\let\zeroslash\emptyset
\def\paragraph{\subsection}
\def\@sect#1#2#3#4#5#6[#7]#8{%
  \edef\@toclevel{\ifnum#2=\@m 0\else\number#2\fi}%
  \ifnum #2>\c@secnumdepth \let\@secnumber\@empty
  \else \@xp\let\@xp\@secnumber\csname the#1\endcsname\fi
  \@tempskipa #5\relax
  \ifnum #2>\c@secnumdepth
    \let\@svsec\@empty
  \else
    \refstepcounter{#1}%
    \edef\@secnumpunct{%
      \ifdim\@tempskipa>\z@ % not a run-in section heading
        \@ifnotempty{#8}{.\@nx\enspace}%
      \else
        \@ifempty{#8}{.}{.\@nx\enspace}%
      \fi
    }%
    %\@ifempty{#8}{%
      \ifnum #2=\tw@ \def\@secnumfont{\bfseries}\fi{}%
    \protected@edef\@svsec{%
      \ifnum#2<\@m
        \@ifundefined{#1name}{}{%
          \ignorespaces\csname #1name\endcsname\space
        }%
      \fi
      \@seccntformat{#1}%
    }%
  \fi
  \ifdim \@tempskipa>\z@ % then this is not a run-in section heading
    \begingroup #6\relax
    \@hangfrom{\hskip #3\relax\@svsec}{\interlinepenalty\@M #8\par}%
    \endgroup
    \ifnum#2>\@m \else \@tocwrite{#1}{#8}\fi
  \else
  \def\@svsechd{#6\hskip #3\@svsec
    \@ifnotempty{#8}{\ignorespaces#8\unskip
       \@addpunct.}%
    \ifnum#2>\@m \else \@tocwrite{#1}{#8}\fi
  }%
  \fi
  \global\@nobreaktrue
  \@xsect{#5}}
\def\pxspace{\@ifnextchar.{\@}{.\@\xspace}}
\newtheorem{theorem}{Theorem}[section]
\newtheorem{assumption}[theorem]{Assumption}
\newtheorem{case}[theorem]{Case}
\newtheorem{definition}[theorem]{Definition}
\newtheorem{lemma}[theorem]{Lemma}
\newtheorem{remark}[theorem]{Remark}
\newcounter{figuregroup}
\newenvironment{figuregroup}{%
  \global\let\thefiguresave=\thefigure
  \global\let\theHfiguresave=\theHfigure
  % increments counter "figure" and shows it as @currentlabel
  \refstepcounter{figure}
  \numberwithin{figure}{figuregroup}
  \setcounter{figuregroup}{\value{figure}}
  \setcounter{figure}{0}
}{%
  \global\let\thefigure=\thefiguresave
  \global\let\theHfigure=\theHfiguresave
  \setcounter{figure}{\value{figuregroup}}
}
\newcommand{\manuallabel}[2]{\edef\@currentlabel{#2}\label{#1}}
\newcommand{\itemlabel}[2]{\item[#1]\manuallabel{#2}{#1}}
\newcommand{\interject}[1]{\noalign{\begin{quote}#1\end{quote}}}
\newcommand{\disconnect}{\leavevmode\par}
\newcommand{\forwardref}[1]{\ref{#1}}
\newcommand{\mbbCP}{\mathbb{CP}}
\newcommand{\mbbN}{\mathbb{N}}
\newcommand{\mbbR}{\mathbb{R}}
\newcommand{\mbbSS}{\mathbb{SS}}
\newcommand{\mbbSY}{\mathbb{SY}}
\newcommand{\mbbY}{\mathbb{Y}}
\newcommand{\mbbYF}{\mathbb{YF}}
\newcommand{\mbbZ}{\mathbb{Z}}
\newcommand{\scrL}{\mathcal{L}}
\newcommand{\zerohat}{\hat{0}}
\newcommand{\Idealf}{{\operatorname{Ideal}_{\textup{f}}}}
\newcommand{\tnew}{\textup{new}}
\newcommand{\tmax}{\textup{max}}
\newcommand{\tmin}{\textup{min}}
\newcommand{\upcolon}{\textup{:}}
\newcommand{\mvert}{\,|\,}      % "such that":  \{ x \in S \mvert x > y \}
\begin{document}

%% Generated by ipescript page-labels 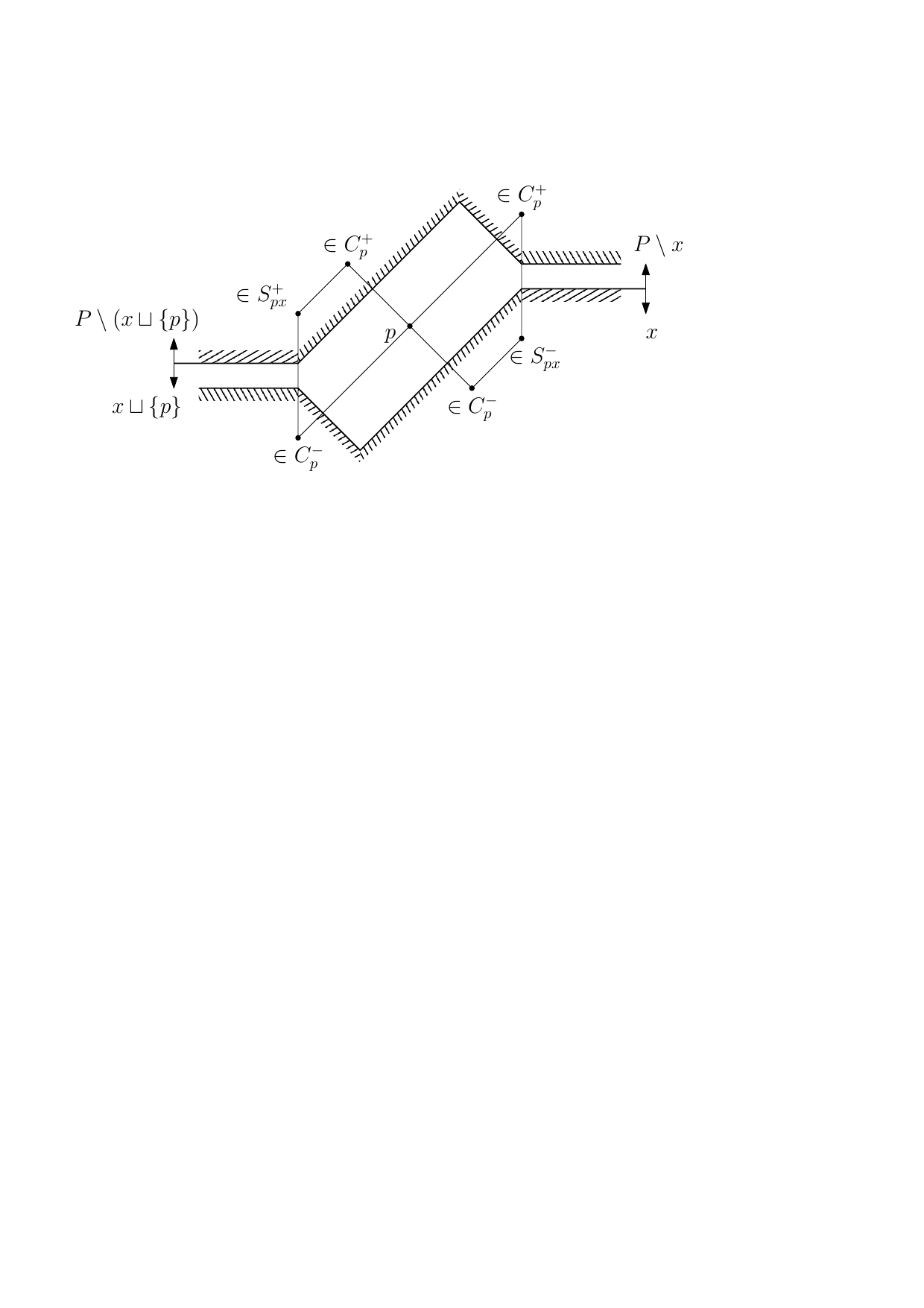
%% Do not edit
\newcommand{\ipeFigJXB}{1}
\newcommand{\ipeFigNzerozerozero}{2}
\newcommand{\ipeFigNzeroonezero}{3}
\newcommand{\ipeFigNzerotwozero}{4}
\newcommand{\ipeFigNonezerozero}{5}
\newcommand{\ipeFigNoneonezero}{6}
\newcommand{\ipeFigNoneoneone}{7}
\newcommand{\ipeFigNonetwozero}{8}
\newcommand{\ipeFigNonetwoone}{9}
\newcommand{\ipeFigNtwozerozero}{10}
\newcommand{\ipeFigNtwoonezero}{11}
\newcommand{\ipeFigNtwooneone}{12}
\newcommand{\ipeFigNtwotwozero}{13}
\newcommand{\ipeFigNtwotwoone}{14}
\newcommand{\ipeFigNtwotwotwo}{15}
\newcommand{\ipeFigInfinitePath}{16}
\newcommand{\ipeFigFinitePath}{17}
\newcommand{\ipeFigTriple}{18}
\newcommand{\ipeFigFucJ}{19}
\newcommand{\ipeFigFucL}{20}
\newcommand{\ipeFigU}{21}
\newcommand{\ipeFigAssignment}{22}
\newcommand{\ipeFigSimEquiv}{23}
\newcommand{\ipeFigCthreetwo}{24}
\newcommand{\ipeFigCfourtwo}{25}
\newcommand{\ipeFigCsixtwo}{26}
\newcommand{\ipeFigTwosPoints}{27}
\newcommand{\ipeFigTwosLattice}{28}
\newcommand{\ipeFigYP}{29}
\newcommand{\ipeFigYL}{30}
\newcommand{\ipeFigSYP}{31}
\newcommand{\ipeFigSYL}{32}
\newcommand{\ipeFigYthreeP}{33}
\newcommand{\ipeFigYthreeL}{34}

\title[Classification of lattices]{On the combinatorics of tableaux ---
  Classification of lattices underlying Schensted correspondences}
\author{Dale R. Worley}
\email{worley@alum.mit.edu}
\date{Jan 11, 2026} % Mon Jan 12 05:08:17 PM EST 2026 k-row.tex.349

\begin{abstract}
The celebrated Robinson--Schensted algorithm and each of its variants
that have attracted substantial attention can be constructed using
Fomin's ``growth diagram'' construction from a modular lattice that is
also a weighted-differential poset.  In this paper we classify all
such ``Fomin'' lattices that meet certain criteria; the main criterion
is that the lattice is distributive.  Intuitively, these criteria seem
excessively strict, but all known Fomin lattices satisfy all of these
criteria, with the sole exception of one family that is not even
distributive, the Young--Fibonacci lattices and cartesian products
involving them.  We discover a new class of Fomin lattices, but
unfortunately they cannot be used to construct Robinson--Schensted
algorithms.
\end{abstract}

\subjclass[2020]{Primary 06A11; Secondary 05A17, 06B99}

\keywords{differential poset, Fomin lattice, graded graph, growth
  diagram, Robinson--Schensted algorithm, Young diagram, Young
  tableaux}

\maketitle

\textit{Note added to the 2\textsuperscript{nd} version:}
Paragraphs 6.5, 6.6, and 6.7 of the 1\textsuperscript{st} version were
incorrect and have been replaced.  Their conclusions remain unchanged.
Also, a new class of Fomin lattices has been added as item (11) of
para. 2.3:  the ``vertical'' joining of an arbitrary doubly-inﬁnite
sequence boolean algebras $B_1$ or $B_2$.

\tableofcontents
%\listoffigures

\section{Introduction} \label{sec:intro}

The celebrated Robinson--Schensted
algorithm\cite{Rob1938a}\linebreak[0]\cite{Schen1961a}\linebreak[0]%
\cite{Fom1994a}\linebreak[0]\cite{Fom1995a}
and each of its variants that have attracted substantial attention
can be constructed\cite{Wor2023c}*{sec.~2} using Fomin's ``growth
diagram'' construction from a modular lattice that is also a
(weighted-)differential poset\cite{Stan1988a}.  We define these
requirements as a ``Fomin lattice'' that has a minimum element
$\zerohat$.
In this paper we classify all Fomin lattices that meet certain criteria.
Our main criterion is that the lattice is distributive,
with secondary criteria on the set of points (the join-irreducible
elements of the lattice).  Intuitively, these criteria seem excessively
strict, but all known Fomin lattices satisfy all of these criteria,
with the sole exception of one family that is not even distributive,
the Young--Fibonacci lattices.  This work adds to the classes of
lattices within which all of the Fomin lattices have been classified.

We find a class of Fomin lattices that appear to be new to the
literature:  Let $L$ be the ``vertical'' joining of an arbitrary
doubly-infinite sequence of 2-chains or $B_1$ (boolean algebras on 1
element), and diamonds or $B_2$ (boolean algebras on 2 elements).  Then
$L$ with the weighting is Fomin, but unfortunately has no $\zerohat$.
See an example in fig.~\ref{fig:twos} and
para.~\ref{para:known}(\forwardref{i:previous:twos}).

The history of this problem in the literature is discussed in
para.~\ref{para:literature}.
Para.~\ref{para:properties} enumerates the relevant properties of structures.
All known Fomin lattices are described in para.~\ref{para:known}.
They consist of Young's lattice of partitions, the shifted Young's
lattice of strict partitions, the lattices of partitions into a limited
number of parts, the Young-Fibonacci lattices, the cylindric partition
lattices, the lattice of ideals of a ``skew strip'',
the new class of lattices, some trivial
cases, and certain derivatives of other Fomin lattices.
The previous classification theorems for Fomin lattices are discussed
in para.~\ref{para:prev-classification}.

Our new classification theorem is in sec.~\ref{sec:classification}.
The class of lattices classified by
our theorem is incomparable with the classes of previous
classification theorems; it does not supersede them.
However, it is an incremental contribution to the classification of
Fomin lattices (and thus Robinson--Schensted algorithms) and is the
only one that applies to ``weighted-differential'' lattices, that is,
where the weight on coverings (or points) is not identically 1.

Our proof techniques are elementary.
They consist of a careful accounting of the consequences of the
differential condition~(\forwardref{eq:differential}) in lattices that
satisfy our criteria.  This ``local characterization''
(sec.~\ref{sec:local}) includes a succession of restatements equivalent to
the differential condition (lem.~\forwardref{lem:equiv-1},
\forwardref{lem:equiv-2}, \forwardref{lem:equiv-3},
\forwardref{lem:equiv-4}, \forwardref{lem:equiv-4-dual},
\forwardref{lem:equiv-5},
\forwardref{lem:equiv-6}, and \forwardref{lem:equiv-6-dual}).
These results culminate in lem.~\forwardref{lem:no-triples}
and~\forwardref{lem:no-triples-dual},
that no point covers three distinct points or is covered by three
distinct points.
We then proceed to the ``global characterization'' (sec.~\ref{sec:global})
of how the points can be placed on a two-dimensional grid.  This
culminates in the
classification theorem, th.~\forwardref{th:classification}.

Finally, we discuss possible future directions for research
(sec.~\ref{sec:future}), which
assesses the prospect of various relaxations of the criteria of the
classification theorem.

Thus this article consists of:
\begin{itemize}
\item background information including the previous literature, a
  catalog of properties, known Fomin lattices, and a recapitulation of
  properties of the Cartesian product (sec.~\ref{sec:background}),
\item a statement of the definitions essential for the classification
  (sec.~\ref{sec:fomin}),
\item a construction supporting the discussion of future directions
  (sec.~\ref{sec:aux}),
\item the characterization of Fomin lattices that meet our criteria
  (sec.~\ref{sec:local} and~\ref{sec:global}),
\item the classification theorem for Fomin lattices that meet our
  criteria (sec.~\ref{sec:classification}), and
\item a discussion of future directions for research
  (sec.~\ref{sec:future}).
\end{itemize}

The development essential to prove our classification
theorem~\forwardref{th:classification} is contained in
sec.~\ref{sec:fomin}, \ref{sec:local}, \ref{sec:global},
and~\ref{sec:classification}.

\section{Background} \label{sec:background}

\paragraph{Previous literature} \label{para:literature}
The literature on differential structures can be classified based on
the generality
of the structures being considered.  The generality can be described
along two dimensions:
\begin{enumerate}
\item the
generality of the underlying structures (lattice, poset, etc.) being
considered and
\item whether the covering pairs of elements of the
structure are all given the weight 1 (the \emph{unweighted} case) or
are allowed varying weights (the \emph{weighted} case).
\end{enumerate}
In regard to the underlying structures, from most general to least
general, the major categories are
\begin{enumerate}
\item dual graded graph (two graphs or multi-graphs on the same set
  of vertices with the same grading with separate weights or edge
  multiplicities in the two graphs between pairs of points),
\item graded poset,
\item lattice (which by the differential property must be modular),
\item distributive lattice, and
\item distributive lattice with the further conditions of
  th.~\forwardref{th:classification}.
\end{enumerate}

Significant works in the literature, and distinctive parts of them,
are classified in the following table.
In some cases a work's definitions and general analysis are
considerably more general than
all of the examples described in the work; in that case the
classification of the definitions are marked ``D'' and classification
of the examples are marked ``E''.
Some works that are largely a description of examples (often novel
examples) are only marked with ``E''.
The works with the best previous classification results (detailed at
the end of para.~\ref{para:known}) are listed with ``C'' in the
appropriate category.

\vspace{0.5\baselineskip}
{
% Add some space to the top and bottom of the cells.
\renewcommand{\arraystretch}{1.5}
% Shorthand.
\newcommand{\R}{\raggedright}
% The column width.
\newcommand{\cwd}{0.29\textwidth}
\begin{tabular}{|l|p{\cwd}|p{\cwd}|p{\cwd}|}
\cline{3-4}
\multicolumn{2}{c|}{} & \multicolumn{2}{c|}{Weighting} \\
\cline{3-4}
\multicolumn{2}{c|}{} & Weighted & Unweighted \\
\hline
\raisebox{-0.5em}[\height][0pt]{%
\rotatebox[origin=Br]{90}{Underlying structure}%
} &
Dual graded graph \R & Fomin \cite{Fom1994a}*{Def.~1.3.4} \R &
Qing \cite{Qing2008a}*{sec.~4.2} E \\
\cline{2-4}
& Graded poset \R & & Stanley \cite{Stan1988a}*{Def.~1.1} DC \newline
Lewis \cite{Lew2007a}*{sec.~3} E \\
\cline{2-4}
& Lattice (modular) \R & Stanley \cite{Stan1990a}*{sec.~3} \newline
this article D \R & Stanley \cite{Stan1988a} E \newline
Byrnes \cite{Byrn2012a}*{Ch.~6} C \\
\cline{2-4}
& Distributive lattice \R & Fomin \cite{Fom1994a}*{sec.~2.2} &
Elizalde \cite{El2025a} E \\
\cline{2-4}
& Distributive lattice with further restrictions \R & this article E \R & \\
\hline
\end{tabular}
}

\paragraph{Properties considered in this article} \label{para:properties}
A large number of properties of structures are considered in this
area of combinatorics.  This section catalogs the
properties that are relevant to this article and their definitions.
This article classifies in sec.~\ref{sec:classification} structures
which have all of these properties.
The properties are here generally ordered from weakest to strongest.
Some theorems relating the properties are included.
For a casual reading of this article, this section can be skipped and
referred back to as necessary.
Section~\ref{sec:fomin} gives terse definitions
that suffice for reading characterizations in this article.

\begin{enumerate}
\itemlabel{(A)}{i:prop-DGG} \emph{The structure is a dual graded graph (DGG):}
\cite{Fom1994a}*{Def.~1.3.4} There is a graded set of vertices.  In the original
formulation each grade
contains a finite number of vertices, but we do not include that
criterion here.  (But see property~\forwardref{i:prop-fin-covers}.)
Vertices in a grade
have edges to some vertices in adjacent grades, forming a graph.
Each edge has an assigned weight, with the weights being drawn from a
set of \emph{values} $V$.
In the original formulation, each edge is
an otherwise unlabeled multi-edge, so the weights
are positive integers ($V = \mbbZ$), but we do not include that criterion here.

A dual graded graph is two such graphs on the same graded set of
vertices, and thus there are up to two weights between any pair
of vertices in adjacent grades.  The edges of the first graph are
conceptualized as being directed upward (toward higher grades) and the
edges of the second graph are conceptualized as being directed
downward.

There is some ambiguity in the presentation of a DGG; if a pair
of vertices has a non-zero
weight between them in only one graph, we may add an edge
between them in the other graph that has a weight of 0 without
changing any of the essential properties of the DGG.  This allows
the two graphs to have the same set of edges, with the two graphs
differing only in the weights of the edges.

Critically, the DGG has the \emph{differential property}:  It has a
constant $r$ called the \emph{differential degree} and the DGG is
called \emph{$r$-differential}.
Assume that weight 0 edges have been added so that both graphs have
the same edges. Let $x \lessdot y$
mean that $y$ is in the grade above $x$ and $(x,y)$ is an edge.
When $x \lessdot y$, let
$w_1(x \lessdot y)$ be the weight of the edge $(x,y)$ in the first
graph and $w_2(x \lessdot y)$ be the weight of the edge $(x,y)$ in the
second graph.
Then the differential property is that for every pair of vertices $x$
and $y$ in the same grade,

\begin{equation} \label{eq:DGG-differential}
\sum_{z \mvert x \lessdot z, y\lessdot z}  w_1(x \lessdot z) w_2(x \lessdot z)
- \sum_{v \mvert v \lessdot x, v \lessdot y}  w_2(v \lessdot x) w_1(v \lessdot y)
=
\begin{cases}
0 & \text{if } x \neq y \\
r & \text{if } x = y
\end{cases}
\end{equation}

\itemlabel{(B)}{i:prop-diff} \emph{The structure is a differential poset:}
\cite{Stan1988a} This is a narrower type of DGG.  Both graphs have the
same set of edges and the weights on the edges of the second graph are
all 1.  In the original formulation, the weights on the edges of the
first graph are also 1, but we do not include that criterion here.
(But see property~\forwardref{i:prop-0}.)
Differential posets with weights that are not 1 are also called
\emph{weighted-differential} posets.

The graphs are the Hasse diagram of a poset.
In the original formulation, the poset is assumed to have a minimum
element, $\zerohat$, but we do not include that criterion here.
A differential poset is required to be \emph{unique-cover-modular}.

\end{enumerate}

\begin{definition} \label{def:cover-modular}
A poset $P$ is \emph{cover-modular}\footnote{%
We choose this name because of its resemblance to the property of
coverings in modular lattices.  But in general, $P$ is not necessarily
modular, indeed, it need not even be a lattice.
A poset with this property and finite length is called a ``modular
poset'' in \cite{Birk1967a}*{\S II.8}.}
if when $x, y \in P$ and $x$
and $y$ both cover an element of $P$, then they are both covered by an
element of $P$ (which may not be unique).  Dually, we require
that when $x, y \in P$ and $x$
and $y$ both are covered by an element of $P$, then they both cover an
element of $P$ (which may not be unique).
\end{definition}

\begin{definition} \label{def:unique-cover-modular}
A poset $P$ is \emph{unique-cover-modular}%
\footnote{This property is called a ``uniquely modular poset'' in
\cite{Proct1982a}*{sec.~5}.}
if when $x, y \in P$ and
either $x$ and $y$ both cover an element of $P$ or
$x$ and $y$ are covered by an element of $P$, then
they both cover a unique element of $P$
and they are both covered by a unique element of $P$.
\end{definition}

\begin{theorem} \cite{Birk1967a}*{\S II.8 Th.~14} \label{th:graded}
  If a locally-finite poset has $\zerohat$ and is cover-modular, then
  it is graded.
\end{theorem}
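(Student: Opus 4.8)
The plan is to build a rank function explicitly and show it is well defined. For $x\in P$ let $\rho(x)$ be the maximum length of a chain from $\zerohat$ to $x$; this is a well-defined natural number because $[\zerohat,x]$ is finite (local finiteness together with $\zerohat\in P$), and $\rho(\zerohat)=0$. To conclude that $P$ is graded it is enough to prove the Jordan--Dedekind property for the principal lower intervals, namely that \emph{every} maximal chain from $\zerohat$ to $x$ has length $\rho(x)$: granting that, whenever $x\lessdot y$ we get $\rho(y)=\rho(x)+1$ by appending $y$ to a maximal chain from $\zerohat$ to $x$, which is exactly what ``graded'' requires. So the whole argument reduces to the assertion: for each $x$, all maximal chains from $\zerohat$ to $x$ have the same length.

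I would prove this by induction on the (finite) cardinality of $[\zerohat,x]$. The case $x=\zerohat$ is trivial, so assume $x\ne\zerohat$ and take two maximal chains $\zerohat=a_0\lessdot\cdots\lessdot a_m=x$ and $\zerohat=b_0\lessdot\cdots\lessdot b_n=x$ with $m,n\ge 1$; compare their penultimate elements $a_{m-1}$ and $b_{n-1}$, each covered by $x$. If $a_{m-1}=b_{n-1}$, deleting the top element turns both into maximal chains in the proper subinterval $[\zerohat,a_{m-1}]$, so $m-1=n-1$ by the inductive hypothesis. If $a_{m-1}\ne b_{n-1}$, apply the dual clause of def.~\ref{def:cover-modular} to $a_{m-1}$ and $b_{n-1}$ (both covered by $x$): they both cover some $z\in P$. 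Pick a maximal chain $\zerohat=z_0\lessdot\cdots\lessdot z_k=z$; then $z_0\lessdot\cdots\lessdot z_k\lessdot a_{m-1}$ and $z_0\lessdot\cdots\lessdot z_k\lessdot b_{n-1}$ are maximal chains in the proper subintervals $[\zerohat,a_{m-1}]$ and $[\zerohat,b_{n-1}]$, and comparing each with $a_0\lessdot\cdots\lessdot a_{m-1}$, respectively $b_0\lessdot\cdots\lessdot b_{n-1}$, the inductive hypothesis yields $m-1=k+1$ and $n-1=k+1$, hence $m=n$. (The degenerate possibility $a_{m-1}=\zerohat\ne b_{n-1}$ cannot arise: cover-modularity would force $\zerohat$ to cover something, which is absurd --- equivalently, if $x$ covers $\zerohat$ then $\zerohat$ is the only element $x$ covers --- so the chain through $z$ really is available.)

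The step that carries the weight is the case $a_{m-1}\ne b_{n-1}$. In a semimodular lattice one would simply form the join $a_{m-1}\vee b_{n-1}$ and descend, but $P$ need not be a lattice and that join need not exist; cover-modularity returns only a common \emph{lower} element $z$, with no guaranteed relation to $x$ (indeed, even $z\le x$ is neither asserted nor needed). The insight is that this is exactly enough: one never needs $z$ to lie above anything, only to be routed down to $\zerohat$, and the recursion is well-founded because each appeal to the hypothesis is on a strictly smaller finite interval. Thus local finiteness is used twice --- to make $\rho$ finite and to ground the induction --- and the hypothesis $\zerohat\in P$ is what upgrades ``relative rank within an interval'' to a single global rank function.
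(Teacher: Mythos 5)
The paper does not actually prove this statement; it is quoted from Birkhoff (\S II.8, Th.~14), so there is no internal proof to compare against. Your argument is correct and is essentially the classical Jordan--Dedekind induction behind Birkhoff's theorem: the reduction of gradedness to ``all maximal chains from $\zerohat$ to $x$ have equal length,'' the induction on the (finite, by local finiteness plus $\zerohat$) cardinality of $[\zerohat,x]$, the use of the dual clause of cover-modularity to produce a common lower cover $z$ of the two penultimate elements and route it down to $\zerohat$, and the exclusion of the degenerate case $a_{m-1}=\zerohat\neq b_{n-1}$ are all sound, so nothing further is needed.
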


\begin{enumerate}[resume]

\itemlabel{(C)}{i:prop-loc-fin} \emph{The structure is locally-finite:}
In both the DGG and differential poset formulations, the
structure (seen as a poset) is assumed to be locally-finite.

\itemlabel{(D)}{i:prop-fin-covers} \emph{The structure has finite covers:}
In both the DGG and differential poset formulations, every
vertex has a finite number of upward and downward edges, or a finite
number of upward and downward covers.

\cite{Fom1994a}*{sec.~1.1} requires each grade of vertices to be
finite.  If the structure has
a $\zerohat$, this is equivalent to finite covers, but without a
$\zerohat$, it is a stronger property.  (See also
property~\forwardref{i:prop-0}.)

\itemlabel{(E)}{i:prop-umc} \emph{The structure has unique-modular-covers and the
weights are projective-constant:}
In this case, if $x \neq y$,
(\ref{eq:DGG-differential}) is satisfied automatically.
If we define $w(x \lessdot y) = w_1(x \lessdot y) w_2(x \lessdot y)$,
then the $x = y$ case of (\ref{eq:DGG-differential}) is equivalent to:
For every vertex $x$,

\begin{equation} \label{eq:DP-differential}
\sum_{z \mvert x \lessdot z} w(x \lessdot z)
- \sum_{v \mvert v \lessdot x}  w(v \lessdot x)
= r
\end{equation}

Thus if we require unique-modular-covering and projective-constancy,
the DGG and weighted-differential poset structures effectively
coincide with only one weight function.
Differential posets that are not weighted-differential are automatically
projective-constant.
\end{enumerate}

\begin{definition} \label{def:projective-constant}
The weights on a DGG or differential poset are
\emph{projective-constant} iff (for a DGG) the weight functions $w_1$
and $w_2$ or (for a differential poset) the weight function $w$ have
the following property:
the function $f$ is equal on
coverings which are \emph{cover-projective}:
If $x \vee y$ covers $x$ and $y$ and $x \wedge y$ is covered by $x$ and $y$,
then $f(x \wedge y \lessdot x) = f(y \lessdot x \vee y)$
and $f(x \wedge y \lessdot y) = f(x \lessdot x \vee y)$.
\end{definition}

\begin{enumerate}[resume]
\itemlabel{(F)}{i:prop-lattice} \emph{The structure is a differential poset
that is a lattice.}
\end{enumerate}

\begin{lemma}
\cite{Birk1967a}*{\S II.8 Th.~16}
If a lattice is locally-finite and cover-modular, then it is modular.
\end{lemma}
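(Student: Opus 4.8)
The plan is to reduce to the finite case and then argue with a rank function. Modularity is the identity $a\vee(b\wedge c)=(a\vee b)\wedge c$ for $a\le c$; fixing such $a,b,c$, all of $a,b,c$, the elements $a\wedge b$, $a\vee b$, $b\wedge c$, and the two sides $u:=a\vee(b\wedge c)$ and $v:=(a\vee b)\wedge c$ of the identity lie in the interval $I:=[a\wedge b,\,b\vee c]$, which is finite since $L$ is locally-finite and in which meets and joins agree with those of $L$. Moreover $I$ is itself cover-modular: if $x,y\in I$ cover their meet, then cover-modularity gives them a common cover in $L$, which is forced to equal $x\vee y$ and hence lies in $I$ (and dually for the dual condition). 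Since $I$ has a minimum, th.~\ref{th:graded} makes it graded; let $\rho$ be its rank function. It thus suffices to show $u=v$.

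First I would upgrade cover-modularity to the full upper covering condition inside $I$: \emph{if $x\wedge y\lessdot x$, then $y\lessdot x\vee y$}, together with its dual. Argue by induction on the length of a longest chain from $x\wedge y$ up to $y$. If $y=x\wedge y$ the claim is immediate. Otherwise choose $y'$ with $x\wedge y\le y'\lessdot y$; then $x\wedge y'=x\wedge y\lessdot x$ and the parameter has strictly decreased (any chain from $x\wedge y$ to $y'$ extends to one up to $y$), so by induction $y'\lessdot x\vee y'$. Put $m:=x\vee y'$. The meet $y\wedge m$ lies between $y'$ and $y$; the case $y\wedge m=y$ is impossible, since it would give $m=x\vee y$ with $y'<y<m$, contradicting $y'\lessdot m$. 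Hence $y$ and $m$ both cover $y'=y\wedge m$, so by cover-modularity they possess a common cover, necessarily $y\vee m=x\vee y$; in particular $y\lessdot x\vee y$.

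From the upper covering condition a routine induction on $\rho(x)-\rho(x\wedge y)$ yields $\rho(x)+\rho(y)\ge\rho(x\vee y)+\rho(x\wedge y)$ for all $x,y\in I$, and the dual condition yields the reverse inequality; hence $\rho$ is modular: $\rho(x)+\rho(y)=\rho(x\vee y)+\rho(x\wedge y)$ throughout $I$. Applying this to the pairs $(u,b)$ and $(v,b)$ and noting that $u\vee b=v\vee b=a\vee b$ and $u\wedge b=v\wedge b=b\wedge c$, we get $\rho(u)=\rho(a\vee b)+\rho(b\wedge c)-\rho(b)=\rho(v)$. Since $u\le v$ in the graded poset $I$, this forces $u=v$, which is the modular law.

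The main obstacle is the passage in the second paragraph: cover-modularity produces a common cover of two elements only once both already cover their meet, so it does not apply to $x$ and $y$ directly; one must manufacture that situation by descending to $y'$ and then exclude the degenerate alternative $y\wedge m=y$. Once the covering conditions are established, the rest is standard rank-function bookkeeping.
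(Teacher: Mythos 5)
Your argument is correct, but there is nothing in the paper to compare it against: the paper does not prove this statement at all, it simply cites Birkhoff (\S II.8 Th.~16) and moves on. What you have written is essentially a self-contained reconstruction of the classical proof: localize the modular law for a fixed triple $a\le c$, $b$ to the finite interval $I=[a\wedge b,\,b\vee c]$, observe that cover-modularity passes to $I$ (the common cover of two elements covering their meet is forced to be their join, hence stays in $I$), invoke th.~\ref{th:graded} to get a rank function, upgrade the weak cover-modularity to the full upper and lower covering conditions by your induction on chain length, deduce that $\rho$ is modular, and conclude $a\vee(b\wedge c)=(a\vee b)\wedge c$ from $u\le v$ and $\rho(u)=\rho(v)$. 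The reduction to the finite case, the verification of the four identities $u\vee b=v\vee b=a\vee b$ and $u\wedge b=v\wedge b=b\wedge c$, and the inductive upgrade step all check out; the one place you pass over silently is that in the excluded case $y\wedge m=y$ you need $y\neq m$ to get the strict chain $y'<y<m$, which follows in one line from $x\wedge y\lessdot x$ (so $x\not\le y$, whereas $y=m$ would force $x\le y$). With that remark and the standard rank-submodularity induction spelled out, your proof is a complete substitute for the citation, which is a modest gain in self-containedness over the paper's appeal to Birkhoff.
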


\begin{enumerate}[resume]
\itemlabel{(G)}{i:prop-mod-lattice} \emph{The structure is a differential
poset that is a modular lattice:}
If the structure is a locally-finite modular lattice, then
if the weight function is projective-constant, it is
constant on projectivity classes\cite{Birk1967a}*{\S I.7 Def.} of coverings.
\end{enumerate}

We define ``Fomin lattices'' to be structures with the above set of
properties.

\begin{enumerate}[resume]
\itemlabel{(H)}{i:prop-0} \emph{The structure has a $\zerohat$:}
We consider the existence of a minimum vertex as a separate property.
\cite{Stan1988a} defines all differential posets to have a $\zerohat$.

\itemlabel{(I)}{i:prop-positive} \emph{The weighting and differential
degree are positive.}
\end{enumerate}

\begin{definition} \label{def:positive}
A weighting $w$ and differential degree $r$ (taken together)
are \emph{positive}
if the set of values $V$ (the target set of $w$) is $\mbbZ$, all of
the values of $w$ are positive, and $r$ is positive.
\end{definition}

Depending on the degree of abstraction of the discussion, the
requirements for the set of values $V$ varies.
The minimum requirement is that $V$ be a module over a unitial ring of
\emph{scalars} $S$.
Usually we will be unconcerned with the details of
$S$ other than it necessarily contains a homomorphic image of $\mbbZ$
and thus we can multiply elements of $V$ by integer coefficients.
If $V = \mbbZ$, then $\mbbZ$ is \emph{isomorphically}
embedded in $S$.

\begin{lemma} \label{lem:w-positive}
If $L$ is a Fomin lattice with $\zerohat$, $L$ is not the trivial
(one-element) lattice, $V = \mbbZ$, and all values of $w$ are
positive, then $r$ is positive.
\end{lemma}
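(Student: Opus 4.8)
The plan is to evaluate the differential identity \eqref{eq:DP-differential} at the minimum element $\zerohat$. Because a Fomin lattice has properties~\ref{i:prop-umc} and~\ref{i:prop-mod-lattice}, the $x=y$ instance of \eqref{eq:DGG-differential} is equivalent to \eqref{eq:DP-differential}, so for every vertex $x$ we have
\[
\sum_{z \mvert x \lessdot z} w(x \lessdot z) \;-\; \sum_{v \mvert v \lessdot x} w(v \lessdot x) \;=\; r .
\]
Taking $x = \zerohat$, the second sum is over the empty set, since $\zerohat$ is the minimum and hence covers nothing, so this collapses to $r = \sum_{z \mvert \zerohat \lessdot z} w(\zerohat \lessdot z)$.

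Next I would verify that the remaining sum is a finite sum over a nonempty index set. Finiteness is immediate from property~\ref{i:prop-fin-covers} (finite covers). For nonemptiness: since $L$ is not the trivial lattice, there is some $y \in L$ with $y \neq \zerohat$, hence $\zerohat < y$; by local-finiteness (property~\ref{i:prop-loc-fin}) the interval $[\zerohat, y]$ is finite, so it contains a minimal element above $\zerohat$, i.e.\ an atom $z$ with $\zerohat \lessdot z \leq y$. (Alternatively, th.~\ref{th:graded} shows $L$ is graded and therefore $\zerohat$ has at least one cover.) Thus $\{\, z \mvert \zerohat \lessdot z \,\} \neq \zeroslash$.

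Finally, since $V = \mbbZ$ and all values of $w$ are positive, each term $w(\zerohat \lessdot z)$ is an integer with $w(\zerohat \lessdot z) \geq 1$, so $r = \sum_{z \mvert \zerohat \lessdot z} w(\zerohat \lessdot z) \geq 1 > 0$. The only step that genuinely uses the hypotheses beyond a direct substitution is securing the existence of at least one atom above $\zerohat$, which is exactly where non-triviality and local-finiteness are needed; everything else is bookkeeping with \eqref{eq:DP-differential}.
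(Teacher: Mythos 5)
Your proof is correct and follows essentially the same route as the paper: specialize the differential condition~(\ref{eq:DP-differential}) to $x=\zerohat$, note the downward sum is empty, and conclude $r$ is a nonempty sum of positive integers. The only difference is that you spell out (via non-triviality and local finiteness) why $\zerohat$ has at least one cover, a point the paper's proof asserts more tersely.
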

\begin{proof} \disconnect
For $x = \zerohat$, (\ref{eq:DP-differential}) becomes
$r = \sum_{z \mvert x\lessdot z} w(\zerohat \lessdot z)$.
But by hypothesis, there is at least one $z$ in the range of the sum and
$w(\zerohat \lessdot z) > 0$, which ensures
the sum is positive.  That proves $r$ is positive.
\end{proof}

In light of the above lemma,
we will abuse language by using ``$w$ is positive'' or ``the lattice
is positive'' to mean that the
weighting and differential degree are positive, that is, $V$ is
$\mbbZ$, $r > 0$, and all of
the values of $w$ are positive.

\begin{enumerate}[resume]
\itemlabel{(J)}{i:prop-dist} \emph{The structure is a distributive lattice:}
In this case, we call the lattice $L$ and define $P$ to be the poset
of \emph{points} (join-irreducibles) of $L$.
If the lattice is finitary, it is
isomorphic to the set of finite order ideals of $P$.

\itemlabel{(K)}{i:prop-points-ucm} \emph{The structure is a lattice
and the  poset of points $P$ is unique-cover-modular:}
Note that this requirement is on the poset of points, not
a requirement on the structure itself (which all Fomin lattices satisfy).

\itemlabel{(L)}{i:prop-factor} \emph{The structure is a lattice that
cannot be factored into two nontrivial lattices:}
If the lattice is distributive this is equivalent
to that $P$ cannot be partitioned into two disjoint non-empty
posets $P = P_1 \sqcup P_2$.%
\footnote{We use $\sqcup$ for disjoint unions of sets and posets
because ``disjoint union'' is the coproduct in the categories of sets
and posets.}
\end{enumerate}

\paragraph{Known Fomin lattices} \label{para:known}
All known Fomin lattices can be constructed as follows:
\begin{enumerate}
\item \label{i:previous:degenerate}
  Any lattice satisfying property~\ref{i:prop-DGG} can be made into a
  \emph{degenerate} Fomin lattice by setting $r = 0$ and $w = 0$.
\item \cite{Fom1994a}*{Exam.~2.1.2 and~2.2.7}
  \label{i:previous:Y}
  Young's lattice $\mbbY$, the lattice of
  partitions, with $r=1$ and $w=1$ is a positive Fomin lattice with
  $\zerohat$.  See fig.~\ref{fig:Y}.
  (There are additional weightings $w$ on this lattice that make it Fomin but
  are not positive.)

\begin{figure}[htp]
\centering
\begin{subfigure}[b]{0.45\textwidth}
  \centering
  \includegraphics[page=\ipeFigYL,scale=0.5]{}
  \caption{Lattice $\mbbY$ with weights of the coverings}
  \label{fig:YL}
\end{subfigure}
\\
\begin{subfigure}[b]{0.45\textwidth}
  \centering
  \includegraphics[page=\ipeFigYP,scale=1]{}
  \caption{The quadrant $J(\mbbY)$ with weights of the points}
  \label{fig:YP}
\end{subfigure}
\caption{Young's lattice $\mbbY$, the lattice of partitions, with its
  canonical weighting.}
\label{fig:Y}
\end{figure}

\item \cite{Sag1979b}\cite{Wor1984a}\cite{Fom1994a}*{Exam.~2.2.8}
  \label{i:previous:SY}
  The shifted Young's Lattice $\mbbSY$, the lattice of
  partitions into unequal parts, with $r=1$ and
  $w = 1 \textup{ or } 2$ is a positive Fomin lattice with
  $\zerohat$.  See fig.~\ref{fig:SY}.

\begin{figure}[htp]
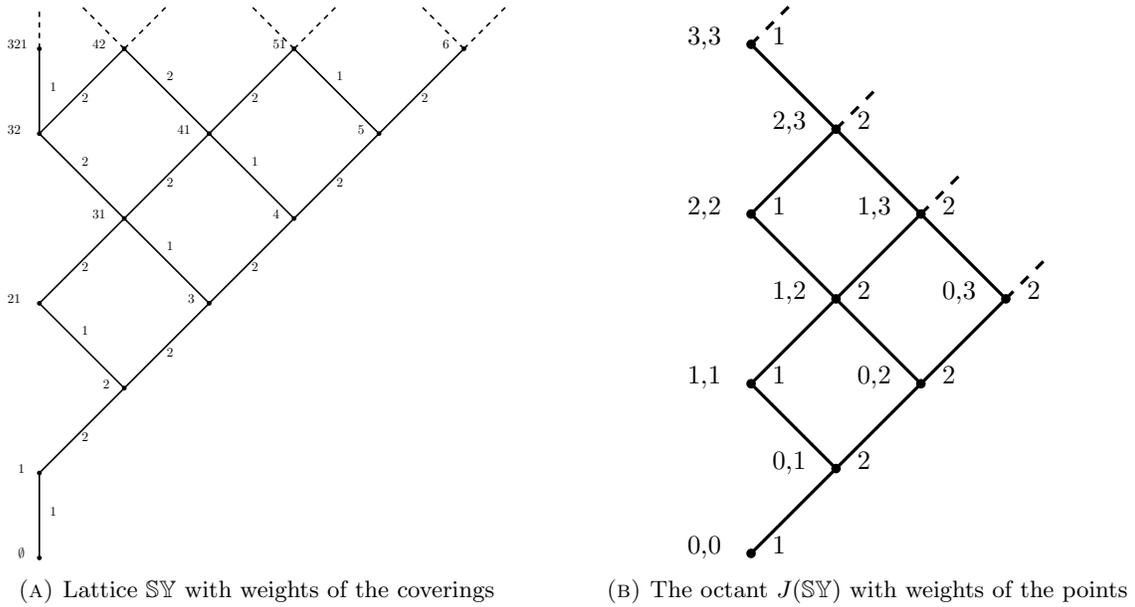

\centering
\begin{subfigure}[b]{0.45\textwidth}
  \centering
  \includegraphics[page=\ipeFigSYL,scale=0.5]{k-row-figs.pdf}
  \caption{Lattice $\mbbSY$ with weights of the coverings}
  \label{fig:SYL}
\end{subfigure}
\begin{subfigure}[b]{0.45\textwidth}
  \centering
  \includegraphics[page=\ipeFigSYP,scale=1]{k-row-figs.pdf}
  \caption{The octant $J(\mbbSY)$ with weights of the points}
  \label{fig:SYP}
\end{subfigure}
\caption{The shifted Young's lattice $\mbbSY$, the lattice of strict
  partitions, with its canonical weighting.}
\label{fig:SY}
\end{figure}

\item \cite{Fom1994a}*{Exam.~2.2.5} \label{i:previous:Yk}
  For any integer $k \geq 1$, the $k$-row Young's lattice,
  $\mbbY_k$, the lattice of partitions into at most $k$ parts (or
  dually, into an arbitrary number of parts that are all
  $\leq k$), with $r = k$ and $w$ having a particular range of values
  is a positive Fomin lattice with
  $\zerohat$.  See $\mbbY_3$ in fig.~\ref{fig:Y3}.

\begin{figure}[htp]
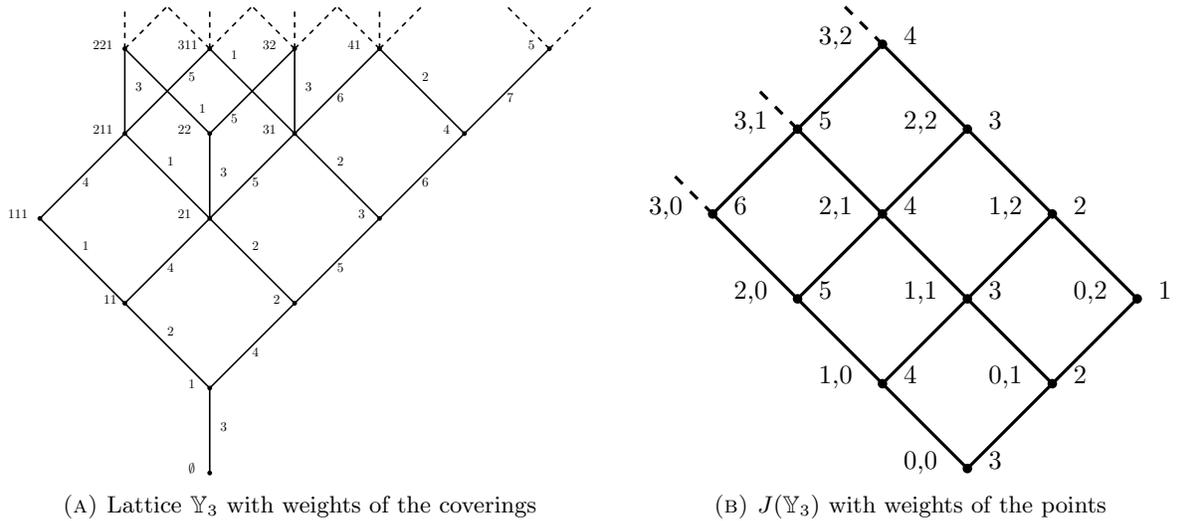

\centering
\begin{subfigure}[b]{0.45\textwidth}
  \centering
  \includegraphics[page=\ipeFigYthreeL,scale=0.5]{k-row-figs.pdf}
  \caption{Lattice $\mbbY_3$ with weights of the coverings}
  \label{fig:Y3L}
\end{subfigure}
\begin{subfigure}[b]{0.45\textwidth}
  \centering
  \includegraphics[page=\ipeFigYthreeP,scale=1]{k-row-figs.pdf}
  \caption{$J(\mbbY_3)$ with weights of the points}
  \label{fig:Y3P}
\end{subfigure}
\caption{The lattice $\mbbY_3$ of partitions with $\leq 3$ rows, with
  its canonical weighting.}
\label{fig:Y3}
\end{figure}

\item \cite{Fom1994a}*{Exam.~2.2.1} \label{i:previous:N}
  The upward semi-infinite chain $\mbbN$
  with $r = k$ and $w(i) = k(i+1)$ (which is isomorphic to $\mbbY_1$, a special
  case of (\ref{i:previous:Yk})) is a positive Fomin lattice with
  $\zerohat$.
\item \cite{Fom1994a}*{Exam.~2.2.9} \label{i:previous:Z}
  The infinite chain $\mbbZ$ with and $w(i)=ri$ is a Fomin lattice
  but has no $\zerohat$ and is not positive.
\item \cite{Stan1988a}*{sec.~5}\cite{Roby1991a}*{Ch.~5}
  \label{i:previous:YF}
  For any integer $k \geq 1$, the Young--Fibonacci
  lattice of degree $k$, $\mbbYF_k$,
  with $r = k$ and $w=1$ is a positive Fomin lattice with
  $\zerohat$.
\item \cite{El2025a}*{sec.~5} \label{i:previous:CP}
  For any integers $d, L \geq 1$,
  the \emph{cylindric partition} lattice $\mbbCP_{dL}$ with $r = 0$
  and $w = 1$ is a Fomin lattice but is not
  positive and has no $\zerohat$.\footnote{%
  \cite{El2025a} uses the notation $\scrL_{d,L}$ for $\mbbCP_{dL}$.
  The cylindric partitions have been known since at least
  \cite{GessKratt1997a} but \cite{El2025a} seems to be the first
  report that all of the $\mbbCP_{dL}$ are 0-differential.
  See (\forwardref{i:previous:skewstrip}):
  \cite{Fom1994a} reports that the $\mbbCP_{2t}$ are
  0-differential.}
\item \cite{Fom1994a}*{Exam.~2.2.10} \label{i:previous:skewstrip}
  defines $t\textit{-SkewStrip}$, which is a special case of
  (\ref{i:previous:CP}):  $t\textit{-SkewStrip} = \mbbCP_{2t}$.
\item \cite{Fom1994a}*{Exam.~2.2.11} \label{i:previous:SS}
  $\mbbSS_t$ is the lattice of non-trivial ideals of
  $t\textit{-SkewStrip} = \mbbCP_{2t}$, those that are neither empty
  nor all of $t\textit{-SkewStrip}$,
  with $r=0$ and  $w = 1 \textup{ or } 2$ is a
  Fomin lattice but is not positive and has no $\zerohat$.%
\item \label{i:previous:twos}
  Let $L$ be the ``vertical'' joining of an arbitrary doubly-infinite sequence of
  2-chains or $B_1$ (boolean algebras on 1 element), and diamonds or
  $B_2$ (boolean algebras on 2 elements).  Then $L$ with
  $r=0$ and $w = 1 \textup{ or } 2$ is a Fomin lattice but is not
  positive and has no $\zerohat$.  See an example in fig.~\ref{fig:twos}.
\begin{figure}[htp]
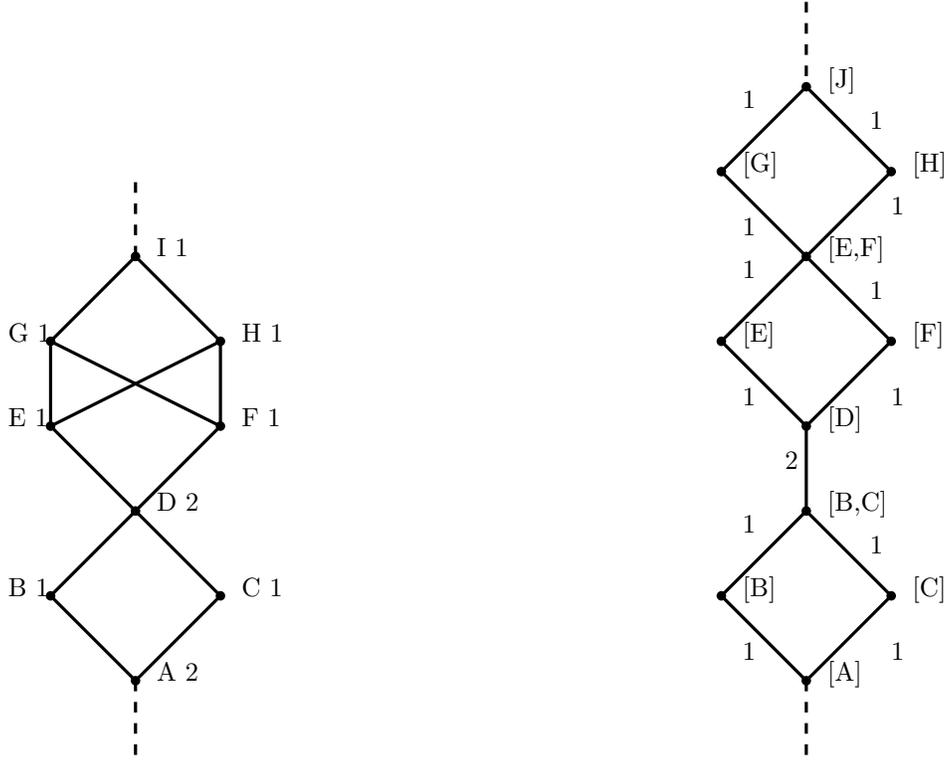

\centering
\begin{subfigure}[b]{0.45\textwidth}
  \centering
  \includegraphics[page=\ipeFigTwosPoints,scale=1]{k-row-figs.pdf}
  \caption{A section of the poset of points, with the weights of the points}
\end{subfigure}
\hspace{0.05\textwidth}
\begin{subfigure}[b]{0.45\textwidth}
  \centering
  \includegraphics[page=\ipeFigTwosLattice,scale=1]{k-row-figs.pdf}
  \caption{A section of the lattice, with the weights of the
    coverings, labeled with the generators of the ideals}
\end{subfigure}
\caption{Example of a Fomin lattice of type (\ref{i:previous:twos})}
\label{fig:twos}
\end{figure}
\item \label{i:previous:cartesian} \cite{Fom1994a}*{Lem.~2.2.3}
  Any cartesian product of a finite number of Fomin lattices
  that all have the same value set $V$,
  with the weight of a covering in the product lattice equal to the weight
within the factor from which the covering is derived and the
differential degree equal to the sum of the differential degrees of the
factors, is a Fomin lattice.
The product lattice has $\zerohat$ if all factors have $\zerohat$ and
is positive if all factors are positive.
\item \label{i:previous:multiply} Given a Fomin lattice,
  the same lattice but multiplying the differential degree and
  weighting by the same constant gives a Fomin lattice.
  The resulting lattice is positive if the original lattice is
  positive and  the constant is a positive integer.
\end{enumerate}

\paragraph{Previous classifications} \label{para:prev-classification}
The previous classifications of Fomin lattices with $\zerohat$ are:
\begin{enumerate}
\item \cite{Stan1988a}*{Prop.~5.5}
  If a Fomin lattice with $\zerohat$ is distributive
  and has $w=1$, then it is isomorphic to $\mbbY^{\times r}$.%
\footnote{We use $\bullet^{\times n}$ to denote the $n$-fold cartesian
exponential to avoid ambiguity with other uses of superscripts.}
\item \cite{Byrn2012a}*{Th.~1.3}
If a Fomin lattice with $\zerohat$ has $r=1$ and $w=1$, then it is
either $\mbbY$ or $\mbbYF_1$.
\end{enumerate}
In particular, there are no previous classifications for the weighted
case, when $w$ is not identically $1$.

\paragraph{Cartesian product} \label{para:prev-cartesian}
Para.~\ref{para:known} item~(\ref{i:previous:cartesian}) above shows that
cartesian product preserves Fomin
lattices.  Surprisingly, the converse is also true.

\begin{theorem} \label{th:cartesian}
If $L$ is a Fomin lattice with differential degree $r$ and weighting
$w$,
and $L$ is the cartesian product of two lattices
$L = L_1 \times L_2$, then
\begin{enumerate}
\item $L_1$ is a Fomin lattice with some differential degree $r_1$ and
  weighting $w_1$.
\item $L_2$ is a Fomin lattice with some differential degree $r_2$ and
  weighting $w_2$.
\item $r = r_1 + r_2$.
\item For any $x \lessdot y$ in $L_1$ and $z$ in $L_2$,
  $w((x,z) \lessdot (y,z)) = w_1(x \lessdot y)$.
\item For any $w$ in $L_1$ and $x \lessdot y$ in $L_2$,
  $w((v,x) \lessdot (v,y)) = w_2(x \lessdot y)$.
\end{enumerate}
\end{theorem}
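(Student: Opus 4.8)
The plan is to push each hypothesis down from $L$ to the two factors in turn --- first the lattice structure, then the factor weightings, then the differential identity --- reading off the numerical relations from (\ref{eq:DP-differential}). Recall that a Fomin lattice is a locally-finite modular lattice with finite covers, carrying a projective-constant weighting that satisfies (\ref{eq:DP-differential}) for a fixed differential degree, and that a modular lattice is automatically unique-cover-modular, since two distinct elements covering (resp.\ covered by) a common element have their join (resp.\ meet) as the unique common upper (resp.\ lower) cover. In $L = L_1 \times L_2$ meets and joins are computed coordinatewise, and every covering of $L$ has exactly one of the forms $(x,z)\lessdot(y,z)$ with $x\lessdot y$ in $L_1$ or $(v,x)\lessdot(v,y)$ with $x\lessdot y$ in $L_2$; so each covering ``comes from'' a unique factor, and conclusions~(4) and~(5) amount to showing that the weight of such a covering depends only on the underlying factor-covering.

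First I would transfer the lattice structure. Fix $c \in L_2$. Then $L_1 \times \{c\}$ is a sublattice of $L$ isomorphic to $L_1$, and sublattices of modular lattices are modular, so $L_1$ is modular; symmetrically $L_2$ is modular. Each interval $[a,b]$ of $L_1$ is isomorphic to $[(a,c),(b,c)]$ in $L$, which is finite, so $L_1$ is locally-finite; and the covers of $(x,c)$ in $L$ that move the first coordinate are in bijection with the covers of $x$ in $L_1$, so $L_1$ has finite covers. Hence $L_1$ and $L_2$ are locally-finite modular lattices with finite covers, in particular unique-cover-modular.

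Next, the weightings. If $x\lessdot y$ in $L_1$ and $z\lessdot z'$ in $L_2$, then $(x,z')$ and $(y,z)$ form a ``diamond'' in $L$: their meet is $(x,z)$, covered by both, and their join is $(y,z')$, covering both. So Definition~\ref{def:projective-constant} applied to $w$ gives $w((x,z)\lessdot(y,z)) = w((x,z')\lessdot(y,z'))$, and symmetrically whenever $z$ and $z'$ form a covering pair of $L_2$. Since the Hasse diagram of the locally-finite lattice $L_2$ is connected --- any two elements are joined through their meet by saturated chains inside finite intervals --- chaining these equalities shows $w((x,z)\lessdot(y,z))$ is independent of $z$; call this common value $w_1(x\lessdot y)$, and define $w_2$ dually. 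Conclusions~(4) and~(5) are now exactly these definitions. For projective-constancy of $w_1$: by property~\ref{i:prop-mod-lattice}, $w$ is constant on projectivity classes of $L$; if two coverings of $L_1$ are joined by a chain of transpositions of prime quotients, then, fixing any $z$ and taking products with $z$ (which carries each transposition in $L_1$ to one in $L$ coordinatewise), the corresponding coverings of $L$ are projective, so $w$ agrees on them; hence $w_1$ is constant on projectivity classes of $L_1$, in particular projective-constant, and likewise $w_2$.

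Finally, the differential identity. Write $D_i$ for the left-hand side of (\ref{eq:DP-differential}) computed in $L_i$ with weighting $w_i$. Evaluating (\ref{eq:DP-differential}) for $L$ at the vertex $(x,z)$ and splitting each of the two cover-sums according to which coordinate changes gives $D_1(x) + D_2(z) = r$ for all $x \in L_1$ and $z \in L_2$. Holding $z$ fixed shows $D_1$ is a constant $r_1$; holding $x$ fixed shows $D_2$ is a constant $r_2$; and $r_1 + r_2 = r$. Thus $L_1$ --- a locally-finite modular lattice with finite covers whose projective-constant weighting $w_1$ satisfies (\ref{eq:DP-differential}) with degree $r_1$ --- is a Fomin lattice, and likewise $L_2$, giving all five conclusions. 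The one step that is not bookkeeping is the well-definedness of $w_1$ and $w_2$: it rests on the diamond observation together with the connectedness of the Hasse diagram of the complementary factor; everything else is coordinatewise computation with (\ref{eq:DP-differential}).
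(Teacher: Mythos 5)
Your proposal is correct and follows essentially the same route as the paper's proof: well-definedness of $w_1,w_2$ via projective-constancy on product ``diamonds'' plus connectivity of the complementary factor through finite chains of covers, and then splitting the differential sum for $(x,z)$ coordinatewise so that holding one coordinate fixed isolates the constants $r_1,r_2$ with $r_1+r_2=r$. You spell out the transfer of the structural lattice properties and the projective-constancy of $w_1,w_2$, which the paper leaves as the ``straightforward'' final step, but the argument is the same.
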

\begin{proof}
First we show that for any $x \lessdot y$ in $L_1$ and
any $z_1,z_2 \in L_2$,
\begin{align}
w((x,z_1)\lessdot(y,z_1)) & = w((x,z_2)\lessdot(y,z_2)). \label{eq:left-const}
\end{align}
Because of property~\ref{i:prop-umc},
$w$ is constant on cover-projective pairs of elements of $L$,
so equation (\ref{eq:left-const}) holds if $z_1 \lessdot z_2$.
Since any two elements $z_1, z_2 \in L_2$ can be connected by a finite
chain of upward and downward covers, equation (\ref{eq:left-const}) holds for
any $z_1$ and $z_2$.
Thus the $w_1$ in conclusion (4) exists.

Similarly, for any $w_1,w_2 \in L_1$ and any $x \lessdot y$ in $L_2$
\begin{align}
w((w_1,x)\lessdot(w_1,y)) & = w((w_2,x)\lessdot(w_2,y)). \label{eq:right-const}
\end{align}
Thus the $w_2$ in conclusion (5) exists.

We now show that $r_1$ and $r_2$ exist, that is, that $L_1$ is a Fomin
lattice with differential degree $r_1$ and weighting $w_1$ and
similarly for $L_2$.
Consider any $x \in L_1$ and any $z_1,z_2 \in L_2$.
Equation~(\ref{eq:DP-differential}) for $(x, z_1)$ is:
\begin{align}
  \sum_{(p,q)\mvert (p,q)\lessdot(x,z_1)} w((p,q)\lessdot(x,z_1)) + r
  & = \sum_{(t,u)\mvert (x,z_1)\lessdot(t,u)} w((x,z_1)\lessdot(t,u)) \notag \\
  \sum_{p \mvert p \lessdot x} w((p,z_1)\lessdot(x,z_1)) +
  \sum_{q \mvert q \lessdot z_1} w((x,q)\lessdot(x,z_1)) + r
  & = \sum_{t \mvert x \lessdot t} w((x,z_1) \lessdot (t,z_1)) +
      \sum_{u \mvert z_1 \lessdot u} w((x,z_1) \lessdot (x,u)) \notag \\
  \sum_{p \mvert p \lessdot x} w_1(p \lessdot x) +
  \sum_{q \mvert q \lessdot z_1} w_2(q \lessdot z_1) + r
  & = \sum_{t \mvert x \lessdot t} w_1(x \lessdot t) +
      \sum_{u \mvert z_1 \lessdot u} w_2(z_1 \lessdot u) \label{eq:prod-z1} \\
\interject{Similarly, equation~(\ref{eq:DP-differential}) for $(x, z_2)$ is:}
  \sum_{p \mvert p \lessdot x} w_1(p \lessdot x) +
  \sum_{q \mvert q \lessdot z_2} w_2(q \lessdot z_2) + r
  & = \sum_{t \mvert x \lessdot t} w_1(x \lessdot t) +
      \sum_{u \mvert z_2 \lessdot u} w_2(z_2 \lessdot u) \label{eq:prod-z2} \\
\interject{Subtracting (\ref{eq:prod-z2}) from (\ref{eq:prod-z1}) gives}
  \sum_{q \mvert q \lessdot z_1} w_2(q \lessdot z_1) -
  \sum_{q \mvert q \lessdot z_2} w_2(q \lessdot z_2)
  & = \sum_{u \mvert z_1 \lessdot u} w_2(z_1 \lessdot u) -
      \sum_{u \mvert z_2 \lessdot u} w_2(z_2 \lessdot u) \notag \\
  \sum_{u \mvert z_1 \lessdot u} w_2(z_1 \lessdot u) -
  \sum_{q \mvert q \lessdot z_1} w_2(q \lessdot z_1)
  & = \sum_{u \mvert z_2 \lessdot u} w_2(z_2 \lessdot u) -
      \sum_{q \mvert q \lessdot z_2} w_2(q \lessdot z_2) \notag
\end{align}
This shows that
$\sum_{u \mvert z \lessdot u} w_2(z \lessdot u) -
\sum_{q \mvert q \lessdot z} w_2(q \lessdot z)$ is independent of $z
\in L_1$.
We define $r_2$ as this value.

Similarly, we show that $\sum_{t \mvert z \lessdot t} w_1(z \lessdot t) -
\sum_{q \mvert q \lessdot z} w_1(q \lessdot z)$ is independent of
$z \in L_1$, and we define $r_1$ as this value.

From these facts it is straightforward to prove the conclusions of
the theorem.
\end{proof}

\begin{lemma} \label{lem:cartesian-ass}
If $L$ is a Fomin lattice with differential degree $r$ and weighting
$w$, then
\begin{enumerate}
\item If $L$ has a minimum element $\zerohat_L$, then $L_1$ has a minimum
  element $\zerohat_{L_1}$ and $L_2$ has a minimum
  element $\zerohat_{L_2}$.
\item If $L$ has a minimum element $\zerohat_L$ and is positive,
  and $L_1$ and $L_2$ are not the trivial lattice,
  then $L_1$ and $L_2$ are positive.
\item If $L$ is distributive, then $L_1$ and $L_2$ are distributive.
\end{enumerate}
\end{lemma}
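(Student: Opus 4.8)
The three parts are largely independent, so the plan is to handle them in turn: parts~(1) and~(3) are pure order theory, while part~(2) reduces to Lemma~\ref{lem:w-positive}. Throughout I retain the setup of Theorem~\ref{th:cartesian}, so that $L = L_1 \times L_2$ with the product order, and $L_1$, $L_2$ are nonempty since $L$ is a nonempty lattice.

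For part~(1), I would argue directly from the definition of the product order. Write the minimum element as $\zerohat_L = (a,b)$ with $a \in L_1$ and $b \in L_2$. For every $x \in L_1$ the pair $(x,b)$ belongs to $L$, and $(a,b) = \zerohat_L \le (x,b)$ forces $a \le x$; hence $a$ is the least element of $L_1$, and symmetrically (using the pairs $(a,y)$) $b$ is the least element of $L_2$. Set $\zerohat_{L_1} = a$ and $\zerohat_{L_2} = b$. For part~(3), I would use that a sublattice of a distributive lattice is distributive: fixing any $b \in L_2$, the subset $L_1 \times \{b\}$ is closed under the operations of $L$, since $(x,b) \vee (x',b) = (x \vee x', b)$ and $(x,b) \wedge (x',b) = (x \wedge x', b)$, so $x \mapsto (x,b)$ is a lattice isomorphism of $L_1$ onto a sublattice of the distributive lattice $L$; thus $L_1$ is distributive, and likewise $L_2$.

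For part~(2), I would invoke Theorem~\ref{th:cartesian} together with part~(1) and Lemma~\ref{lem:w-positive}. By Theorem~\ref{th:cartesian}, $L_1$ is a Fomin lattice with differential degree $r_1$ and weighting $w_1$, and every value of $w_1$ has the form $w((x,z) \lessdot (y,z))$ for some $z \in L_2$, hence is a positive integer because $L$ is positive; in particular $w_1$ takes values in $\mbbZ$. By part~(1) the lattice $L_1$ has a $\zerohat$, and by hypothesis it is not the trivial lattice, so every hypothesis of Lemma~\ref{lem:w-positive} is met and $r_1 > 0$. Therefore $L_1$ is positive, and the same argument gives that $L_2$ is positive. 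I do not expect a genuine obstacle; the only point requiring a little care is confirming in part~(2) that $w_1$ inherits both integrality and positivity from $w$ through the coordinate identity of Theorem~\ref{th:cartesian}, and that the nontriviality hypothesis is exactly what is needed to rule out the degenerate case $r_1 = 0$.
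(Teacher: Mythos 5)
Your proposal is correct and follows essentially the same route as the paper: part~(2) is exactly the paper's argument (Theorem~\ref{th:cartesian} to obtain $(r_1,w_1)$ and $(r_2,w_2)$, inheritance of positive integer values from $w$, part~(1) for the $\zerohat$'s, and Lemma~\ref{lem:w-positive} with the nontriviality hypothesis to get $r_1, r_2 > 0$). For parts~(1) and~(3) the paper merely says ``straightforward lattice reasoning,'' and your explicit arguments (coordinates of $\zerohat_L$, and $L_1 \cong L_1 \times \{b\}$ as a sublattice of the distributive lattice $L$) are correct fillings-in of that reasoning.
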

\begin{proof} \disconnect
Regarding (1) and (3): These are straightforward lattice reasoning.

Regarding (2):
By the construction in th.~\ref{th:cartesian},
$L_1$ has some differential degree $r_1$ and weighting $w_1$ and
$L_2$ has differential degree $r_2$ and weighting $w_2$.;
Every value of $w_1$ and every value of $w_2$ are values of $w$ for
suitable arguments, so all values of $w_1$ and $w_2$ are elements of
the set of values $V$ of $L$.
Thus the set of values $V_1$ and
$V_2$ of $L_1$ and $L_2$ can be considered to be $\mbbZ$,
and all values of $w_1$ and $w_2$ are positive.
Since $L$ has a $\zerohat_L$, $L_1$ has a $\zerohat_{L_1}$ and
$L_2$ has a $\zerohat_{L_2}$.
By lem.~\ref{lem:w-positive}, both $r_1$ and $r_2$ are positive.
Thus $L_1$ and $L_2$ are positive.
\end{proof}

\begin{remark} \label{rem:cartesian} \disconnect
(1) A McLarnan map (def.~\forwardref{def:fomin}\forwardref{i:fomin-C})
on $L$ may not factor into McLarnan maps on $L_1$ and $L_2$ in any nice way.

(2) The trivial lattice is a Fomin lattice:  Its $w$ must be the empty
function and $r = 0$.  It is the identity of the cartesian
product of Fomin lattices.
By our definition, this is not a positive weighting.
Thus lem.~\ref{lem:cartesian-ass}(2) requires the hypothesis that
$L_1$ and $L_2$ are not the trivial lattice.%
\footnote{Though our
presentation of the theory might be made more uniform if we defined
the trivial Fomin lattice to be positive.}

(3) Lem.~\ref{lem:cartesian-ass}(2)  requires the hypothesis that
$\zerohat_L$ exists:
Consider $\mbbY$, the lattice of partitions, which is a positive
Fomin lattice
with $w = 1$ and $r=1$.  Consider $\mbbY^*$, the lattice dual of
$\mbbY$, which is a Fomin lattice with $w =1$ and $r=-1$ and so is not
positive.
Define $2\mbbY$ to be the Fomin lattice $\mbbY$ with $w = 2$ and
$r = 2$, which is positive.
Then $L = 2\mbbY \times \mbbY^*$ is a Fomin lattice with
$w = 1 \textup{ or } 2$ and $r = 1$, which is positive.
Thus $L = 2\mbbY \times \mbbY^*$ is a positive Fomin lattice that
is a product of Fomin lattices, one of which is not positive.
\end{remark}

\paragraph{Cartesian product of distributive Fomin lattices} \label{para:prev-cartesian-dist}
The results in this paragraph are only relevant under the assumption
that $L$ is distributive and that its poset of join-irreducibles is
unique-cover-modular.  Hence, they are not significant until
para.~\ref{para:unique-cover-modular}.  However, since these results are not new,
we place them in this section.

\begin{lemma} \label{lem:cartesian-union}
Let $L$ be a finitary%
\footnote{``Finitary'' means that all principal ideals are finite.
For lattices, it is equivalent to $\zerohat$ exists and the lattice is
locally finite.}
distributive lattice with poset of
join-irreducible elements $P = J(L)$.
Then $L$ is the cartesian product of two non-trivial lattices
$L = L_1 \times L_2$ iff
$P$ is the disjoint union of two non-empty posets
$P = P_1 \sqcup P_2$,
in which case $L_1 = \Idealf(P_1)$ and $L_2 = \Idealf(P_2)$.%
\footnote{We define $\Idealf(P)$, where $P$ is a poset, to be the
lattice of finite lower order ideals of $P$.}
\end{lemma}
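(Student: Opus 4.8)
The plan is to reduce everything to Birkhoff's representation theorem for finitary distributive lattices, already recalled in property~\ref{i:prop-dist}: such an $L$ is canonically isomorphic to $\Idealf(J(L))$. Then both implications become a comparison of the operators $J(\,\cdot\,)$ and $\Idealf(\,\cdot\,)$ on the two sides, plus the routine bookkeeping that the auxiliary structures are again finitary and distributive.

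For ``$P = P_1 \sqcup P_2 \Rightarrow L = L_1 \times L_2$'': since $P_1$ and $P_2$ carry their induced orders and no element of $P_1$ is comparable with an element of $P_2$, a subset $I \subseteq P$ is a finite order ideal iff $I \cap P_1$ and $I \cap P_2$ are finite order ideals of $P_1$ and $P_2$ respectively. Hence $I \mapsto (I \cap P_1,\, I \cap P_2)$ is a lattice isomorphism from $\Idealf(P)$ onto $\Idealf(P_1) \times \Idealf(P_2)$. Composing with the Birkhoff isomorphism $L \cong \Idealf(P)$ gives $L \cong L_1 \times L_2$ with $L_i = \Idealf(P_i)$, and each $L_i$ is non-trivial because $P_i \neq \zeroslash$ forces $\Idealf(P_i)$ to have at least two elements.

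For the converse, suppose $L = L_1 \times L_2$ with both factors non-trivial. Since $L$ is finitary it has $\zerohat_L$, and writing $\zerohat_L = (a,b)$ forces $a = \zerohat_{L_1}$, $b = \zerohat_{L_2}$, so each factor has a bottom element. Each $L_i$ embeds as a sublattice of $L$ via $x \mapsto (x, \zerohat_{L_2})$ (resp.\ $x \mapsto (\zerohat_{L_1}, x)$), so it is distributive, and it is finitary because the principal ideal of $x$ in $L_i$ maps onto $\{(y,\zerohat_{L_2}) : y \le x\}$, a subset of the finite principal ideal of $(x,\zerohat_{L_2})$ in $L$. The key step is the description of join-irreducibles: $(x_1,x_2)\in L$ is join-irreducible iff exactly one coordinate is join-irreducible and the other is $\zerohat$. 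Indeed $(x_1,x_2) = (x_1,\zerohat_{L_2}) \vee (\zerohat_{L_1},x_2)$ is a non-trivial join unless one coordinate is $\zerohat$, and $(x_1,\zerohat_{L_2})$ is join-irreducible iff $x_1$ is. Therefore the map sending $j \in J(L_1)$ to $(j,\zerohat_{L_2})$ and $j \in J(L_2)$ to $(\zerohat_{L_1},j)$ is an order isomorphism from $J(L_1) \sqcup J(L_2)$ onto $J(L)$: it is an order embedding on each piece, and no $(j,\zerohat_{L_2})$ is comparable with any $(\zerohat_{L_1},j')$ because a join-irreducible is never $\zerohat$. Thus $P = J(L)$ decomposes as $P_1 \sqcup P_2$ with $P_i \cong J(L_i)$, both non-empty since a non-trivial finitary distributive lattice has a join-irreducible (e.g.\ an atom). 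Finally $L_i \cong \Idealf(J(L_i)) = \Idealf(P_i)$ by Birkhoff, matching the claimed identification and agreeing with the first implication applied to $P = P_1 \sqcup P_2$.

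The only genuinely delicate point is the characterization of $J(L_1 \times L_2)$, and within it the need to know that $\zerohat$ exists in each factor so that the elements $(j,\zerohat)$ make sense --- precisely where finitariness of $L$ is used. Everything else is routine distributive-lattice bookkeeping, and the identity ``$L_1 = \Idealf(P_1)$'' is to be read under the canonical Birkhoff isomorphism.
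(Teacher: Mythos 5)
The paper states this lemma without proof, treating it as a standard consequence of Birkhoff's representation theorem (as cited at th.~\ref{th:differential-distributive}), and your argument is correct and follows exactly that standard route: finite ideals of an ordered disjoint union split as pairs of finite ideals, and $J(L_1\times L_2)$ consists precisely of the elements with one coordinate join-irreducible and the other $\zerohat$. The only step you leave implicit is that a non-empty $P_i$ makes $\Idealf(P_i)$ non-trivial because principal ideals of $P\subseteq L$ are finite, which is immediate from the finitariness of $L$.
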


\begin{lemma} \label{lem:ass-poset}
Let $P$ be a poset and $P = P_1 \sqcup P_2$.
Then
\begin{enumerate}
\item
If $P$ is cover-modular, then $P_1$ and $P_2$ are
cover-modular.  (def.~\ref{def:cover-modular}.)
\item
If $P$ is unique-cover-modular, then $P_1$ and $P_2$ are
unique-cover-modular.  (def.~\ref{def:unique-cover-modular}.)
\end{enumerate}
\end{lemma}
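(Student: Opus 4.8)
The plan is to exploit the fact that in a disjoint union of posets there are no comparabilities between the two parts, so that both the order relation and the covering relation of each $P_i$ are merely the restrictions of those of $P$, and moreover each $P_i$ is \emph{saturated} in $P$, meaning that anything comparable to an element of $P_i$ again lies in $P_i$. Concretely, I would first record three elementary observations. (a) If $a \in P_1$ and $b \in P$ with $a \le b$ or $b \le a$, then $b \in P_1$ (and dually with $P_2$), since a comparability witnesses that $a$ and $b$ lie in the same disjoint summand. (b) Consequently, for $x, y \in P_i$ the closed interval $[x,y]$ computed in $P_i$ coincides with the one computed in $P$; hence $x \lessdot y$ holds in $P_i$ if and only if it holds in $P$. (c) If $x, y \in P_i$ both cover, or are both covered by, some element of $P$, then by (a) that element automatically lies in $P_i$, and likewise every common upper cover and every common lower cover (in $P$) of $x$ and $y$ lies in $P_i$.

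For part (1), suppose $P$ is cover-modular and fix $i \in \{1,2\}$. Let $x, y \in P_i$ both cover some $v \in P_i$. By (b) they cover $v$ in $P$ as well, so cover-modularity of $P$ yields $z \in P$ with $x \lessdot z$ and $y \lessdot z$ in $P$; by (c) we get $z \in P_i$, and by (b) these are coverings in $P_i$. The dual half, in which $x$ and $y$ are both covered by an element of $P_i$, is verbatim the same argument with the order reversed. Hence $P_i$ is cover-modular, and the same holds for $P_{3-i}$.

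For part (2), assume $P$ is unique-cover-modular and fix $i$. Suppose $x, y \in P_i$ with either both covering an element of $P_i$ or both covered by an element of $P_i$; by (b) the same hypothesis holds in $P$, so unique-cover-modularity of $P$ furnishes a unique $v^- \in P$ covered by both $x$ and $y$ and a unique $v^+ \in P$ covering both $x$ and $y$. By (c), $v^-, v^+ \in P_i$. It remains to check that they stay unique inside $P_i$: if $w \in P_i$ were covered by both $x$ and $y$ in $P_i$, then by (b) it would be covered by both in $P$, forcing $w = v^-$; symmetrically any common upper cover in $P_i$ must equal $v^+$. Thus $x$ and $y$ cover a unique element of $P_i$ and are covered by a unique element of $P_i$, so $P_i$ is unique-cover-modular.

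There is essentially no obstacle here; the one point meriting a moment's care is observation (b) — that covering relations are unchanged upon passing to a summand — and it is exactly this that requires $P = P_1 \sqcup P_2$ to be a genuine disjoint union rather than an arbitrary subposet decomposition.
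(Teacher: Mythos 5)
Your proof is correct, and since the paper states this lemma without proof (as a routine, known fact), your argument --- that in the coproduct $P = P_1 \sqcup P_2$ there are no comparabilities between summands, so intervals, coverings, and common covers all restrict to the summand containing the given elements --- is exactly the intended reasoning. Your closing remark correctly identifies the one point that matters, namely that $\sqcup$ here is the order-disjoint union (the coproduct of posets, as the paper's footnote specifies), not an arbitrary partition of the underlying set.
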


\begin{remark} \label{rem:fuc}
Suppose $L$ is a distributive lattice, $L$ is finitary, and $L$ has
finite upward covers.  It is not necessary that $P$ has finite upward
covers, as is shown by the counterexample shown in fig.~\ref{fig:Fuc}.

\begin{figure}[htp]
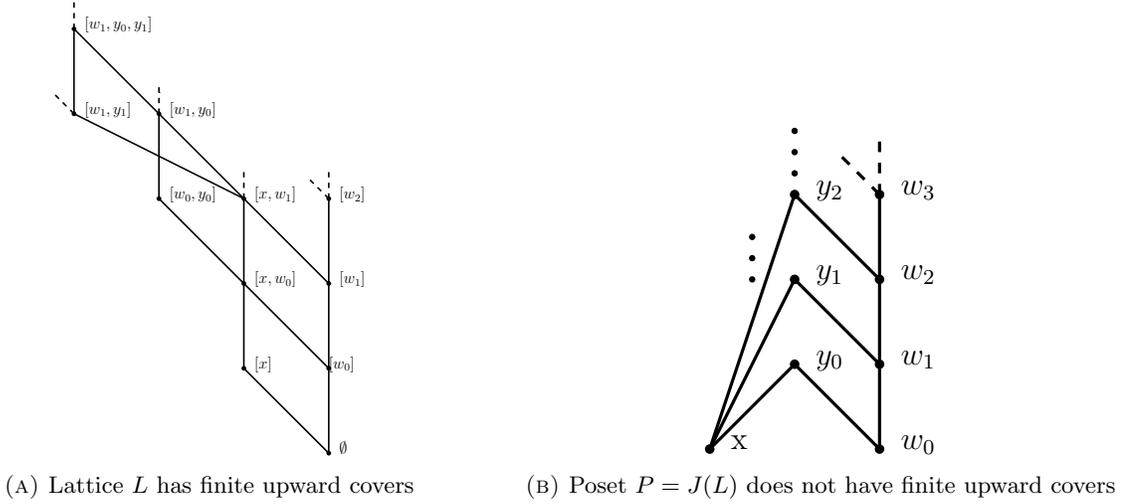

\centering
\begin{subfigure}[b]{0.45\textwidth}
  \centering
  \includegraphics[page=\ipeFigFucL,scale=0.5]{k-row-figs.pdf}
  \caption{Lattice $L$ has finite upward covers}
  \label{fig:FucL}
\end{subfigure}
\begin{subfigure}[b]{0.45\textwidth}
  \centering
  \includegraphics[page=\ipeFigFucJ,scale=1]{k-row-figs.pdf}
  \caption{Poset $P = J(L)$ does not have finite upward covers}
  \label{fig:FucJ}
\end{subfigure}
\caption{Counterexample in rem.~\ref{rem:fuc}}
\label{fig:Fuc}
\end{figure}

Define $P$ to be the poset with elements
$x, y_0, y_1, y_2, \ldots, w_0, w_1, w_2, \ldots$ and the coverings:
\begin{itemize}
\item $w_0 \lessdot w_1 \lessdot w_2 \lessdot \cdots$,
\item $x \lessdot y_i$ \textup{for all} $i \geq 0$, and
\item $w_i \lessdot y_i$ \textup{for all} $i \geq 0$.
\end{itemize}

Define $L$ to be the lattice $L = \Idealf(P)$ of all finite ideals of $P$.  By
Birkhoff's Representation Theorem%
\cite{Birk1967a}*{\S III.3 Th.~3}\linebreak[0]\cite{Stan2012a}*{Prop.~3.4.3}
$L$ is distributive, $L$ is finitary, and $P$ is poset-isomorphic to
the set of join-irreducible elements of $L$.  The
elements of $L$ are
(where $[a, b, c, \ldots]$ is the ideal of $P$ generated by the elements
$a, b, c, \ldots$):

\begin{itemize}
\item $[w_i] = \{w_0, w_1, w_2, \ldots, w_i\}$ \textup{for all} $i \geq -1$
  (generating $\zeroslash$ when $i = -1$),
\item $[x, w_i] = \{x, w_0, w_1, w_2, \ldots, w_i\}$ \textup{for all}
 $i \geq -1$ (generating $\{x\}$ when $i = -1$), and
\item $[w_i, y_{n_1}, y_{n_2}, y_{n_3}, \ldots y_{n_j}] =
\{x, w_0, w_1, w_2, \ldots, w_i, y_{n_1}, y_{n_2}, y_{n_3}, \ldots y_{n_j}\}$
\textup{for all} $i \geq 0$,
\textup{all} $j \geq 1$, and
\textup{all} $0 \leq n_1 < n_2 < n_3 < \cdots < n_j \leq i$
(some non-empty finite set of $y_\bullet$ added to a compatible ideal
of the preceding type),
\end{itemize}

We see that each of the above types of elements of $L$ (ideals of $P$)
has only a finite number of ideals that cover it in $L$, that is,
contain one more element than it does.  So $L$ has finite upward covers.
We see that $x$ has all of the $y_\bullet$ as upward covers in $P$.
So $P$ does not have finite upward covers.
\end{remark}

\section{Fomin lattice essentials} \label{sec:fomin}

The celebrated Robinson--Schensted
algorithm\cite{Rob1938a}\linebreak[0]\cite{Schen1961a}\linebreak[0]%
\cite{Fom1994a}\linebreak[0]\cite{Fom1995a}
and its variants that have attracted substantial attention
can be constructed\cite{Wor2023c}*{sec.~2} from the following
machinery.%
\footnote{Though since a Robinson--Schensted variant can be
constructed from any DGG, it is not clear why DGGs that are not Fomin
lattices have not been as productive.  See
item~(\forwardref{i:why-fomin-lattices}) in sec.~\ref{sec:future}.}
The core of the machinery is a ``Fomin lattice'' which satisfies
properties~\ref{i:prop-DGG}--\ref{i:prop-mod-lattice} of
para.~\ref{para:properties}, summarized here:

\begin{definition} \label{def:fomin}
\disconnect
\begin{enumerate}
\itemlabel{(A)}{i:fomin-A} A \emph{Fomin lattice},%
\footnote{We call it a Fomin lattice because all of the required
properties are explicitly or implicitly given in \cite{Fom1994a} and
\cite{Fom1995a}.}
which comprises:
\begin{enumerate}
\itemlabel{(1)}{i:fomin-A1} a lattice $L$ that
\begin{enumerate}
\itemlabel{(a)}{i:fomin-A1a} is modular,
\itemlabel{(b)}{i:fomin-A1b} is locally finite, and
\itemlabel{(c)}{i:fomin-A1c} has finite upward and downward covers
  for each element;
\end{enumerate}
\itemlabel{(2)}{i:fomin-A2} a \emph{differential degree} $r$ which
  is a member of a set of \emph{values} $V$;
\itemlabel{(3)}{i:fomin-A3} a \emph{weighting}
  $w(\bullet \lessdot \bullet)$, which is a
  function from \emph{coverings}
  (or \emph{prime quotients}), pairs $x \lessdot y$ of elements of $L$, to
  $V$, for which $L$ is a \emph{weighted-differential
  lattice}\cite{Stan1990a}*{sec.~3}\cite{Fom1994a}*{sec~2.2}:
\begin{equation}
  \sum_{y\mvert y \lessdot x} w(y \lessdot x) + r =
  \sum_{z\mvert x \lessdot z} w(x \lessdot z)
  \textup{\qquad\qquad for all } x \in L
\label{eq:differential}
\end{equation}
(which is often described as \emph{$r$-differential}); and
\itemlabel{(4)}{i:fomin-A4} for which $w$ is projective-constant,
  that is, constant on projectivity classes\cite{Birk1967a}*{\S I.7 Def.}
  of coverings, or equivalently, on coverings which are
  \emph{cover-projective}:
  if $x \vee y$ covers $x$ and $y$ and $x \wedge y$ is covered by $x$ and $y$,
  then $w(x \wedge y \lessdot x) = w(y \lessdot x \vee y)$
  and $w(x \wedge y \lessdot y) = w(x \lessdot x \vee y)$.
\end{enumerate}
\itemlabel{(B)}{i:fomin-B} The lattice $L$ has a minimum element, $\zerohat$.
\itemlabel{(C)}{i:fomin-C} A \emph{McLarnan map}%
\footnote{We name the correspondence for McLarnan, who seems to have
been the first to emphasize\cite{McLar1986a}*{Sec.~1 p.~22} the arbitrariness
of the choice of the correspondence and thus that there is no
``natural'' correspondence.  The set of possible correspondences for a
particular Fomin lattice is
usually uncountable.  We choose ``map'' because of its
consonance with ``McLarnan''.}
or \emph{$R$-correspondence} $M$, which
  is a realization of the weighted-differential condition:
  \[ \textup{for all $x \in L$,
  $M_x$ is a bijection between
  $\{y \in L\mvert y \lessdot x\} \sqcup \{1,2,3,\ldots r\}$ and
  $\{z \in L\mvert x \lessdot z\}$}.
	\]
\itemlabel{(D)}{i:fomin-D}
Depending on the degree of abstraction of the discussion, the
requirements for the set of values $V$ varies.
The minimum requirement is that $V$ be a module over a unitial ring of
\emph{scalars} $S$.  Usually we will be unconcerned with the details of
$S$ other than it necessarily contains a homomorphic image of $\mbbZ$
and thus we can multiply elements of $V$ by integer coefficients.
If $V = \mbbZ$ (which implies $\mbbZ$ is \emph{isomorphically}
embedded in $S$), $r > 0$, and all of the values of $w$ are positive,
then the
weighting is defined to be \emph{positive}, which is required to
construct a Robinson--Schensted algorithm.
\end{enumerate}
\end{definition}

\begin{lemma} \label{lem:linear-combos}
If a lattice $L$ forms a Fomin lattice with two pairs of
differential degrees and
weightings $(r, w)$ and $(r^\prime, w^\prime)$, then $L$ forms a Fomin
lattice with any linear combination of the two:
the differential degree
$r^{\prime\prime} = \alpha r + \beta r^\prime$ and weighting
$w^{\prime\prime}(x \lessdot y) = \alpha w(x \lessdot y) + \beta
w^\prime(x \lessdot y)$ for any $\alpha, \beta \in S$.  Thus, the set of
pairs of degrees and weightings for $L$ forms a
subspace of an infinite-dimensional module over $S$,
the subspace of solutions of the set of
linear equations~(\ref{eq:differential}).

If $(r,w)$ and $(r^\prime,w^\prime)$ are positive,
$\alpha \geq 0$, $\beta \geq 0$, and not both $\alpha$ and $\beta$ are 0,
then $(r^{\prime\prime}, w^{\prime\prime})$ is positive.
\end{lemma}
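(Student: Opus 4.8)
The plan is to split the defining data of a Fomin lattice into the part that depends only on the underlying lattice $L$ --- modularity, local finiteness, and finiteness of upward and downward covers (def.~\ref{def:fomin}\ref{i:fomin-A1}) --- and the part that depends on the weighting data $(r,w)$, namely the $r$-differential equation~\eqref{eq:differential} and projective-constancy (def.~\ref{def:fomin}\ref{i:fomin-A4}). Replacing the weighting leaves the first part untouched, so it holds for $(r'',w'')$ for free, and everything reduces to checking that the second part is preserved under $S$-linear combinations of $(r,w)$; this is plausible at once because both conditions are systems of \emph{homogeneous linear} equations in the unknowns $(r,w)$.

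First I would verify~\eqref{eq:differential} for $(r'',w'')$ directly. Fixing $x\in L$, finiteness of upward and downward covers lets me split and recombine the finite sums freely, so $\sum_{y\mvert y\lessdot x} w''(y\lessdot x) + r'' = \alpha(\sum_{y\mvert y\lessdot x} w(y\lessdot x)+r) + \beta(\sum_{y\mvert y\lessdot x} w'(y\lessdot x)+r')$; applying~\eqref{eq:differential} for $(r,w)$ and for $(r',w')$ turns the two parenthesized sums into $\sum_{z\mvert x\lessdot z} w(x\lessdot z)$ and $\sum_{z\mvert x\lessdot z} w'(x\lessdot z)$, and recombining gives $\sum_{z\mvert x\lessdot z} w''(x\lessdot z)$, as required. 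For projective-constancy, if $x\wedge y\lessdot x$ and $x\lessdot x\vee y$ are cover-projective (and similarly for the companion pair), the defining equalities hold for $w$ and for $w'$ by hypothesis, hence the same $S$-linear combination of them holds for $w''$; so $w''$ is projective-constant. This establishes that $(r'',w'')$ makes $L$ a Fomin lattice.

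The ``subspace'' assertion is then a restatement: the pairs $(r,w)$ range over the $S$-module $V\oplus V^{\mathcal{Q}}$, where $\mathcal{Q}$ is the (in general infinite) set of coverings of $L$ and $V$ carries its $S$-module structure pointwise on $V^{\mathcal{Q}}$; the equations~\eqref{eq:differential}, indexed by $x\in L$, together with the projective-constancy equations, indexed by cover-projective pairs, are homogeneous and $S$-linear, so their common solution set is a sub-$S$-module, which is exactly what the computation above verifies. (The statement names only~\eqref{eq:differential}, but the projective-constancy equations are equally homogeneous and linear and are implicitly included.) For the positivity claim I would read $\alpha,\beta$ as nonnegative integers --- the pertinent case, since positivity of $(r,w)$ or $(r',w')$ forces $V=\mbbZ$ and one wants $(r'',w'')$ to land in $\mbbZ$ again --- whereupon $V''=\mbbZ$, each $w''(x\lessdot y)=\alpha\,w(x\lessdot y)+\beta\,w'(x\lessdot y)$ is a nonnegative-integer combination of positive integers and so is $\geq\alpha+\beta\geq 1>0$ whenever $(\alpha,\beta)\neq(0,0)$, and likewise $r''=\alpha r+\beta r'>0$; hence $(r'',w'')$ is positive.

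I do not expect a substantive obstacle: the whole argument is linear algebra over $S$. The two points meriting a little care are remembering that projective-constancy, and not only~\eqref{eq:differential}, has to be checked (and that it too is linear and homogeneous), and pinning down the intended sense of ``positive'' scalars --- nonnegative integers --- so that the combined weighting again takes values in $\mbbZ$.
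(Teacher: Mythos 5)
Your proof is correct, and it is exactly the routine linearity argument the paper has in mind: the paper states lem.~\ref{lem:linear-combos} without proof, treating it as immediate from the fact that the lattice-only conditions \ref{i:fomin-A}\ref{i:fomin-A1} do not involve $(r,w)$ while~(\ref{eq:differential}) and projective-constancy \ref{i:fomin-A}\ref{i:fomin-A4} are homogeneous $S$-linear conditions on $(r,w)$, which is precisely what you verify. Your two points of care --- that projective-constancy must also be checked (and is likewise linear), and that for the positivity claim $\alpha,\beta$ are to be read as nonnegative integers so that $w''$ again takes values in $\mbbZ$ --- are both apt and consistent with the paper's conventions.
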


\begin{lemma}
Because $L$ is modular and $w$ satisfies the projectivity
condition \ref{i:fomin-A}\ref{i:fomin-A4},
the relevant version of the Jordan--H\"older
theorem\cite{Birk1967a}*{\S III.7 Th.~9 Cor.}\linebreak[0]%
\cite{Nat2017a}*{Th.~9.7} shows that $L$ is graded and
that the multiset of the weights of the coverings composing a
path from some $x$ to some $y \geq x$ in $L$ depends only on $x$ and
$y$ and not on the particular path.
\end{lemma}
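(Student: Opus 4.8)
The plan is to obtain the whole statement from the Jordan--Hölder theorem for modular lattices in the cited form, so the real content lies in checking that its hypotheses are in force and that its notion of projectivity is the one used in property~\ref{i:fomin-A}\ref{i:fomin-A4}.

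First I would fix $x \le y$ in $L$ and work inside the interval $[x,y]$. This interval is again a modular lattice and is finite by local finiteness (\ref{i:fomin-A}\ref{i:fomin-A1}\ref{i:fomin-A1b}), so every path of coverings from $x$ to $y$ — i.e.\ every saturated chain $x = c_0 \lessdot c_1 \lessdot \cdots \lessdot c_m = y$ — is one of the finitely many maximal chains of $[x,y]$. The Jordan--Hölder theorem for modular lattices (\cite{Birk1967a}*{\S III.7 Th.~9 Cor.}, \cite{Nat2017a}*{Th.~9.7}) then yields: (i) all maximal chains of $[x,y]$ have one common length $n$; and (ii) for any two such chains $x = a_0 \lessdot \cdots \lessdot a_n = y$ and $x = b_0 \lessdot \cdots \lessdot b_n = y$ there is a permutation $\sigma$ of $\{1,\dots,n\}$ making $a_i/a_{i-1}$ projective to $b_{\sigma(i)}/b_{\sigma(i)-1}$ for each $i$.

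From (i), $L$ is graded: by Jordan--Hölder every interval $[u,v]$ has a well-defined finite length $\ell([u,v])$, and in a modular lattice these lengths satisfy the diamond relations $\ell([u \wedge v, u]) = \ell([v, u \vee v])$ and $\ell([u \wedge v, u \vee v]) = \ell([u \wedge v, u]) + \ell([u \wedge v, v])$, from which one assembles a rank function $\rho$ on $L$, unique once a value is pinned down (take $\rho(\zerohat) = 0$ if $\zerohat$ exists, and otherwise fix $\rho$ at any chosen base point), with $\rho(v) = \rho(u)+1$ whenever $u \lessdot v$. From (ii) and the projective-constancy of $w$ (\ref{i:fomin-A}\ref{i:fomin-A4}), each corresponding pair of prime quotients carries the same weight, $w(a_{i-1} \lessdot a_i) = w(b_{\sigma(i)-1} \lessdot b_{\sigma(i)})$; hence the multisets $\{\!\{\, w(a_{i-1} \lessdot a_i)\,\}\!\}$ and $\{\!\{\, w(b_{j-1} \lessdot b_j)\,\}\!\}$ coincide, and since the two chains were arbitrary the multiset of covering weights along a path from $x$ to $y$ depends only on $x$ and $y$.

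The one delicate point — and the main obstacle — is to invoke exactly the right version of Jordan--Hölder: one needs the statement that supplies the labelled correspondence (ii) between prime quotients up to projectivity, not merely the equality of chain lengths. One must also note that the ``projectivity'' appearing there is the transitive--symmetric closure of the single transpose relation $[u \wedge v,\, u] \nearrow [v,\, u \vee v]$, one step of which (between prime quotients) is precisely a cover-projective pair in the sense of def.~\ref{def:projective-constant}; this is the equivalence already recorded in \ref{i:fomin-A}\ref{i:fomin-A4}, so once the correct form of the theorem is pinned down nothing further is required. Modularity is exactly the hypothesis that licenses Jordan--Hölder here (in a non-modular lattice such as the pentagon $N_5$, maximal chains between comparable elements can have different lengths), and projective-constancy of $w$ is what upgrades the quotient-correspondence to equality of weight multisets.
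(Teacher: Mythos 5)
Your argument is correct and is essentially the paper's own: the lemma is established there simply by invoking the cited Jordan--H\"older theorem for modular lattices, which is exactly what you do, with the routine details (finiteness of $[x,y]$ from local finiteness, the quotient correspondence up to projectivity, and projective-constancy of $w$ turning that correspondence into equality of weight multisets) filled in. Your remark that a single transpose of prime quotients is precisely a cover-projective pair in the sense of \ref{i:fomin-A}\ref{i:fomin-A4} is the right point of contact and matches the equivalence already recorded in the paper.
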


\begin{definition}
We define the \emph{number of colorings} of $x \in L$, denoted by
$c(x)$,
to be the product of the weights of the coverings composing an
arbitrary path from $\zerohat$ to $x$ in $L$.
\end{definition}

\begin{theorem}
Given this machinery, the general Robinson--Schensted insertion
algorithm can be constructed using Fomin's growth diagram
construction provided the weighting is positive.
The set of permutations of $\{1, \ldots, n\}$ whose elements are
$r$-colored can be bijectively mapped to the set
\begin{align*}
\{ (P, Q, c) \mvert
& \textup{there exists $x \in L$ of grade $n$ for which } \\
& P \textup{ is a path in $L$ from $\zerohat$ to $x$ and } \\
& Q \textup{ is a path in $L$ from $\zerohat$ to $x$ and } \\
& c \textup{ maps each covering $y \lessdot z$ in $P$ to an element of }
  \{1, 2, 3, \ldots, w(y \lessdot z)\} \}
\end{align*}
where the particular bijection depends on the McLarnan map
that is chosen.%
\footnote{We use ``path'' rather than the more usual ``saturated chain'' to
emphasize the importance of the coverings composed of adjacent elements of
the chain, which bear the weights.}
\end{theorem}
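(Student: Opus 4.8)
The plan is to run Fomin's growth-diagram construction with the local rules built from a fixed McLarnan map (def.~\ref{def:fomin}\ref{i:fomin-C}), following \cite{Fom1994a} and \cite{Wor2023c}*{sec.~2}, and to verify reversibility one cell at a time. Fix $n$ and encode an $r$-colored permutation of $\{1,\dots,n\}$ as an $n\times n$ array of unit cells with exactly one marked cell in each row and each column, each mark carrying a color from $\{1,\dots,r\}$. Consider the $(n+1)\times(n+1)$ array of corners of this array, and initialize every corner on the bottom edge and on the left edge with the label $\zerohat$.

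The core is a \emph{cell lemma}. Take a single unit cell whose SW, NW, SE corners carry lattice elements $\mu$, $\lambda$, $\nu$ with $\mu\le\lambda$, $\mu\le\nu$, and each of $\lambda$, $\nu$ equal to $\mu$ or covering $\mu$; the \emph{forward} local rule must output the NE corner $\rho$ (with $\lambda\le\rho$, $\nu\le\rho$, again a cover-or-equal on each side), together with whether the cell is marked and with which color. The claim is that, for each fixed $\mu$, this rule is a bijection: its domain consists of the pairs $(\lambda,\nu)$ with the cell unmarked, together with the colors $1,\dots,r$ with the cell marked (forcing $\lambda=\nu=\mu$); its codomain consists of the admissible triples $(\lambda,\nu,\rho)$. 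When $\lambda\neq\nu$ one sets $\rho=\lambda\vee\nu$, and modularity of $L$ makes $\lambda\vee\nu$ cover both $\lambda$ and $\nu$; the remaining cases ($\lambda=\nu$, marked or not) are resolved through the McLarnan bijection $M_\mu$ between $\{z\mvert\mu\lessdot z\}$ and $\{y\mvert y\lessdot\mu\}\sqcup\{1,\dots,r\}$, which is precisely the data whose bijectivity makes the cell rule reversible; the \emph{backward} local rule runs each case in reverse using $M_\mu^{-1}$. I would prove the cell lemma by a finite case analysis showing the forward and backward rules are mutually inverse.

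Granting the cell lemma, propagate the forward rule over the whole array along successive anti-diagonals, so that whenever a cell is processed its SW, NW, SE corners are already labelled. Consecutive corners along any row or column are then equal or in a covering relation, so each maximal horizontal strip of corners reads off a saturated chain; in particular the top edge, left to right, is a path $P$ from $\zerohat$ (its left-hand end, which lies on the all-$\zerohat$ left edge) to the NE label $x$, and the right edge, bottom to top, is a path $Q$ from $\zerohat$ to $x$. Since $L$ is graded (modularity together with the Jordan--H\"older lemma recalled above), ``grade'' is additive along these chains, and a telescoping count over the array --- each of the $n$ marked cells raises the grade of the NE corner by exactly one relative to the all-$\zerohat$ bottom and left edges --- shows $x$ has grade $n$. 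Simultaneously track the colors: the color of each marked cell, carried through the forward rule, is recorded on a uniquely determined covering of $P$, and the number of colors available for a covering $y\lessdot z$ of $P$ obtained this way is exactly $w(y\lessdot z)$, since $w$ records the multiplicity encoded in the McLarnan correspondences along that step. This produces the map $c$ and hence the forward assignment $\pi\mapsto(P,Q,c)$.

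For the inverse, start from a triple $(P,Q,c)$, label the top edge by $P$, the right edge by $Q$, the bottom and left edges by $\zerohat$, and sweep from the NE corner toward the SW corner applying the backward local rule of the cell lemma; this recovers a marked, colored $n\times n$ array, which has exactly one mark per row and per column by the same telescoping count, i.e.\ an $r$-colored permutation. Mutual inversion of the two directions is immediate from the cell lemma, and the dependence of the bijection on the choice of McLarnan map is visible because different $M$ give different but equally valid local rules. The main obstacle is the cell lemma: checking that the rule assembled from the join operation of $L$ and the map $M_\mu$ is a bijection in \emph{both} directions across all of the (few but delicate) cases is exactly where modularity of $L$ and the precise bijective form of the McLarnan map --- not merely the numerical differential identity~(\ref{eq:differential}) --- are indispensable; once it is in hand, the global assembly, the grade count, and the color tracking are routine.
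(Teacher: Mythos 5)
Your overall plan is the right one, and in fact it is the only route the paper itself gestures at: the statement appears in sec.~\ref{sec:fomin} as background machinery and is given no proof there, only the attribution to Fomin's growth-diagram construction and to \cite{Wor2023c}*{sec.~2}. So a complete argument along your lines would be a genuine addition rather than a duplication. However, your central ``cell lemma'' is misstated in a way that makes it false as written. The local bijection that makes the sweep reversible must be taken \emph{for fixed NW and SE corners} $(\lambda,\nu)$: it matches the data $(\mu,\ \text{bottom-edge color},\ \text{marking data})$ with the data $(\rho,\ \text{top-edge color})$. You instead fix the SW corner $\mu$ and claim a bijection from $\{(\lambda,\nu),\ \text{unmarked}\}\sqcup\{1,\dots,r\}$ onto all admissible triples $(\lambda,\nu,\rho)$; this cannot hold. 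Already in Young's lattice with $\mu=\zeroslash$ and $r=1$: the admissible triples include $(z,z,\rho)$ with $z\gtrdot\mu$ and $\rho\gtrdot z$, and for fixed $\mu$ the forward rule with $\lambda=\nu=z$ unmarked produces exactly one such triple, so most of them are never hit (your domain has $(1+u)^2+r$ elements while the codomain has $(1+u)^2+\sum_{z\gtrdot\mu}|C_z^+|$, and these differ). The correct statement, with $(\lambda,\nu)=(z,z)$ fixed, lets $\mu$ range over $\{z\}\sqcup C_z^-$ and is exactly an instance of the differential identity~(\ref{eq:differential}) at $z$, realized by the McLarnan map $M_z=M_\lambda$ --- not $M_\mu$, which maps into covers of $\mu$ and cannot produce the required $\rho\gtrdot\lambda$ when $\mu\lessdot\lambda=\nu$.

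Two further ingredients are missing from your sketch and must be made explicit for the weighted case this theorem addresses. First, the edge colors are local data: every covering along every row of the diagram carries a color in $\{1,\dots,w(\cdot)\}$, and when $\lambda\neq\nu$ the color on the bottom edge $\mu\lessdot\nu$ must be transferred to the top edge $\lambda\lessdot\rho$; this is legitimate precisely because the projective-constancy condition (def.~\ref{def:fomin}\ref{i:fomin-A}\ref{i:fomin-A4}) gives $w(\mu\lessdot\nu)=w(\lambda\lessdot\rho)$ for the cover-projective pair --- modularity alone gives you the element $\rho=\lambda\vee\nu$ but not the color bookkeeping, and your proposal never invokes \ref{i:fomin-A4}. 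Second, for the same reason the McLarnan map of def.~\ref{def:fomin}\ref{i:fomin-C} must be read in its weighted form, as a bijection between $\bigsqcup_{y\lessdot x}\{1,\dots,w(y\lessdot x)\}\sqcup\{1,\dots,r\}$ and $\bigsqcup_{z\gtrdot x}\{1,\dots,w(x\lessdot z)\}$; with the unweighted reading your global claim that ``the number of colors available for a covering of $P$ is exactly $w(y\lessdot z)$'' has no local mechanism behind it. With the cell lemma re-pivoted to fixed $(\lambda,\nu)$, the McLarnan map applied at the common corner, and the colors carried through via \ref{i:fomin-A4}, the rest of your outline (anti-diagonal propagation, the grade count by marked cells, and the backward sweep) is the standard and correct assembly.
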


\begin{definition}
Each element $x$ of $L$ with grade $n$ is called a \emph{diagram} of
\emph{size} $n$.  Each path from $\zerohat$ to $x$ is called
a \emph{tableau} of \emph{shape} $x$ and thus size $n$.
We define $f(x)$ as the number
of paths from $\zerohat$ to $x$, that is, the number of tableaux of
shape $x$.
\end{definition}

\begin{theorem}
The most celebrated consequence of the general insertion algorithm is
the immediate consequence of the above bijection:
\begin{equation}
n!\,r^n = \sum_{\textup{diagrams $x$ with size $n$}} f(x)^2 c(x).
\label{eq:n-fact}
\end{equation}
\end{theorem}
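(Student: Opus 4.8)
The plan is to prove (\ref{eq:n-fact}) by double counting: evaluate the cardinality of each side of the bijection established in the preceding theorem, and equate them.

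First I would count the left-hand set, the $r$-colored permutations of $\{1,\ldots,n\}$. A permutation can be chosen in $n!$ ways, and, independently, an $r$-coloring assigns one of $r$ colors to each of the $n$ elements, contributing a factor $r^n$; hence this set has exactly $n!\,r^n$ elements, which is the left side of (\ref{eq:n-fact}).

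Next I would count the right-hand set of triples $(P,Q,c)$ by grouping them according to the common endpoint $x\in L$, which necessarily has grade $n$, and then summing over all diagrams $x$ of size $n$. For a fixed such $x$, the path $P$ from $\zerohat$ to $x$ may be chosen in $f(x)$ ways by the definition of $f$, and independently $Q$ may be chosen in $f(x)$ ways. Given $P$, the number of admissible colorings $c$, which send each covering $y\lessdot z$ of $P$ into $\{1,\ldots,w(y\lessdot z)\}$, is the product $\prod_{y\lessdot z\in P} w(y\lessdot z)$. Here I would invoke the Jordan--H\"older lemma stated above: since $L$ is modular and $w$ is projective-constant, this product is the same for every path from $\zerohat$ to $x$, and its common value is by definition $c(x)$. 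Thus for fixed $x$ there are $f(x)\cdot f(x)\cdot c(x)=f(x)^2c(x)$ triples, and summing over all diagrams $x$ of size $n$ yields the right side of (\ref{eq:n-fact}) as the total cardinality.

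Because the map of the preceding theorem is a bijection, these two cardinalities agree, which is precisely (\ref{eq:n-fact}). The only step that is not a bare multiplication of independent choices is the replacement of $\prod_{y\lessdot z\in P}w(y\lessdot z)$ by the path-independent quantity $c(x)$, and that is furnished in advance by the Jordan--H\"older lemma together with the definition of the number of colorings; so I expect no genuine obstacle beyond this bookkeeping.
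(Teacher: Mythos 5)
Your double-counting argument is correct and is exactly the argument the paper intends when it calls (\ref{eq:n-fact}) an ``immediate consequence'' of the bijection: $n!\,r^n$ counts the $r$-colored permutations, and grouping the triples $(P,Q,c)$ by their common endpoint $x$ gives $f(x)^2c(x)$, with the Jordan--H\"older lemma guaranteeing that the number of colorings of $P$ is the path-independent quantity $c(x)$. No gap; your write-up simply makes explicit the bookkeeping the paper leaves to the reader.
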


\section{Auxiliary result} \label{sec:aux}

The result here supports the discussion of future
directions (sec.~\ref{sec:future}).
It is placed here to avoid interacting with the
assumptions used in sec.~\ref{sec:local} and~\ref{sec:global}.

\paragraph{Partitioning a lattice} \label{para:partition}

Let $L$ be a locally finite modular lattice and let $S$ be a set of
coverings in $L$ that is closed under cover-projectivity, that is, if
$x \lessdot y$ is in $S$ and it is cover-projective to
$x^\prime \lessdot y^\prime$, then $x^\prime \lessdot y^\prime$ is in $S$.
Since $L$ is locally finite and modular, this is equivalent to that
$S$ is the union of a set of projectivity classes of coverings in $L$.

We will define a relationship $\sim$ between elements of $L$.
Intuitively, $x \sim y$ if there is a (finite) path between $x$ and
$y$ composed of (upward and downward) covering steps, none of which
are in $S$.  But it's easier to analyze $\sim$ by defining it in a
different way.

\begin{lemma} \label{lem:sim-path}
Given $x \leq y$ in $L$ and any two paths between them, one path
contains a covering in $S$ iff the other path contains a covering in $S$.
\end{lemma}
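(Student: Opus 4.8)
\textit{Proof proposal.} The plan is induction on the length $\ell$ of the interval $[x,y]$. Since $L$ is locally finite, $[x,y]$ is a finite lattice, and since $L$ is modular it is a finite modular lattice, hence graded; so $\ell$ is well defined and every path from $x$ to $y$ consists of exactly $\ell$ coverings. If $\ell \le 1$ there is at most one path, so the claim is vacuous; this is the base case.

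For the inductive step I would take paths $P$ and $Q$ from $x$ to $y$ with initial coverings $x \lessdot a$ and $x \lessdot b$, and split into two cases. If $a = b$, the tails of $P$ and $Q$ are paths from $a$ to $y$ in $[a,y]$, an interval of length $\ell - 1$; the inductive hypothesis there gives that one tail contains a covering in $S$ iff the other does, and since the initial coverings coincide the same holds for $P$ and $Q$. If $a \ne b$, then $a \wedge b = x$ and $a, b$ both cover $x$, so by modularity $a \vee b$ covers both $a$ and $b$; by def.~\ref{def:projective-constant} the covering $x \lessdot a$ is cover-projective to $b \lessdot a \vee b$, and $x \lessdot b$ is cover-projective to $a \lessdot a \vee b$, so, since $S$ is closed under cover-projectivity (para.~\ref{para:partition}), we have $x \lessdot a \in S \iff b \lessdot a \vee b \in S$ and $x \lessdot b \in S \iff a \lessdot a \vee b \in S$. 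Fixing any path $R$ from $a \vee b$ to $y$ and applying the inductive hypothesis in $[a,y]$ (to the tail of $P$ versus the path $(a \lessdot a \vee b)$ followed by $R$) and in $[b,y]$ (to the tail of $Q$ versus $(b \lessdot a \vee b)$ followed by $R$), one gets that $P$ contains a covering in $S$ iff $x \lessdot a \in S$ or $a \lessdot a \vee b \in S$ or $R$ contains a covering in $S$, and symmetrically for $Q$ with $a$ and $b$ interchanged. The two cover-projectivity equivalences transform the first disjunction into the second, so $P$ contains a covering in $S$ iff $Q$ does, completing the induction.

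The only non-formal ingredient is the classical modular-lattice fact that if $a$ and $b$ cover $a \wedge b$ then $a \vee b$ covers both (the upper covering condition), together with recognizing these diamond transposes as the cover-projectivity of def.~\ref{def:projective-constant}; this is exactly the step underlying the Jordan--Hölder theorem for modular lattices (\cite{Birk1967a}*{\S III.7}), so I do not expect a genuine obstacle. A shorter route would be to invoke that Jordan--Hölder theorem directly: it supplies a bijection between the coverings of $P$ and those of $Q$ matching cover-projective pairs, so closure of $S$ under cover-projectivity immediately yields the claim — indeed the stronger statement that $P$ and $Q$ contain the same number of coverings in $S$. The induction above is just a self-contained version of that argument, preferable here only to avoid quoting the full refinement machinery.
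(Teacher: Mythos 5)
Your proposal is correct, but your primary argument is a genuinely different (and more self-contained) route than the paper's. The paper proves this in one step: it observes that $[x,y]$ is a finite modular lattice and invokes the Jordan--H\"older theorem for modular lattices (\cite{Birk1967a}*{\S III.7 Th.~9 Cor.}), which matches the coverings of the two maximal chains in projectivity-preserving fashion; since $S$ is a union of projectivity classes, one chain meets $S$ iff the other does. You instead re-prove exactly the diamond-exchange step underlying that theorem: induction on the length of $[x,y]$, splitting on whether the two paths share their first cover, and in the divergent case using $a \wedge b = x$, the modular covering condition to get $a \vee b \gtrdot a, b$, the cover-projectivity of $x \lessdot a$ with $b \lessdot a \vee b$ (and symmetrically), closure of $S$ under cover-projectivity, and the inductive hypothesis in $[a,y]$ and $[b,y]$ against a common path through $a \vee b$. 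All the individual steps check out (in particular $a \wedge b = x$ follows since $a \neq b$ both cover $x$, and the upper covering property is available because modular lattices satisfy the diamond isomorphism theorem), so the induction is sound; it even yields, as you note, the stronger conclusion that the two paths contain the same number of coverings in each projectivity class. What the paper's approach buys is brevity and a clean appeal to standard machinery; what yours buys is independence from the refinement apparatus, at the cost of reproducing it in miniature. Your closing remark that one could instead quote Jordan--H\"older directly is precisely the paper's proof.
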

\begin{proof} \disconnect
Since $L$ is modular and locally finite, the interval $[x,y]$ is modular
and finite.  Choose any projectivity class $C$ of coverings in $L$.
The relevant version of the Jordan--H\"older
theorem\cite{Birk1967a}*{\S III.7 Th.~9 Cor.}\linebreak[0]%
\cite{Nat2017a}*{Th.~9.7} can be applied $[x,y]$ to show that
one path contains a covering in $C$ iff the other path contains a
covering in $C$.  The lemma then follows.
\end{proof}

Thus the following definition does not depend on the particular path
chosen between $x \wedge y$ and $x \vee y$.

\begin{definition} \label{def:sim}
We define $x \sim_S y$ (or $x \sim y$ if, as usual, $S$ is understood)
iff no covering in a path from $x \wedge y$ to
$x \vee y$ is in $S$.
\end{definition}

\begin{lemma} \label{lem:sim-leq}
If $x \leq y$ in $L$, then $x \sim y$ iff no covering in a path
from $x$ to $y$ is in $S$.
\end{lemma}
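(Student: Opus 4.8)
The plan is to simply unwind Definition~\ref{def:sim} in the special case $x \le y$. Since $x \le y$ forces $x \wedge y = x$ and $x \vee y = y$, the condition ``no covering in a path from $x \wedge y$ to $x \vee y$ is in $S$'' that defines $x \sim_S y$ becomes verbatim the condition ``no covering in a path from $x$ to $y$ is in $S$.'' So the lemma is essentially a restatement of the definition in comparable-pair language, and almost nothing is left to prove.

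The only points needing a word are that such a path exists and that the choice of path does not matter. For existence: $L$ is locally finite and modular, so the interval $[x,y]$ is finite, hence a finite saturated chain — i.e. a path — from $x$ to $y$ exists. For path-independence: this is precisely Lemma~\ref{lem:sim-path} applied to the pair $x \le y$, which tells us that one path from $x$ to $y$ contains a covering in $S$ iff every path does, so the quantifier ``a path'' in Definition~\ref{def:sim} is unambiguous. Putting these together, $x \sim y$ (equivalently, $x \sim_S y$) holds iff some/any path from $x$ to $y$ avoids $S$ entirely, which is the claim.

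There is no real obstacle here: the statement is recorded separately only because $\sim$ was defined via $x \wedge y$ and $x \vee y$ rather than directly for comparable elements, and it is convenient in later arguments to have the comparable-case reformulation available. I would keep the proof to one or two sentences invoking Lemma~\ref{lem:sim-path} and the observation that $x \wedge y = x$, $x \vee y = y$.
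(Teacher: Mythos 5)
Your proof is correct and matches the paper's treatment: the paper states this lemma without proof, regarding it as immediate from Definition~\ref{def:sim} once one notes $x \wedge y = x$ and $x \vee y = y$, with Lemma~\ref{lem:sim-path} guaranteeing path-independence exactly as you say.
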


\begin{lemma} \label{lem:sim-convex}
If $x \leq y \leq z$ in $L$ and $x \sim z$, then $x \sim y$ and
$y \sim z$.
\end{lemma}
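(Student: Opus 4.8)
The plan is to reduce everything to Lemma~\ref{lem:sim-leq} together with the path-independence statement Lemma~\ref{lem:sim-path}. Since $x \leq z$, we have $x \wedge z = x$ and $x \vee z = z$, so by Definition~\ref{def:sim} and Lemma~\ref{lem:sim-leq} the hypothesis $x \sim z$ says precisely that no covering in a path from $x$ to $z$ lies in $S$ (and, by Lemma~\ref{lem:sim-path}, this then holds for every such path).

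Next I would build a convenient path from $x$ to $z$ that passes through the intermediate point $y$. Choose any path $\pi_1$ from $x$ to $y$ and any path $\pi_2$ from $y$ to $z$; these exist and are finite because $[x,z]$ is a finite interval by local finiteness. Their concatenation $\pi_1\pi_2$ is a path from $x$ to $z$, so by the previous paragraph no covering occurring in $\pi_1\pi_2$ lies in $S$. In particular no covering of $\pi_1$ and no covering of $\pi_2$ lies in $S$.

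Finally I would apply Lemma~\ref{lem:sim-leq} inside the intervals $[x,y]$ and $[y,z]$: since $\pi_1$ is a path from $x$ to $y$ containing no covering in $S$, we obtain $x \sim y$; since $\pi_2$ is a path from $y$ to $z$ containing no covering in $S$, we obtain $y \sim z$.

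There is essentially no obstacle here; the only point requiring care is that Definition~\ref{def:sim} and Lemma~\ref{lem:sim-leq} are phrased in terms of \emph{a} path, so one must invoke Lemma~\ref{lem:sim-path} to know that "contains a covering in $S$" is a property of the endpoints rather than of the chosen path. That invariance is exactly what lets us use the concatenated path $\pi_1\pi_2$ to read off the hypothesis and then extract the conclusions from its two sub-paths.
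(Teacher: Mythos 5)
Your proposal is correct and follows essentially the same route as the paper: both arguments use lem.~\ref{lem:sim-leq} (with the path-independence guaranteed by lem.~\ref{lem:sim-path}) to pass to a concatenated path through $y$ and then read off $x \sim y$ and $y \sim z$ from its two sub-paths. Your write-up is merely slightly more explicit about citing lem.~\ref{lem:sim-path}, which the paper leaves implicit.
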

\begin{proof} \disconnect
By lem.~\ref{lem:sim-leq}, no covering in any path from $x$ to $z$ is
in $S$.  We can choose such a path
from $x$ to $y$ and from there to $z$.  Since no
covering in that path is in $S$, no covering from $x$ to $y$ or from
$y$ to $z$ is in $S$.  Then by lem.~\ref{lem:sim-leq}, $x \sim y$
and $y \sim z$.
\end{proof}

\begin{lemma} \label{lem:sim-transitive}
Conversely, if $x \leq y \leq z$ in $L$, $x \sim y$, and
$y \sim z$, then $x \sim y$.
\end{lemma}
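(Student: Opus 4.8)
The plan is to reduce everything to Lemma~\ref{lem:sim-leq}, running the argument of Lemma~\ref{lem:sim-convex} in reverse. (I read the conclusion as ``$x \sim z$''; as written the conclusion ``$x \sim y$'' is already one of the hypotheses, and the intended statement is plainly the converse of Lemma~\ref{lem:sim-convex}, namely that $\sim$ is transitive along a chain.)

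First I would use that $x \sim y$ with $x \le y$: by Lemma~\ref{lem:sim-leq} no covering in some path from $x$ to $y$ lies in $S$, and by Lemma~\ref{lem:sim-path} the same then holds for every path from $x$ to $y$. Likewise no covering in any path from $y$ to $z$ lies in $S$. Next I would concatenate: choosing any path from $x$ to $y$ and any path from $y$ to $z$, and using $x \le y \le z$, their concatenation is a path from $x$ to $z$ none of whose coverings lies in $S$. Applying Lemma~\ref{lem:sim-leq} in the other direction then yields $x \sim z$.

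There is no real obstacle. One can argue equivalently straight from Definition~\ref{def:sim}: since $x \le y \le z$ we have $x \wedge z = x$ and $x \vee z = z$, so a path $x \to y \to z$ is an admissible choice of path from $x \wedge z$ to $x \vee z$; its coverings are exactly those of a path $x \to y$ together with those of a path $y \to z$, and by hypothesis ($x \sim y$ and $y \sim z$) none of these lies in $S$, so by Definition~\ref{def:sim} we get $x \sim z$. The only point to verify — that the concatenation of the saturated chains $x \to y$ and $y \to z$ is again a saturated chain from $x$ to $z$ — is immediate because $y$ lies between $x$ and $z$ and each piece is saturated.
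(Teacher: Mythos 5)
Your proposal is correct: the conclusion is indeed a typo for ``$x \sim z$'', and the paper states this lemma without proof precisely because the intended argument is the one you give — concatenate a saturated chain from $x$ to $y$ with one from $y$ to $z$ (using $x \le y \le z$, so $x \wedge z = x$ and $x \vee z = z$) and apply lem.~\ref{lem:sim-leq} (or def.~\ref{def:sim}) to the combined path. So your argument matches the paper's (implicit) route exactly, with nothing missing.
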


\begin{lemma} \label{lem:sim-diamond}
If $x, y \in L$, then $x \sim x \vee y$ iff $x \wedge y \sim y$.
\end{lemma}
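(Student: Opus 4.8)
The plan is to reduce each side of the claimed equivalence to a statement about a single saturated chain and then transport that chain across the standard ``diamond'' isomorphism of a modular lattice. Since $x\le x\vee y$ and $x\wedge y\le y$ (and $L$ is locally finite, so the relevant intervals have finite length), Lemma~\ref{lem:sim-leq} says that $x\sim x\vee y$ holds iff no covering of some (equivalently, any) path from $x$ to $x\vee y$ lies in $S$, and that $x\wedge y\sim y$ holds iff no covering of some path from $x\wedge y$ to $y$ lies in $S$. So it is enough to produce a bijection between paths from $x\wedge y$ to $y$ and paths from $x$ to $x\vee y$ that carries each covering to a covering in the same projectivity class of $L$; because $S$ is a union of projectivity classes (para.~\ref{para:partition}), such a bijection preserves the presence or absence of an $S$-covering.

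First I would invoke the transposition theorem for modular lattices: the map $\phi\colon a\mapsto a\vee x$ is a lattice isomorphism from the interval $[x\wedge y,\,y]$ onto $[x,\,x\vee y]$, with inverse $b\mapsto b\wedge y$; the required identities $(a\vee x)\wedge y=a$ for $a\in[x\wedge y,y]$ and $(b\wedge y)\vee x=b$ for $b\in[x,x\vee y]$ are immediate applications of the modular law. Being an isomorphism of finite intervals, $\phi$ maps each maximal chain $x\wedge y=a_0\lessdot a_1\lessdot\cdots\lessdot a_n=y$ bijectively to a maximal chain $x=a_0\vee x\lessdot a_1\vee x\lessdot\cdots\lessdot a_n\vee x=x\vee y$.

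Next I would verify that each covering $a_i\lessdot a_{i+1}$ and its image $a_i\vee x\lessdot a_{i+1}\vee x$ are projective in $L$. The key computation is $a_{i+1}\wedge x=x\wedge y$: from $a_{i+1}\le y$ we get $a_{i+1}\wedge x\le x\wedge y$, and from $x\wedge y\le a_i\le a_{i+1}$ together with $x\wedge y\le x$ we get the reverse inequality. Then the modular law gives $a_{i+1}\wedge(a_i\vee x)=a_i\vee(a_{i+1}\wedge x)=a_i\vee(x\wedge y)=a_i$, while trivially $a_{i+1}\vee(a_i\vee x)=a_{i+1}\vee x$. Hence the quotient $[a_i,a_{i+1}]$ transposes up onto $[a_i\vee x,\,a_{i+1}\vee x]$, so the two coverings are projective and lie in the same projectivity class; thus $a_i\lessdot a_{i+1}\in S$ iff $a_i\vee x\lessdot a_{i+1}\vee x\in S$. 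Therefore a chosen path from $x\wedge y$ to $y$ contains an $S$-covering iff its $\phi$-image, a path from $x$ to $x\vee y$, does; by Lemma~\ref{lem:sim-leq} this is exactly $x\wedge y\sim y\iff x\sim x\vee y$.

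The one point that deserves care is the passage from ``transposes up'' to ``lies in the same projectivity class'' in the sense relevant here, namely the transitive closure of the single-diamond cover-projectivity relation of Definition~\ref{def:projective-constant}. For the locally finite modular $L$ of para.~\ref{para:partition} this is already recorded there as the equivalence between ``$S$ is closed under cover-projectivity'' and ``$S$ is a union of projectivity classes of coverings,'' so it may simply be quoted. If one preferred to argue it inline, one would take a saturated chain $a_i=c_0\lessdot\cdots\lessdot c_m=a_i\vee x$ in the finite interval $[a_i,\,a_i\vee x]$, check (using the same modular-law identities, now with $a_{i+1}$ in place of $x$) that each square $\{c_j,\,c_{j+1},\,c_j\vee a_{i+1},\,c_{j+1}\vee a_{i+1}\}$ is a covering diamond, and compose the resulting one-step cover-projectivities $[c_j,\,c_j\vee a_{i+1}]\sim[c_{j+1},\,c_{j+1}\vee a_{i+1}]$ from $j=0$ to $j=m$ to connect $[a_i,\,a_{i+1}]$ with $[a_i\vee x,\,a_{i+1}\vee x]$. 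Everything else is routine bookkeeping.
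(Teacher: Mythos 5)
Your proof is correct and follows essentially the same route as the paper, whose proof simply cites the diamond (transposition) isomorphism theorem together with lem.~\ref{lem:sim-leq}; you have merely made explicit the verification that transposed coverings lie in the same projectivity class, hence in or out of $S$ together. No further changes are needed.
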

\begin{proof} \disconnect
This follows from the diamond isomorphism theorem and
lem.~\ref{lem:sim-leq}.
\end{proof}

\begin{lemma} \label{lem:sim-ud}
If $x, y \in L$, $x \sim y$ implies
$x \wedge y \sim x \vee y$,
$x \wedge y \sim x \sim x \vee y$, and
$x \wedge y \sim y \sim x \vee y$.
\end{lemma}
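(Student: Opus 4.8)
The plan is to unwind def.~\ref{def:sim} once and then let the convexity lemma do all the remaining work. First I would observe that the hypothesis $x \sim y$ says, by def.~\ref{def:sim}, precisely that no covering in some (hence, by lem.~\ref{lem:sim-path}, every) path from $x \wedge y$ to $x \vee y$ lies in $S$. Since $x \wedge y \leq x \vee y$, lem.~\ref{lem:sim-leq} then immediately yields $x \wedge y \sim x \vee y$, which is the first assertion.

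Next I would record the two chains $x \wedge y \leq x \leq x \vee y$ and $x \wedge y \leq y \leq x \vee y$, both of which hold by the lattice axioms. Applying lem.~\ref{lem:sim-convex} to the first chain, with endpoints related by $\sim$ as just shown, gives $x \wedge y \sim x$ and $x \sim x \vee y$; applying it to the second chain gives $x \wedge y \sim y$ and $y \sim x \vee y$. Combined with the first assertion, these are exactly the relations $x \wedge y \sim x \sim x \vee y$ and $x \wedge y \sim y \sim x \vee y$ in the statement.

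There is essentially no obstacle here: the lemma is a direct repackaging of def.~\ref{def:sim}, lem.~\ref{lem:sim-leq}, and lem.~\ref{lem:sim-convex}. The only point needing a moment's care is the direction of the implication — we pass from the relation on the ``outer'' pair $x \wedge y, x \vee y$ to relations on the ``inner'' pairs, so it is the convexity lemma, lem.~\ref{lem:sim-convex}, rather than the transitivity lemma, lem.~\ref{lem:sim-transitive}, that is invoked, and one should verify that the intermediate elements genuinely lie on chains between $x \wedge y$ and $x \vee y$, which is immediate.
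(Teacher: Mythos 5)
Your argument is correct: the paper states lem.~\ref{lem:sim-ud} without proof, and the intended justification is exactly what you give — $x \sim y$ is by def.~\ref{def:sim} literally the statement that no covering on a path from $x \wedge y$ to $x \vee y$ lies in $S$, i.e.\ $x \wedge y \sim x \vee y$, and then lem.~\ref{lem:sim-convex} applied to the chains $x \wedge y \leq x \leq x \vee y$ and $x \wedge y \leq y \leq x \vee y$ gives the remaining relations. Your remark that it is the convexity lemma, not transitivity, that is needed here is also the right observation; nothing further is required.
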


\begin{lemma} \label{lem:sim-ud-inverse}
If $x, y \in L$, $x \sim x \vee y \sim y$ implies $x \sim y$.
Dually, $x \sim x \wedge y \sim y$ implies $x \sim y$.
\end{lemma}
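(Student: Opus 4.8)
The plan is to reduce the statement to transitivity of $\sim$ along a chain (lem.~\ref{lem:sim-transitive}) together with the diamond lemma~\ref{lem:sim-diamond}, using that $\sim$ is symmetric in its two arguments; symmetry is immediate from def.~\ref{def:sim}, since $x \wedge y = y \wedge x$ and $x \vee y = y \vee x$.

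For the first assertion, suppose $x \sim x \vee y$ and $x \vee y \sim y$. By symmetry the second hypothesis reads $y \sim y \vee x$, so lem.~\ref{lem:sim-diamond} applied with the roles of $x$ and $y$ interchanged gives $x \wedge y \sim x$. Now $x \wedge y \le x \le x \vee y$ with $x \wedge y \sim x$ and $x \sim x \vee y$, so lem.~\ref{lem:sim-transitive} yields $x \wedge y \sim x \vee y$. By def.~\ref{def:sim} (equivalently by lem.~\ref{lem:sim-leq}), the relation $x \wedge y \sim x \vee y$ is exactly the assertion $x \sim y$.

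For the dual assertion I would invoke order-duality rather than rerun the argument. The order dual $L^{\mathrm{op}}$ is again a locally finite modular lattice; projectivity of prime quotients is a self-dual notion, so $S$ is still closed under cover-projectivity in $L^{\mathrm{op}}$; and the relation $\sim_S$ is literally unchanged when computed in $L^{\mathrm{op}}$, because def.~\ref{def:sim} refers only to the unordered pair $\{x \wedge y,\, x \vee y\}$ and to paths between its two members, and these are the same data in $L$ and in $L^{\mathrm{op}}$ (meet and join are merely interchanged). Under this duality the hypothesis $x \sim x \wedge y \sim y$ turns into $x \sim x \vee' y \sim y$, with $\vee'$ the join of $L^{\mathrm{op}}$, so the first assertion applies in $L^{\mathrm{op}}$ and returns $x \sim y$.

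I do not anticipate a genuine obstacle. The only points needing care are (i) feeding the hypothesis about $x \vee y$ into lem.~\ref{lem:sim-diamond} with $x$ and $y$ swapped, so that it lands in the ``$x \wedge y$'' slot, and (ii) making the duality step clean, i.e.\ checking that both $S$ and $\sim$ are genuinely self-dual before quoting the first half of the lemma to prove the second.
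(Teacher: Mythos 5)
Your proof is correct and follows essentially the same route as the paper: one hypothesis is fed into lem.~\ref{lem:sim-diamond} and the other is used with lem.~\ref{lem:sim-transitive} along the chain $x \wedge y \leq \cdot \leq x \vee y$ to get $x \wedge y \sim x \vee y$, i.e.\ $x \sim y$ (you merely mirror which hypothesis plays which role, chaining through $x$ instead of $y$). Your more explicit justification of the dual half via self-duality of $S$ and $\sim$ matches the paper's ``proved dually'' in substance.
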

\begin{proof} \disconnect
Given $x \sim x \vee y$, by lem.~\ref{lem:sim-diamond}, $x \wedge y \sim y$.
Since $x \wedge y \leq y \leq x \vee y$, $x \wedge y \sim y$,
and $y \sim x \vee y$,
By lem.~\ref{lem:sim-transitive}, $x \wedge y \sim x \vee y$ and so $x \sim y$.
The dual is proved dually.
\end{proof}

\begin{lemma} \label{lem:sim-equiv}
Given $x, y, z \in L$, if $x \sim y$ and $y \sim z$, then $x \sim z$.
\end{lemma}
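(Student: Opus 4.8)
The plan is to deduce full transitivity of $\sim$ from the comparable-element results \ref{lem:sim-transitive} and \ref{lem:sim-convex}, bridged by a single new ingredient: a \emph{join-compatibility} fact stating that if $a \le b$ with $a \sim b$ and $c \in L$ is arbitrary, then $a \vee c \sim b \vee c$ (dually, $a \wedge c \sim b \wedge c$). With that in hand, from $x \sim y$ and $y \sim z$ one routes up through the common upper bound $x \vee y \vee z$: first show it is $\sim$ to both $x$ and $z$, then descend to $x \vee z$, and finish with \ref{lem:sim-ud-inverse}.

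I would first prove the join-compatibility fact, as it is the only step that really uses the lattice geometry. Given $a \le b$ with $a \sim b$ and arbitrary $c$, observe $a \le (a \vee c) \wedge b \le b$, so \ref{lem:sim-convex} gives $(a \vee c) \wedge b \sim b$. Now apply \ref{lem:sim-diamond} with the pair $a \vee c$ and $b$: since $a \le b$ we have $(a \vee c) \vee b = b \vee c$, and the equivalence ``$(a\vee c) \sim (a\vee c)\vee b$ iff $(a\vee c)\wedge b \sim b$'' then yields $a \vee c \sim b \vee c$. The meet version follows dually (and will not be needed here).

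For the main argument: \ref{lem:sim-ud} applied to $x \sim y$ gives $x \sim x \vee y$ and $y \sim x \vee y$, and applied to $y \sim z$ gives $y \sim y \vee z$ and $z \sim y \vee z$. Joining $y \sim x \vee y$ with $z$, and joining $y \sim y \vee z$ with $x$ (join-compatibility), we get $y \vee z \sim x \vee y \vee z$ and $x \vee y \sim x \vee y \vee z$. Along the chain $x \le x \vee y \le x \vee y \vee z$, \ref{lem:sim-transitive} gives $x \sim x \vee y \vee z$; along $z \le y \vee z \le x \vee y \vee z$ it gives $z \sim x \vee y \vee z$. Now \ref{lem:sim-convex} along $x \le x \vee z \le x \vee y \vee z$ and along $z \le x \vee z \le x \vee y \vee z$ yields $x \sim x \vee z$ and $z \sim x \vee z$. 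Finally \ref{lem:sim-ud-inverse}, applied to $x \sim x \vee z \sim z$, gives $x \sim z$.

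The crux is the join-compatibility fact, since that is where the diamond isomorphism (packaged in \ref{lem:sim-diamond}) has to be combined with convexity; after that the proof is a bookkeeping exercise. One subtlety worth flagging is that the detour through $x \vee y \vee z$ and then $x \vee z$ is forced: \ref{lem:sim-transitive} and \ref{lem:sim-convex} only move $\sim$ along chains, so one cannot connect $x$ to $z$ without first passing through a common bound, and join-compatibility is exactly what lets the two hypotheses be transported onto such a bound.
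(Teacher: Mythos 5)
Your proof is correct and follows essentially the same route as the paper's: both arguments climb from $x \sim y$ and $y \sim z$ to $x \sim x \vee y \vee z$ and $z \sim x \vee y \vee z$, descend by convexity (lem.~\ref{lem:sim-convex}) to $x \vee z$, and finish with lem.~\ref{lem:sim-ud-inverse}. Your join-compatibility lemma is a repackaging of the paper's middle step rather than a new idea --- applied with $a=y$, $b=x\vee y$, $c=z$ its proof inspects $(x\vee y)\wedge(y\vee z)$, exactly the element $v$ the paper introduces, and uses lem.~\ref{lem:sim-diamond} where the paper uses lem.~\ref{lem:sim-ud-inverse} --- so the two proofs coincide up to presentation, with yours isolating that step as a reusable statement.
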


\begin{proof} \disconnect

The construction for this proof is shown in fig.~\ref{fig:SimEquiv}.

\begin{figure}[htp]
\centering
\includegraphics[page=\ipeFigSimEquiv,scale=0.66]{k-row-figs.pdf}
\caption{Construction for the proof of lem.~\ref{lem:sim-equiv}.}
\label{fig:SimEquiv}
\end{figure}

We know $x \sim y$ and $y \sim z$.  By lem.~\ref{lem:sim-ud},
$x \sim x \vee y$, $x \vee y \sim y$, $y \sim y \vee z$,
and $y \vee z \sim z$.
Define $v = (x \vee y) \wedge (y \vee z)$.
Of necessity, $y \leq v \leq x \vee y$ and
$y \leq v \leq y \vee z$.
By lem.~\ref{lem:sim-convex}, $v \sim x \vee y$ and $v \sim y \vee z$.
By lem.~\ref{lem:sim-ud-inverse}, $x \vee y \sim y \vee z$
and by lem.~\ref{lem:sim-ud},
$x \vee y \sim (x \vee y) \vee (y \vee z) = x \vee y \vee z$.
Since $x \leq x \vee y \leq x \vee y \vee z$,
by lem.~\ref{lem:sim-transitive}, $x \sim x \vee y \vee z$.
Since $x \leq x \vee z \leq x \vee y \vee z$,
by lem.~\ref{lem:sim-convex}, $x \sim x \vee z$.
Similarly, $z \sim x \vee z$.
Then by lem.~\ref{lem:sim-ud-inverse}, $x \sim z$.
\end{proof}

\begin{lemma} \label{lem:sim-equivalence}
$\sim$ is an equivalence relation on the elements of $L$.
\end{lemma}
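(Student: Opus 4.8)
The plan is to verify the three defining properties of an equivalence relation directly from Definition~\ref{def:sim}, leaning on the lemmas already established. For \emph{reflexivity}, I would observe that $x \wedge x = x = x \vee x$, so the (empty) path from $x \wedge x$ to $x \vee x$ contains no coverings at all, and in particular none lying in $S$; hence $x \sim x$ for every $x \in L$. For \emph{symmetry}, I would note that meet and join are commutative, so $x \wedge y = y \wedge x$ and $x \vee y = y \vee x$; thus the condition defining $x \sim y$ --- that no covering in a path from $x \wedge y$ to $x \vee y$ lies in $S$ --- is verbatim the condition defining $y \sim x$, and the two assertions coincide. (By Lemma~\ref{lem:sim-path} the choice of path is irrelevant, so there is no ambiguity to worry about here.)

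\emph{Transitivity} is the only substantive point, and it is precisely the content of Lemma~\ref{lem:sim-equiv}: if $x \sim y$ and $y \sim z$, then $x \sim z$. So no new work is required; I would simply invoke that lemma.

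Having verified all three properties, I conclude that $\sim$ is an equivalence relation on the elements of $L$. I do not anticipate any obstacle, since the only part needing genuine argument --- transitivity --- has already been handled in Lemma~\ref{lem:sim-equiv}; the present lemma is essentially a bookkeeping corollary collecting that result together with the trivial reflexivity and symmetry checks.
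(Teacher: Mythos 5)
Your proposal is correct and matches the paper's own proof, which likewise dismisses reflexivity and symmetry as trivial and cites Lemma~\ref{lem:sim-equiv} for transitivity. You merely spell out the trivial checks in slightly more detail; there is nothing to add or correct.
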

\begin{proof} \disconnect
That $\sim$ is reflexive and symmetric are trivial.
Lem.~\ref{lem:sim-equiv} shows that $\sim$ is transitive.
\end{proof}

\begin{lemma} \label{lem:sim-meet-join}
Given $x, y, z \in L$, if $x \sim y$ and $x \sim z$, then
$x \sim y \wedge z$ and $x \sim y \vee z$.
\end{lemma}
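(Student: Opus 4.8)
The idea is to reduce the statement entirely to the facts already at hand: that $\sim$ is an equivalence relation (lem.~\ref{lem:sim-equivalence}, via lem.~\ref{lem:sim-equiv}) and that lem.~\ref{lem:sim-ud} drags both the meet and the join of a $\sim$-related pair into that pair's equivalence class. So the proof will be a short chain of transitivity steps rather than any direct manipulation of def.~\ref{def:sim}.

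First I would note that $y \sim z$: since $\sim$ is symmetric we have $y \sim x$, and $x \sim z$ by hypothesis, so transitivity (lem.~\ref{lem:sim-equiv}) gives $y \sim z$. Next I would apply lem.~\ref{lem:sim-ud} to the pair $(y,z)$; from $y \sim z$ that lemma yields in particular $y \wedge z \sim y$ and $y \sim y \vee z$. Finally I would chain with the hypothesis $x \sim y$: combining $x \sim y$ with $y \sim y \wedge z$ gives $x \sim y \wedge z$, and combining $x \sim y$ with $y \sim y \vee z$ gives $x \sim y \vee z$, each time by transitivity. That is the whole argument.

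There is no real obstacle here; the only thing to get right is the orientation in which lem.~\ref{lem:sim-ud} is invoked (on the pair $(y,z)$ rather than on a pair involving $x$), after which everything is bookkeeping with the equivalence relation. One could in principle argue directly from def.~\ref{def:sim} by tracking paths through $x \wedge (y\wedge z)$ and $x \vee (y\vee z)$, but that is strictly messier and unnecessary once lem.~\ref{lem:sim-ud} and lem.~\ref{lem:sim-equivalence} are available.
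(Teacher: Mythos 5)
Your proposal is correct and is essentially the paper's own argument: obtain $y \sim z$ by transitivity (lem.~\ref{lem:sim-equiv}), pull in $y \wedge z$ and $y \vee z$ via lem.~\ref{lem:sim-ud}, and chain back to $x$ by transitivity. The only difference is that the paper dispatches the join case with "dually" while you write both cases out explicitly.
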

\begin{proof} \disconnect
By lem.~\ref{lem:sim-equiv}, $y \sim z$, and by lem.~\ref{lem:sim-ud},
$y \sim y \wedge z$.  Then by lem.~\ref{lem:sim-equiv},
$x \sim y \wedge z$.  Dually, $x \sim y \vee z$.
\end{proof}

\begin{lemma} \label{lem:sim-convex-4}
Given $x, y, z, v \in L$ with $y \leq z \leq v$,
if $x \sim y$ and $x \sim v$, then $x \sim z$.
\end{lemma}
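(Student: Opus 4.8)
The plan is to reduce this to the already-established transitivity of $\sim$ (lem.~\ref{lem:sim-equiv}, lem.~\ref{lem:sim-equivalence}) together with the ``downward convexity'' of $\sim$ along a chain (lem.~\ref{lem:sim-convex}). The key point to keep in mind is that $x$ need not be comparable with any of $y$, $z$, $v$, so one cannot apply lem.~\ref{lem:sim-convex} to $x$ directly; instead I would first transport the hypotheses onto the chain $y \leq z \leq v$ by way of transitivity.

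First I would observe that, since $x \sim y$ and $x \sim v$ and $\sim$ is an equivalence relation (lem.~\ref{lem:sim-equivalence}), we have $y \sim v$. Now $y \leq z \leq v$ and $y \sim v$, so lem.~\ref{lem:sim-convex} (applied with $y$, $z$, $v$ playing the roles of $x$, $y$, $z$ there) yields $y \sim z$ (and also $z \sim v$, which is not needed). Finally, combining $x \sim y$ with $y \sim z$ and invoking transitivity once more (lem.~\ref{lem:sim-equiv}) gives $x \sim z$, as required.

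There is essentially no obstacle here; the statement is a short corollary of the preceding development, and its purpose is presumably to package a convexity-and-transitivity step to be reused later. The only care needed is the remark above about non-comparability, which is what forces the detour through $y \sim v$ rather than a naive direct appeal to convexity for $x$ along $y \leq z \leq v$.
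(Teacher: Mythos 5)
Your proof is correct and is essentially identical to the paper's: both deduce $y \sim v$ by transitivity, apply lem.~\ref{lem:sim-convex} to the chain $y \leq z \leq v$ to get $y \sim z$, and conclude $x \sim z$ by transitivity again.
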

\begin{proof} \disconnect
By lem.~\ref{lem:sim-equiv}, $y \sim v$, and by
lem.~\ref{lem:sim-convex}, $y \sim z$.
Then by lem.~\ref{lem:sim-equiv}, $x \sim z$.
\end{proof}

Now we fix an element $z \in L$.  $z$ will usually be $\zerohat$.

\begin{definition} \label{def:truncate}
We define $L_{/Sz}$ to be the equivalence class of $\sim_S$ containing
$z$, which is $\{ x \in L \mvert x \sim_S z \}$.
\end{definition}

Intuitively, we construct $L_{/Sz}$ by constructing the Hasse diagram
of $L$, removing from it the edges which are coverings in $S$,
selecting from the resulting graph the connected component that contains
$z$, and considering that component as the Hasse diagram of a lattice,
which is $L_{/Sz}$.

\begin{lemma} \label{lem:sim-conv-subl}
$L_{/Sz}$ is a convex sublattice of $L$.
\end{lemma}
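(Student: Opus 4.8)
The plan is to verify the three components of the claim — nonemptiness, closure under the lattice operations, and convexity — each as an immediate instance of a lemma established just above, so that no genuinely new argument is needed.

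First I would note that $L_{/Sz}$ is nonempty: by lem.~\ref{lem:sim-equivalence} the relation $\sim_S$ is reflexive, so $z \sim_S z$ and hence $z \in L_{/Sz}$. Next, to see that $L_{/Sz}$ is a sublattice, I would show it is closed under the meet and join of $L$; since those operations are then inherited from the ambient lattice, $L_{/Sz}$ is a sublattice in the full sense and not merely an abstract lattice. Take $x, y \in L_{/Sz}$, so $x \sim_S z$ and $y \sim_S z$, and by symmetry $z \sim_S x$ and $z \sim_S y$. Applying lem.~\ref{lem:sim-meet-join} with $z$ in the role of its first argument gives $z \sim_S x \wedge y$ and $z \sim_S x \vee y$, i.e. $x \wedge y \in L_{/Sz}$ and $x \vee y \in L_{/Sz}$.

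For convexity, suppose $x, v, y \in L$ with $x \leq v \leq y$ and both $x, y \in L_{/Sz}$. Then $z \sim_S x$ and $z \sim_S y$, so applying lem.~\ref{lem:sim-convex-4} with $z$ playing the role of its first argument and the chain $x \leq v \leq y$ playing the role of ``$y \leq z \leq v$'' yields $z \sim_S v$, hence $v \in L_{/Sz}$. This completes all three parts.

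I do not anticipate a real obstacle: the content has already been done in lem.~\ref{lem:sim-meet-join} and lem.~\ref{lem:sim-convex-4}, and the only point that deserves an explicit sentence is the observation that closure under the ambient $\wedge$ and $\vee$ is exactly what makes $L_{/Sz}$ a sublattice of $L$ (so its meets and joins agree with those of $L$), which in turn is what lets the later development treat $L_{/Sz}$ as a lattice in its own right.
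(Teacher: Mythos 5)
Your proof is correct and matches the paper's own argument, which likewise cites lem.~\ref{lem:sim-meet-join} for closure under $\wedge$ and $\vee$ and lem.~\ref{lem:sim-convex-4} for convexity; your extra remarks on nonemptiness and on symmetry of $\sim_S$ (justifying the swap of argument roles) are harmless elaborations of the same route.
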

\begin{proof} \disconnect
That $L_{/Sz}$ is convex is shown by lem.~\ref{lem:sim-convex-4}.
That $L_{/Sz}$ is a sublattice is shown by lem.~\ref{lem:sim-meet-join}.
\end{proof}

We now apply this construction to a Fomin lattice that has a
non-positive weighting.

\begin{definition} \label{def:truncate-fomin}
Given a Fomin lattice $L$ with weighting $w$ and $\zerohat$, we define its
\emph{truncation} $L_/$ to be
$L_{/S\zerohat}$, where $S = \{ x \lessdot y \mvert w(x \lessdot y) = 0 \}$,
the set of coverings with weight 0.
We define the weighting $w_/$ on $L_/$ to be $w$ restricted to $L_/$.
\end{definition}

\begin{theorem} \label{th:truncate-fomin}
Given a Fomin lattice $L$ with $\zerohat$, its truncation $L_/$ is a
Fomin lattice with (the same) $\zerohat$.
\end{theorem}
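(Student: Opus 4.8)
The plan is to verify directly that $(L_/, r, w_/)$ satisfies each clause of def.~\ref{def:fomin}\ref{i:fomin-A} together with clause~\ref{i:fomin-B}, the two workhorses being lem.~\ref{lem:sim-conv-subl} (that $L_/$ is a convex sublattice of $L$) and the fact that $L_/$ is a single $\sim_S$-equivalence class, so that any $x, y \in L_/$ satisfy $x \sim_S y$.

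The first step I would isolate is the structural claim that the coverings of $L_/$ are exactly the coverings $x \lessdot z$ of $L$ with both endpoints in $L_/$, and that these are precisely the nonzero-weight coverings of $L$ incident to an element of $L_/$. Indeed, if $x \lessdot z$ in $L_/$ then $x \lessdot z$ in $L$ by convexity; and since $x \sim_S z$, applying lem.~\ref{lem:sim-leq} to the one-step path shows $x \lessdot z \notin S$, i.e.\ $w(x \lessdot z) \neq 0$. Conversely, if $x \in L_/$ and $x \lessdot z$ in $L$ with nonzero weight, then $x \lessdot z \notin S$, so lem.~\ref{lem:sim-leq} gives $x \sim_S z$, hence $z \in L_/$, and $x \lessdot z$ is still a covering inside the convex sublattice $L_/$; symmetrically with $x$ and $z$ interchanged.

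Granting this, the lattice-theoretic clauses fall out quickly. A sublattice of a modular lattice is modular (\ref{i:fomin-A1a}); for $x \le y$ in $L_/$ convexity gives $[x,y]_{L_/} = [x,y]_L$, which is finite, so $L_/$ is locally finite (\ref{i:fomin-A1b}); the upward and downward covers of any $x$ in $L_/$ are subsets of those in $L$, hence finite in number (\ref{i:fomin-A1c}); meets, joins, and coverings in $L_/$ agree with those in $L$, so a cover-projective pair in $L_/$ is cover-projective in $L$ and projective-constancy of $w$ descends to the restriction $w_/$ (\ref{i:fomin-A4}); and $\zerohat \in L_/$ by construction lies below every element of $L_/ \subseteq L$, so it is the minimum of $L_/$ (\ref{i:fomin-B}).

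The heart of the argument is the weighted-differential condition~(\ref{eq:differential}) for $L_/$, with the same $r$. Fixing $x \in L_/$, equation~(\ref{eq:differential}) holds for $x$ in $L$; deleting the weight-$0$ terms from both sums changes nothing, and by the structural claim the surviving terms index exactly the coverings of $L_/$ at $x$, on which $w$ and $w_/$ coincide, so (\ref{eq:differential}) holds verbatim for $L_/$. The one place where any care is needed — and thus the ``main obstacle'' — is exactly this two-way matching between $L_/$-coverings and nonzero-weight $L$-coverings, both directions of which rest on lem.~\ref{lem:sim-leq} and the convexity of $L_/$; everything else is routine bookkeeping against def.~\ref{def:fomin}.
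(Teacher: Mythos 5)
Your proposal is correct and takes essentially the same route as the paper: the paper likewise observes that every clause of def.~\ref{def:fomin} except \ref{i:fomin-A}\ref{i:fomin-A3} is inherited by restriction, and verifies (\ref{eq:differential}) for $x \in L_/$ by noting that the terms dropped from the equation in $L$ are exactly those of weight $0$. Your explicit two-way matching of $L_/$-coverings with nonzero-weight $L$-coverings (via lem.~\ref{lem:sim-leq} and the convexity from lem.~\ref{lem:sim-conv-subl}) simply spells out what the paper compresses into ``trivially inherited.''
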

\begin{proof} \disconnect
All of the requirements for $L_/$ to be a Fomin lattice
(def.~\ref{def:fomin}) are trivially inherited from $L$ except for
\ref{i:fomin-A3}.
But by construction, equation (\ref{eq:differential}) for a particular
$x \in L_/$ contains a subset of the terms of (\ref{eq:differential})
for $x \in L$, and the terms of the latter that are not in the former
are ones for which $w$ has the value 0.
Thus (\ref{eq:differential}) is true for all $x \in L_/$.
\end{proof}

\begin{theorem} \label{th:truncate-fomin-dist}
Given a finitary distributive Fomin lattice $L$ with $\zerohat$ with points
$J(L)$, its truncation $L_/$ is isomorphic to $\Idealf(P_/)$
where $P_/$ is the subposet
$\{ j \in J(L) \mvert
\textup{not } (\exists k)(k \leq j \textup{ and } w(j) = 0) \}$.
\end{theorem}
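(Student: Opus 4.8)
The plan is to combine Theorem~\ref{th:truncate-fomin-dist}'s two ingredients: the structural description of the truncation $L_/$ as a convex sublattice (Lemma~\ref{lem:sim-conv-subl}), and Birkhoff's representation theorem, identifying which join-irreducibles of $L$ survive into $L_/$. Since $L$ is finitary distributive, $L \cong \Idealf(J(L))$, and each element $x \in L$ corresponds to a finite order ideal $I_x \subseteq J(L)$. The coverings $x \lessdot y$ in $L$ correspond exactly to pairs $I_x \subset I_y$ with $I_y = I_x \cup \{j\}$ for a single join-irreducible $j$ maximal in $I_y$; the weight $w(x \lessdot y)$ depends only on $j$ (this is the notation $w(j)$ the theorem uses, justified by projective-constancy: all such coverings "adding $j$" are cover-projective to one another, being translates of $\zerohat \lessdot \{j\text{-principal ideal}\}$ along the diamond isomorphisms). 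Call $j \in J(L)$ \emph{alive} if there is no $k \leq j$ in $J(L)$ with $w(k) = 0$, and \emph{dead} otherwise; so $P_/$ is the subposet of alive join-irreducibles, which is automatically a lower-order-closed sub\emph{poset} (if $j$ is alive and $j' \leq j$ then $j'$ is alive — any $k \leq j'$ is also $\leq j$), hence its own finite ideals are just the finite ideals of $J(L)$ that contain no dead element.

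The core claim to establish is: for $x \in L$, one has $x \in L_/$ (i.e. $x \sim_S \zerohat$) if and only if $I_x$ contains no dead join-irreducible. For the "only if" direction: suppose $j \in I_x$ is dead, witnessed by $k \leq j$ with $w(k) = 0$. Then $k \in I_x$ as well (order ideal), and any saturated chain from $\zerohat$ to $x$ can be arranged to pass through the step that adds $k$; by Lemma~\ref{lem:sim-leq} and Lemma~\ref{lem:sim-path}, that step is a covering in $S$, so every path from $\zerohat$ to $x$ meets $S$, whence $x \not\sim_S \zerohat$. For the "if" direction: if $I_x$ contains no dead element, build a saturated chain $\zerohat = x_0 \lessdot x_1 \lessdot \cdots \lessdot x_n = x$ by adding the elements of $I_x$ in any linear extension order; every step adds an alive $j$, so has weight $w(j) \neq 0$, hence lies outside $S$; by Lemma~\ref{lem:sim-leq}, $\zerohat \sim_S x$, so $x \in L_/$.

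Granting that claim, the underlying set of $L_/$ is $\{x \in L : I_x \cap (\text{dead elements}) = \zeroslash\} = \{I \in \Idealf(J(L)) : I \subseteq P_/\} = \Idealf(P_/)$, and since $L_/$ is a convex sublattice of $L$ (Lemma~\ref{lem:sim-conv-subl}) its meet and join agree with those of $L$, which under Birkhoff are intersection and union of ideals — exactly the meet and join of $\Idealf(P_/)$. So the bijection $x \mapsto I_x$ restricts to a lattice isomorphism $L_/ \xrightarrow{\sim} \Idealf(P_/)$. Finally $\zerohat$ maps to $\zeroslash$ as required.

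The main obstacle is pinning down rigorously that $w(x \lessdot y)$ for the covering "adding $j$" is independent of $x$, so that the symbol $w(j)$ is well-defined and the "dead/alive" dichotomy genuinely partitions the coverings into those in $S$ and those not. This is exactly where projective-constancy (property~\ref{i:prop-mod-lattice}, or \ref{i:fomin-A}\ref{i:fomin-A4}) and the Jordan--Hölder-type invariance of covering weights along paths (already invoked for Lemma~\ref{lem:sim-path}) do the work: in a finitary distributive lattice any two coverings that add the same join-irreducible $j$ are connected through diamond isomorphisms (each diamond $I, I\cup\{j\}, I\cup\{j'\}, I\cup\{j,j'\}$ when $j,j'$ are incomparable maximal candidates), so they are cover-projective and $w$ agrees on them. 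Once $w(j)$ is well-defined, the rest is the bookkeeping sketched above.
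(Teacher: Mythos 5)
Your proof is correct. The paper states th.~\ref{th:truncate-fomin-dist} without any proof, so there is no argument of the author's to compare yours against; what you have written supplies exactly the routine verification that the paper omits, and along the lines the surrounding machinery clearly intends: Birkhoff's representation, the fact that the weight of a covering $x \lessdot y$ depends only on the added point $y \setminus x$, and lem.~\ref{lem:sim-path}, \ref{lem:sim-leq}, and \ref{lem:sim-conv-subl} applied to $S$ the set of weight-$0$ coverings as in def.~\ref{def:truncate-fomin}. Three small remarks. First, you have silently corrected what is evidently a typo in the statement: the condition defining $P_/$ should read $w(k)=0$ rather than $w(j)=0$ (compare th.~\ref{th:truncate-fomin-dist-admis}); your ``dead/alive'' reading is the intended one. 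Second, the well-definedness of $w(j)$ that you spend your last paragraph on is already provided by the paper as th.~\ref{th:differential-distributive}, stated under exactly the present hypotheses (finitary distributive Fomin lattice with $\zerohat$), so it can simply be cited; also, in your parenthetical the base covering to which every covering adding $j$ is projective is the one from the principal ideal of $j$ with $j$ removed up to the full principal ideal of $j$, not $\zerohat \lessdot$ (principal ideal of $j$), which is a covering only when $j$ is minimal --- your later diamond argument is the correct justification. Third, a available shortcut: since a point of weight $0$ is itself dead, ``$I_x$ contains a dead point'' is equivalent to ``$I_x$ contains a point of weight $0$'', which by lem.~\ref{lem:sim-path} is equivalent to every saturated chain from $\zeroslash$ to $I_x$ meeting $S$; this makes your core claim nearly immediate and merges your two directions into one observation.
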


\begin{theorem} \label{th:truncate-fomin-admis}
Given a Fomin lattice $L$ with $\zerohat$, if
its set of values is $\mbbZ$, $r > 0$, and for every covering
$x \lessdot y$ for which $w(x \lessdot y) < 0$, there exists
a $z, v$ for which $z \lessdot v \leq x$ and $w(z \lessdot v) = 0$,
then $L_/$ is a Fomin lattice with $\zerohat$ and a positive weighting.
\end{theorem}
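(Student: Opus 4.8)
The plan is to observe that most of the conclusion is already supplied by Theorem~\ref{th:truncate-fomin}, which tells us that the truncation $L_/ = L_{/S\zerohat}$ (with $S = \{x \lessdot y \mvert w(x \lessdot y) = 0\}$, as in Definition~\ref{def:truncate-fomin}) is a Fomin lattice with the same $\zerohat$. By construction its weighting $w_/$ is the restriction of $w$, its value set is still $\mbbZ$, and its differential degree is the same $r$, which is positive by hypothesis. So, by Definition~\ref{def:positive}, the only thing left to verify is that every value of $w_/$ is a positive integer, and I would establish this in two steps.

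First I would show no covering of $L_/$ lies in $S$: if $x \lessdot y$ is a covering of $L_/$ then $x, y \in L_{/S\zerohat}$, so $x \sim_S y$, and applying Lemma~\ref{lem:sim-leq} to the one-step path $x \lessdot y$ (path-independence being Lemma~\ref{lem:sim-path}) gives $x \lessdot y \notin S$, hence $w_/(x \lessdot y) = w(x \lessdot y) \neq 0$. Second — the step that actually uses the hypothesis — I would rule out negative weights. Suppose some covering $x \lessdot y$ of $L_/$ had $w(x \lessdot y) < 0$. By hypothesis there is a covering $z \lessdot v$ with $v \le x$ and $w(z \lessdot v) = 0$, so $z \lessdot v \in S$. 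Since $\zerohat \le z \lessdot v \le x$, splicing a path from $\zerohat$ to $z$, the covering $z \lessdot v$, and a path from $v$ to $x$ produces a path from $\zerohat$ to $x$ containing a covering of $S$. But $x \in L_{/S\zerohat}$ means $x \sim_S \zerohat$, and Lemma~\ref{lem:sim-leq} then forces every path from $\zerohat$ to $x$ to avoid $S$ — a contradiction. Therefore every value of $w_/$ is both nonzero and non-negative, i.e. a positive integer, so $(r, w_/)$ is positive and $L_/$ is a positive Fomin lattice with $\zerohat$.

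The main obstacle is minor: it is just keeping the bookkeeping straight, namely recognizing that $x \in L_{/S\zerohat}$ is exactly the statement that every path from $\zerohat$ up to $x$ avoids $S$ (via Lemmas~\ref{lem:sim-path} and~\ref{lem:sim-leq}), so that the weight-$0$ covering the hypothesis provides below any negatively-weighted $x$ can always be routed into such a path and contradicts $x \in L_/$. No deeper difficulty arises; the hypothesis has been tailored precisely so that, together with $r > 0$, it prevents any negative weight from surviving the truncation.
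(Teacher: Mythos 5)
Your proposal is correct and follows essentially the same route as the paper: invoke Theorem~\ref{th:truncate-fomin} for the Fomin-lattice-with-$\zerohat$ part, then use the $\sim_S$ machinery (Lemmas~\ref{lem:sim-path} and~\ref{lem:sim-leq}) to show a weight-$0$ covering cannot occur inside $L_/$, and that a negative-weight covering would, via the hypothesis, place a weight-$0$ covering on a path from $\zerohat$ into $L_/$, a contradiction. Your write-up is in fact a bit more explicit than the paper's (which phrases the contradiction as $y \notin L_/$ rather than $x \notin L_/$), but the logical content is identical.
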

\begin{proof} \disconnect
By th.~\ref{th:truncate-fomin}, $L_/$ is a Fomin lattice with
$\zerohat$.  It remains to prove that its weighting $w_/$ is
has only positive values on $L_/$.  Assume to the contrary that
$x \lessdot y$ in $L_/$ and $w_/(x \lessdot y) \leq 0$.
If $w_/(x \lessdot y) = 0$, then by the construction of $L_/$ and
lem.~\ref{lem:sim-leq}, $y \not\in L_/$.
If on the other hand $w_/(x \lessdot y) < 0$, then by hypothesis there
is $\zerohat \leq z \lessdot v \leq y$ with $w_/(z \lessdot v) = 0$,
and similarly $v \not\in L_/$, implying $y \not\in Z_/$.
\end{proof}

\begin{theorem} \label{th:truncate-fomin-dist-admis}
Given a distributive Fomin lattice $L$ with $\zerohat$ with points
$P(L)$, if for every point $j$ for which $w(j) < 0$ then
there exists a point $k$ for which $k \leq j$ and $w(k) = 0$,
then $L_/$ is a Fomin lattice with a positive weighting.
\end{theorem}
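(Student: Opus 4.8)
The plan is to reduce this to Theorem~\ref{th:truncate-fomin-admis}. Working under the same ambient assumptions (the value set is $\mbbZ$ and $r>0$, which are exactly what make ``$w(j)<0$'' and ``positive weighting'' meaningful), it suffices to verify the covering-wise hypothesis of that theorem, namely that every covering $x\lessdot y$ of $L$ with $w(x\lessdot y)<0$ has some $z\lessdot v\le x$ with $w(z\lessdot v)=0$; the only real content is translating the point-wise hypothesis stated here into that form, and this is where distributivity enters.

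First I would recall that, $L$ being distributive and finitary, $L\cong\Idealf(J(L))$, every covering $x\lessdot y$ adjoins a unique point $j$ (the one with $j\le y$, $j\not\le x$), and by projective-constancy $w(x\lessdot y)=w(j)$. So let $x\lessdot y$ satisfy $w(x\lessdot y)<0$ and let $j$ be the point it adjoins, so $w(j)<0$. By hypothesis there is a point $k\le j$ with $w(k)=0$; since $w(k)\ne w(j)$ we have $k\ne j$, hence $k<j$. Identifying $x$ and $y$ with order ideals of $J(L)$, the ideal of $y$ is the ideal of $x$ together with the single extra element $j$, so from $k\le j\le y$ and $k\ne j$ we get $k\le x$. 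Taking any saturated chain from $\zerohat$ to $x$ and looking at the step that adjoins $k$ gives a covering $z\lessdot v\le x$ with $w(z\lessdot v)=w(k)=0$, as required. Theorem~\ref{th:truncate-fomin-admis} then yields that $L_/$ is a Fomin lattice with $\zerohat$ and a positive weighting.

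The step ``$k\le j\le y$, $k\ne j$, hence $k\le x$'' is the crux: it is the only place distributivity (via the ideal description and the fact that a covering adjoins exactly one point) is used, and it is precisely what makes the weaker point-wise hypothesis here equivalent to the covering-wise hypothesis of Theorem~\ref{th:truncate-fomin-admis}, so there is no deep obstacle once that translation is in hand. As an alternative to invoking Theorem~\ref{th:truncate-fomin-admis}, one could argue directly from Theorem~\ref{th:truncate-fomin-dist}: it identifies $L_/$ with $\Idealf(P_/)$ for $P_/=\{\,j\in J(L)\mvert\text{no }k\le j\text{ has }w(k)=0\,\}$, the restriction $w_/$ of a covering of $\Idealf(P_/)$ adjoining $j\in P_/$ equals $w(j)$, and by definition of $P_/$ we have $w(j)\ne 0$ while $w(j)<0$ would force a point $k\le j$ with $w(k)=0$ by hypothesis, contradicting $j\in P_/$; hence $w_/>0$ on all of $P_/$. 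Either way, finiteness of $L$, survival of $\zerohat$, and the identification $w_/=w|_{L_/}$ are immediate from the earlier results, and $r_{L_/}=r>0$ because $L_/$ retains $\zerohat$ and its differential equation is obtained from the one in $L$ by dropping only weight-$0$ terms (Theorem~\ref{th:truncate-fomin}).
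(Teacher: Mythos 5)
Your argument is correct, and it supplies exactly the bridging step that the paper leaves implicit: Theorem~\ref{th:truncate-fomin-dist-admis} is stated there without proof, evidently as an immediate corollary of the neighbouring truncation results, and both of your routes are faithful to that intent. The reduction to Theorem~\ref{th:truncate-fomin-admis} (covering $x\lessdot y$ adjoins a unique point $j$ with $w(x\lessdot y)=w(j)$; the hypothesized $k\le j$ with $w(k)=0$ satisfies $k\ne j$, hence $k\in x$, and a saturated chain from $\zerohat$ to $x$ exhibits the required weight-$0$ covering $z\lessdot v\le x$) is sound, and the alternative via Theorem~\ref{th:truncate-fomin-dist} (no $j\in P_/$ can have $w(j)\le 0$ by the definition of $P_/$ together with the hypothesis) is even more direct and arguably the intended one-line justification. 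The only caveat is the one you already flag: the statement as printed does not explicitly assume $V=\mbbZ$ and $r>0$, so the conclusion ``positive weighting'' (which by Definition~\ref{def:positive} includes $r>0$) either needs those carried over as ambient hypotheses, as you do in parallel with Theorem~\ref{th:truncate-fomin-admis}, or needs $r>0$ deduced from Lemma~\ref{lem:w-positive} applied to $L_/$, which requires $L_/$ to be nontrivial; in the degenerate case where every minimal point has weight $0$ the truncation is the trivial lattice and $r$ may be $0$, so that imprecision belongs to the paper's statement rather than to your proof.
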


Of course, this construction of a smaller lattice that is an
positive Fomin lattice could be done by choosing the component
containing a different element $z \in L$ than $\zerohat$,
since if $L$ has $zerohat$, then any sublattice of
$L$ has a minimum element.

The only known application of truncation is when $L$ is $\mbbY$,
$P = J(L)$ is the quadrant, and the weighting on $P$ is
$w((i,j) = \alpha + \beta i + \gamma j$.\cite{Fom1994a}*{Exam.~2.2.7}
If $\alpha = r$, $\beta = r/k$, and $\gamma = -r/k$ for some nonzero
integer $k$ that divides $r$, then $L_/$ is $\mbbY_k$ with
the weights multiplied by $r/k$
(if $k$ is positive) or its left-right reversal (if $k$ is negative).

\section{Local characterization} \label{sec:local}

We start with proving various ``local'' facts about the poset $P$ of
points (join-irreducible elements) of $L$.  These culminate with a
classification of the possible structures of the neighborhood of a
point $p$, that is, the convex set generated by the points that cover
of $p$ and the points that are covered by $p$.

Henceforth we will characterize positive Fomin lattices under
certain sets of assumptions.  As we proceed through the analysis,
further assumptions will be added.  All of these assumptions may seem an
excessively narrow approach, but all known Fomin lattices with
$\zerohat$ other
than $\mbbYF_r$ satisfy all of these assumptions.

\paragraph{Distributive lattice}
\begin{assumption} \label{ass:distributive}
Henceforth, we will restrict our attention to
distributive Fomin lattices with $\zerohat$.
Specifically, we assume $L$ is a Fomin lattice with $\zerohat$
with differential degree $r$, values $V$, and weighting $w$,
and that $L$ is distributive.
(We do not, at this point, assume that $w$ is positive.)
\end{assumption}

\begin{theorem} \label{th:differential-distributive}
\cite{Birk1967a}*{\S III.3 Th.~3}\cite{Stan2012a}*{Prop. 3.4.3}
Since we have assumed that $\zerohat$ exists,
$L$ is finitary, that is, all if its principal ideals are finite.
$L$ is
isomorphic to the set of finite order ideals of the poset
$P = J(L)$ of its join-irreducible elements,
which we denote $L = \Idealf(P)$.
Given a covering $x \lessdot y$
in $L$, $x \setminus y$ (considered as the set difference of two ideals
of $P$) contains exactly one element of $P$, and the projectivity
requirement (condition \ref{i:fomin-A}\ref{i:fomin-A4}) on $w$ can be
simplified to:
There is a weighting function $w$ from $P$ into
$V$, and for every $x \lessdot y$ in $L$,
$w(x \lessdot y) = w(y \setminus x)$.

The weighted-differential condition (\ref{eq:differential}) is
equivalent to
\begin{equation}
  \sum_{a\mvert a \textup{ maximal in } x} w(a) + r =
  \sum_{b\mvert b \textup{ minimal in } P \setminus x} w(b)
  \textup{\qquad\qquad for all $x \in \Idealf(P)$}.
\label{eq:differential-distributive}
\end{equation}
\end{theorem}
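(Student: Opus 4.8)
The plan is to prove the four assertions in the stated order, letting Birkhoff's representation theorem carry the structural part and an elementary projectivity/connectivity argument carry the reduction of the weight from coverings to points.

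\emph{Finitariness, Birkhoff, and single-point coverings.} Since a Fomin lattice is locally finite and, by Assumption~\ref{ass:distributive}, $L$ has $\zerohat$, every principal ideal of $L$ is the interval $[\zerohat,x]$, hence finite; so $L$ is finitary. The cited representation theorem then identifies $L$ with $\Idealf(P)$, $P=J(L)$, with $\vee,\wedge,\le$ realized by $\cup,\cap,\subseteq$ of ideals. For a covering $x\lessdot y$ (so $x\subsetneq y$), I would pick $a$ minimal in $y\setminus x$: every $c<a$ in $P$ lies in $y$ and, by minimality of $a$, not in $y\setminus x$, hence in $x$, so $x\cup\{a\}$ is an order ideal with $x\subsetneq x\cup\{a\}\subseteq y$, and the covering forces $y\setminus x=\{a\}$. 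Write $\pi(x\lessdot y)$ for this unique point; note that $x\lessdot x\cup\{p\}$ holds precisely for those finite ideals $x$ with $p\notin x$ and $\{c\in P:c<p\}\subseteq x$.

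\emph{The weight factors through $P$.} It remains to show $w(x\lessdot y)$ depends only on $\pi(x\lessdot y)$. In a diamond of $\Idealf(P)$ with bottom $x\wedge y=x\cap y$, sides $x,y$, and top $x\vee y=x\cup y$, if $x$ adds the point $p$ to $x\cap y$ then $x\cup y$ adds the same $p$ to $y$; thus every cover-projective pair consists of two coverings with equal $\pi$, and condition~\ref{i:fomin-A}\ref{i:fomin-A4} makes $w$ constant along each such step. Hence it suffices to connect any two coverings with $\pi=p$, say at ideals $x_1$ and $x_2$, by a chain of cover-projective steps. First I would reduce to the comparable case via $x_1\cap x_2$, which again contains $\{c<p\}$, omits $p$, and is contained in each $x_i$. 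For $x_1\subseteq x_2$, list $x_2\setminus x_1$ in a linear extension and add its elements one at a time: at a stage $z\mapsto z\cup\{e\}$ (with $e\ne p$, $\{c<p\}\subseteq z$, and $z\cup\{e\}$ an ideal) the four ideals $z,\ z\cup\{e\},\ z\cup\{p\},\ z\cup\{e,p\}$ form a diamond whose cover-projectivity relates the $p$-covering at $z$ to the $p$-covering at $z\cup\{e\}$. Chaining gives the claim, so $w$ descends to a function $w\colon P\to V$ with $w(x\lessdot y)=w(y\setminus x)$.

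\emph{Rewriting the differential condition.} For a finite ideal $x$, the coverings $y\lessdot x$ are exactly the deletions $x\setminus\{a\}$ for $a$ maximal in $x$, with $w(y\lessdot x)=w(a)$; the coverings $x\lessdot z$ are exactly the additions $x\cup\{b\}$ for $b$ minimal in $P\setminus x$, with $w(x\lessdot z)=w(b)$. Substituting these into~(\ref{eq:differential}) gives exactly~(\ref{eq:differential-distributive}), and every step above is reversible. I expect the only non-routine point to be the last part of the second paragraph---that all coverings adding a fixed point $p$ lie in a single projectivity class; the intersection reduction together with the one-element-at-a-time diamond chain is the crux, everything else being bookkeeping over Birkhoff's theorem.
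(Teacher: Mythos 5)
Your proposal is correct, and it follows the route the paper intends: the paper offers no written proof of this theorem, delegating it to the cited Birkhoff/Stanley representation results, so your argument is exactly the standard filling-in of those details. The only substantive point beyond bookkeeping --- that all coverings $z \lessdot z\cup\{p\}$ adding a fixed point $p$ are linked by cover-projective diamonds (via intersecting the two ideals and then adding the elements of the difference one at a time), so that condition \ref{i:fomin-A}\ref{i:fomin-A4} forces $w$ to depend only on $p$ --- is handled correctly, and the translation of (\ref{eq:differential}) into (\ref{eq:differential-distributive}) via maximal elements of $x$ and minimal elements of $P\setminus x$ is the same reduction the paper uses from that point onward.
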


Because the image of $w$ in $L$ is the same as
the image of $w$ on $P$, we abuse language by conflating the
two meanings of $w$ when we are dealing them as entire functions.  In
particular, each of them has all of its values positive integers iff the other
does, and so ``$w$ is positive'' is used to mean both.

\begin{definition} \label{def:point}
The elements of $P$ are called \emph{points} or \emph{boxes}.
\end{definition}

\begin{lemma} \label{lem:P-finite} \disconnect
\begin{enumerate}
\item $P$ is locally finite.
\item $P$ is finitary.
\end{enumerate}
\end{lemma}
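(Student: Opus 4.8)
\emph{Proof proposal.} The plan is to reduce both parts to the fact, recorded in th.~\ref{th:differential-distributive}, that $L$ is finitary (every principal ideal of $L$ is finite) and that $P = J(L)$ carries the order induced from $L$. Note in particular that the weighting and the differential condition play no role here; this is a statement about finitary distributive lattices.

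For part (2), I would argue as follows. Fix a point $p \in P$ and consider the principal ideal of $P$ it generates, $\{q \in P \mvert q \le p\}$; we must show it is finite. Each such $q$ is in particular a join-irreducible element of the lattice $L$ with $\zerohat \le q \le p$ in $L$, so the map $q \mapsto q$ embeds $\{q \in P \mvert q \le p\}$ into the interval $[\zerohat, p]$ of $L$. But $[\zerohat, p]$ is the principal ideal of $p$ in $L$, which is finite because $L$ is finitary. Hence the principal ideal of $p$ in $P$ is finite, which is exactly the assertion that $P$ is finitary. (Equivalently: under the Birkhoff isomorphism $L \cong \Idealf(P)$ of th.~\ref{th:differential-distributive}, the point $p$ corresponds to the order ideal $\{q \in P \mvert q \le p\}$ of $P$, and every element of $\Idealf(P)$ is by definition a \emph{finite} ideal.)

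For part (1), local finiteness of $P$ is then immediate: given $p \le q$ in $P$, the interval $[p,q]_P$ is contained in the principal ideal of $P$ generated by $q$, which is finite by part (2); hence every interval of $P$ is finite.

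I do not anticipate any genuine obstacle. The only point requiring care is the passage between the two ``levels'' at which $p$ lives --- as an element of the poset $P$ and as a join-irreducible element of the lattice $L$ --- and in particular the observation that $\le_P$ is the restriction of $\le_L$, so that a down-set in $P$ below $p$ sits inside the (finite) principal ideal of $p$ in $L$. Everything else follows directly from th.~\ref{th:differential-distributive}.
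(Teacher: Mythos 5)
Your proof is correct and takes essentially the same approach as the paper: both reduce the claim to the finiteness properties of $L$ established in th.~\ref{th:differential-distributive}, by identifying points of $P$ with (join-irreducible) elements of $L$, so that ideals and intervals of $P$ sit inside finite principal ideals or intervals of $L$. The only minor difference is that the paper proves (1) directly from local finiteness of $L$ (so that part does not use $\zerohat$), whereas you deduce (1) from (2); under the standing assumptions this is equally valid.
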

\begin{proof} \disconnect
Regarding (1):  If there was some $a \leq b$ in $P$ for which $[a,b]$
is infinite, then the set of all (closed) principal ideals of the elements in
$[a,b]$, $\{ [x] \mvert a \leq x \leq b \}$, would be an infinite
subset of $L$ with all of its elements $\geq [a]$ and $\leq [b]$.
That contradicts that $L$ is locally finite.

Regarding (2):  Similarly, if there was an $a \in P$ whose principal
ideal $[a]$ in $P$ was infinite, then
$\{ [x] \mvert x \leq b \}$ would be an infinite
subset of $L$ with all of its elements $\leq [a]$.
That contradicts that, given our assumptions,
th.~\ref{th:differential-distributive} shows that $L$ is finitary.
\end{proof}

\begin{lemma} \label{lem:reduce-to-points}
The lattice of diagrams is isomorphic to the lattice of finite ideals of
points under union and intersection, the size of a diagram is
the cardinality of its corresponding ideal (the number of points it
contains), and the number of tableaux with a
particular shape is the number of linear orderings of its points
(a subposet of $P$).  The number of colorings $c(x)$ of
a diagram $x$
is the product of the weights $w$ of its points:
$c(x) = \prod_{a \in x} w(a)$.
\end{lemma}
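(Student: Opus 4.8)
The plan is to unwind the four assertions and identify each with the corresponding notion on the poset $P$ of points, using the isomorphism $L \cong \Idealf(P)$ supplied by th.~\ref{th:differential-distributive} (Birkhoff's representation theorem, valid here since $L$ is a distributive lattice with $\zerohat$, hence finitary). The first assertion is then immediate: under this isomorphism, the join and meet of $L$ are exactly the union and intersection of finite ideals of $P$, which is part of the content of th.~\ref{th:differential-distributive}. For the claim about size, I would recall that the size of a diagram $x$ is its grade in $L$, and that by th.~\ref{th:differential-distributive} every covering $x' \lessdot y'$ in $L$ adjoins exactly one element of $P$ (that is, $y' \setminus x'$ is a singleton); hence any saturated chain from $\zerohat$ to the ideal corresponding to $x$ has length equal to the number of points contained in that ideal, so the grade of $x$ is $|x|$.

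For tableaux, I would argue as follows. By definition a tableau of shape $x$ is a path (saturated chain) from $\zerohat$ to $x$; via the isomorphism this is a chain $\zeroslash = I_0 \lessdot I_1 \lessdot \cdots \lessdot I_n = x$ of finite ideals of $P$. By the singleton-difference property, the points $a_k$ with $\{a_k\} = I_k \setminus I_{k-1}$ are pairwise distinct and together exhaust the points of $x$, and the requirement that each $I_k$ be a lower order ideal of $P$ is equivalent to $a_1, a_2, \ldots, a_n$ being a linear extension of the subposet of $P$ induced on the points of $x$. Conversely, any such linear extension of that subposet produces, by taking initial segments, a saturated chain from $\zeroslash$ to $x$. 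This yields the asserted bijection between tableaux of shape $x$ and linear orderings of the points of $x$. I expect this correspondence between maximal chains in an ideal lattice and linear extensions of the underlying poset — routine though it is — to be the only step of the proof that needs any genuine verification.

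Finally, the number of colorings $c(x)$ was defined as the product of the weights of the coverings along an arbitrary path from $\zerohat$ to $x$. Fix any such path and its image $\zeroslash = I_0 \lessdot \cdots \lessdot I_n = x$; by th.~\ref{th:differential-distributive} we have $w(I_{k-1} \lessdot I_k) = w(a_k)$, where $a_k$ is the point adjoined at step $k$, and since $a_1, \ldots, a_n$ runs over the points of $x$ exactly once, the product equals $\prod_{a \in x} w(a)$. This is visibly independent of the chosen path, consistent with the Jordan--H\"older argument cited earlier, and completes all four assertions.
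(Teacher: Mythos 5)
Your proposal is correct and is exactly the argument the paper has in mind: the paper states lem.~\ref{lem:reduce-to-points} without proof, treating all four assertions as immediate consequences of th.~\ref{th:differential-distributive} (Birkhoff's representation with the singleton-difference and $w(x \lessdot y) = w(y \setminus x)$ properties) together with the definitions of size, tableau, and $c(x)$. Your spelled-out bijection between saturated chains in $\Idealf(P)$ and linear extensions of the subposet of points of $x$ is the standard verification the paper leaves implicit, and nothing more is needed.
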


\begin{definition} \label{def:ins-del}
Given any diagram $x$, its \emph{insertion points}
are the points that are the minimal elements of $P \setminus x$,
those which can be added to $x$ to make a diagram (whose size is
1 larger than the size of $x$).
The \emph{deletion points} of $x$
are the points that are the maximal elements of $x$,
those which can be deleted from $x$ to make a diagram (whose size is 1
smaller than the size of $x$).
For any diagram $x$,
we define $D_x$ to be the set of deletion points of $x$,
and $I_x$ to be the set of insertion points
of $x$.
\end{definition}

The summation on the left of (\ref{eq:differential-distributive})
is over the deletion points of $x$ and the summation on the right of
(\ref{eq:differential-distributive}) is over the insertion points of
$x$.
Thus, (\ref{eq:differential-distributive}) is equivalent to
\begin{equation}
  \sum_{a \in D_x} w(a) + r =
  \sum_{b \in I_x} w(b)
  \textup{\qquad\qquad for all $x \in \Idealf(P)$}.
\label{eq:differential-distributive-I-D}
\end{equation}

Applying (\ref{eq:differential-distributive}) to the ideal $\zeroslash$
gives
\begin{equation}
  r =
  \sum_{z \mvert z \textup{ minimal in } P} w(z)
\label{eq:minimal}
\end{equation}
In many cases $P$ has a unique minimal element, $\zerohat_P$,
and in that case, (\ref{eq:minimal}) reduces to $w(\zerohat_P) = r$.

Given an ideal $x$ and a point $p$ in $I_x$,
that is, minimal in
$P \setminus x$, we can construct the ideal ${x^\prime} = x \sqcup \{p\}$.
Thus $P \setminus x = (P\setminus x^\prime) \sqcup \{p\}$ and
$p \in D(x^\prime)$.

The difference between (\ref{eq:differential-distributive-I-D}) applied to
$x^\prime$ and (\ref{eq:differential-distributive-I-D}) applied to
$x$ is
\begin{equation}
  \sum_{a \in D_{x^\prime} \setminus D_x} w(a) - \sum_{b \in D_x \setminus D_{x^\prime}} w(b)
= \sum_{a \in I_{x^\prime} \setminus I_x} w(a) - \sum_{b \in I_x \setminus I_{x^\prime}} w(b)
  \textup{\qquad\qquad for all $x \in \Idealf(P)$, $p \in I_x$}.
\label{eq:diff-4-sets}
\end{equation}

The domains of the four summations of (\ref{eq:diff-4-sets}) are:
\begin{enumerate}
\item
$D_{x^\prime} \setminus D_x$: The set of maximal elements of
${x^\prime}$ that are not maximal elements of $x$,
which is $\{p\}$,
\item
$D_x \setminus D_{x^\prime}$: The set of maximal elements of
$x$ that are not maximal elements of ${x^\prime}$,
\item
$I_{x^\prime} \setminus I_x$: The set of minimal elements of
$P \setminus {x^\prime}$ that are not minimal elements of
$P \setminus x$,
\item
$I_x \setminus I_{x^\prime}$: The set of minimal elements of
$P \setminus x$ that are not minimal elements of $P \setminus {x^\prime}$,
which is $\{p\}$.
\end{enumerate}

\begin{lemma} \label{lem:equiv-1}
  Condition (\ref{eq:differential-distributive}) (for every
  $x \in \Idealf(P)$) is equivalent to
  the combination of (\ref{eq:minimal})
  and (\ref{eq:diff-4-sets}) (for every
  $x \in \Idealf(P)$ and $p \in I_x$).
\end{lemma}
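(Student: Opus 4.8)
The plan is to prove the two implications separately; the forward one is essentially the computation already carried out just before the lemma, and the reverse one is an induction on the cardinality of the ideal.

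For the forward direction, assume (\ref{eq:differential-distributive}) — equivalently (\ref{eq:differential-distributive-I-D}), which is the same equation with the summation index sets named $D_x$ and $I_x$ — holds for every $x \in \Idealf(P)$. Then (\ref{eq:minimal}) is simply the instance $x = \zeroslash$, since $D_\zeroslash = \zeroslash$ and $I_\zeroslash$ is precisely the set of minimal elements of $P$. And for any $x \in \Idealf(P)$ and $p \in I_x$, setting $x^\prime = x \sqcup \{p\}$ (an ideal, with $p \in D_{x^\prime}$), equation (\ref{eq:diff-4-sets}) is obtained by subtracting (\ref{eq:differential-distributive-I-D}) for $x$ from (\ref{eq:differential-distributive-I-D}) for $x^\prime$ and reorganizing the four resulting summations according to the identification of their domains listed just before the lemma; both instances hold by assumption, so their difference does too.

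For the reverse direction, assume (\ref{eq:minimal}) and that (\ref{eq:diff-4-sets}) holds for every $x \in \Idealf(P)$ and every $p \in I_x$. I would prove (\ref{eq:differential-distributive-I-D}) for all $x \in \Idealf(P)$ by induction on $n = |x|$; this is legitimate because every member of $\Idealf(P)$ is a finite set, so the induction is well-founded without needing local finiteness of $P$. The base case $n = 0$ is exactly (\ref{eq:minimal}), rewritten with $D_\zeroslash = \zeroslash$ and $I_\zeroslash$ the minimal elements of $P$. For the inductive step with $n \geq 1$, choose any maximal element $p$ of the nonempty finite ideal $x$ and put $y = x \setminus \{p\}$; removing a maximal element of an order ideal leaves an order ideal, so $y \in \Idealf(P)$ with $|y| = n - 1$, and $p$ is minimal in $P \setminus y$, so $p \in I_y$ and $x = y \sqcup \{p\}$ plays the role of ``$x^\prime$'' for the pair $(y, p)$. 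By the induction hypothesis (\ref{eq:differential-distributive-I-D}) holds at $y$; adding to it the instance of (\ref{eq:diff-4-sets}) for $(y, p)$, and once more using the description of the four summation domains, yields (\ref{eq:differential-distributive-I-D}) at $x$.

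I do not expect any genuine obstacle here. The only point requiring care is the one-time bookkeeping that (\ref{eq:diff-4-sets}) really is the termwise difference of (\ref{eq:differential-distributive-I-D}) at $x^\prime$ and at $x$ — in particular that $D_{x^\prime} \setminus D_x = \{p\} = I_x \setminus I_{x^\prime}$ while $D_x \setminus D_{x^\prime}$ and $I_{x^\prime} \setminus I_x$ account for the remaining changes among the deletion and insertion points — but this verification has already been done in the paragraph preceding the lemma, so the proof mainly needs to invoke it and combine the two directions.
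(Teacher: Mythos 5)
Your proposal is correct and follows essentially the same route as the paper: the forward direction invokes the derivation of (\ref{eq:minimal}) and (\ref{eq:diff-4-sets}) carried out just before the lemma, and the reverse direction is the same induction on the size of the ideal, where your choice of a maximal element $p$ of $x$ with $y = x \setminus \{p\}$ is exactly the paper's choice of a covering pair $y^\prime \lessdot y$ in $\Idealf(P)$, after which adding the induction hypothesis to the instance of (\ref{eq:diff-4-sets}) for $(y, p)$ gives the result.
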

\begin{proof} \disconnect
Regarding $\Rightarrow$:  This has been proved by the above derivation of
(\ref{eq:minimal}) and (\ref{eq:diff-4-sets}) from
(\ref{eq:differential-distributive}).

Regarding $\Leftarrow$:
For any $y \in \Idealf(P)$,
we want to prove (\ref{eq:differential-distributive}) for its $x = y$.
Since $y$ is finite, we can prove this by induction upward on the size of $y$.
The base case is $y = \zeroslash = \zerohat$.
In that case, (\ref{eq:differential-distributive}) with
its $x = \zerohat$ reduces to (\ref{eq:minimal}).

If $y$ has positive size, let $y^\prime \lessdot y$ in $\Idealf(P)$.
By induction, (\ref{eq:differential-distributive}) is
true for its $x = y^\prime$.
Now consider (\ref{eq:diff-4-sets}) with its $x = y^\prime$ and
its $p$ the unique element of $y \setminus y^\prime$.
We can add these two equations and by the above derivation obtain
(\ref{eq:differential-distributive}) for its $x = y$.
\end{proof}

\begin{definition} \label{def:CNS}
Given a point $p \in P$,
define $C^-_p$
($C^-$ for ``downward covers'') as
the set of all elements of $P$ which are covered by $p$.
Similarly,
define $C^+_p$ ($C^+$ for ``upward covers'') as the
set of all elements of $P$ which cover $p$.
Define $N_p$ ($N$ for ``neighborhood'') to be
$\{ x \in P \mvert (\exists a\in C_p^-, b\in C_p^+)\, a \leq x \leq b \}$,
the convex subset of $P$ generated by $C_p^-$ and $C_p^+$.
Define $S_p$ ($S$ for ``siblings'') to be
$\{ x \in P \mvert x \neq p \textup{ and }
(\exists a\in C_p^-, b\in C_p^+)\, a < x < b \} =
N_p \setminus C_p^- \setminus C_p^+ \setminus \{p\}$,
$N_p$ with $C_p^-$, $C_p^+$, and $p$ deleted.
\end{definition}

\begin{lemma}
By construction, every element of $C_p^+$ is $>$ every element of $C_p^-$,
but no element of $C_p^+$ covers any element of $C_p^-$.
All elements of $C_p^+$ are incomparable to each other.
Dually for $C_p^-$.
Every element of $S_p$ is incomparable to $p$.
If $P$ is graded, every element of $S_p$ has the same grade as $p$ and
$S_p$ is an antichain.  However, in general, $S_p$ need not be an antichain.
\end{lemma}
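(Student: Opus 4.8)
The approach is to check each assertion of the lemma in turn. All of them except the last are immediate from the definition of the covering relation together with def.~\ref{def:CNS}, so the substance of the proof is confined to producing the example that the last assertion requires.

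First I would dispose of the statements about $C^-_p$ and $C^+_p$. Fix $a \in C^-_p$ and $b \in C^+_p$; then $a \lessdot p \lessdot b$, so $a < p < b$. This gives $a < b$, and since $p$ lies strictly between $a$ and $b$, $b$ cannot cover $a$; that is the first sentence of the lemma. For the second, if $b_1, b_2 \in C^+_p$ were distinct and comparable, say $b_1 < b_2$, then $p < b_1 < b_2$ would contradict $p \lessdot b_2$; so distinct elements of $C^+_p$ are incomparable, and dually distinct elements of $C^-_p$ are incomparable.

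Next I would treat $S_p$, using the description $S_p = N_p \setminus C^-_p \setminus C^+_p \setminus \{p\}$ from def.~\ref{def:CNS}. Any $x \in S_p$ satisfies $a \leq x \leq b$ for some $a \in C^-_p$, $b \in C^+_p$, and $x \notin C^-_p \cup C^+_p \cup \{p\}$; in particular $x \neq a$ and $x \neq b$, so $a < x < b$. If $x$ were comparable to $p$, then either $x < p$, giving $a < x < p$ against $a \lessdot p$, or $p < x$, giving $p < x < b$ against $p \lessdot b$; hence every element of $S_p$ is incomparable to $p$. If moreover $P$ is graded with rank function $\rho$, then $\rho$ is strictly increasing along $<$, so from $a \lessdot p \lessdot b$ and $a < x < b$ we obtain $\rho(p) - 1 = \rho(a) < \rho(x) < \rho(b) = \rho(p) + 1$, whence $\rho(x) = \rho(p)$; thus $S_p \subseteq \rho^{-1}(\rho(p))$, and since distinct elements of equal rank in a graded poset are incomparable, $S_p$ is an antichain.

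Finally, for the assertion that $S_p$ need not be an antichain in general, I would exhibit a Fomin lattice whose poset of points $P$ contains a covering chain $a \lessdot p \lessdot b$ together with a second chain $a \lessdot x \lessdot y \lessdot b$ with $x$ and $y$ distinct from $p$; then $x, y \in S_p$ while $x < y$. Such a configuration is exactly a non-graded interval $[a,b]$ of $P$, and this is entirely consistent with $L = \Idealf(P)$ being a graded (indeed finitary) lattice, since $\Idealf(P)$ is automatically graded by cardinality whether or not $P$ is graded; the only ingredient that forced $S_p$ to be an antichain above was the rank argument, which is unavailable once the gradedness hypothesis on $P$ is dropped. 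The one genuinely non-formal step is verifying that a $P$ of this shape really carries a weighting $w$ satisfying~(\ref{eq:differential-distributive}), so that $L = \Idealf(P)$ is a Fomin lattice; I would settle this by writing down a small explicit $P$ and the corresponding $w$, and I expect this to be the only mildly delicate point in the argument.
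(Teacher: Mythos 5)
Your verifications of the comparability and rank statements are exactly the routine checks the paper intends: it states this lemma without a separate proof (``by construction''), and your arguments for the strict inequalities, the incomparability of distinct covers, the incomparability of $S_p$ with $p$, and the graded case via $\rho(a)<\rho(x)<\rho(b)$ are the standard ones and are correct. The only substantive content is the final claim that $S_p$ need not be an antichain, and there your plan is right but your closing worry is unnecessary: at this point only assump.~\ref{ass:distributive} is in force, and the paper explicitly does \emph{not} yet assume $w$ is positive, so for the five-element poset $P=\{a,p,x,y,b\}$ with $a \lessdot p \lessdot b$ and $a \lessdot x \lessdot y \lessdot b$ you may simply take $L=\Idealf(P)$ with the degenerate weighting $w\equiv 0$, $r=0$ (para.~\ref{para:known}(\ref{i:previous:degenerate})); this is a distributive Fomin lattice with $\zerohat$ in which $x,y\in S_p$ and $x<y$. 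No delicate search for a positive weighting is needed --- and indeed none could succeed once unique-cover-modularity and positivity are imposed, since lem.~\ref{lem:antichain} then forces $S_p$ to be an antichain, so any counterexample must (as yours does) live in the weaker setting of assump.~\ref{ass:distributive} alone.
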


\begin{lemma}
Given an ideal $x$
and a point $p$ that is an insertion point of $x$,
$C_p^- \subset x$ and $C_p^+ \subset P\setminus (x \sqcup \{p\})$.
\end{lemma}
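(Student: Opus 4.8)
The plan is simply to unwind the definitions of \emph{insertion point} (def.~\ref{def:ins-del}) and \emph{order ideal}; no machinery beyond that is needed, so I would present it directly rather than sketch it.

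For the inclusion $C_p^- \subset x$, I would take $a \in C_p^-$, so that $a \lessdot p$ in $P$ and in particular $a < p$. If $a$ were not an element of $x$, then $a$ would be an element of $P \setminus x$ lying strictly below $p$; but $p$, being an insertion point of $x$, is by definition a minimal element of $P \setminus x$, a contradiction. Hence $a \in x$, and since $a$ was arbitrary, $C_p^- \subset x$.

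For the inclusion $C_p^+ \subset P \setminus (x \sqcup \{p\})$, I would take $b \in C_p^+$, so that $p \lessdot b$ and in particular $b \neq p$ and $p < b$. If $b$ were an element of $x$, then since $x$ is a lower order ideal of $P$ and $p < b$, we would have $p \in x$, contradicting $p \in I_x \subseteq P \setminus x$. Hence $b \notin x$; combined with $b \neq p$ this gives $b \in P \setminus (x \sqcup \{p\})$.

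The honest assessment is that there is no real obstacle here: the first inclusion is exactly the statement that $p$ is minimal outside $x$, and the second is exactly the statement that $x$ is downward closed. The only point requiring any care is to invoke the correct direction of the order-ideal property in each of the two halves (downward closure of $x$ for $C_p^+$, and nothing below $p$ escaping $x$ for $C_p^-$).
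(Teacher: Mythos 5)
Your proof is correct and is exactly the routine unwinding of the definitions (minimality of $p$ in $P\setminus x$ for the first inclusion, downward closure of the ideal $x$ for the second) that the paper leaves implicit, as it states this lemma without proof. Nothing is missing.
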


\begin{definition} \label{def:Spx}
Given an ideal $x$ and a point $p$ in $I_x$,
define $S^-_{px} = S_p \cap x$ and
$S^+_{px} = S_p \cap (P\setminus (x \sqcup \{p\}))$.
\end{definition}

\begin{lemma} \label{lem:Spx-ideal}
Since $p \not\in S$, by construction, $S_p = S_{px}^- \sqcup S_{px}^+$, and if
$s^- \in S_{px}^-$ and $s^+ \in S_{px}^+$ are comparable, then $s^- \leq s^+$.
So $S_{px}^-$ is an ideal of $S_p$ and $S_{px}^+$ is a filter of $S_p$.
\end{lemma}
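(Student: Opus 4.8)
The plan is to unwind the definitions and reduce everything to two elementary facts: that $x$ is a finite lower order ideal of $P$ (since $L = \Idealf(P)$, by th.~\ref{th:differential-distributive}), and that $p \notin S_p$ (immediate from the formula $S_p = N_p \setminus C_p^- \setminus C_p^+ \setminus \{p\}$ in def.~\ref{def:CNS}). Everything else is bookkeeping with down-sets, up-sets, and set complements.

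First I would verify the disjoint decomposition $S_p = S_{px}^- \sqcup S_{px}^+$. Since $p$ is an insertion point of $x$ (def.~\ref{def:ins-del}), we have $p \notin x$, so $P$ is partitioned as $P = x \sqcup \{p\} \sqcup (P \setminus (x \sqcup \{p\}))$. Intersecting with $S_p$ and using $p \notin S_p$ removes the middle block, leaving $S_p = (S_p \cap x) \sqcup \bigl(S_p \cap (P \setminus (x \sqcup \{p\}))\bigr) = S_{px}^- \sqcup S_{px}^+$ (def.~\ref{def:Spx}), the union being disjoint because the two blocks of the partition are.

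Next I would prove the ideal/filter claims directly. For $S_{px}^-$: it equals $S_p \cap x$ and $x$ is a down-set of $P$, so if $t \in S_p$ with $t \leq s$ for some $s \in S_{px}^-$, then $t \in x$, hence $t \in S_{px}^-$; thus $S_{px}^-$ is a down-set of $S_p$, i.e. an ideal. For $S_{px}^+$: note $P \setminus (x \sqcup \{p\}) = (P \setminus x) \setminus \{p\}$, and since $p \notin S_p$ this set has the same trace on $S_p$ as $P \setminus x$, which is an up-set of $P$ (the complement of a down-set); by the symmetric argument $S_{px}^+ = S_p \cap (P \setminus x)$ is an up-set of $S_p$, i.e. a filter. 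The comparability statement then follows: if $s^- \in S_{px}^-$ and $s^+ \in S_{px}^+$ are comparable they are distinct (one lies in $x$, the other not), so either $s^- < s^+$ or $s^+ < s^-$, and the latter would force $s^+ \in x$ since $x$ is a down-set containing $s^-$, contradicting $s^+ \in P \setminus x$; hence $s^- \leq s^+$.

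I do not expect a genuine obstacle here; this lemma is essentially a definitional unpacking. The only point that needs care is the accounting around $p$: one must use both $p \notin S_p$ and $p \notin x$ so that the three-block partition restricts to the two-block partition of $S_p$, and so that the complement $P \setminus (x \sqcup \{p\})$ restricts on $S_p$ to the honest up-set $P \setminus x$. Once that is noted, the proof is a few lines.
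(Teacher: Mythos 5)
Your proof is correct and is essentially the argument the paper intends: the paper states lem.~\ref{lem:Spx-ideal} without a separate proof, treating it as immediate from def.~\ref{def:CNS} and def.~\ref{def:Spx} ($p \notin S_p$, $p \notin x$, and $x$ a down-set of $P$), which is exactly the bookkeeping you carry out. The only refinement you add is making explicit that $S_{px}^+$ has the same trace on $S_p$ as the up-set $P \setminus x$, a point the paper leaves tacit.
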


The generic relationships of the elements of these sets are shown in
fig.~\ref{fig:JXB}.
\begin{figure}[htp]
\centering
\includegraphics[page=\ipeFigJXB,scale=0.66]{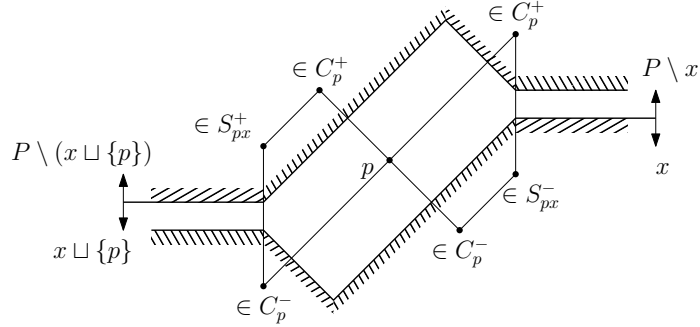}
\caption{Adding a point $p$ to an ideal $x$ making an ideal $x^\prime$}
\label{fig:JXB}
\end{figure}

\paragraph{Unique-cover-modular} \label{para:unique-cover-modular}

\begin{assumption} \label{ass:unique-cover-modular}
Henceforth, we assume the poset
of points is unique-cover-modular (def.~\ref{def:cover-modular}).
\end{assumption}

The following example disproves a conjecture that was in preliminary
versions of this article.
\begin{remark}
The poset $t$-\textit{Plait} in \cite{Fom1994a}*{Exam.~2.2.10 and Fig.~7}
is locally finite and cover-modular but is not graded.%
\footnote{``Graded'' means that every saturated chain from $a$ to $b$ has
the same length, but there need not be a $\zerohat$.  In general, the
grade function of a graded poset ranges over the integers, not just the
nonnegative integers.}
\end{remark}

\begin{lemma} \label{lem:x-plus}
Let $p \in P$.
For any $a \in C_p^+$ and any $b \neq p$ that is covered by $a$,
unique-cover-modularity shows that there is a $c \in C_p^-$ that is
covered by $a$ and so $b \in S_p$.
Dually, for any $a \in C_p^-$ and any $b \neq p$ that covers $a$,
there is a $c \in C_p^+$ that covers $a$ and so $b \in S_p$.
For any $a \in S_p$, unique-cover-modularity shows there
is a unique element of $C_p^+$ which is $> a$.
Dually, there is a unique element of $C_p^-$ which is $< a$.
\end{lemma}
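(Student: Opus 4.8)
The plan is to obtain (1) and (2) from a single application of unique-cover-modularity, and to bootstrap (3) and (4) from them together with local finiteness.

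For (1): if $a\in C^+_p$ and $b\neq p$ is covered by $a$, then $p\lessdot a$ and $b\lessdot a$, so $p$ and $b$ are both covered by $a$; unique-cover-modularity (def.~\ref{def:unique-cover-modular}) then furnishes a common element $c$ that both $p$ and $b$ cover. Since $c\lessdot p$ we get $c\in C^-_p$, and $c\lessdot b\lessdot a$ displays $b$ strictly between an element of $C^-_p$ and the element $a\in C^+_p$; as $b\neq p$ this gives $b\in S_p$. Part (2) is the order dual: from $a\lessdot p$ and $a\lessdot b$ with $b\neq p$, unique-cover-modularity supplies a common element $c$ covering both $p$ and $b$, so $c\in C^+_p$ and $a\lessdot b\lessdot c$, whence $b\in S_p$.

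For (3) (and (4) dually), existence is immediate from the definition of $S_p$, so the content is uniqueness. The reduction I would use is the auxiliary claim
\[
  a\in S_p,\quad b\in C^+_p,\quad a<b\ \Longrightarrow\ a\lessdot b .
\]
Granting this, if $b,b'\in C^+_p$ both exceed $a$ then $a\lessdot b$ and $a\lessdot b'$, so $a$ and $p$ are both covered by $b$ and both covered by $b'$; since $a\neq p$, unique-cover-modularity asserts that $\{a,p\}$ has a \emph{unique} common upper cover, and therefore $b=b'$. To prove the auxiliary claim I would induct on the (finite) cardinality of the interval $[a,b]$. If $|[a,b]|=2$ there is nothing to prove; otherwise $a$ is not covered by $b$, so pick $a_1$ with $a\lessdot a_1\le b$. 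Then $a_1\neq b$ (else $a\lessdot b$), so $a<a_1<b$, and $a_1\neq p$ because $a_1>a$ while $a$ is incomparable to $p$. Taking a witness $a^-\in C^-_p$ of $a\in S_p$ we have $a^-<a<a_1<b$, so $a_1\in S_p$; since $|[a_1,b]|<|[a,b]|$, induction gives $a_1\lessdot b$, and it remains to rule out that $[a,b]$ properly contains $\{a,a_1,b\}$ and thereby conclude $a\lessdot b$.

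The hard part is precisely this last step. The plan is to suppose $a$ has a second upper cover $c_1$ with $a\lessdot c_1<b$ (so, again by induction, $a\lessdot c_1\lessdot b$ with $c_1\neq a_1$), and then to wring a contradiction out of the interplay of unique-cover-modularity applied to $\{a_1,c_1\}$, $\{a_1,p\}$ and $\{c_1,p\}$ --- all three pairs being covered by $b$ --- together with parts (1) and (2) and the fact that distinct elements of $C^+_p$ are incomparable. Should a purely order-theoretic argument prove too slippery, the fallback is to transport the question into the ambient distributive (hence modular) lattice $L=\Idealf(P)$, where every interval is modular, and to read off $a\lessdot b$ from the modular structure of the interval between the principal ideals of $a^-$ and $a^+$ in $L$.
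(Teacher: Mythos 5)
Your handling of the first two assertions is correct and is exactly the one-line application of unique-cover-modularity that the paper intends: $p$ and $b$ share the upper cover $a$, hence cover a unique common element $c\in C_p^-$, and $c<b<a$ places $b$ in $S_p$; dually for the second assertion. (You also silently repaired the statement's ``covered by $a$'' to ``covered by $b$,'' which is the right reading.) The problem is the two uniqueness assertions, and there your proposal has a genuine, and admitted, gap: everything is made to rest on the auxiliary claim that $a\in S_p$, $b\in C_p^+$ and $a<b$ force $a\lessdot b$, and that claim is not proved. At this point of the paper neither positivity nor gradedness of $P$ is available ($S_p$ is shown to be an antichain only later, under assump.~\ref{ass:positive}, lem.~\ref{lem:antichain}), so an element of $S_p$ may a priori lie several covering steps above its witness in $C_p^-$ and several below an element of $C_p^+$; the covering claim is therefore precisely the nontrivial content of the uniqueness statements, not a technical preliminary. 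Moreover, even granting your inductive reduction to the configuration $a\lessdot a_1\lessdot b$ with $a_1\in S_p$, what you say remains to be done --- ``rule out that $[a,b]$ properly contains $\{a,a_1,b\}$'' --- would not finish the job: if $[a,b]$ is exactly that three-element chain you still do not have $a\lessdot b$. What must be excluded is the existence of \emph{any} intermediate element, and for that only a plan is offered (and the plan presupposes a second upper cover $c_1$ of $a$ inside $[a,b]$, which need not exist).

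The fallback you sketch also cannot work as stated: $\Idealf(P)$ is a distributive (hence modular) lattice for an \emph{arbitrary} poset $P$, so no appeal to modularity of intervals of $L$ can by itself force a covering relation in $P$; any correct argument must genuinely use unique-cover-modularity of $P$ together with local finiteness/finitariness. For comparison, the paper supplies no proof at all here --- the lemma is asserted as an immediate consequence of unique-cover-modularity, which is accurate for the first two parts, and for the uniqueness parts only once one knows that the relevant comparabilities $c^-<a$ and $a<b$ are coverings (for then $a$ and $p$ share a lower cover and so have a unique common upper cover). You correctly noticed that in the present non-graded setting this is exactly what needs justification, but the proposal as written does not supply it, so the uniqueness halves of the lemma remain unproven.
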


\begin{definition} \label{def:x-plus}
For any $p \in P$ and any $a \in S_p$, we define
$a^+_p$ to be the unique element of $C_p^+$ which is $> a$.
Dually, we define $a^-_p$ to be the unique element of $C_p^-$ which is $< a$.
\end{definition}

\begin{lemma} \label{lem:equiv-2}
The four summation domain sets of (\ref{eq:diff-4-sets}) are:
\begin{enumerate}
\item $D_{x^\prime} \setminus D_x = \{p\}$
\item $D_x \setminus D_{x^\prime}$
is the set of maximal elements of $x$ which are covered by $p$.
This is the same as the elements of $C_p^-$ which are not $\leq$ (or
equivalently, not covered by) any element of $S_{px}^-$.
We denote this property by $\not\leq S_{px}^-$.
\item $I_{x^\prime} \setminus I_x$
is the set of minimal elements of $P\setminus x^\prime$ which cover $p$.
This is the same as the elements of $C_p^+$ which are not $\geq$ (or
equivalently, do not cover) any element of $S_{px}^+$.
We denote this property by $\not\geq S_{px}^+$.
\item $I_x \setminus I_{x^\prime} = \{p\}$
\end{enumerate}
This shows (\ref{eq:diff-4-sets}) is equivalent to
\begin{align}
  w(p) - \sum_{a \mvert a \in C_p^-,\  a \not\leq S_{px}^-} w(a)
  & = \sum_{b \mvert b \in C_p^+,\ b \not\geq S_{px}^+} w(b) - w(p)
& \textup{for all $x \in \Idealf(P)$, $p \in I_x$}
  \notag \\
\noalign{or equivalently}	% Don't want to indent this.
\sum_{a \mvert a \in C_p^-,\ a \not\leq S^-_{px}} w(a) +
\sum_{b \mvert b \in C_p^+,\ b \not\geq S^+_{px}} w(b)
& = 2 w(p)
& \textup{for all $x \in \Idealf(P)$, $p \in I_x$}.
\label{eq:diff-2-sets}
\end{align}
\end{lemma}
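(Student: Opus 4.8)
The plan is to establish the four set identities in turn and then substitute them into (\ref{eq:diff-4-sets}) and rearrange. Identities (1) and (4) were already recorded in the enumeration of the domains of (\ref{eq:diff-4-sets}) immediately preceding the statement, so no further argument is needed for them; all of the work is in (2), with (3) its order-dual.

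For (2) I would argue as follows. An element $a \in D_x$ (maximal in $x$) ceases to be maximal in $x^\prime = x \sqcup \{p\}$ exactly when $a < p$, and, since $p$ is minimal in $P \setminus x$, any element strictly between $a$ and $p$ would lie in $x$ and contradict the maximality of $a$; hence $a < p$ is equivalent to $p$ covering $a$, i.e.\ $a \in C_p^-$. (Conversely $C_p^- \subseteq x$ because $p \in I_x$.) Thus $D_x \setminus D_{x^\prime}$ is precisely the set of $a \in C_p^-$ that are maximal in $x$. It then remains to show that $a \in C_p^-$ is maximal in $x$ iff $a$ is not $\leq$ any element of $S_{px}^-$, and that here ``$\leq$'' may be replaced by ``covered by''. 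If $a$ is not maximal in $x$, choose $a \lessdot y$ with $y$ below some element of $x$ that lies above $a$; then $y \neq p$ (otherwise $p$ would sit below an element of $x$), and $y \in x$, so by lem.~\ref{lem:x-plus} $y \in S_p$, whence $y \in S_p \cap x = S_{px}^-$ and $a \lessdot y$. Conversely, any $s \in S_{px}^- \subseteq x$ with $a \leq s$ satisfies $a < s$ (the sets $C_p^-$ and $S_p$ are disjoint by def.~\ref{def:CNS}), so $a$ is not maximal; and the same first-covering-step argument applied to a chain from $a$ up to $s$ shows that ``$\leq$ an element of $S_{px}^-$'' coincides with ``covered by an element of $S_{px}^-$''. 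This gives (2), and (3) follows verbatim by the order-dual argument, with $C_p^+$, $S_{px}^+$, and $P \setminus (x \sqcup \{p\})$ in place of $C_p^-$, $S_{px}^-$, and $x$, using that $S_{px}^+$ is a filter of $S_p$ (lem.~\ref{lem:Spx-ideal}).

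Finally, substituting (1)--(4) into (\ref{eq:diff-4-sets})---and using th.~\ref{th:differential-distributive} to read each covering weight $w({\cdot} \lessdot {\cdot})$ as the weight $w(\cdot)$ of the corresponding point---replaces its four sums by $w(p)$, $\sum_{a \in C_p^-,\ a \not\leq S_{px}^-} w(a)$, $\sum_{b \in C_p^+,\ b \not\geq S_{px}^+} w(b)$, and $w(p)$, respectively. Since this is a substitution of equal sets, the resulting identity is equivalent to (\ref{eq:diff-4-sets}); transposing the two $w(p)$ terms puts it in the form (\ref{eq:diff-2-sets}).

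The main obstacle is the case analysis in step (2): one must keep straight that the only upward cover of $a \in C_p^-$ that can lie inside $x$ is an element of $S_p$ (never $p$ itself, because $p \notin x$), which rests on lem.~\ref{lem:x-plus} together with $p \notin x$; and one must not overlook the small equivalence between ``$\leq$ an element of $S_{px}^-$'' and ``covered by an element of $S_{px}^-$''. Everything else is routine substitution.
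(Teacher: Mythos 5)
Your proposal is correct and follows essentially the same route as the paper's proof: the substance is the two-directional argument for item (2) (with (3) dual), where you show an element of $C_p^-$ is maximal in $x$ iff it lies below no element of $S_{px}^-$, the only cosmetic difference being that you invoke lem.~\ref{lem:x-plus} to place the covering element $y$ in $S_p$ whereas the paper re-runs the unique-cover-modularity argument directly. You also make explicit the parenthetical equivalence between ``$\leq$'' and ``covered by'' and the final substitution into (\ref{eq:diff-4-sets}), which the paper treats as immediate.
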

\begin{proof} \disconnect
Regarding (1) and (4):  These are straightforward.

Regarding (2) $\Rightarrow$:
Let $a \in D_x \setminus D_{x^\prime}$.
Then $a$ is maximal in $x$ but not maximal in ${x^\prime}$.
Thus $a \leq$ an element of $x^\prime$ which is not an element of $x$.
The only  element of $x^\prime$ that is not an element of $s$ is $p$, so $a < p$.
If $a$ is not covered by $p$, then there is a $b$ with $a < b < p$.
Thus $b \in x^\prime$ but since $a$ is maximal in $x$,
$b \not\in x$, which requires that $b = p$,
which is a contradiction. Thus $a$ is covered by $p$ and so $a \in C^-_p$.

To show that $a \not\leq S_{px}^-$, assume there is a $c$
in $S_{px}^-$ with $a \leq c$. Since $C^-_p$ is disjoint from
$S^-_{ps}$, $a < c$.  But $c \in x$, contradicting
that $a$ is maximal in $x$.  Thus $a \not\leq S_{px}^-$.

Regarding (2) $\Leftarrow$:
Let $a \in C^-_p$ and $a \not\leq S^-_{px}$.
Because $a < p$ and $p \in x^\prime$, $a$ is not maximal in $x^\prime$.

To show that $a$ is maximal in $x$, assume there is a $d$ in $x$ with
$d > a$.  We can assume that $d \gtrdot a$.
Then we apply
unique-cover-modularity to $p$ and $d$ to obtain a $e$ which covers
$p$ and $d$.  So $e \in C^+_p$, and with $a \in C^-_p$ shows
$d \in S_p$.  Since $d \in x$, $d \in S^-_{px}$.
This contradicts that $a \not\leq S^-_{ps}$, so $a$ is maximal in $x$.

Regarding (3): This is proved dually to (2).
\end{proof}

\paragraph{Reorganizing the linear equations}
Looking at (\ref{eq:diff-2-sets}) as a set of linear equations in the values
of $w$, we see that each equation is determined by choosing
first an $x \in \Idealf(P)$ and then a $p \in I_x$.
We will now reorganize the statement of the set of linear equations
(\ref{eq:diff-2-sets}),
but indexed first by an arbitrary choice
of $p \in P$ (which determines $C_p^+$ and $C_p^-$), and then
by the values of
$S_{px}^+$ and $S_{px}^-$ consistent with $p$.

\begin{lemma} \label{lem:equiv-3}
The set of equations (\ref{eq:diff-2-sets}) is the same set as the
equations
\begin{align}
\sum_{a \in C_p^-,\ a \not\leq T^-} w(a) + \sum_{b \in C_p^+,\ b \not\geq T^+} w(b)
& = 2 w(p)
\textup{\qquad for all $p \in P$, $S_p = T^- \sqcup T^+$,
with $T^-$ an ideal and $T^+$ a filter}.
\label{eq:diff-2-sets-part}
\end{align}
\end{lemma}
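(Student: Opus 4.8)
The statement is an equality of two indexed families of linear equations in the values of $w$, and the plan is to prove the two containments separately. \emph{Every equation of \textup{(\ref{eq:diff-2-sets})} occurs in \textup{(\ref{eq:diff-2-sets-part})}:} an equation of (\ref{eq:diff-2-sets}) is indexed by a pair $(x,p)$ with $x \in \Idealf(P)$ and $p \in I_x$; for such a pair, lem.~\ref{lem:Spx-ideal} gives $S_p = S^-_{px} \sqcup S^+_{px}$ with $S^-_{px}$ an ideal and $S^+_{px}$ a filter of $S_p$, so $(p, S^-_{px}, S^+_{px})$ is one of the triples indexing (\ref{eq:diff-2-sets-part}) and the corresponding equations coincide term by term. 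This direction is immediate.

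For the reverse containment, \emph{every equation of \textup{(\ref{eq:diff-2-sets-part})} occurs in \textup{(\ref{eq:diff-2-sets})}}: fix $p \in P$ and a decomposition $S_p = T^- \sqcup T^+$ with $T^-$ an ideal and $T^+$ a filter of $S_p$. The plan is to produce an ideal $x \in \Idealf(P)$ with $p \in I_x$, $S^-_{px} = T^-$, and $S^+_{px} = T^+$; then the equation of (\ref{eq:diff-2-sets}) indexed by $(x,p)$ is precisely the equation of (\ref{eq:diff-2-sets-part}) indexed by $(p, T^-, T^+)$. I would take
\[
  x \ :=\ \{\, q \in P \mvert q < p \,\}\ \cup\ \{\, q \in P \mvert q \leq t \text{ for some } t \in T^- \,\},
\]
the union of the set of elements strictly below $p$ with the order ideal of $P$ generated by $T^-$.

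The verification then consists of four checks. (i) $x$ is a down-set of $P$, being a union of two down-sets. (ii) $p \in I_x$ (def.~\ref{def:ins-del}): every $q < p$ lies in the first set, so no element strictly below $p$ lies in $P \setminus x$, while $p \notin x$ since $p \not< p$ and $p$ is incomparable to every element of $S_p$, hence to every element of $T^-$ (noted just after def.~\ref{def:CNS}). (iii) $S^-_{px} = S_p \cap x = T^-$: the inclusion $\supseteq$ is clear; conversely, if $s \in S_p \cap x$ then $s$ cannot lie in the first set (its members are $< p$ while $s$ is incomparable to $p$), so $s \leq t$ for some $t \in T^-$, and since $T^-$ is an ideal of $S_p$ and $s \in S_p$ we get $s \in T^-$. (iv) $S^+_{px} = S_p \setminus (x \cup \{p\}) = S_p \setminus x = S_p \setminus T^- = T^+$, using $p \notin S_p$ and $S_p = T^- \sqcup T^+$.

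The only check that is not pure bookkeeping is the implicit one that $x \in \Idealf(P)$, i.e.\ that $x$ is \emph{finite}: the first set in the definition of $x$ is finite because $P$ is finitary (lem.~\ref{lem:P-finite}) and its maximal elements form the finite set $C^-_p$, but the second set is finite only once we know $S_p$ itself is finite. I would derive the finiteness of $S_p$ from the finite-cover hypothesis on $L$ (def.~\ref{def:fomin}\ref{i:fomin-A1c}) together with unique-cover-modularity: if $S_p$ were infinite then, since $C^-_p$ is finite and each sibling $s$ lies above a unique $s^-_p \in C^-_p$ (def.~\ref{def:x-plus}), infinitely many siblings would sit above one fixed $a \in C^-_p$, and from this one could exhibit infinitely many insertion points of a single finite ideal, contradicting that $L$ has finite upward covers. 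I expect this finiteness argument to be the main obstacle; the two containments themselves are routine.
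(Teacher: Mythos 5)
Your argument is, in substance, the same as the paper's: the forward containment is identical, and for the reverse containment the paper also manufactures an explicit ideal from the data $(p,T^-,T^+)$ and checks $p\in I_x$, $S^-_{px}=T^-$, $S^+_{px}=T^+$. The paper takes $x=\{z\in P\mvert z<p\}\sqcup T^-$, whereas you take the down-closure of $T^-$ together with $\{z\mvert z<p\}$; your variant is harmless and in fact slightly safer, since it is a down-set by construction, while with the paper's choice one must still argue that every element below a sibling in $T^-$ is either below $p$ or again a sibling in $T^-$. Your checks (i)--(iv) are correct as written.

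The one point where you go beyond the paper -- finiteness of $x$, equivalently of $S_p$ (needed so that $x\in\Idealf(P)$; the paper needs it too but passes over it silently) -- is a real issue, and your sketch does not yet close it: infinitely many siblings lying above one fixed $a\in C^-_p$ need not be insertion points of any single finite ideal, since they can occur at arbitrary heights above $a$, so the contradiction with finite upward covers of $L$ does not follow directly from the pigeonhole step. A way to finish along your lines is to first show that every $q\in P$ has only finitely many upper covers: if $b\gtrdot q$ and $q$ is the only element covered by $b$, then $b$ is minimal in $P\setminus[q]$ and corresponds to one of the finitely many covers of $[q]$ in $L$; otherwise $b$ covers some $e\neq q$, and unique-cover-modularity forces $e$ to cover one of the finitely many lower covers of $q$, with distinct $b$ giving distinct $e$ (uniqueness of the common upper cover). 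So if $q$ had infinitely many upper covers, pigeonholing would hand the same property to some lower cover of $q$, and descending repeats this inside the finite ideal $[q]$ (lem.~\ref{lem:P-finite}), a contradiction. Then $C^+_p$ is finite, and $S_p\subseteq\bigcup_{b\in C^+_p}\{s\mvert s<b\}$ is finite by lem.~\ref{lem:P-finite} again, so your $x$ is indeed a finite ideal and the reverse containment is complete.
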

\begin{proof} \disconnect
A particular equation in the set (\ref{eq:diff-2-sets}) is indexed
first by an $x \in \Idealf(P)$ and then a $p \in I_x$.
$p$ determines $C_p^+$ and $C_p^-$, and $p$ and $x$ together determine
$S^-_{px}$ and $S^+_{px}$.  Necessarily,
$S_p = S^+_{px} \sqcup S^+_{px}$,
so setting $T^- = S^-_{px}$ and $T^+ = S^+_{px}$ shows that the
equation is in the set (\ref{eq:diff-2-sets-part}).

Conversely, a particular equation in the set
(\ref{eq:diff-2-sets-part}) is indexed first by a $p \in P$ and then a
partition of $S_p$ into an ideal $T^-$ and a filter $T^+$.
Define $x = \{z \in P \mvert z < p\} \sqcup T^- = (p) \sqcup T^-$,
the principal ideal
of $P$ generated by $p$, less $p$ itself, plus $T^-$.
By construction, $x$ is an ideal of $P$, $p$ is in insertion point of $x$,
$S^-_{px} = T^-$ and $S^+_{px} = T^+$.
This shows that the equation is in the set (\ref{eq:diff-2-sets}).
\end{proof}

\paragraph{Orphans}
\begin{definition} \label{def:orphan}
Define $x \in C_p^+$ as an \emph{upward orphan} (relative to $p$)
if $x$ does not cover
any element of $S_p$, which is equivalent to $x$ covering only one
element of $N_p$ (namely $p$).
Similarly, we define $x \in C_p^-$ as a \emph{downward orphan}
(relative to $p$) if $x$ is not covered by
any element of $S_p$, which is equivalent to $x$ being covered by only one
element of $N_p$ (namely $p$).
\end{definition}

In (\ref{eq:diff-2-sets-part}), if we choose $T^- = \zeroslash$ and
$T^+ = S_p$, it becomes
\begin{align}
\sum_{a \in C_p^-} w(a) + \sum_{b \in C_p^+,\ b \not\geq S_p} w(b)
& = 2 w(p) \notag
\textup{\qquad for all $p \in P$}
\end{align}
By lem.~\ref{lem:x-plus}, $S_p$ contains every element covered by any element of
$C_p^+$ except for $p$, so the second sum is over all upward orphans.
Thus
\begin{lemma}
\begin{align}
\sum_{a \mvert a \lessdot p} w(a) +
\sum_{b \mvert b \gtrdot p,\ b \textup{ upward orphan}} w(b)
& = 2 w(p)
\textup{\qquad for all $p \in P$}
\label{eq:join-orphan}
\end{align}
\end{lemma}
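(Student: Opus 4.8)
The plan is to obtain (\ref{eq:join-orphan}) as a single, well-chosen instance of the reorganized system (\ref{eq:diff-2-sets-part}) from lem.~\ref{lem:equiv-3}. Fix $p \in P$. Among the partitions $S_p = T^- \sqcup T^+$ with $T^-$ an ideal and $T^+$ a filter of $S_p$, take the extreme choice $T^- = \zeroslash$ and $T^+ = S_p$; here $\zeroslash$ is vacuously an ideal of $S_p$ and $S_p$ is vacuously a filter of $S_p$, so this choice is admissible. Substituting it into (\ref{eq:diff-2-sets-part}) gives
\[
  \sum_{a \in C_p^-} w(a) \;+\; \sum_{b \in C_p^+,\ b \not\geq S_p} w(b) \;=\; 2\,w(p).
\]

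It then remains only to rewrite the two index sets in the form appearing in (\ref{eq:join-orphan}). By def.~\ref{def:CNS}, $C_p^-$ is exactly the set of points covered by $p$, so the first sum is $\sum_{a \mvert a \lessdot p} w(a)$ verbatim. For the second sum, recall from lem.~\ref{lem:equiv-2} that for $b \in C_p^+$ the condition $b \not\geq S_p$ says precisely that $b$ covers no element of $S_p$; the equivalence of "$b \geq s$ for some $s \in S_p$" with "$b \gtrdot s'$ for some $s' \in S_p$" is a short chain argument, namely if $s \le b$ with $s \in S_p$ then (using that $P$ is locally finite, lem.~\ref{lem:P-finite}) a saturated chain from $s$ to $b$ has topmost covering $b \gtrdot t$ with $s \le t < b$, where $t \neq p$ since $s$ is incomparable to $p$, whence $t \in S_p$ by lem.~\ref{lem:x-plus}; the converse is immediate. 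Now by lem.~\ref{lem:x-plus} every point distinct from $p$ that is covered by an element of $C_p^+$ already lies in $S_p$, so "$b$ covers no element of $S_p$" is the same as "$p$ is the only point covered by $b$", which is exactly def.~\ref{def:orphan} of $b$ being an upward orphan relative to $p$. Substituting these descriptions of the two index sets into the display above yields (\ref{eq:join-orphan}).

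I do not expect a genuine obstacle: the identity is simply the $T^- = \zeroslash$ specialization of (\ref{eq:diff-2-sets-part}) re-expressed in the language of covers and orphans. The only point deserving a moment's care is that this translation must not tacitly assume $P$ is graded (which is not among our hypotheses, since $P$ may have several minimal elements); the chain argument above is written so as to make the comparison "$b \ge S_p$ versus $b \gtrdot S_p$" without invoking any grading.
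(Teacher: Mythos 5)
Your proof is correct and is essentially the paper's own argument: the paper likewise obtains (\ref{eq:join-orphan}) by specializing (\ref{eq:diff-2-sets-part}) to $T^- = \zeroslash$, $T^+ = S_p$ and then invoking lem.~\ref{lem:x-plus} to identify the set $\{b \in C_p^+ \mvert b \not\geq S_p\}$ with the upward orphans. The extra chain argument you give for the equivalence of ``$b \geq$ some element of $S_p$'' and ``$b$ covers some element of $S_p$'' is a careful elaboration of a point the paper already records parenthetically in lem.~\ref{lem:equiv-2}, so no substantive difference remains.
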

Equation (\ref{eq:join-orphan}) allows the weight of a point to be
computed as half of the sum of the weights of the elements it covers,
if the point is not covered by any upward orphans.

If we choose $T^- = S_p$ and $T^+ = \zeroslash$, then
(\ref{eq:diff-2-sets-part}) becomes
\begin{lemma}
\begin{align}
\sum_{a \mvert a \lessdot p,\ a \textup{ downward orphan}} w(a) +
\sum_{b \mvert b \gtrdot p} w(b)
& = 2 w(p)
\textup{\qquad for all $p \in P$}
\label{eq:meet-orphan}
\end{align}
\end{lemma}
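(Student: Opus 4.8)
The plan is to derive (\ref{eq:meet-orphan}) from the reorganized linear system (\ref{eq:diff-2-sets-part}) of Lemma~\ref{lem:equiv-3}, in exactly the way (\ref{eq:join-orphan}) was obtained, but using the opposite extreme partition of $S_p$. So first I would fix $p \in P$ and specialize (\ref{eq:diff-2-sets-part}) to the choice $T^- = S_p$, $T^+ = \zeroslash$. This choice is legitimate: the whole poset $S_p$ is trivially an ideal of $S_p$, and $\zeroslash$ is trivially a filter of $S_p$, so $(T^-,T^+)$ is one of the admissible index pairs for that equation.

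Next I would simplify the two sums under this specialization. In the second summand, the condition ``$b \in C_p^+$ and $b \not\geq T^+$'' becomes ``$b \not\geq \zeroslash$'', which holds vacuously for every $b$ (there is no element of $\zeroslash$ that $b$ could dominate), so that sum runs over all of $C_p^+$, i.e.\ over all $b \gtrdot p$. In the first summand, the condition ``$a \in C_p^-$ and $a \not\leq T^- = S_p$'' needs to be identified with ``$a$ a downward orphan''. By the parenthetical equivalence recorded in Lemma~\ref{lem:equiv-2}(2) together with Lemma~\ref{lem:x-plus}, for $a \in C_p^-$ the relation $a \leq s$ for some $s \in S_p$ holds iff $a$ is covered by some element of $S_p$; hence $a \not\leq S_p$ says precisely that $a$ is covered by no element of $S_p$, which is the definition (Definition~\ref{def:orphan}) of $a$ being a downward orphan relative to $p$. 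Thus the first sum runs exactly over the $a \lessdot p$ that are downward orphans.

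Assembling these two simplifications turns the specialized instance of (\ref{eq:diff-2-sets-part}) into
\[
\sum_{a \mvert a \lessdot p,\ a \textup{ downward orphan}} w(a) + \sum_{b \mvert b \gtrdot p} w(b) = 2w(p),
\]
which is (\ref{eq:meet-orphan}). I do not expect a genuine obstacle here: the statement is the order-dual of the lemma yielding (\ref{eq:join-orphan}), obtained by interchanging the roles of $C_p^-$ and $C_p^+$ and of ideals and filters of $S_p$; the only points requiring care are checking that the extreme choice $T^+ = \zeroslash$ is permissible and that its ``orphan'' condition degenerates to a vacuous one on the $C_p^+$ side.
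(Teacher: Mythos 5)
Your proposal is correct and is essentially the paper's own derivation: the paper obtains (\ref{eq:meet-orphan}) precisely by specializing (\ref{eq:diff-2-sets-part}) to $T^- = S_p$, $T^+ = \zeroslash$, dually to the derivation of (\ref{eq:join-orphan}). Your extra care in identifying ``$a \not\leq S_p$'' with ``$a$ is a downward orphan'' via lem.~\ref{lem:x-plus} (and the parenthetical equivalence in lem.~\ref{lem:equiv-2}) just makes explicit what the paper leaves implicit as the dual of its remark for the upward-orphan case.
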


\paragraph{Partitioning \texorpdfstring{$S$}{S} into three sets}
Given $p \in P$, let us partition $S_p$ into three disjoint parts
$S_p = L \sqcup \{m\} \sqcup U$ where $L$ is an ideal of $S$, $U$ is
a filter of $S$, and thus $m$ is an element between $L$ and $U$.
In (\ref{eq:diff-2-sets-part}), setting its $T^- = L$ and its
$T^+ = \{m\}\sqcup U$ gives
\begin{align}
\sum_{a \mvert a \in C_p^-,\ a \not\leq L} w(a) +
\sum_{b \mvert b \in C_p^+,\ b \not\geq \{m\}\sqcup U} w(b)
& = 2 w(p) \label{eq:three-1} \\
\interject{and then setting its $T^- = L \sqcup \{m\}$ and
its $T^+ = U$ in (\ref{eq:diff-2-sets-part}) gives}
\sum_{a \mvert a \in C_p^-,\ a \not\leq L \sqcup \{m\}} w(a) +
\sum_{b \mvert b \in C_p^+,\ b \not\geq U} w(b)
& = 2 w(p) \label{eq:three-2}
\end{align}
Since $x \not\leq L\sqcup\{m\}$ implies $x \not\leq L$
and $x \not\geq \{m\}\sqcup U$ implies $x \not\geq U$,
subtracting (\ref{eq:three-2}) from (\ref{eq:three-1}) gives
\begin{align}
\sum_{a \mvert a \in C_p^-,\ a \not\leq L,\ \textup{not } a \not\leq L\sqcup\{m\}} w(a)
& = \sum_{b \mvert b \in C_p^+,\ b \not\geq U,\ \textup{not } b \not\geq \{m\}\sqcup U} w(b)
\notag
\end{align}
This is equivalent to
\begin{align}
\sum_{a \mvert a \in C_p^-,\ a \not\leq L,\ a \leq m} w(a)
& = \sum_{b \mvert b \in C_p^+,\ b \not\geq U,\ b \geq m} w(b)
\label{eq:L-m-U}
\end{align}

\begin{lemma} \label{lem:equiv-4}
Given a $p \in P$,
equation (\ref{eq:diff-2-sets-part}) holds for every partition of
$S_p$ into an ideal $T^-$ and a filter $T^+$ iff:
\begin{enumerate}
\item equation (\ref{eq:join-orphan}) holds for $p$ and
\item equation (\ref{eq:L-m-U}) holds for $p$ and
for every partition of $S_p$ into three
disjoint sets $S_p = L \sqcup \{m\} \sqcup U$, where L is an ideal of
$S_p$ and $U$ is a filter of $S_p$.
\end{enumerate}
\end{lemma}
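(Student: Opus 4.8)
The plan is to prove the two implications separately. The forward implication ($\Rightarrow$) is essentially the bookkeeping already carried out in the paragraph preceding the statement: if (\ref{eq:diff-2-sets-part}) holds for $p$ and every partition $S_p = T^-\sqcup T^+$ into an ideal and a filter, then conclusion~(1) is just the instance $T^- = \zeroslash$, $T^+ = S_p$ --- there the $C_p^-$-sum runs over all of $C_p^-$ (each $a$ vacuously satisfies $a\not\leq\zeroslash$), and by lem.~\ref{lem:x-plus} the $C_p^+$-sum runs exactly over the upward orphans, so (\ref{eq:diff-2-sets-part}) reduces to (\ref{eq:join-orphan}). For conclusion~(2), given a three-part partition $S_p = L\sqcup\{m\}\sqcup U$ with $L$ an ideal and $U$ a filter, both $(L,\{m\}\sqcup U)$ and $(L\sqcup\{m\},U)$ are partitions of $S_p$ into an ideal and a filter, so (\ref{eq:diff-2-sets-part}) gives both (\ref{eq:three-1}) and (\ref{eq:three-2}); subtracting them exactly as in the derivation of (\ref{eq:L-m-U}) yields (\ref{eq:L-m-U}).

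For the reverse implication ($\Leftarrow$), I would assume (\ref{eq:join-orphan}) and all instances of (\ref{eq:L-m-U}), and prove (\ref{eq:diff-2-sets-part}) for every partition $S_p = T^-\sqcup T^+$ by induction on $|T^-|$. The base case $T^- = \zeroslash$ is (\ref{eq:join-orphan}). For the inductive step with $|T^-|\geq 1$, pick $m$ \emph{maximal} in the ideal $T^-$ and set $L = T^-\setminus\{m\}$, $U = T^+$. Since removing a maximal element of an ideal leaves an ideal, $L$ is an ideal of $S_p$ and $\{m\}\sqcup U = S_p\setminus L$ is the complementary filter, so $S_p = L\sqcup\{m\}\sqcup U$ is a legitimate three-part partition and $(L,\{m\}\sqcup U)$ is an ideal/filter partition whose ideal part has size $|T^-|-1$. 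By the inductive hypothesis (\ref{eq:diff-2-sets-part}) holds for $(L,\{m\}\sqcup U)$, that is, (\ref{eq:three-1}) holds; and (\ref{eq:L-m-U}) holds for $S_p = L\sqcup\{m\}\sqcup U$ by hypothesis. Since (\ref{eq:L-m-U}) asserts precisely that the left-hand sides of (\ref{eq:three-1}) and (\ref{eq:three-2}) coincide, we obtain (\ref{eq:three-2}), which is (\ref{eq:diff-2-sets-part}) for $(L\sqcup\{m\},U) = (T^-,T^+)$, closing the induction.

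I do not expect a serious obstacle. The only genuine idea is noticing that one must peel off a \emph{maximal} (not minimal) element of $T^-$, so that $T^-\setminus\{m\}$ stays an ideal and $T^+\cup\{m\}$ stays a filter, making the three-part partition $S_p = (T^-\setminus\{m\})\sqcup\{m\}\sqcup T^+$ the bridge between the equation to be proved and the one supplied by induction; everything else is the elementary arithmetic of adding and subtracting the two equations, together with the identifications of index sets (``$a\not\leq L$ while not $a\not\leq L\sqcup\{m\}$'' $\Leftrightarrow$ ``$a\not\leq L$ and $a\leq m$'', dually for $C_p^+$) already spelled out just before the statement. The one point requiring a little care is well-foundedness of the induction, i.e.\ that $T^-$ --- equivalently $S_p$ --- is finite; for this I would appeal to local finiteness together with the finite-covers hypothesis (for instance, $C_p^-$ is the set of deletion points of the ideal $\{z\in P\mvert z<p\}$ of $L$ and hence finite), and if necessary reduce the general case to transferring only the finitely many siblings that actually switch sides between the partitions $(\zeroslash,S_p)$ and $(T^-,T^+)$.
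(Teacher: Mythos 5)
Your proposal is correct and follows essentially the same route as the paper's own proof: the forward direction is the bookkeeping already done in deriving (\ref{eq:join-orphan}) and (\ref{eq:L-m-U}), and the reverse direction is the same induction on $|T^-|$, peeling off a maximal element $m$ so that $L = T^-\setminus\{m\}$ remains an ideal and then combining (\ref{eq:three-1}) with (\ref{eq:L-m-U}) to obtain (\ref{eq:three-2}). Your extra remark about well-foundedness (finiteness of $T^-$, inherited from the finite ideals $x$ that index these partitions) is a point the paper passes over silently, but it does not change the argument.
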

\begin{proof} \disconnect
Regarding $\Rightarrow$: This has been proved by the above derivations of
(\ref{eq:join-orphan}), (\ref{eq:meet-orphan}), and (\ref{eq:L-m-U}).

Regarding $\Leftarrow$:
For a given $p$, we prove (\ref{eq:diff-2-sets-part}) for any partition
$S_p = T^- \sqcup T^+$ where $T^-$ is an ideal and $T^+$ is a filter
by induction upward on the size of $T^-$.
If $T^- = \zeroslash$, then by hypothesis (\ref{eq:join-orphan}) holds
for $p$,
which is equivalent to (\ref{eq:diff-2-sets-part}) for $T^- = \zeroslash$.

If $T^- \neq \zeroslash$, then choose $m$ as a maximal element of
$T^-$ and define $L = T^- \setminus \{m\}$ and $U = T^+$, so that
$S_p = L \sqcup \{m\} \sqcup U$ with $L$ being an ideal and $U$ being
a filter.
Since $|L| = |T^-|-1$,
by induction (\ref{eq:diff-2-sets-part}) holds for the partition
$S_p = L \sqcup (\{m\} \sqcup U)$, which is
\begin{align}
\sum_{a \in C_p^-,\ a \not\leq L} w(a) + \sum_{b \in C_p^+,\ b \not\geq \{m\} \sqcup U} w(b)
& = 2 w(p)
\label{eq:equiv-4-1}
\end{align}
By hypothesis, (\ref{eq:L-m-U}) holds for the partition
$S = L \sqcup \{m\} \sqcup U$, which is
\begin{align}
\sum_{a \mvert a \in C_p^-,\ a \not\leq L,\ a \leq m} w(a)
& = \sum_{b \mvert b \in C_p^+,\ b \not\geq U,\ b \geq m} w(b)
\label{eq:equiv-4-2}
\end{align}
Subtracting the left side of (\ref{eq:equiv-4-2}) from the first term of
(\ref{eq:equiv-4-1}) and adding the right side of (\ref{eq:equiv-4-2})
to the second term of (\ref{eq:equiv-4-1}) gives
\begin{align}
  \sum_{a \in C_p^-,\ a \not\leq L,\ a \not\leq m} w(a) +
  \sum_{b \in C_p^+,\ b \not\geq U} w(b)
& = 2 w(p)
\notag
\end{align}
This equation is the same as (\ref{eq:diff-2-sets-part})
for the partition $S_p = (L \sqcup \{m\}) \sqcup U = T^- \sqcup T^+$.

Thus by induction (\ref{eq:diff-2-sets-part}) holds for $p$ and
every partition of $S_p$ into an ideal $T^-$ and a filter $T^+$.
\end{proof}

We can replace (\ref{eq:join-orphan}) with (\ref{eq:meet-orphan}):

\begin{lemma} \label{lem:equiv-4-dual}
Given a $p \in P$,
equation (\ref{eq:diff-2-sets-part}) holds for every partition of
$S_p$ into an ideal $T^-$ and a filter $T^+$ iff:
\begin{enumerate}
\item equation (\ref{eq:meet-orphan}) holds for $p$ and
\item equation (\ref{eq:L-m-U}) holds for $p$ and
for every partition of $S_p$ into three
disjoint sets $S_p = L \sqcup \{m\} \sqcup U$, where L is an ideal of
$S_p$ and $U$ is a filter of $S_p$.
\end{enumerate}
\end{lemma}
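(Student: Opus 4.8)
The plan is to obtain lem.~\ref{lem:equiv-4-dual} from lem.~\ref{lem:equiv-4} by order-reversal, with the direct inductive mirror of the proof of lem.~\ref{lem:equiv-4} available as a self-contained alternative.  For the duality route I would first record that every piece of data entering lem.~\ref{lem:equiv-4} is attached to the single point $p$ and is exchanged in a matched pair when the partial order on $P$ is reversed: $C_p^-$ and $C_p^+$ swap, ``upward orphan'' and ``downward orphan'' swap, ``$T^-$ an ideal'' and ``$T^+$ a filter'' swap, equation~(\ref{eq:diff-2-sets-part}) is carried to itself with its two summands interchanged, equation~(\ref{eq:join-orphan}) (the instance $T^- = \zeroslash$, $T^+ = S_p$ of~(\ref{eq:diff-2-sets-part})) is carried to~(\ref{eq:meet-orphan}) (the instance $T^- = S_p$, $T^+ = \zeroslash$), and equation~(\ref{eq:L-m-U}), quantified over all partitions $S_p = L \sqcup \{m\} \sqcup U$ into three parts with $L$ an ideal and $U$ a filter, is carried to itself, the roles of $L$ and $U$ being swapped along with the two sides of the equation.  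The reversed poset again satisfies assumption~\ref{ass:unique-cover-modular} (unique-cover-modularity is self-dual) and lem.~\ref{lem:x-plus} applies to it as well, so all of these correspondences are legitimate.  Hence lem.~\ref{lem:equiv-4} applied to the reversed order is, after the relabelling just described, precisely lem.~\ref{lem:equiv-4-dual}.

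If a self-contained proof is preferred, I would mirror the proof of lem.~\ref{lem:equiv-4}.  For $\Rightarrow$: when~(\ref{eq:diff-2-sets-part}) holds for every partition, its instance $T^- = S_p$, $T^+ = \zeroslash$ is~(\ref{eq:meet-orphan}), and part~(2) is already the conclusion of the $\Rightarrow$ direction of lem.~\ref{lem:equiv-4}.  For $\Leftarrow$: assuming~(\ref{eq:meet-orphan}) and part~(2), prove~(\ref{eq:diff-2-sets-part}) for an arbitrary partition $S_p = T^- \sqcup T^+$ by induction upward on $|T^+|$, with base case $T^+ = \zeroslash$ given by~(\ref{eq:meet-orphan}).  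For the inductive step, pick $m$ minimal in $T^+$, set $U = T^+ \setminus \{m\}$ and $L = T^-$ (so $L \sqcup \{m\}$ is an ideal and $U$ a filter), apply the inductive hypothesis to the partition $(L \sqcup \{m\}) \sqcup U$ and~(\ref{eq:L-m-U}) to $L \sqcup \{m\} \sqcup U$, and combine them using the disjoint decompositions $\{a \in C_p^- : a \not\leq L\} = \{a \in C_p^- : a \not\leq L \sqcup \{m\}\} \sqcup \{a \in C_p^- : a \not\leq L,\ a \leq m\}$ and $\{b \in C_p^+ : b \not\geq U\} = \{b \in C_p^+ : b \not\geq \{m\} \sqcup U\} \sqcup \{b \in C_p^+ : b \not\geq U,\ b \geq m\}$ to get~(\ref{eq:diff-2-sets-part}) for $L \sqcup (\{m\} \sqcup U) = T^- \sqcup T^+$.

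The argument is essentially formal, so I do not expect a real obstacle; the only thing requiring care is the bookkeeping of exactly which set-theoretic decompositions of $C_p^-$ and $C_p^+$ are valid --- this is where the ideal/filter hypotheses and lem.~\ref{lem:x-plus} are used --- and, if the duality route is taken, the routine verification that every hypothesis of lem.~\ref{lem:equiv-4} is stated locally enough in $p$ to survive passage to the reversed order.
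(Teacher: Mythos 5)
Your proposal is correct and matches the paper's own argument, which simply dualizes the proof of lem.~\ref{lem:equiv-4}, replacing (\ref{eq:join-orphan}) by (\ref{eq:meet-orphan}); your explicit mirrored induction on $|T^+|$ (with $m$ minimal in $T^+$ and the two disjoint decompositions of the index sets) is exactly the dual proof spelled out, and the duality bookkeeping you describe is the justification the paper leaves implicit.
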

\begin{proof} \disconnect
This is proved dually to the proof of lem.~\ref{lem:equiv-4}, using
(\ref{eq:meet-orphan}) in place of (\ref{eq:join-orphan}).
\end{proof}

\paragraph{Positive weighting} \label{para:positive}

\begin{assumption} \label{ass:positive}
Henceforth, we assume that $w$ is an
positive weighting, that is, the set of values $V$, the target of
$w$, is $\mbbZ$, and all of
the values of $w$ are positive.
\end{assumption}

This assumption is not a loss of generality for
our larger purpose, because if a non-positive weighting could be used to
construct a positive weighting using th.~\ref{th:truncate-fomin}
et seq.,
our analysis would reveal the resulting positive weighting directly.

\begin{lemma} \label{lem:isolated-simple}
For any $p \in P$ and for any $x,y \in S_p$,
if $x^+_p = y^+_p$ and $x^-_p = y^-_p$, then $x=y$.
\end{lemma}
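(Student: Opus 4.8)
The plan is to argue by contradiction, using positivity (Assumption~\ref{ass:positive}) together with the differential identity in the form \eqref{eq:L-m-U}. Suppose $x,y\in S_p$ are distinct with $x^+_p=y^+_p$ and $x^-_p=y^-_p$; set $a:=x^-_p=y^-_p\in C_p^-$ and $b:=x^+_p=y^+_p\in C_p^+$, so that $a<x<b$, $a<y<b$. Since the hypotheses and conclusion are symmetric in $x$ and $y$, and $x\neq y$, we may assume $y\not\le x$. By lem.~\ref{lem:P-finite} the principal ideal of $x$ in $P$ is finite, so the set $\{s\in S_p\mvert s\le x\}$ is finite and nonempty (it contains $x$); let $m$ be one of its minimal elements.

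Next I would record the three facts that make the bookkeeping go through. (i) $\{s\in S_p\mvert s<m\}=\zeroslash$, since any such $s$ would satisfy $s<m\le x$, contradicting minimality of $m$; hence $\{m\}$ is an ideal of $S_p$ and $S_p\setminus\{m\}$ is a filter, so $S_p=\zeroslash\sqcup\{m\}\sqcup(S_p\setminus\{m\})$ is a partition of the kind to which \eqref{eq:L-m-U} applies (with $L=\zeroslash$). (ii) Because $m\le x<b$ and $S_p$ is disjoint from $C_p^-$ and $C_p^+$, lem.~\ref{lem:x-plus} and def.~\ref{def:x-plus} give $m^-_p=x^-_p=a$ and $m^+_p=x^+_p=b$, and the uniqueness statements of lem.~\ref{lem:x-plus} give $\{a'\in C_p^-\mvert a'\le m\}=\{a\}$ and $\{b'\in C_p^+\mvert b'\ge m\}=\{b\}$. (iii) $y\in S_p\setminus\{m\}$ (indeed $y\neq m$, since $m\le x$ but $y\not\le x$), and $b=y^+_p>y$, so $b$ lies above an element of the filter $S_p\setminus\{m\}$. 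Applying \eqref{eq:L-m-U} to $p$ with the partition from (i), its left-hand side is $\sum_{a'\in C_p^-,\,a'\le m}w(a')=w(a)$ by (ii), which is positive by Assumption~\ref{ass:positive}; its right-hand side ranges over $\{b'\in C_p^+\mvert b'\ge m,\ b'\not\ge S_p\setminus\{m\}\}$, and by (ii)--(iii) the only candidate $b$ is excluded (as $b\ge y\in S_p\setminus\{m\}$), so the right-hand side is $0$. Thus $w(a)=0$, a contradiction; therefore $x=y$. (Equivalently, one may simply subtract the instance of \eqref{eq:diff-2-sets-part} with $T^-=\{m\}$, $T^+=S_p\setminus\{m\}$ from the instance with $T^-=\zeroslash$, $T^+=S_p$.)

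The delicate point is the choice of partition of $S_p$: one wants the subtraction of two instances of \eqref{eq:diff-2-sets-part} to isolate the single weight $w(a)$ on the left while the right collapses to an empty sum. Passing from $x$ itself to a minimal element $m$ of $S_p$ below $x$ is exactly what forces the ideal part $\{s\in S_p\mvert s<m\}$ to be empty (so the left side is unconditionally $w(a)$), and the hypothesis $y\not\le x$ — available after the harmless symmetrization — is precisely what guarantees that $b$ sits above an element of the complementary filter, so the right side vanishes. Once those two observations are in place, the rest is routine manipulation of the uniqueness assertions in lem.~\ref{lem:x-plus}.
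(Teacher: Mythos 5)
Your proof is correct and takes essentially the same route as the paper: you subtract two instances of (\ref{eq:diff-2-sets-part}) (packaged as (\ref{eq:L-m-U})) so that the upward term at $b=x^+_p=y^+_p$ cancels because $b\ge y$ with $y$ in the filter part, isolating $w(x^-_p)=0$ and contradicting positivity. The only cosmetic difference is that the paper takes the ideal $T^-$ to be the whole principal ideal of $x$ within $S_p$, whereas you take the singleton of a minimal element $m\le x$ (hence your extra appeal to lem.~\ref{lem:P-finite} and the check that $\{m\}$ is an ideal), with $m^-_p=x^-_p$ making the isolated weight the same.
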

\begin{proof} \disconnect
Assume there is $x,y \in S_p$, with $x \neq y$, $x^+_p = y^+_p$, and
$x^-_p = y^-_p$.
Since $x \neq y$, either $x \not\leq y$ or $y \not\leq x$.  Without loss of
generality, assume $y \not\leq x$.
Define
$X = \{s \in S_p \mvert s^-_p = x^-_p \textup{ and } s^+_p = x^+_p
\textup{ and } s \leq x \} = [x] \cap [x^-_p, x^+_p]$, the principal ideal
generated by $x$ within the interval $[x^-_p, x^+_p]$.
By construction, $x \in X$ and $y \not\in X$.

Given $a \in C_p^-$,
$a \leq X$ iff $a \leq x$, so $a \not\leq X$ iff $a \not\leq x$.
Given $b \in C_p^+$, if $b \geq$ some $z \in X$, $z^+_p = b$.
But since $z \leq x$, lem.~\ref{lem:x-plus} shows
$z^+_p = x^+_p$, so $b = z^+_p = x^+_p = y^+_p$ and $b \geq y \not\in X$.
Thus $b \geq S_p$ iff $b \geq S_p \setminus X$ and
$b \not\geq S_p$ iff $b \not\geq S_p \setminus X$.

Applying (\ref{eq:diff-2-sets-part}) to $S_p = \zeroslash \sqcup S_p$ gives
\begin{align}
\sum_{a \in C_p^-,\ a \not\leq \zeroslash} w(a) + \sum_{b \in C_p^+,\ b \not\geq S_p} w(b)
& = 2 w(p) \notag \\
\interject{which is equivalent to}
\sum_{a \in C_p^-} w(a) + \sum_{b \in C_p^+,\ b \not\geq S_p \setminus X} w(b)
& = 2 w(p) \label{eq:isolated-simple-1}
\end{align}
Applying (\ref{eq:diff-2-sets-part}) to
$S_p = X \sqcup (S_p \setminus X)$ gives
\begin{align}
  \sum_{a \in C_p^-,\ a \not\leq X} w(a) +
  \sum_{b \in C_p^+,\ b \not\geq S_p \setminus X} w(b)
& = 2 w(p) \notag \\
\interject{which is equivalent to}
  \sum_{a \in C_p^-,\ a \not\leq x} w(a) +
  \sum_{b \in C_p^+,\ b \not\geq S_p \setminus X} w(b)
& = 2 w(p) \label{eq:isolated-simple-2}
\end{align}
Subtracting (\ref{eq:isolated-simple-2}) from  (\ref{eq:isolated-simple-1})
gives $w(x^-_p) = 0$.
By the assumption that $w$ is positive, this is a contradiction.
Thus $x = y$.
\end{proof}

\begin{lemma} \label{lem:antichain}
For any $p \in P$, $S_p$ is an antichain.
\end{lemma}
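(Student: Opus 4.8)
The plan is to obtain this as an immediate consequence of Lemma~\ref{lem:isolated-simple} via a contradiction argument. Suppose $S_p$ is not an antichain, so that there exist distinct $x,y\in S_p$ with $x<y$. The key observation is that comparability forces $x$ and $y$ to share their bracketing covers in $C_p^-$ and $C_p^+$. Indeed, $y^+_p\in C_p^+$ and $y^+_p>y>x$, so $y^+_p$ is \emph{an} element of $C_p^+$ lying above $x$; by the uniqueness clause of Lemma~\ref{lem:x-plus}, $x^+_p$ is the \emph{only} element of $C_p^+$ above $x$, hence $x^+_p=y^+_p$. Dually, $x^-_p\in C_p^-$ and $x^-_p<x<y$, so $x^-_p$ is an element of $C_p^-$ lying below $y$, and uniqueness (applied to $y$) gives $y^-_p=x^-_p$.

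Now $x$ and $y$ are distinct elements of $S_p$ with $x^+_p=y^+_p$ and $x^-_p=y^-_p$, which is exactly the configuration ruled out by Lemma~\ref{lem:isolated-simple}. This contradiction shows that no comparable pair can exist in $S_p$, so $S_p$ is an antichain.

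I do not expect a genuine obstacle here: all the real content has already been spent in Lemma~\ref{lem:isolated-simple}, where positivity of $w$ (Assumption~\ref{ass:positive}) is used to force $w(x^-_p)=0$ by subtracting two instances of~(\ref{eq:diff-2-sets-part}) and thereby reach a contradiction. The only point requiring a little care in the present argument is the bookkeeping of which of $x,y$ the uniqueness statement of Lemma~\ref{lem:x-plus} is applied to: one must first recognize $y^+_p$ (respectively $x^-_p$) as an admissible witness \emph{for the other} of the two elements before invoking uniqueness. Note also that no preliminary reduction to a covering relation $x\lessdot y$ is needed, since the uniqueness argument works directly from $x<y$.
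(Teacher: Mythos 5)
Your proof is correct and follows the paper's argument exactly: assume $x<y$ in $S_p$, use the uniqueness clause of lem.~\ref{lem:x-plus} to conclude $x^+_p=y^+_p$ and $x^-_p=y^-_p$, then invoke lem.~\ref{lem:isolated-simple} for the contradiction. The only difference is that you spell out the bookkeeping of the uniqueness application, which the paper leaves implicit.
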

\begin{proof} \disconnect
Assume there exists $x < y$ in $S_p$.
By lem.~\ref{lem:x-plus}, $x^+_p = y^+_p$ and $x^-_p = y^-_p$.
By lem.~\ref{lem:isolated-simple}, $x = y$, which is a contradiction.
Thus $S_p$ is an antichain.
\end{proof}

Thus, for any $p$, any subset of $S_p$ is both an ideal and a filter,
which allows (\ref{eq:diff-2-sets-part}) and (\ref{eq:L-m-U})
to be applied quite broadly.
Consider how unique-cover-modularity simplifies (\ref{eq:L-m-U}):
For any given $m \in S_p$, any $a$ on the left-hand side of
(\ref{eq:L-m-U}) can only be $m^-_p$.  Similarly,
any $b$ on the right-hand side can only be $m^+_p$.  Thus
(\ref{eq:L-m-U}) is equivalent to:
\begin{align}
\begin{rcases}
  \textup{if } m^-_p \not\leq L \textup{ } & w(m^-_p) \\
  \textup{otherwise }                    & 0
\end{rcases}
& =
\begin{cases}
  w(m^+_p) & \textup{ if } m^+_p \not\geq U \\
  0      & \textup{ otherwise}
\end{cases}
\label{eq:L-m-U-iso}
\end{align}

If we set $L=\zeroslash$ and $U=S_p\setminus \{m\}$,
\begin{align}
w(m^-_p) & =
\begin{cases}
  w(m^+_p) & \textup{ if } m^+_p \not\geq U \\
  0      & \textup{ otherwise}
\end{cases}
\label{eq:L-m-U-iso-1}
\end{align}
Given the assumption of unique-cover-modularity,
$m^+_p \geq z$ for some $z \in U$ iff
$m^+_p = z^+_p$ for some $z \in U$.
So $m^+_p \not\geq U$ iff there is no $z \in U$ with $m^+_p = z^+_p$.
Given the assumption of positivity, $w(m^-_p) > 0$, which is
contradicted by (\ref{eq:L-m-U-iso-1}) unless there is no $z \in S_p$
with $z^+_p = m^+_p$ and $z \neq m$.

Dually,
if we set $L=S_p\setminus \{m\}$ and $U=\zeroslash$,
we get a contradiction
unless there is no $z \in S_p$ with $z^-_p = m^-_p$ and $z \neq m$.

Thus for every $m \neq n$ in $S_p$, $m^-_p \neq n^-_p$ and $m^+_p \neq n^+_p$,
and so the set of pairs $\{(m^-_p, m^+_p) \mvert m \in S_p\}$
is a bijection between subsets of $C_p^-$ and $C_p^+$.
In addition, (\ref{eq:L-m-U-iso}) reduces to
\begin{align}
w(m^-_p) & = w(m^+_p) \textup{\qquad for all $p \in P$ and $m \in S_p$}
\label{eq:w-plus-minus}
\end{align}

\begin{lemma} \label{lem:equiv-5}
Given $p \in P$,
(\ref{eq:L-m-U}) holds for every partition of $S_p$ into three
disjoint sets $S_p = L \sqcup \{m\} \sqcup U$
with $L$ an ideal of $S_P$ and $U$ a filter of $S_p$
iff the following conditions are true:
\begin{enumerate}
\item the set of pairs $\{(m^-_p, m^+_p) \mvert m \in S_p\}$
is a bijection between subsets of $C_p^-$ and $C_p^+$ and
\item for every $m \in S_p$, $w(m^-_p) = w(m^+_p)$.
\end{enumerate}
\end{lemma}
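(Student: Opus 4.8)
The plan is to prove both implications, drawing on lem.~\ref{lem:antichain} — so that $S_p$ is an antichain and hence every subset of $S_p$ is simultaneously an ideal and a filter of $S_p$ — together with unique-cover-modularity in the form of lem.~\ref{lem:x-plus} and def.~\ref{def:x-plus}, which guarantee that for each $s \in S_p$ the element $s^-_p$ is the unique member of $C_p^-$ with $s^-_p < s$ and $s^+_p$ is the unique member of $C_p^+$ with $s < s^+_p$. In particular, for any $m \in S_p$ the only $a \in C_p^-$ with $a \leq m$ is $m^-_p$ and the only $b \in C_p^+$ with $b \geq m$ is $m^+_p$, so for every partition $S_p = L \sqcup \{m\} \sqcup U$ into an ideal, a singleton, and a filter, equation (\ref{eq:L-m-U}) collapses to the single-term form (\ref{eq:L-m-U-iso}).

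For the forward direction I would repackage the derivation preceding the statement. Assuming (\ref{eq:L-m-U}) holds for every partition, fix $m \in S_p$; taking $L = \zeroslash$ and $U = S_p \setminus \{m\}$ turns (\ref{eq:L-m-U-iso}) into (\ref{eq:L-m-U-iso-1}), and since $w(m^-_p) > 0$ by positivity (Assumption~\ref{ass:positive}) the right-hand side cannot be $0$, so $m^+_p \not\geq U$; because $m^+_p \geq z$ with $z \in S_p$ forces $m^+_p = z^+_p$, this says no $z \in S_p \setminus \{m\}$ has $z^+_p = m^+_p$. The dual choice $L = S_p \setminus \{m\}$, $U = \zeroslash$ shows no $z \in S_p \setminus \{m\}$ has $z^-_p = m^-_p$. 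Hence $m \mapsto m^-_p$ and $m \mapsto m^+_p$ are both injective on $S_p$, which gives condition (1): the set of pairs $\{(m^-_p,m^+_p) \mvert m \in S_p\}$ is then the graph of a bijection between a subset of $C_p^-$ and a subset of $C_p^+$. With this injectivity available, in (\ref{eq:L-m-U-iso}) neither ``otherwise'' branch can occur, since $m^-_p \not\leq L$ and $m^+_p \not\geq U$ whenever $m \notin L$ and $m \notin U$, so (\ref{eq:L-m-U-iso}) reduces to $w(m^-_p) = w(m^+_p)$, which is condition (2).

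For the converse I would fix a partition $S_p = L \sqcup \{m\} \sqcup U$ (with $L$ an ideal and $U$ a filter of $S_p$) and simplify (\ref{eq:L-m-U}) directly using conditions (1) and (2). Its left side is $w(m^-_p)$ if $m^-_p \not\leq L$ and $0$ otherwise; but if $m^-_p \leq \ell$ for some $\ell \in L \subseteq S_p$, then $\ell^-_p = m^-_p$ by uniqueness of the element of $C_p^-$ below $\ell$, whence $\ell = m$ by the injectivity in condition (1), contradicting $m \notin L$, so the left side equals $w(m^-_p)$. Dually the right side equals $w(m^+_p)$. Thus (\ref{eq:L-m-U}) reduces to $w(m^-_p) = w(m^+_p)$, which holds by condition (2); hence (\ref{eq:L-m-U}) holds for every such partition.

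The main — and essentially only — delicate point is the handling of the ``otherwise'' branches of (\ref{eq:L-m-U-iso}): one must consistently use unique-cover-modularity to convert an inequality such as ``$m^-_p \leq \ell$'' into the equality ``$m^-_p = \ell^-_p$'' before invoking injectivity, and keep straight that $m \notin L$ and $m \notin U$ are exactly what force $m^-_p \not\leq L$ and $m^+_p \not\geq U$. Everything else is a routine unwinding of the definitions, given that $S_p$ is an antichain so that $L$, $\{m\}$, $U$ may be chosen freely.
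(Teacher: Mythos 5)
Your proposal is correct and follows essentially the same route as the paper: the preceding derivation (antichain property plus unique-cover-modularity collapsing (\ref{eq:L-m-U}) to (\ref{eq:L-m-U-iso}), then the extreme partitions $L=\zeroslash$ and $U=\zeroslash$ with positivity forcing injectivity of $m \mapsto m^-_p$ and $m \mapsto m^+_p$) gives the forward direction, and the converse uses hypothesis (1) with lem.~\ref{lem:x-plus} to rule out the ``otherwise'' branches so that (\ref{eq:L-m-U-iso}) reduces to hypothesis (2), exactly as in the paper's proof.
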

\begin{proof} \disconnect
Regarding $\Rightarrow$: This has been shown by the above derivations.

Regarding $\Leftarrow$:
Assume $S_p = L \sqcup \{m\} \sqcup U$.
By the above derivation,
(\ref{eq:L-m-U}) is the equivalent to (\ref{eq:L-m-U-iso}).
By hypothesis (1) and lem.~\ref{lem:x-plus},
$m^-_p \leq z$ for some $z \in L$ iff
$m^-_p = z^-_p$ iff $m = z$.  But by construction, $m \not\in L$,
so $m^-_p \not\leq L$ is always true.  Dually, $m^+_p \not\leq U$
is always true.  Thus (\ref{eq:L-m-U-iso}) is equivalent to
$w(m^-_p) = w(m^+_p)$ which is assumed as hypothesis (2).
\end{proof}

At this point, let us summarize our progress:

\begin{lemma} \label{lem:equiv-6}
Given
\begin{enumerate}
\item[] (assump.~\ref{ass:distributive}) $L$ satisfies the criteria of
  a Fomin lattice other than \ref{i:fomin-A}\ref{i:fomin-A3},
  the weighted-differential property,
\item[] (assump.~\ref{ass:distributive}) $L$ has a $\zerohat$,
\item[] (assump.~\ref{ass:distributive}) $L$ is distributive,
\item[] (assump.~\ref{ass:unique-cover-modular}) $P$ is unique-cover-modular,
and
\item[] (assump.~\ref{ass:positive}) $w(\bullet)$ from $P$ into
  $V = \mbbZ$ has only positive values,
\end{enumerate}
then
$L$ satisfies (9) (and thus is a Fomin lattice) iff
\begin{enumerate}
\item[] (eq.~\ref{eq:minimal}) $r = \sum_{z \mvert z \textup{ minimal in } P} w(z)$,
\item[] (eq.~\ref{eq:join-orphan}) $\sum_{a \mvert a \lessdot p} w(a) +
\sum_{b \mvert b \gtrdot p,\ b \textup{ upward orphan}} w(b) = 2 w(p)$
for all $p \in P$,
\item[] (lem.~\ref{lem:equiv-5}(1)) the set of pairs
$\{(m^-_p, m^+_p) \mvert m \in S_p\}$ is a bijection between
subsets of $C_p^-$ and $C_p^+$ for all $p \in P$, and
\item[] (lem.~\ref{lem:equiv-5}(2)) $w(m^-_p) = w(m^+_p)$
for all $p \in P$, $m \in S_p$.
\end{enumerate}
\end{lemma}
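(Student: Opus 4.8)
The plan is simply to stitch together the string of equivalences proved in \ref{lem:equiv-1}--\ref{lem:equiv-5}; there is no genuinely new content in the statement, only the task of tracking which quantifier attaches to which condition. Under the standing hypotheses (\ref{ass:distributive}, \ref{ass:unique-cover-modular}, \ref{ass:positive}) we may invoke \ref{th:differential-distributive}, which identifies ``$L$ is a Fomin lattice'' with ``(\ref{eq:differential-distributive}) holds for every $x\in\Idealf(P)$''. So it suffices to show that the latter is equivalent to the conjunction of the four displayed conditions.

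First I would apply \ref{lem:equiv-1} to replace (\ref{eq:differential-distributive}) (for all $x$) by the pair: (\ref{eq:minimal}), together with (\ref{eq:diff-4-sets}) for every $x\in\Idealf(P)$ and every $p\in I_x$. Next, \ref{lem:equiv-2} rewrites each instance of (\ref{eq:diff-4-sets}) as the corresponding instance of (\ref{eq:diff-2-sets}); and \ref{lem:equiv-3} observes that the family (\ref{eq:diff-2-sets}), as $x$ and $p\in I_x$ vary, is literally the same family of linear equations as (\ref{eq:diff-2-sets-part}), now reindexed by a point $p\in P$ together with a splitting $S_p=T^-\sqcup T^+$ into an ideal and a filter. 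At this stage ``$L$ is a Fomin lattice'' has become: (\ref{eq:minimal}) holds, and for every $p\in P$ equation (\ref{eq:diff-2-sets-part}) holds for every such splitting of $S_p$.

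Now fix $p$. Here the key ingredient is that, under positivity, $S_p$ is an antichain (\ref{lem:antichain}), so ``ideal'' and ``filter'' of $S_p$ mean ``arbitrary subset'' and the three-part splittings $S_p=L\sqcup\{m\}\sqcup U$ simply single out one element $m$. With this in hand, \ref{lem:equiv-4} turns ``(\ref{eq:diff-2-sets-part}) for every splitting of $S_p$'' into the pair: (\ref{eq:join-orphan}) for $p$, together with (\ref{eq:L-m-U}) for $p$ and every three-part splitting; and \ref{lem:equiv-5} in turn replaces the latter by the two conditions that $\{(m^-_p,m^+_p):m\in S_p\}$ is a bijection between subsets of $C_p^-$ and $C_p^+$ and that $w(m^-_p)=w(m^+_p)$ for all $m\in S_p$. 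Conjoining these per-point equivalences over all $p\in P$ and keeping (\ref{eq:minimal}) yields exactly the four-item list, which completes the proof. One may, if preferred, use \ref{lem:equiv-4-dual} to state the result with (\ref{eq:meet-orphan}) in place of (\ref{eq:join-orphan}).

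The only thing to be careful about---and the nearest thing to an ``obstacle''---is the quantifier bookkeeping: \ref{lem:equiv-4} and \ref{lem:equiv-5} are stated for a single fixed $p$, so one must first gather, for each $p$, the ``for every partition of $S_p$'' hypotheses, apply the two lemmas pointwise, and only then universally quantify the resulting conditions over $p\in P$. No computation is required beyond what the cited lemmas already supply.
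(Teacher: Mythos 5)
Your proposal is correct and matches the paper's own proof, which simply chains together th.~\ref{th:differential-distributive} and lem.~\ref{lem:equiv-1}, \ref{lem:equiv-2}, \ref{lem:equiv-3}, \ref{lem:equiv-4}, and~\ref{lem:equiv-5}; your extra care with the per-point quantifier bookkeeping only makes explicit what the paper leaves implicit.
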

\begin{proof} \disconnect
  This is proved by chaining together
  th.~\ref{th:differential-distributive}, lem.~\ref{lem:equiv-1},
  lem.~\ref{lem:equiv-2}, lem.~\ref{lem:equiv-3},
  lem.~\ref{lem:equiv-4}, and lem.~\ref{lem:equiv-5}.
\end{proof}

Similarly to lem.~\ref{lem:equiv-4} and~\ref{lem:equiv-4-dual}, we can
replace (\ref{eq:join-orphan}) with (\ref{eq:meet-orphan}):

\begin{lemma} \label{lem:equiv-6-dual}
Given
\begin{enumerate}
\item[] (assump.~\ref{ass:distributive}) $L$ satisfies the criteria of
  a Fomin lattice other than \ref{i:fomin-A}\ref{i:fomin-A3},
  the weighted-differential property,
\item[] (assump.~\ref{ass:distributive}) $L$ has a $\zerohat$,
\item[] (assump.~\ref{ass:distributive}) $L$ is distributive,
\item[] (assump.~\ref{ass:unique-cover-modular}) $P$ is unique-cover-modular,
and
\item[] (assump.~\ref{ass:positive}) $w(\bullet)$ from $P$ into
  $V = \mbbZ$ has only positive values,
\end{enumerate}
then
$L$ satisfies (9) (and thus is a Fomin lattice) iff
\begin{enumerate}
\item[] (eq.~\ref{eq:minimal}) $r = \sum_{z \mvert z \textup{ minimal in } P} w(z)$,
\item[] (eq.~\ref{eq:meet-orphan})
  $\sum_{a \mvert a \lessdot p,\ a \textup{ downward orphan}} w(a) +
\sum_{b \mvert b \gtrdot p} w(b) = 2 w(p)$
for all $p \in P$,
\item[] (lem.~\ref{lem:equiv-5}(1)) the set of pairs
$\{(m^-_p, m^+_p) \mvert m \in S_p\}$ is a bijection between
subsets of $C_p^-$ and $C_p^+$ for all $p \in P$, and
\item[] (lem.~\ref{lem:equiv-5}(2)) $w(m^-_p) = w(m^+_p)$
for all $p \in P$, $m \in S_p$.
\end{enumerate}
\end{lemma}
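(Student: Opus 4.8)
The plan is to repeat the chain of equivalences behind lem.~\ref{lem:equiv-6} essentially verbatim, substituting lem.~\ref{lem:equiv-4-dual} for lem.~\ref{lem:equiv-4} at the single point where equation~(\ref{eq:join-orphan}) enters.

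First I would recall that proof of lem.~\ref{lem:equiv-6}: th.~\ref{th:differential-distributive} rewrites the weighted-differential condition~(\ref{eq:differential}) as~(\ref{eq:differential-distributive}); lem.~\ref{lem:equiv-1} replaces it by the conjunction of~(\ref{eq:minimal}) with the family~(\ref{eq:diff-4-sets}) over all $x \in \Idealf(P)$ and $p \in I_x$; lem.~\ref{lem:equiv-2} and lem.~\ref{lem:equiv-3} show that family is the same family of linear equations as~(\ref{eq:diff-2-sets-part}), re-indexed by $p \in P$ and a splitting $S_p = T^- \sqcup T^+$ into an ideal and a filter. The only place that argument then commits to the precise shape of the second bullet is the appeal to lem.~\ref{lem:equiv-4}, which turns ``(\ref{eq:diff-2-sets-part}) for all splittings of $S_p$'' into ``(\ref{eq:join-orphan}) for $p$ together with~(\ref{eq:L-m-U}) for all three-part splittings of $S_p$'', whereupon lem.~\ref{lem:equiv-5} restates the latter as the bijection condition plus the equalities $w(m^-_p) = w(m^+_p)$.

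Next I would invoke lem.~\ref{lem:equiv-4-dual} in place of lem.~\ref{lem:equiv-4}. It performs exactly the same reduction, but with~(\ref{eq:meet-orphan}) playing the role of~(\ref{eq:join-orphan}); the three-part-splitting half of the conclusion is untouched. Chaining th.~\ref{th:differential-distributive}, lem.~\ref{lem:equiv-1}, lem.~\ref{lem:equiv-2}, lem.~\ref{lem:equiv-3}, lem.~\ref{lem:equiv-4-dual}, and lem.~\ref{lem:equiv-5} then delivers precisely the four-part characterization stated here, with~(\ref{eq:meet-orphan}) substituted for~(\ref{eq:join-orphan}).

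There is no real obstacle: lem.~\ref{lem:equiv-4-dual} already carries the only nontrivial step, being the dual of lem.~\ref{lem:equiv-4} under order reversal. The one thing to verify is bookkeeping — that the five hypotheses listed here (the Fomin-lattice axioms minus the weighted-differential property, existence of $\zerohat$, distributivity, unique-cover-modularity of $P$, positivity of $w$) are exactly what assump.~\ref{ass:distributive}, \ref{ass:unique-cover-modular}, and~\ref{ass:positive} supply, so each of the six cited results applies unchanged, as it did for lem.~\ref{lem:equiv-6}.
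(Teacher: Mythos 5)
Your proposal is correct and matches the paper's proof exactly: the paper also establishes lem.~\ref{lem:equiv-6-dual} by chaining th.~\ref{th:differential-distributive}, lem.~\ref{lem:equiv-1}, lem.~\ref{lem:equiv-2}, lem.~\ref{lem:equiv-3}, lem.~\ref{lem:equiv-4-dual}, and lem.~\ref{lem:equiv-5}, i.e.\ the same chain as for lem.~\ref{lem:equiv-6} with lem.~\ref{lem:equiv-4-dual} substituted for lem.~\ref{lem:equiv-4}. Your bookkeeping of the hypotheses is also consistent with how the paper uses assump.~\ref{ass:distributive}, \ref{ass:unique-cover-modular}, and~\ref{ass:positive}.
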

\begin{proof} \disconnect
  This is proved by chaining together
  th.~\ref{th:differential-distributive}, lem.~\ref{lem:equiv-1},
  lem.~\ref{lem:equiv-2}, lem.~\ref{lem:equiv-3},
  lem.~\ref{lem:equiv-4-dual}, and lem.~\ref{lem:equiv-5}.
\end{proof}

\paragraph{No upward or downward triple covers}

\begin{lemma} \label{lem:no-triples}
  No element of $P$ is covered by three or more distinct elements.
\end{lemma}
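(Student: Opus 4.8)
The plan is to argue by contradiction, exploiting the symmetry of the situation by analysing the neighbourhood of one of the three covers rather than that of $p$ itself. Suppose $p\in P$ is covered by three distinct points $b_1,b_2,b_3$. Since $b_1$ and $b_2$ both cover $p$, unique-cover-modularity (assump.~\ref{ass:unique-cover-modular}) provides an element $e_{12}$ covering both $b_1$ and $b_2$; likewise, $b_1$ and $b_3$ both covering $p$ yields an element $e_{13}$ covering both $b_1$ and $b_3$. (The elements $e_{12}$ and $e_{13}$ need not be distinct; that is irrelevant.)

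First I would verify that $b_2$ and $b_3$ both belong to $S_{b_1}$. Indeed $p\lessdot b_1$ gives $p\in C^-_{b_1}$, $b_1\lessdot e_{12}$ gives $e_{12}\in C^+_{b_1}$, and $p<b_2<e_{12}$ with $b_2\neq b_1$, so $b_2\in S_{b_1}$ by def.~\ref{def:CNS}; the same argument with $e_{13}$ gives $b_3\in S_{b_1}$. Now lem.~\ref{lem:x-plus} and def.~\ref{def:x-plus} say that $(b_2)^-_{b_1}$ is the \emph{unique} element of $C^-_{b_1}$ that lies below $b_2$; since $p$ is such an element, $(b_2)^-_{b_1}=p$, and for the same reason $(b_3)^-_{b_1}=p$. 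But $b_2$ and $b_3$ are distinct elements of $S_{b_1}$, so by lem.~\ref{lem:equiv-5}(1) — equivalently, by the conclusion established just before it that $m^-_p\neq n^-_p$ whenever $m\neq n$ in $S_p$, which uses assump.~\ref{ass:positive} — we must have $(b_2)^-_{b_1}\neq(b_3)^-_{b_1}$, a contradiction. Hence no point of $P$ has three distinct covers.

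The only step that is not pure bookkeeping is the verification that $b_2,b_3\in S_{b_1}$: this is where unique-cover-modularity does the real work, producing the upper witnesses $e_{12},e_{13}$ without which $b_2$ and $b_3$ would not be confined to the neighbourhood of $b_1$. The conceptual point — and the thing I would have to spot — is that the invariant to invoke is the injectivity of $m\mapsto m^-_{\bullet}$ on the sibling set, and that it must be invoked at $b_1$ (where the three covers of $p$ force two siblings of $b_1$ to share the lower partner $p$) rather than at $p$ itself. Everything else is just unwinding def.~\ref{def:CNS}, def.~\ref{def:x-plus}, and lem.~\ref{lem:x-plus}; note in particular that the orphan equations (\ref{eq:join-orphan})–(\ref{eq:meet-orphan}) are not needed for this lemma.
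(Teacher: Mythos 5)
Your proof is correct, but it takes a genuinely different and shorter route than the paper's. You localize at one of the three covers $b_1$ and observe that the other two covers $b_2,b_3$ land in $S_{b_1}$ (the witnesses $e_{12},e_{13}$ from unique-cover-modularity make this legitimate, as you check), both with lower partner $(b_2)^-_{b_1}=(b_3)^-_{b_1}=p$ by lem.~\ref{lem:x-plus}; this contradicts the injectivity of $m\mapsto m^-_{b_1}$ on $S_{b_1}$, which the paper has indeed already established in the derivation preceding lem.~\ref{lem:equiv-5} (from~(\ref{eq:L-m-U}), lem.~\ref{lem:antichain}, and positivity), so there is no circularity. The paper's own proof never invokes that one-coordinate injectivity: it builds a two-story configuration above $a$ (the pairwise joins $pq_2,qr_2,rp_2$ and further covers $p_3,q_3,r_3$), using only the weaker lem.~\ref{lem:isolated-simple} together with~(\ref{eq:w-plus-minus}) and the orphan equations~(\ref{eq:join-orphan}) and~(\ref{eq:meet-orphan}), and reaches the numerical contradiction $2w(a)\geq 3w(a)$. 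What your route buys is economy: no orphan equations, far less bookkeeping, and the dual statement lem.~\ref{lem:no-triples-dual} follows by the same one-step argument using injectivity of $m\mapsto m^+_p$; what the paper's route shows is that the conclusion already follows from the weaker two-coordinate uniqueness plus the orphan inequalities. Both arguments rest essentially on positivity, yours through the $L$--$m$--$U$ analysis, the paper's through the inequality chain.
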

\begin{proof} \disconnect
The construction for this poof is shown in fig.~\ref{fig:Triple}.

\begin{figure}[htp]
\centering
\includegraphics[page=\ipeFigTriple,scale=0.9]{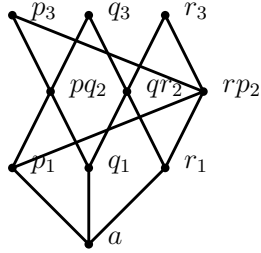}
\caption{Construction for the proof of lem.~\ref{lem:no-triples}}
\label{fig:Triple}
\end{figure}

Assume that we have $a \in P$ covered by distinct $p_1$, $q_1$ and
$r_1$.
By unique-cover-modularity, there is a $pq_2$ covering $p_1$ and $q_1$,
a $qr_2$ covering $q_1$ and $r_1$, and an $rp_2$ covering $r_1$ and
$p_1$.
By lem.~\ref{lem:isolated-simple} (applied to $p_1$, $q_1$, and $r_1$),
$pq_2$, $qr_2$, and $rp_2$ are distinct.
By (\ref{eq:w-plus-minus}) (applied to $p_1$, $q_1$, and $r_1$),
$w(a) = w(pq_2) = w(qr_2) = w(rp_2)$.

By unique-cover-modularity, there is a $p_3$ covering $rp_2$ and $pq_2$, a
$q_3$ covering $pq_2$ and $qr_2$, and an $r_3$ covering $qr_3$ and
$rp_3$.
Suppose two of $p_3$, $q_3$, and $r_3$ are the same, say $p_3$ is the
same as $q_3$.  Define $b = p_3 = q_3$, so $b$ covers $pq_2$,
$qr_2$, and $rp_2$.
Then by (\ref{eq:w-plus-minus}) (applied to $pq_2$, $qr_2$, and $rp_2$),
$w(b) = w(p_1) = w(q_1) = w(r_1)$.
Since $w$ is positive,
(\ref{eq:join-orphan}) implies $2w(pq_2) \geq w(p_1)+w(q_1) = 2w(b)$.
So $w(pq_2) \geq w(b)$ and similarly $w(qr_2) \geq w(b)$ and $w(rp_2) \geq w(b)$.
Applying (\ref{eq:join-orphan}) to $b$ and the above gives
$2 w(b) \geq w(pq_2) + w(qr_2) + w(rp_2) \geq 3w(b)$.
Since $w(b) > 0$, this is a contradiction.
Thus $p_3$, $q_3$, and $r_3$ are distinct.

By (\ref{eq:w-plus-minus}) (applied to $pq_2$, $qr_2$, and $rp_2$),
$w(p_3)=w(p_1)$, $w(q_3)=w(q_1)$, and $w(r_3)=w(r_1)$.
Applying (\ref{eq:join-orphan}) and positivity,
\begin{align}
2w(p_1)+2w(q_1)+2w(r_1) & = 2w(p_3) + 2w(q_3)+2w(r_3) \notag \\
2w(p_3) + 2w(q_3)+2w(r_3) & \geq 2w(pq_2) + 2w(qr_2)+2w(rp_2)
\label{eq:no-triple-3-2} \\
2w(pq_2) + 2w(qr_2)+2w(rp_2) & \geq 2w(p_1)+2w(q_1)+2w(r_1)
\label{eq:no-triple-2-1}
\end{align}
Thus
\begin{align}
w(p_1)+w(q_1)+w(r_1) & = w(pq_2) + w(qr_2)+w(rp_2) = w(p_3) + w(q_3)+w(r_3)
\notag
\end{align}
and the inequalities (\ref{eq:no-triple-3-2}) and
(\ref{eq:no-triple-2-1}) are equalities.
Equation~(\ref{eq:join-orphan})
and the assumption that $w$ is positive requires
that $pq_2$, $qr_2$, $rp_2$, $p_3$, $q_3$, and $r_3$ cover no elements
other than the ones mentioned above (and shown in fig.~\ref{fig:Triple})
and have no upward orphans.
Applying (\ref{eq:join-orphan}) to $pq_2$, $qr_2$, and $rp_2$ thus gives
\begin{align}
w(a) = w(pq_2) & = w(p_1)/2 + w(q_1)/2 \\
w(a) = w(qr_2) & = w(q_1)/2 + w(r_1)/2 \\
w(a) = w(rp_2) & = w(r_1)/2 + w(p_1)/2.
\end{align}
Together these prove $w(p_1) = w(q_1) = w(r_1) = w(a)$ and
$w(p_3)=w(q_3)=w(r_3)=w(a)$.

By (\ref{eq:meet-orphan}) and that $w$ is positive,
$2w(a) \geq w(p_1)+w(q_1)+w(r_1) = 3w(a)$.
Since $w(a) > 0$, this is a contradiction.

Thus, there is no $a \in P$ that is covered by three distinct elements.
\end{proof}

\begin{lemma} \label{lem:no-triples-dual}
  No element of $P$ covers three or more distinct elements.
\end{lemma}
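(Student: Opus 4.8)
The plan is to derive this statement as the order-theoretic dual of lem.~\ref{lem:no-triples}, so the first thing to check is that the dualization is legitimate — i.e. that every fact invoked in the proof of lem.~\ref{lem:no-triples} has a counterpart that survives reversing the order on $P$. Reversal swaps $C_p^+$ with $C_p^-$ (hence $x^+_p$ with $x^-_p$), swaps ``upward orphan'' with ``downward orphan'', and exchanges the two equations~(\ref{eq:join-orphan}) and~(\ref{eq:meet-orphan}); it leaves $S_p$, $N_p$, the notion of an antichain, unique-cover-modularity (def.~\ref{def:unique-cover-modular} is stated symmetrically), the hypothesis that $w$ is positive, lem.~\ref{lem:isolated-simple} (whose statement is unchanged when $x^+_p \leftrightarrow x^-_p$), and equation~(\ref{eq:w-plus-minus}) (likewise unchanged) all intact. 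Since we have \emph{both}~(\ref{eq:join-orphan}) and~(\ref{eq:meet-orphan}) available, and lem.~\ref{lem:equiv-6-dual} furnishes the meet-orphan form of the local characterization, the entire toolkit used in the proof of lem.~\ref{lem:no-triples} is closed under this duality. Note that~(\ref{eq:minimal}), which is \emph{not} self-dual, never enters that proof.

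Granting this, the argument is a mirror image of the proof of lem.~\ref{lem:no-triples}, running ``downward'' instead of ``upward''. I would suppose for contradiction that $a \in P$ covers three distinct points $p_1, q_1, r_1$. By unique-cover-modularity each pair among them has a common lower cover, giving points $pq_0, qr_0, rp_0$ (distinct by lem.~\ref{lem:isolated-simple}) with $w(a) = w(pq_0) = w(qr_0) = w(rp_0)$ by~(\ref{eq:w-plus-minus}); applying unique-cover-modularity once more to pairs among $\{pq_0, qr_0, rp_0\}$ produces $p_{-1}, q_{-1}, r_{-1}$, which are shown distinct exactly as in lem.~\ref{lem:no-triples} but with~(\ref{eq:meet-orphan}) and positivity in place of~(\ref{eq:join-orphan}) and positivity. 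The weight accounting then transcribes with the roles of~(\ref{eq:join-orphan}) and~(\ref{eq:meet-orphan}) interchanged throughout, forcing $w(p_1) = w(q_1) = w(r_1) = w(a)$ and forcing the six auxiliary points to have no extra covers and no downward orphans; finally~(\ref{eq:join-orphan}) applied at $a$ gives $2w(a) \geq w(p_1) + w(q_1) + w(r_1) = 3w(a)$, contradicting $w(a) > 0$. The mirror image of fig.~\ref{fig:Triple} serves as the picture.

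I do not expect any genuine obstacle: the content is entirely in the duality check of the first paragraph, after which the proof is a mechanical transcription. The only place demanding care is remembering, at each step of the dualized weight accounting, which of~(\ref{eq:join-orphan}) and~(\ref{eq:meet-orphan}) is now playing the role that the other played in lem.~\ref{lem:no-triples}.
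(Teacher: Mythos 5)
Your proposal is correct and is essentially the paper's own proof: the paper disposes of this lemma in one line by dualizing lem.~\ref{lem:no-triples}, interchanging~(\ref{eq:join-orphan}) with~(\ref{eq:meet-orphan}) and ``upward orphan'' with ``downward orphan'', exactly as you do. Your added verification that every ingredient (lem.~\ref{lem:isolated-simple}, eq.~(\ref{eq:w-plus-minus}), unique-cover-modularity, positivity) is self-dual and that the non-self-dual equation~(\ref{eq:minimal}) never enters is a sound and slightly more explicit justification of the same step.
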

\begin{proof} \disconnect
This is proved dually to lem.~\ref{lem:no-triples}, by interchanging the
use of (\ref{eq:join-orphan}) and (\ref{eq:meet-orphan}) and
replacing ``upward orphan'' with ``downward orphan''.
\end{proof}

\begin{remark} \label{rem:no-triples}
\textup{
Lem.~\ref{lem:no-triples} and~\ref{lem:no-triples-dual} are the deeper
explanation of the long-known fact\cite{Fom1994a}*{sec.~2.2} that
$\Idealf(\mbbN^k)$ for $k > 2$
has no positive weighting as a differential lattice.
Similarly, they show why
$\mbbSS_t = \Idealf(\mbbCP_{2t}) = \Idealf(t\textit{-SkewStrip})$
is a Fomin lattice (para.~\ref{para:known} item~(\ref{i:previous:SS})) but
$\Idealf(\mbbCP_{kt})$ with $k > 2$ has no
positive weightings.  (Compare $\mbbCP_{32} = \scrL_{3,2}$ in
\cite{El2025a}*{Fig.~2} with $\mbbCP_{33} = \scrL_{3,3}$ in
\cite{El2025a}*{Fig.~3}.)
}
\end{remark}

The equations implied by lem.~\ref{lem:equiv-6} for any particular $p \in P$
are characterized by the structure of its neighborhood $N_p$ as a poset.

\begin{lemma} \label{lem:neighborhoods}
$N_p$ is characterized up to poset isomorphism fixing $p$ by the numbers
$|C_p^-|$, $|C_p^+|$ and $|S_p|$:
All elements of $C_p^-$ are covered by $p$, all elements of $C_p^+$ cover
$p$, and $S_p$ forms a matching between subsets of $C_p^-$ and $C_p^+$, with
each element of $S_p$ covering a distinct element of $C_p^-$ and being
covered by a distinct element of $C_p^+$.
In addition, $|C_p^-| \leq 2$, $|C_p^+| \leq 2$,
$|S_p| \leq |C_p^+|$, $|S_p| \leq |C_p^-|$,
and $|N_p| = |C_p^-| + |C_p^+| + |S_p| + 1$.
\end{lemma}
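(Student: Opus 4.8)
The plan is to obtain this lemma as a bookkeeping consequence of the local facts already established. The two size bounds on the cover sets are immediate: $C_p^-$ is by definition the set of elements covered by $p$, so lem.~\ref{lem:no-triples-dual} gives $|C_p^-| \leq 2$; dually $C_p^+$ is the set of elements covering $p$, so lem.~\ref{lem:no-triples} gives $|C_p^+| \leq 2$. The cardinality identity $|N_p| = |C_p^-| + |C_p^+| + |S_p| + 1$ will then follow once the disjoint decomposition $N_p = \{p\} \sqcup C_p^- \sqcup C_p^+ \sqcup S_p$ is in place: disjointness is essentially the definition of $S_p$ together with the already-recorded facts that $C_p^-$ lies strictly below $p$, $C_p^+$ strictly above $p$, each of $C_p^-$ and $C_p^+$ is an antichain, and $S_p$ meets neither cover set, while $C_p^\pm \cup \{p\} \subseteq N_p$ is clear from def.~\ref{def:CNS}.

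The substantive step is to describe all covering relations inside the convex set $N_p$ and to verify there are no others. Here I would argue: every $a \in C_p^-$ is covered by $p$, and by lem.~\ref{lem:x-plus} the only other element of $N_p$ that can cover $a$ is an $m \in S_p$ with $m^-_p = a$, which is unique when it exists (def.~\ref{def:x-plus}); dually for each $b \in C_p^+$. Since $S_p$ is an antichain (lem.~\ref{lem:antichain}) there are no coverings internal to $S_p$, and moreover any element strictly between $m^-_p$ and $m$, or between $m$ and $m^+_p$, would again land in $S_p$ and violate the antichain property, so each $m \in S_p$ contributes exactly the length-two chain $m^-_p \lessdot m \lessdot m^+_p$ and is otherwise incomparable to $p$; finally no element of $C_p^+$ covers an element of $C_p^-$ (the lemma recorded right after def.~\ref{def:CNS}). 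This shows the Hasse diagram of $N_p$ is precisely the element $p$, the antichain $C_p^-$ below it, the antichain $C_p^+$ above it, and a partial matching carried by $S_p$ threading each of its elements between one element of $C_p^-$ and one of $C_p^+$. Because $L$ is a Fomin lattice, lem.~\ref{lem:equiv-6}, through lem.~\ref{lem:equiv-5}(1), tells us the assignment $m \mapsto (m^-_p, m^+_p)$ is injective in each coordinate, so $S_p$ genuinely matches an $|S_p|$-element subset of $C_p^-$ with an $|S_p|$-element subset of $C_p^+$; in particular $|S_p| \leq |C_p^-|$ and $|S_p| \leq |C_p^+|$.

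For the isomorphism claim I would then note that the data just isolated, namely two antichains $C_p^-$ and $C_p^+$ together with a coordinatewise-injective labelling of $S_p$ by pairs in $C_p^- \times C_p^+$, is determined up to relabelling by the triple $(|C_p^-|, |C_p^+|, |S_p|)$: given two neighborhoods with the same triple, relabel the matched elements of $C_p^-$ to an initial segment and likewise for $C_p^+$, identify the two matchings, and match the leftover unmatched cover elements arbitrarily; this is a poset isomorphism fixing $p$. Combined with the decomposition above, this completes the argument.

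The step I expect to be most delicate is the ``no other coverings'' claim: one must apply lem.~\ref{lem:x-plus} with care to exclude, for instance, an element of $C_p^-$ being covered by two distinct elements of $S_p$, or some $m \in S_p$ being covered by an element of $C_p^+$ other than $m^+_p$, so that the neighborhood is exactly the claimed matching picture and nothing larger. The degenerate cases where $p$ is minimal or maximal in $P$, so that $C_p^-$ or $C_p^+$ is empty and hence $S_p = \zeroslash$, are immediate and should be disposed of first.
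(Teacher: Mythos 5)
Your argument is correct and follows essentially the same route as the paper, whose proof is just the one-line citation of lem.~\ref{lem:antichain}, lem.~\ref{lem:equiv-5}, lem.~\ref{lem:no-triples}, and lem.~\ref{lem:no-triples-dual}; you have simply spelled out the bookkeeping (via lem.~\ref{lem:x-plus} and def.~\ref{def:x-plus}) that the paper leaves implicit, including the covering relations $m^-_p \lessdot m \lessdot m^+_p$ and the relabelling step for the isomorphism claim.
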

\begin{proof} \disconnect
This follows from lem.~\ref{lem:antichain}, \ref{lem:equiv-5},
\ref{lem:no-triples}, and~\ref{lem:no-triples-dual}.
\end{proof}

For each possible set of values of $|C_p^-|$, $|C_p^+|$, $|S_p|$, and
$|N_p|$, fig.~\ref{fig:neighborhoods} shows
the resulting structure of $N_p$, and the consequent constraints
per lem.~\ref{lem:equiv-6}.

\disconnect
\vspace{\baselineskip}
\begin{figuregroup} \label{fig:neighborhoods}

\begin{figure}[htp]
\begin{tabular}{|c|c|c|c|c|c|}
\hline
$|C_p^+|$ & $|C_p^-|$ & $|S_p|$ & $|N_p|$ & Neighborhood & Constraints \\
\hline
0       & 0      & 0    & 1  &
\includegraphics[page=\ipeFigNzerozerozero,scale=0.9]{k-row-figs.pdf} &
\vbox{\hbox{$2w(p)=0$}} \\
\hline
0       & 1      & 0    & 2  &
\includegraphics[page=\ipeFigNzeroonezero,scale=0.9]{k-row-figs.pdf} &
\vbox{\hbox{$2w(p)=w(x_1)$}} \\
\hline
0       & 2      & 0    & 3  &
\includegraphics[page=\ipeFigNzerotwozero,scale=0.9]{k-row-figs.pdf} &
\vbox{\hbox{$2w(p)=w(x_1)+w(x_2)$}} \\
\hline
1       & 0      & 0    & 2  &
\includegraphics[page=\ipeFigNonezerozero,scale=0.9]{k-row-figs.pdf} &
\vbox{\hbox{$2w(p)=w(y_1)$}} \\
\hline
1       & 1      & 0    & 3  &
\includegraphics[page=\ipeFigNoneonezero,scale=0.9]{k-row-figs.pdf} &
\vbox{\hbox{$2w(p)=w(x_1)+w(y_1)$}} \\
\hline
1       & 1      & 1    & 4  &
\includegraphics[page=\ipeFigNoneoneone,scale=0.9]{k-row-figs.pdf} &
\vbox{\hbox{$2w(p)=w(x_1)$}\hbox{$w(x_1)=w(y_1)$}} \\
\hline
1       & 2      & 0    & 4  &
\includegraphics[page=\ipeFigNonetwozero,scale=0.9]{k-row-figs.pdf} &
\vbox{\hbox{$2w(p)=w(x_1)+w(x_2)+w(y_1)$}} \\
\hline
1       & 2      & 1    & 5  &
\includegraphics[page=\ipeFigNonetwoone,scale=0.9]{k-row-figs.pdf} &
\vbox{\hbox{$2w(p)=w(x_1)+w(x_2)$}\hbox{$w(x_1)=w(y_1)$}} \\
\hline
\end{tabular}
\caption{The possible neighborhoods of $p$ and their constraints, part 1}
\label{fig:neighborhoods-1}
\end{figure}

\begin{figure}[htp]
\begin{tabular}{|c|c|c|c|c|c|}
\hline
$|C_p^+|$ & $|C_p^-|$ & $|S_p|$ & $|N_p|$ & Neighborhood & Constraints \\
\hline
2       & 0      & 0    & 3  &
\includegraphics[page=\ipeFigNtwozerozero,scale=0.9]{k-row-figs.pdf} &
\vbox{\hbox{$2w(p)=w(y_1)+w(y_2)$}} \\
\hline
2       & 1      & 0    & 4  &
\includegraphics[page=\ipeFigNtwoonezero,scale=0.9]{k-row-figs.pdf} &
\vbox{\hbox{$2w(p)=w(x_1)+w(y_1)+w(y_2)$}} \\
\hline
2       & 1      & 1    & 5  &
\includegraphics[page=\ipeFigNtwooneone,scale=0.9]{k-row-figs.pdf} &
\vbox{\hbox{$2w(p)=w(x_1)+w(y_2)$}\hbox{$w(x_1)=w(y_1)$}} \\
\hline
2       & 2      & 0    & 5  &
\includegraphics[page=\ipeFigNtwotwozero,scale=0.9]{k-row-figs.pdf} &
\vbox{\hbox{$2w(p)=w(x_1)+w(x_2)+w(y_1)+w(y_2)$}} \\
\hline
2       & 2      & 1    & 6  &
\includegraphics[page=\ipeFigNtwotwoone,scale=0.9]{k-row-figs.pdf} &
\vbox{\hbox{$2w(p)=w(x_1)+w(x_2)+w(y_2)$}\hbox{$w(x_1)=w(y_1)$}} \\
\hline
2       & 2      & 2    & 7  &
\includegraphics[page=\ipeFigNtwotwotwo,scale=0.9]{k-row-figs.pdf} &
\vbox{\hbox{$2w(p)=w(x_1)+w(x_2)$}\hbox{$w(x_1)=w(y_1)$}\hbox{$w(x_2)=w(y_2)$}} \\
\hline
\end{tabular}
\caption{The possible neighborhoods of $p$ and their constraints, part 2}
\label{fig:neighborhoods-2}
\end{figure}

\end{figuregroup}

\section{Global characterization} \label{sec:global}

In this section, we continue by using the local characterization from
sec.~\ref{sec:local} to prove ``global'' facts about the poset $P$ of
points.  These culminate in an enumeration of a small number of
possible structures for $P$.

In this section, we continue the assumptions we have made in
section~\ref{sec:local}.
Thus, all points $p \in P$ have a neighborhood $N_p$ as
described by lem.~\ref{lem:neighborhoods}, and $w$ must
satisfy the consequent constraints per lem.~\ref{lem:equiv-6}.

\paragraph{Cartesian products}
If a lattice is
the cartesian product product of Fomin lattices,
para.~\ref{para:known} item~(\ref{i:previous:cartesian}) shows that it is
is a Fomin lattice.
It is straightforward that the product lattice preserves any of the
properties that we have been discussing (\ref{ass:distributive},
\ref{ass:unique-cover-modular}, and~\ref{ass:positive}) that are true of the
factors.
Conversely, if a Fomin lattice is the cartesian product of nontrivial
lattices, th.~\ref{th:cartesian}, lem.~\ref{lem:cartesian-ass},
\ref{lem:cartesian-union}, and~\ref{lem:ass-poset} show that its
factors are Fomin lattices that preserve any of the properties that are true
of the product lattice.  Thus, without excluding any
Fomin lattices from our classification, we can restrict our attention to
lattices that cannot be cartesian-factored.

\begin{assumption} \label{ass:non-factor}
Henceforth, we assume that $L$ cannot be factored into two nontrivial
lattices, which is equivalent to assuming that
$P$ cannot be partitioned into two disjoint non-empty
posets $P = P_1 \sqcup P_2$.
\end{assumption}

\paragraph{\texorpdfstring{$P$}{P} has a minimum element}

\begin{lemma} \label{lem:down-up}
Given any two elements $a, b \in P$, we
can connect them by a sequence of elements
$$a = x_0, x_1, x_2, \ldots,x_i, \ldots, x_{n-1}, x_n = b$$
with
$$a = x_0 \gtrdot x_1 \gtrdot x_2 \gtrdot \cdots \gtrdot x_i
\lessdot \cdots \lessdot x_{n-1} \lessdot x_n = b,$$
for some $0 \leq i \leq n$.
(That is, the sequence first descends, then ascends.)
\end{lemma}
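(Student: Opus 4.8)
The plan is to reduce to a connectivity statement and then straighten zig-zag covering paths by a nested induction. By Assumption~\ref{ass:non-factor}, $P$ admits no partition $P = P_1 \sqcup P_2$ into two nonempty posets with $P_1$ incomparable to $P_2$. Since $P$ is locally finite (lem.~\ref{lem:P-finite}(1)), any comparable pair $x < y$ is joined by a finite saturated chain, i.e.\ by a path in the undirected Hasse diagram; hence the relation ``joined by a finite sequence of $\lessdot$ and $\gtrdot$ steps'' is an equivalence relation on $P$ whose classes, if there were more than one, would furnish such a forbidden partition. Therefore there is a single class, and every $a,b\in P$ are joined by some covering path $a = y_0, y_1, \dots, y_m = b$ with $y_i \lessdot y_{i+1}$ or $y_i \gtrdot y_{i+1}$ for each $i$. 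Call a covering path a \emph{valley path} if it first (weakly) descends and then (weakly) ascends, as in the statement; it remains to straighten an arbitrary covering path into a valley path.

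The engine is the auxiliary claim: \emph{if $\pi$ is a valley path from $a$ to $t$ and $s \lessdot t$, then there is a valley path from $a$ to $s$}, which I would prove by induction on the number $\ell$ of ascending steps of $\pi$. If $\ell = 0$ then $\pi$ descends throughout, and appending the step $t \gtrdot s$ keeps it descending. If $\ell \geq 1$, let $z \lessdot t$ be the last ascending step of $\pi$. If $s = z$, truncate $\pi$ at $z$. Otherwise $s$ and $z$ are distinct elements both covered by $t$, so by Assumption~\ref{ass:unique-cover-modular} they both cover a (unique) common element $c$; deleting the last step of $\pi$ leaves a valley path from $a$ to $z$ with $\ell - 1$ ascending steps, and $c \lessdot z$, so the induction hypothesis gives a valley path from $a$ to $c$, and appending the ascending step $c \lessdot s$ produces a valley path from $a$ to $s$.

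Finally I would prove the lemma by induction on the length $m$ of a covering path $a = y_0, \dots, y_m = b$. If $m = 0$ the trivial path works. If $m \geq 1$, the subpath $y_0, \dots, y_{m-1}$ has length $m - 1$, so by induction there is a valley path $\pi$ from $a$ to $y_{m-1}$; if the last step is $y_{m-1} \lessdot y_m = b$, append this ascending step to $\pi$; if instead $b = y_m \lessdot y_{m-1}$, apply the auxiliary claim with $t = y_{m-1}$ and $s = b$. In either case we obtain a valley path from $a$ to $b$. The main obstacle is precisely the termination of the straightening: the obvious ``push every local peak one level down'' move need not terminate when $P$ has no minimum, and the fix is to keep the defect pinned to the top of the ascending run and recurse on the number of ascending steps, using unique-cover-modularity to pass from the pair $\{s,z\}$ to their common lower cover $c$. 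The only routine checks are that each recursive call truly has one fewer ascending step and that appending a single covering step to a valley path preserves the descends-then-ascends shape.
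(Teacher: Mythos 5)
Your proof is correct, and its core ingredients are the same as the paper's: connectivity of the Hasse diagram follows from assumption~\ref{ass:non-factor} together with local finiteness (lem.~\ref{lem:P-finite}), and unique-cover-modularity is used to replace two elements covered by a common element with their common lower cover. Where you differ is in the bookkeeping. The paper takes one covering path of fixed length and rewrites it in place: each local maximum $x_{i-1} \lessdot x_i \gtrdot x_{i+1}$ is replaced by $x_{i-1} \gtrdot y \lessdot x_{i+1}$, and termination is immediate because the sequence length and the number of up/down steps are preserved while every rewrite moves a down-step strictly earlier. You instead run an outer induction on path length with an auxiliary lemma (``a valley path to $t$ plus $s \lessdot t$ yields a valley path to $s$'') proved by an inner induction on the number of ascending steps; this is a clean structural recursion that never needs a global termination measure, and it also handles explicitly the degenerate case $s = z$ (the path climbing to $t$ and immediately returning to the same element), which the paper's rewriting silently glosses over since unique-cover-modularity as stated applies to a pair of \emph{distinct} elements. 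One inaccuracy in your closing remark: the ``push every local peak down'' move does terminate even without a minimum element, precisely because it keeps the sequence length fixed and only relocates descending steps earlier --- no appeal to $\zerohat_P$ is needed --- so your alternative organization is a matter of taste rather than necessity. Either way the result stands.
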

\begin{proof} \disconnect
Because $P$ is locally finite and cannot be separated into the
disjoint union of two non-empty posets,
$a$ and $b$ can be connected with a sequence of elements
$a = x_0, x_1, x_2, \ldots, x_n = b$ where each adjacent pair
$x_i, x_{i+1}$ has either $x_i \lessdot x_{i+1}$ or
$x_i \gtrdot x_{i+1}$.  Given such a sequence, if there is an $i$ such
that $0 < i < n$ and $x_{i-1} \lessdot x_i \gtrdot x_{i+1}$,
that is, $x_i$ is a local maximum,
by the assumption of unique-cover-modularity, there is a $y$ such that
$x_{i-1} \gtrdot y \lessdot x_{i+1}$.
Thus $a = x_0, x_1, x_2, \ldots, x_{i-1}, y, x_{i+1}, \ldots x_n = b$
is another such sequence.

We can iterate this transformation on the sequence.
Each transformation preserves the number of adjacent pairs
$x_i \gtrdot x_{i+1}$ and the number of adjacent pairs
$x_i \lessdot x_{i+1}$, while moving the location of the
$x_i \gtrdot x_{i+1}$ pairs earlier in the sequence.
Thus any
series of such transformations must terminate in a sequence for which
there is no $i < i < n$ for which $x_i$ which is a local maximum.
Thus, any terminal sequence must have some $0 \leq i \leq n$ for which
for all $0 \leq j < i$, $x_j \gtrdot x_{j+1}$ and
for all $i \leq j < n$, $x_j \lessdot x_{j+1}$.
\end{proof}

\begin{lemma} \label{lem:P-minimum}
$P$ has a minimum element, which we denote $\zerohat_P$.
\end{lemma}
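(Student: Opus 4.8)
The plan is to derive this as a short corollary of lem.~\ref{lem:down-up}. First I would produce a minimal element of $P$. Since the weighting and differential degree are positive (assump.~\ref{ass:positive}), eq.~\ref{eq:minimal} gives $\sum_{z \mvert z \textup{ minimal in } P} w(z) = r > 0$, so in particular $P$ has at least one minimal element $m$ (alternatively, $P$ is finitary by lem.~\ref{lem:P-finite}(2), so any principal ideal of $P$ is finite and nonempty and therefore contains a minimal element, which is then minimal in all of $P$). In particular $P$ is nonempty, so there is something to prove.

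Next I would show that this $m$ lies below every $b \in P$, which makes it the minimum $\zerohat_P$. Apply lem.~\ref{lem:down-up} to the pair $m, b$ to obtain a sequence
\[ m = x_0 \gtrdot x_1 \gtrdot \cdots \gtrdot x_i \lessdot \cdots \lessdot x_n = b. \]
If the descending part were nonempty, i.e.\ $i > 0$, then $x_0 \gtrdot x_1$ would exhibit an element $x_1 < m$, contradicting the minimality of $m$ in $P$. Hence $i = 0$ and the sequence is an ascending chain $m = x_0 \lessdot x_1 \lessdot \cdots \lessdot x_n = b$, so $m \leq b$ (and $m = b$ when $n = 0$). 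Since $b \in P$ was arbitrary, $m$ is the minimum element of $P$; set $\zerohat_P \coloneqq m$. (As a byproduct, $m$ is the unique minimal element.)

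There is no real obstacle here: the content is carried entirely by lem.~\ref{lem:down-up}, whose ``descend, then ascend'' normal form for a connecting path collapses to a pure ascent exactly when the source vertex is minimal. What actually requires the standing hypotheses is the availability of that normal form in the first place: the proof of lem.~\ref{lem:down-up} uses unique-cover-modularity (assump.~\ref{ass:unique-cover-modular}) to straighten local maxima and the non-factorizability hypothesis (assump.~\ref{ass:non-factor}) to know $P$ is connected. Without connectedness the conclusion would plainly fail --- e.g.\ for $P$ a disjoint union of two nonempty posets --- so this is the one place in the argument where assump.~\ref{ass:non-factor} is genuinely needed.
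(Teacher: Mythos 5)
Your proof is correct and follows essentially the same route as the paper: obtain a minimal element $m$ (the paper uses exactly your parenthetical argument, via lem.~\ref{lem:P-finite}(2)), then apply lem.~\ref{lem:down-up} and use minimality of $m$ to force $i=0$, so the connecting sequence is a pure ascent and $m \leq b$ for every $b$. The only cosmetic differences are that the paper phrases the second step as a proof by contradiction and does not invoke eq.~(\ref{eq:minimal}); your use of it to guarantee $P$ is nonempty is a small but harmless bonus.
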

\begin{proof} \disconnect
Since $P$ is non-empty and finitary by lem.~\ref{lem:P-finite}(2), $P$
must contain at least one minimal element $m$.
Assume there is an element $a \not\geq m$ in $P$.
Then by lem.~\ref{lem:down-up}, there is a sequence of elements
$m = x_0, x_1, x_2, \ldots,x_n = a$
where for some $0 \leq i \leq n$,
for any $0 \leq j < i$, $x_j \gtrdot x_{j+1}$ and
for any $i \leq j < n$, $x_j \lessdot x_{j+1}$.
But since $m$ is minimal, there is no $y$ with
$m \gtrdot y$, and so $i = 0$.
If $n > 0$, then $m = x_0 \lessdot x_1 \leq x_n = a$.
That implies $m < a$, which contradicts that $a \not\geq m$.
So $n = 0$ and $m = x_0 = a$, which also contradicts $a \not\geq m$.
Thus there is no $a \not\geq m$ in $P$ and $m$ is the minimum of $P$.
\end{proof}

\begin{lemma} \label{lem:graded}
$P$ is graded.
We define $\rho$ as the rank function on $P$.
$\rho(\zerohat_P) = 0$.
\end{lemma}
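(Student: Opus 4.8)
The plan is to invoke Theorem~\ref{th:graded} directly. That theorem asks for three properties of the poset in question: local finiteness, existence of a minimum element, and cover-modularity. The first two are already established: $P$ is locally finite by Lemma~\ref{lem:P-finite}(1), and $P$ has a minimum element $\zerohat_P$ by Lemma~\ref{lem:P-minimum} (which in turn rested on Lemma~\ref{lem:down-up} and Assumption~\ref{ass:non-factor}). For the third, I would note that Assumption~\ref{ass:unique-cover-modular} says $P$ is unique-cover-modular (Definition~\ref{def:unique-cover-modular}), and that unique-cover-modularity is strictly stronger than cover-modularity (Definition~\ref{def:cover-modular}): whenever two elements both cover, or are both covered by, a common element, unique-cover-modularity produces a (unique) element covered by both and a (unique) element covering both, which is exactly what cover-modularity requires. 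Hence $P$ is cover-modular, so Theorem~\ref{th:graded} applies and $P$ is graded.

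It then remains only to read off the rank function. Since $P$ is graded in the sense used here, every saturated chain between two comparable elements has a common length; in particular, as $P$ has the minimum $\zerohat_P$, the length of a saturated chain from $\zerohat_P$ to a given $x\in P$ depends only on $x$, and I would \emph{define} $\rho(x)$ to be this common length. One checks in the usual way that $\rho$ increases by exactly $1$ across each covering: if $x\lessdot y$, a saturated chain from $\zerohat_P$ to $x$ extended by the step $x\lessdot y$ is a saturated chain from $\zerohat_P$ to $y$, so $\rho(y)=\rho(x)+1$ by gradedness. Finally $\rho(\zerohat_P)=0$ since the only saturated chain from $\zerohat_P$ to itself is the trivial one.

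There is essentially no obstacle: the substantive work was done in assembling the hypotheses of Theorem~\ref{th:graded}, most of it in Lemma~\ref{lem:P-minimum}. The one point to state carefully is the implication ``unique-cover-modular $\Rightarrow$ cover-modular'', which is immediate from comparing Definitions~\ref{def:cover-modular} and~\ref{def:unique-cover-modular}. (I would avoid trying to transfer the grading of $L\cong\Idealf(P)$ to $P$, since a covering in $P$ need not correspond to a covering in $L$, so that route is not a shortcut.)
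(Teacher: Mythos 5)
Your proposal is correct and matches the paper's proof, which likewise disposes of the lemma by appealing to Theorem~\ref{th:graded} (Birkhoff's argument) once $P$ is known to be locally finite (lem.~\ref{lem:P-finite}), unique-cover-modular (assump.~\ref{ass:unique-cover-modular}), and to have the minimum $\zerohat_P$ (lem.~\ref{lem:P-minimum}). The paper merely calls this a ``straightforward generalization'' and leaves implicit the two details you spell out, namely that unique-cover-modularity implies cover-modularity and that the rank function is read off from saturated chains starting at $\zerohat_P$.
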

\begin{proof} \disconnect
This proof is a straightforward generalization of
th.~\ref{th:graded}\cite{Birk1967a}*{\S II.8 Th.~14}, given
that $P$ is locally finite, is
unique-cover-modular, and has a minimum element $\zerohat_P$.
\end{proof}

\begin{remark} \label{rem:graded}
In lem.~\ref{lem:graded}, the hypothesis that $\zerohat_P$ exists is
necessary.  A counterexample is the poset ``3-Plait'' in
\cite{Fom1994a}*{Fig.~7 and Exam.~2.2.10}.
Further counterexamples are the posets of points of the lattices of
cylindrical partitions.\cite{El2025a}
\end{remark}

\paragraph{The bottom elements of \texorpdfstring{$P$}{P}}

First we define the sequence, finite or infinite, of elements at the
bottom of $P$ that form a chain, with each element (above
$\zerohat_P$) being the unique cover of the one below.

\begin{definition}
We define $B_1 = \zerohat_P$.
Then iteratively for each $i \geq 0$, if $B_i$ has a unique upward
cover, we define $B_{i+1}$ to be that cover.
If there is a $B_\bullet$ that does not have a unique cover (and thus
by lem.~\ref{lem:no-triples} has either zero or two covers), we define
$N$ to be the index of that $B_\bullet$.
Otherwise, we define $N$ to be $\infty$.
($B$ for ``bottom''.)
\end{definition}

In the case $N = \infty$, the $B_\bullet$ form an upward semi-infinite
path, and they are the entirety of $P$.
\begin{figure}[htp]
\begin{center}
\includegraphics[page=\ipeFigInfinitePath]{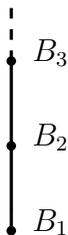}
\caption{The case $N = \infty$ where the $B_\bullet$ form an upward semi-infinite
path.}
\end{center}
\end{figure}
The constraints on $w$ are $w(B_1) = r$,
$2w(B_1) = w(B_2)$, and, for all $i \geq 2$,
$2w(B_i) = w(B_{i-1}) + w(B_{i+1})$,
which have the unique solution $w(B_i) = ir$.
Thus we have:
\begin{case} \label{case:path}
(para.~\ref{para:known} item~(\ref{i:previous:N}))
$P$ is the upward semi-infinite path $B_i$ for $i \geq 1$ with
$r > 0$ and $w(B_i) = ir$. $L$ is isomorphic to $\mbbN$.
\end{case}

If $N$ is not $\infty$, then there are two cases:  $B_N$ has zero
upward covers and $B_N$ has two upward covers.  If $B_N$ has zero
upward covers, then $P$ consists of the finite path
$B_0 \lessdot B_1 \lessdot B_2 \lessdot \cdots \lessdot B_N$.
\begin{figure}[htp]
\begin{center}
\includegraphics[page=\ipeFigFinitePath]{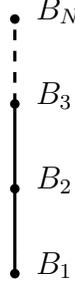}
\caption{The case $N$ is not $\infty$, $B_N$ has zero
upward covers, and the $B_\bullet$ form a finite path.}
\end{center}
\end{figure}
By lem.~\ref{lem:equiv-6}, the constraints on $w$ are
\begin{itemize}
\item $w(B_1) = r$;
\item if $N = 1$, $2w(B_1) = 0$;
\item if $N \geq 2$, $2w(B_1) = w(B_2)$;
\item for all $2 \leq i < N$, $2w(B_i) = w(B_{i-1}) + w(B_{i+1})$; and
\item if $N \geq 2$, $2w(B_N) = w(B_{N-1})$ .
\end{itemize}
For all finite $N$, these imply $r = 0$ and all $w(B_i) = 0$,
so $w$ is not positive.

\paragraph{\texorpdfstring{$P$}{P} is two-dimensional} \label{para:two-dim}

The remaining case is that $B_N$ has exactly two upward covers.

\begin{definition}
We define $P^+$ as the elements of $P$ that are not a $B_i$ for which
$i < N$.
\end{definition}

\begin{lemma}
$P^+$ has at least three elements: one is $B_N$
and two are the covers of $B_N$.
All $p \in P^+$ are $\geq B_N$, and
each has a finite grade $\rho(p) \geq \rho(B_N)$ by
lem.~\ref{lem:graded}.
\end{lemma}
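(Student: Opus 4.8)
The plan is to exploit the grading of $P$ (lem.~\ref{lem:graded}) together with the single property that defines the chain $B_1 \lessdot B_2 \lessdot \dots \lessdot B_N$: for every $i < N$, the element $B_{i+1}$ is the \emph{unique} upward cover of $B_i$. Note first that, since $B_{i+1}$ covers $B_i$ and $B_1 = \zerohat_P$ has rank $0$, we have $\rho(B_i) = i-1$, so in particular $\rho(B_N) = N-1$. Three things have to be checked: that $P^+$ contains $B_N$ and the two covers of $B_N$; that every $p \in P^+$ satisfies $p \geq B_N$; and that every such $p$ has finite rank at least $\rho(B_N)$.

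The membership claims are immediate. By definition $P^+$ omits only the $B_i$ with $i < N$, so $B_N \in P^+$. In the remaining case $B_N$ has exactly two upward covers $c_1 \neq c_2$; each $c_k$ is strictly above $B_N$, hence differs from every $B_i$ with $i \leq N$, so $c_1, c_2 \in P^+$. As $B_N, c_1, c_2$ are pairwise distinct, $|P^+| \geq 3$.

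For the remaining assertions I would fix $p \in P^+$ and choose a saturated chain $\zerohat_P = z_0 \lessdot z_1 \lessdot \dots \lessdot z_m = p$; such a chain exists because the interval $[\zerohat_P, p]$ is finite (lem.~\ref{lem:P-finite}), and $m = \rho(p)$ because $P$ is graded with minimum $\zerohat_P = B_1$ (lem.~\ref{lem:graded}). Running an induction up this chain, $z_0 = B_1$, and whenever $z_j = B_{j+1}$ with $j+1 < N$ the next element $z_{j+1}$ covers $B_{j+1}$, whose only upward cover is $B_{j+2}$, forcing $z_{j+1} = B_{j+2}$; hence $z_j = B_{j+1}$ for all $j \leq \min(m, N-1)$. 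If $m \leq N-2$ this would give $p = z_m = B_{m+1}$ with $m+1 < N$, contradicting $p \in P^+$; therefore $m \geq N-1$, and then $z_{N-1} = B_N \leq z_m = p$ while $\rho(p) = m \geq N-1 = \rho(B_N)$. The degenerate case $N = 1$ is trivial, since then $B_N = \zerohat_P$, $P^+ = P$, and every element is $\geq \zerohat_P$ with nonnegative finite rank.

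I do not anticipate a genuine obstacle here; the only care needed is at the boundary of the induction — separating the subcases $m < N-1$, $m = N-1$, $m > N-1$ and the degenerate $N = 1$ — since the uniqueness of the covers $B_1 \lessdot B_2,\ \dots,\ B_{N-1} \lessdot B_N$ forces every saturated chain out of $\zerohat_P$ to begin by climbing precisely this chain.
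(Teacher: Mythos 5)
Your proof is correct, and the paper in fact states this lemma without any proof, treating it as immediate from the definition of the $B_\bullet$ chain and lem.~\ref{lem:graded}; your argument — that unique upward covers force every saturated chain out of $\zerohat_P$ to climb $B_1 \lessdot \cdots \lessdot B_N$, so any element outside $\{B_i \mvert i < N\}$ lies above $B_N$ with rank at least $\rho(B_N)$ — is exactly the routine verification the paper leaves implicit. Your handling of the boundary cases ($m \le N-2$, $N=1$) is sound, so nothing is missing.
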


We will ``draw the Hasse diagram of $P$ in the plane'' by defining a set of
locations $T_{xy}$ for all integers $x \geq 0$ and $y \geq 0$.
As a poset, $T \cong \mbbN \times \mbbN$.
Then we assign every $p \in P^+$ to a distinct
location $T_{xy}$ with $x+y = \rho(p) - \rho(B_N) = \rho(p) - N$.
($T$ for ``top elements''.)
That is, $B_N$ is assigned to $T_{00}$ and elements of $P^+$ are
assigned ``upward'' from $T_{00}$.
If $p \lessdot q$ in $P^+$ and $p$ is assigned to $T_{xy}$ then
$q$ is assigned to either $T_{x+1,y}$ or $T_{x,y+1}$.
Conversely, if both $T_{xy}$ and either $T_{x+1,y}$ or $T_{x,y+1}$ are
assigned, the latter covers the former.
Intuitively, the $T_{\bullet\bullet}$ form a quarter-plane extending
upward, with
$T_{00}$ at the bottom center, the $x$ coordinate increasing to the
upper-left and the $y$ coordinate increasing to the upper-right.
Note that all elements of $P^+$ are assigned to distinct locations
$T_{\bullet\bullet}$, but some locations may not have an assigned element.
We abuse notation by using $T_{xy}$ to denote both the location and,
if an element of $P^+$ is assigned to that location, that element of
$P^+$.

We define the coordinate $v = x+y$; $v$
is how far $T_{xy}$ is vertically above $T_{00}$.
We define the coordinate $h = y-x$; $h$
is how far $T_{xy}$ is horizontally left of $T_{00}$ (if negative) or
right of $T_{00}$ (if positive).
Note that while $v$ ranges over all nonnegative integers,
$h$ ranges over integers $-v \leq h \leq v$ which have the same parity
as $v$.
The assignment of elements of $P^+$ realizes its
Hasse diagram, with each cover relation being either one step to the
upper left (incrementing $x$) or one step to the upper right
(incrementing $y$).
An example assignment is shown in fig.~\ref{fig:U}.

\begin{figure}[htp]
\centering
\includegraphics[page=\ipeFigU,scale=1]{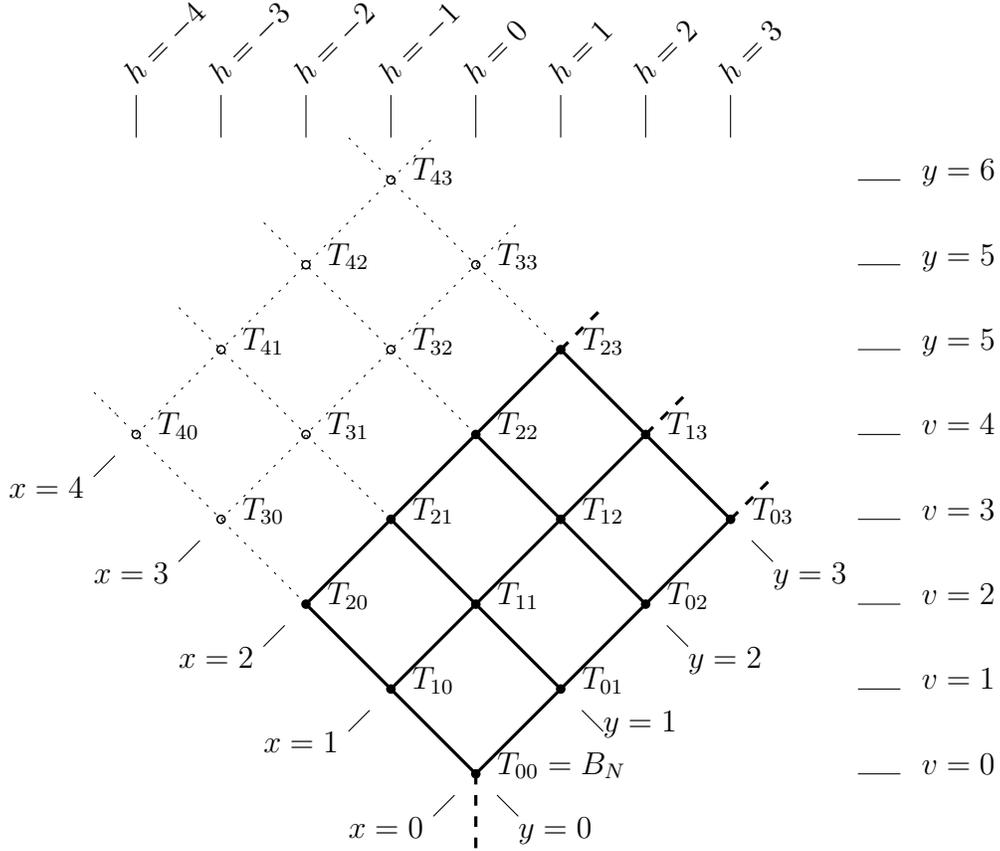}
\caption{An example of assigning elements of $P^+$ to
  locations $T_{\bullet\bullet}$.  Solid circles are assigned
  locations, empty circles are unassigned locations.  Solid lines are
  coverings in $P$.
  Further elements are assigned to the upper-right of $T_{03}$,
  $T_{13}$, and $T_{23}$.
  $B_i$ for $i < N$ are assigned downward from $T_{00}$.}
\label{fig:U}
\end{figure}

\begin{figure}[htp]
\centering
\includegraphics[page=\ipeFigAssignment,scale=1]{k-row-figs.pdf}
\caption{An example of assigning elements of $P^+$ with rank
  $v_\tnew+R$ to locations with $v=v_\tnew$ in steps (3a--f).
  $T_{ab}$ has $v=v_\tnew-1$ and $h=h^\tmin_{v_\tnew-1}$ and
  $T_{cd}$ has $v=v_\tnew-1$ and $h=h^\tmax_{v_\tnew-1}$.}
\label{fig:Assignment}
\end{figure}

We construct the assignments by first assigning $T_{00} = B_N$ and
successively assigning each grade of $P^+$ above $N$ to
locations so as to satisfy these rules.

\begin{lemma} \label{lem:assignment}
We can assign the elements of $P^+$ to locations in
$\{ T_{xy} \mvert x \geq 0 \textup{ and } y \geq 0 \}$ by the
following process, which has the following properties:
\begin{enumerate}
\item \label{i:assignment:1} Assign $B_N$ to $T_{00}$.
\item \label{i:assignment:2}
  Assign rank $N-1$, the two covers of $B_N$, arbitrarily to
  $T_{10}$ and $T_{01}$.
\item \label{i:assignment:3} Iteratively consider the grades of $P^+$
  from $N+2$ upward.
  Terminate if we reach a grade that is empty (all elements of
  $P^+$ have been assigned):
\begin{enumerate}
  \item \label{i:assignment:3a}
  Within  grade $R$, we will assign the elements of that grade to distinct
  locations $T_{xy}$ with $x+y=v_\tnew = R - N$.
  At this point, these iteration hypotheses are true of the $T_{xy}$
  with $x+y < v_\tnew$:
  \begin{enumerate}
    \item \label{i:assignment:3a-first}
    The elements assigned to locations with a particular value of
    $v < v_\tnew$ have
    values $h$ that are a contiguous subset of the integers
    $-v \leq h \leq v$ which have the same parity as $v$.
    Define $h^\tmin_v$ to be the minimum $h$ of an assigned
    location with a particular value of $v$ and
    define $h^\tmax_v$ to be the maximum $h$ of an assigned location
    with a particular value of $v$.
    Similarly, define $x^\tmin_v$, $x^\tmax_v$, $y^\tmin_v$, and
    $y^\tmax_v$ as the minimum and maximum $x$ and $y$ coordinates of
    assigned locations with this value of $v$.
    \item For any $0 < v < v_\tnew$,
    $h^\tmin_v = h^\tmin_{v-1} \pm 1$ and
    $h^\tmax_v = h^\tmax_{v-1} \pm 1$.
    Thus $x^\tmin_v = x^\tmin_{v-1}$ or $x^\tmin_{v-1}+1$,
    $x^\tmax_v = x^\tmax_{v-1}$ or $x^\tmax_{v-1}+1$,
    $y^\tmin_v = y^\tmin_{v-1}$ or $y^\tmin_{v-1}+1$, and
    $y^\tmax_v = y^\tmax_{v-1}$ or $y^\tmax_{v-1}+1$,
    \item If $p \lessdot q$ in $P^+$ are assigned at this point, then $q$ is
      assigned to some $T_{xy}$ and $p$ is assigned to either
      $T_{x-1,y}$ or $T_{x,y-1}$.
    \item \label{i:assignment:3a-last}
      Conversely, if $T_{xy}$ is assigned and either of
      $T_{x-1,y}$ or $T_{x,y-1}$ is assigned, then the former covers the
      latter in $P^+$.
  \end{enumerate}
    \item \label{i:assignment:3b}
    Consider any adjacent pair of assigned locations with
    $v = v_\tnew-1$, $T_{xy}$ and $T_{x-1,y+1}$, with $x+y=v_\tnew-1$,
    $h^\tmin_{v_\tnew-1} \leq y-x$, and $(y+1)-(x-1) \leq h^\tmax_{v_\tnew-1}$.
    $T_{xy}$ and $T_{x-1,y+1}$ cover $T_{xy} \wedge T_{x-1,y+1} = T_{x-1,y}$ and
    so they must be covered by $T_{xy} \vee T_{x-1,y+1}$, which has
    rank $R$ in $P^+$.
    Assign $T_{xy} \vee T_{x-1,y+1}$ to location $T_{x,y+1}$.
    By lem.~\ref{lem:no-triples-dual},  $T_{xy} \vee T_{x-1,y+1}$
    covers no element other than $T_{xy}$ and $T_{x-1,y+1}$.
    \item \label{i:assignment:3c}
    Every element of rank $R$ not yet assigned necessarily covers
    at least one element of rank $R-1$.  But any element of rank
    $R-1$ with $h^\tmin_{v_\tnew-1} < h < h^\tmax_{v_\tnew-1}$ already
    has two elements that cover it that were assigned in
    step (\ref{i:assignment:3b})
    that cover it, so the only elements of rank $R-1$ that it might cover are
    \begin{enumerate}
      \item the assigned location with minimum $h$,
      $T_{ab} = T_{x^\tmax_{v_{\tnew-1}},y^\tmin_{v_{\tnew-1}}}$ and/or
      \item the assigned location with maximum $h$,
      $T_{cd} = T_{x^\tmin_{v_{\tnew-1}},y^\tmax_{v_{\tnew-1}}}$
    \end{enumerate}
    (which might be the same location).
    However, it cannot cover both of these elements if they are
    distinct.  If it did, then both of these elements would cover an
    element of rank $R-2$, which would have caused this element (of rank
    $R$) to have been assigned a location in step
    (\ref{i:assignment:3b}).
  \item \label{i:assignment:3d} If an unassigned element covers $T_{ab}$, but
    $T_{ab}$ has another assigned cover (which must be $T_{a,b+1}$),
    assign this element to $T_{a+1,b}$.  There cannot be two such
    elements in a single rank, as then $T_{ab}$ would be covered by
    three elements.
  \item \label{i:assignment:3e} If an unassigned element covers $T_{pq}$, but
    $T_{pq}$ has another assigned cover (which must be $T_{p+1,q}$),
    assign this element to $T_{p,q+1}$.  There cannot be two such
    elements in a single rank, as then $T_{pq}$ would be covered by
    three elements.
  \item \label{i:assignment:3f}
    The remaining case is $T_{ab} = T_{pq}$ and unassigned
    elements remain.  By lem.~\ref{lem:no-triples}, there are at most
    two such elements, which can be assigned arbitrarily to
    $T_{a+1,b}$ and $T_{a,b+1}$.  (Ultimately, we will prove that
    this only happens in rank $N+1$, which was handled by step
    (\ref{i:assignment:2}).)
  \item At this point, all elements in rank $R$ of $P^+$ are assigned
    locations, and the iteration hypotheses
    (\ref{i:assignment:3a-first})--(\ref{i:assignment:3a-last})
    are true of the $T_{xy}$ with $x+y \leq v_\tnew$.
  \end{enumerate}
  \end{enumerate}
\end{lemma}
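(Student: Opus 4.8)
The plan is to prove the lemma by induction on the grade $R$ processed in step~(\ref{i:assignment:3}), the inductive claim being exactly that the iteration hypotheses~(\ref{i:assignment:3a-first})--(\ref{i:assignment:3a-last}) of step~(\ref{i:assignment:3a}) hold for every location assigned so far. The base is handled by steps~(\ref{i:assignment:1}) and~(\ref{i:assignment:2}): since $\zerohat_P = B_1 \lessdot \cdots \lessdot B_N$ is a chain in which each $B_i$ with $i<N$ has a unique cover, an easy upward induction using lem.~\ref{lem:graded} and the minimality of $\zerohat_P$ shows that every grade of $P$ below $\rho(B_N)$ is a single $B_\bullet$; hence the lowest grade of $P^+$ is $\{B_N\}$ and the next is exactly the two covers of $B_N$, so assigning $B_N$ to $T_{00}$ and the two covers to $T_{10}$ and $T_{01}$ makes all the iteration hypotheses true for $v\le 1$ (with $h^{\tmin}_0=h^{\tmax}_0=0$ and $h^{\tmin}_1=-1$, $h^{\tmax}_1=1$).

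For the inductive step, fix $R$ and assume the hypotheses for all $T_{xy}$ with $x+y<v_\tnew=R-N$. First I would check that step~(\ref{i:assignment:3b}) is well defined: for an adjacent pair $T_{xy},T_{x-1,y+1}$ at height $v_\tnew-1$ the contiguity/parity hypothesis forces $T_{x-1,y}$ to be an assigned location of height $v_\tnew-2$, and the converse hypothesis~(\ref{i:assignment:3a-last}) then gives $T_{x-1,y}\lessdot T_{xy}$ and $T_{x-1,y}\lessdot T_{x-1,y+1}$; unique-cover-modularity supplies a unique common cover of grade $R$, which we write $T_{xy}\vee T_{x-1,y+1}$, and lem.~\ref{lem:no-triples-dual} shows it covers nothing else. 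Distinct adjacent pairs give distinct such elements and distinct target locations, for otherwise some grade-$R$ element would cover three grade-$(R-1)$ elements (consecutive pairs overlap in one element, non-consecutive pairs in none), contradicting lem.~\ref{lem:no-triples-dual}. Thus step~(\ref{i:assignment:3b}) bijectively assigns every grade-$R$ element that is ``visible'' from the interior of grade $R-1$.

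The crux is step~(\ref{i:assignment:3c}): any still-unassigned $e$ of grade $R$ covers some grade-$(R-1)$ element, but it cannot cover one whose $h$ lies strictly between $h^{\tmin}_{v_\tnew-1}$ and $h^{\tmax}_{v_\tnew-1}$, since such an element already received two covers in step~(\ref{i:assignment:3b}) and lem.~\ref{lem:no-triples} forbids a third; so $e$ can cover only the extreme locations $T_{ab}$ (minimum $h$) and/or $T_{cd}$ (maximum $h$) of grade $R-1$, and it cannot cover both when they are distinct, because downward unique-cover-modularity would then give a common grade-$(R-2)$ lower cover, forcing $T_{ab},T_{cd}$ to be an adjacent pair and $e$ to have been assigned already in step~(\ref{i:assignment:3b}). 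Granting this, steps~(\ref{i:assignment:3d})--(\ref{i:assignment:3f}) are routine: $T_{ab}$ has by lem.~\ref{lem:no-triples} at most one unassigned cover beyond the one (if any) produced in~(\ref{i:assignment:3b}), which we place in its free grid-neighbour $T_{a+1,b}$; symmetrically $T_{cd}=T_{pq}$ gets its extra cover at $T_{p,q+1}$; and in the degenerate case $T_{ab}=T_{cd}$ its at most two covers go to $T_{a+1,b}$ and $T_{a,b+1}$. To close the induction one checks that the $h$-values now present at height $v_\tnew$ are exactly $h^{\tmin}_{v_\tnew-1}+1,h^{\tmin}_{v_\tnew-1}+3,\dots,h^{\tmax}_{v_\tnew-1}-1$ from~(\ref{i:assignment:3b}), optionally extended by $h^{\tmin}_{v_\tnew-1}-1$ and/or $h^{\tmax}_{v_\tnew-1}+1$, hence a contiguous run of the right parity with extremes $h^{\tmin}_{v_\tnew-1}\pm1$ and $h^{\tmax}_{v_\tnew-1}\pm1$; and that the cover relations among the new and old assigned elements are exactly those claimed, since each new element was placed at a grid-neighbour of, and covering, precisely the one or two grade-$(R-1)$ elements from which it was built. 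An instance of one pass of step~(\ref{i:assignment:3}) is depicted in fig.~\ref{fig:Assignment}.

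The step I expect to be the real obstacle is~(\ref{i:assignment:3c}): establishing that an unassigned grade-$R$ element sees \emph{only} the two extreme grade-$(R-1)$ locations, and at most one of them. This is where unique-cover-modularity, lem.~\ref{lem:no-triples}, and lem.~\ref{lem:no-triples-dual} must be played off against each other, and it is exactly what keeps $P^+$ within the quarter-plane picture; by contrast the well-definedness of~(\ref{i:assignment:3b}) and the verification of the iteration hypotheses, though fiddly, are essentially forced.
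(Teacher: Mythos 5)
Your proposal is correct and takes essentially the same approach as the paper: the paper's own proof is only the remark that the stated iteration properties and the correctness of the algorithm are established by mutual induction on the grades, the details being ``tedious but straightforward.'' Your sketch supplies exactly those omitted details (the base case from the chain $B_1 \lessdot \cdots \lessdot B_N$, well-definedness of step~(\ref{i:assignment:3b}) via unique-cover-modularity, and the exclusion argument of step~(\ref{i:assignment:3c}) via lem.~\ref{lem:no-triples} and lem.~\ref{lem:no-triples-dual}), so it matches and in fact elaborates the intended argument.
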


The algorithm is nondeterministic in cases (\ref{i:assignment:2}) and
(\ref{i:assignment:3f}); either
cover of the unique element in one rank may be assigned to either of
the two locations in the next rank.  In all other steps, it
is deterministic.  The typical operation of steps (3b--f) is shown in
fig.~\ref{fig:Assignment}.

\begin{proof} \disconnect
The induction properties stated in the lemma and the correctness of the
algorithm are proved by mutual induction.
The details are tedious but straightforward.
\end{proof}

Because the assigned locations of each grade are contiguous and very
similar to the assigned locations of the grades above and below, the
possible $P^+$ are determined up to isomorphism by the outline of the
set of assigned
locations, the values $h^\tmin_\bullet$ and $h^\tmax_\bullet$.

\paragraph{The bottom of \texorpdfstring{$P^+$}{P\textplussuperior}}
We can now examine the constraints implied by the neighborhoods of $P$.

\begin{definition} \label{def:m-delta}
We define $w_{xy} = w(T_{xy})$ when $T_{xy}$ is allocated,
$m = w(B_N) = w_{00}$,
$\delta^- = w_{10}-m$, and $\delta^+ = w_{01}-m$.
If $N > 1$, we define $\epsilon = w_{B_{N-1}} - m$ and otherwise
$\epsilon = -m$.
\end{definition}
($m$ for the ``weight of the \emph{middle} element of $P^+$'', namely $T_{00}=B_N$;
$\delta^-$ for ``the \emph{difference} in the
\emph{negative} direction of $h$'', that is, toward the left side of $P^+$;
$\delta^+$ for ``the \emph{difference} in the
\emph{positive} direction of $h$'', that is, toward the right side of $P^+$.)
Both $\delta^-$ and $\delta^+$ are well-defined since $T_{00} = B_N$
is assumed to have two upward covers, which are $T_{10}$ and $T_{01}$.

\begin{lemma} \label{lem:tail}
The constraints on $w$ due to applying lem.~\ref{lem:equiv-6} to $B_1$
and all the $B_i$ for $i < N$ are
\begin{equation}
w(B_i) = ir \textup{\qquad\qquad for all } 1 \leq i \leq N \label{eq:tail}
\end{equation}
In particular, $m = Nr$, $\epsilon = -r < 0$, and $N = -m/\epsilon$.
\end{lemma}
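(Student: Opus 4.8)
The plan is: first determine the neighborhood $N_{B_i}$ for each $i<N$, then read off the linear constraints these impose via lem.~\ref{lem:equiv-6} (equivalently, the relevant rows of fig.~\ref{fig:neighborhoods}), and finally solve the resulting second-order linear recurrence. I begin with the base constraint: by lem.~\ref{lem:P-minimum} the poset $P$ has minimum $\zerohat_P=B_1$, so $B_1$ is its unique minimal element and eq.~\ref{eq:minimal} reads $w(B_1)=r$. In particular $r=w(B_1)>0$ by assumption~\ref{ass:positive} (or lem.~\ref{lem:w-positive}), a fact I will use at the end.

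Next I would pin down the neighborhoods. For $1\le i<N$, by the construction of the $B$-sequence and the definition of $N$, the element $B_i$ has a unique upward cover, namely $B_{i+1}$; hence $C^+_{B_i}=\{B_{i+1}\}$. For the downward covers I would prove by induction on $i$ that $C^-_{B_i}=\{B_{i-1}\}$ for $i\ge 2$ (and $C^-_{B_1}=\emptyset$ since $B_1=\zerohat_P$): for $i=2$, any $c\lessdot B_2$ satisfies $B_1=\zerohat_P\le c<B_2$ with $B_1\lessdot B_2$, forcing $c=B_1$; for $i\ge 3$, if $c\lessdot B_i$ with $c\ne B_{i-1}$ then unique-cover-modularity (def.~\ref{def:unique-cover-modular}) supplies an element covered by both $c$ and $B_{i-1}$, which must be $B_{i-2}$ by the induction hypothesis, and then $c$ is a second upward cover of $B_{i-2}$, contradicting that $B_{i-1}$ is its unique upward cover. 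Finally $S_{B_i}=\emptyset$ for $i<N$: any $s\in S_{B_i}$ would cover the unique element $B_{i-1}$ of $C^-_{B_i}$ and differ from $B_i$, again contradicting uniqueness of the upward cover of $B_{i-1}$. So each $N_{B_i}$ with $i<N$ is the plain ``two-chain'' neighborhood with $|C^+_{B_i}|=1$, $|C^-_{B_i}|\le 1$, $|S_{B_i}|=0$.

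Applying lem.~\ref{lem:equiv-6} to $p=B_i$ (the conditions coming from lem.~\ref{lem:equiv-5} are vacuous since $S_{B_i}=\emptyset$, and the unique upward cover $B_{i+1}$ is trivially an upward orphan), or equivalently reading the matching rows of fig.~\ref{fig:neighborhoods}, yields exactly $2w(B_1)=w(B_2)$ (when $N\ge 2$) and $2w(B_i)=w(B_{i-1})+w(B_{i+1})$ for $2\le i<N$. Together with $w(B_1)=r$ these determine $w(B_i)$ for $1\le i\le N$: the induction $w(B_1)=r$, $w(B_2)=2r$, $w(B_{i+1})=2w(B_i)-w(B_{i-1})=(i+1)r$ gives eq.~\ref{eq:tail}, and conversely $w(B_i)=ir$ manifestly satisfies all of these constraints, so the constraint set is precisely eq.~\ref{eq:tail}. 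Specializing def.~\ref{def:m-delta}: $m=w(B_N)=Nr$; and $\epsilon=w(B_{N-1})-m=(N-1)r-Nr=-r$ when $N>1$, while $\epsilon=-m=-r$ when $N=1$ (as then $m=w(B_1)=r$); since $r>0$ we get $\epsilon=-r<0$, and $m=Nr$ with $\epsilon=-r$ give $N=-m/\epsilon$.

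I expect the only genuinely non-routine part to be the induction showing $C^-_{B_i}=\{B_{i-1}\}$ and $S_{B_i}=\emptyset$ --- that each $B_i$ with $i<N$ really carries the plain two-chain neighborhood and no extra covers or siblings --- since once that is in hand the rest is the familiar recurrence $2w(B_i)=w(B_{i-1})+w(B_{i+1})$ with boundary value $w(B_1)=r$ already encountered in Case~\ref{case:path}, plus the bookkeeping of the $N=1,2$ edge cases.
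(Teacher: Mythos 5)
Your proposal is correct and takes essentially the same route as the paper, which states lem.~\ref{lem:tail} without an explicit proof and relies on exactly the constraints you derive — $w(B_1)=r$ from (\ref{eq:minimal}), $2w(B_1)=w(B_2)$, and $2w(B_i)=w(B_{i-1})+w(B_{i+1})$ for $2\le i<N$ — just as in its treatment of case~\ref{case:path} and the finite-path case. Your induction pinning down $C^-_{B_i}$, $C^+_{B_i}$, and $S_{B_i}=\zeroslash$ merely makes explicit what the paper treats as immediate from the construction of the $B_\bullet$ (and is also available from lem.~\ref{lem:neighborhoods} together with lem.~\ref{lem:graded}).
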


\newcommand{\xL}{{x_L}}
\newcommand{\yL}{{y_L}}

\begin{lemma}
The constraints on $w$ due to applying lem.~\ref{lem:equiv-6} to
$T_{\bullet 0}$ imply either
\begin{enumerate}
\item $\delta^- \geq 0$\upcolon
  we define $\xL =  \infty$,
  all $T_{\bullet 0}$ are allocated, and
  $w_{i0} = m + i\delta^-$ for all $0 \leq i$; or
\item $\delta^- < 0$\upcolon
  we define $\xL = -m/\delta^-$, $\xL$ is a positive integer $\geq 2$,
  $T_{i0}$ is allocated iff $i < \xL$, and
  $w_{i0} = m + i\delta^-$ for all $0 \leq i < \xL$.
\end{enumerate}
\end{lemma}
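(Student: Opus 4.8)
The plan is to extract, for each left-edge point $T_{i0}$ with $i\ge1$, the single equation that lem.~\ref{lem:equiv-6} contributes, show it is a two-term recurrence in the weights $w_{i-1,0},w_{i0},w_{i+1,0}$, and then solve it. First I would determine the neighborhood of $T_{i0}$ exactly. Since $T_{i0}$ has rank $N+i>N$, every cover of it lies in $P^+$; the only assigned location of rank $N+i-1$ below it is $T_{i-1,0}$, so $C^-_{T_{i0}}=\{T_{i-1,0}\}$, and its up-covers lie among $T_{i+1,0}$ and $T_{i,1}$. The key auxiliary fact is that $T_{i-1,1}$ is allocated whenever $T_{i0}$ is: allocation of $T_{i0}$ forces $h^\tmin_i=-i$, while $h^\tmax_1=1$ (level $1$ of $P^+$ consists of the two covers of $B_N$, placed at $h=\pm1$) and $h^\tmax$ moves by $\pm1$ per level, so $h^\tmax_i\ge 2-i$; since $T_{i-1,1}$ sits at $v=i$, $h=2-i$, it lies in the allocated range $[h^\tmin_i,h^\tmax_i]$. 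Then $T_{i0}$ and $T_{i-1,1}$ are the two adjacent level-$i$ locations whose meet is $T_{i-1,0}$ — which both of them cover — so step~(\ref{i:assignment:3b}) of lem.~\ref{lem:assignment} places their join at $T_{i,1}$, giving $T_{i,1}\in C^+_{T_{i0}}$. By lem.~\ref{lem:no-triples}, $T_{i-1,0}$ is covered by exactly $T_{i0}$ and $T_{i-1,1}$; since any sibling in $S_{T_{i0}}$ must (by unique-cover-modularity) cover an element of $C^-_{T_{i0}}$ and be $\ne T_{i0}$, we get $S_{T_{i0}}=\{T_{i-1,1}\}$ with $(T_{i-1,1})^-_{T_{i0}}=T_{i-1,0}$ and $(T_{i-1,1})^+_{T_{i0}}=T_{i,1}$. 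Hence $N_{T_{i0}}$ is of type $(2,1,1)$ if $T_{i+1,0}$ is allocated and of type $(1,1,1)$ otherwise (see fig.~\ref{fig:neighborhoods-1} and~\ref{fig:neighborhoods-2}).

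Transcribing the constraints of those two neighborhood types from lem.~\ref{lem:equiv-6}, the point $T_{i0}$ contributes $w(T_{i-1,0})=w(T_{i,1})$ together with
\[
2w_{i0}=w_{i-1,0}+w_{i+1,0}\ \ (\text{if }T_{i+1,0}\text{ allocated}),\qquad 2w_{i0}=w_{i-1,0}\ \ (\text{otherwise}).
\]
Using $w_{00}=m$ and $w_{10}=m+\delta^-$ by definition, a one-line induction on $i$ via the first form shows $w_{i0}=m+i\delta^-$ for every allocated $T_{i0}$; in particular $\delta^-=w_{10}-m\in\mbbZ$. (I also use that the allocated edge locations form an initial segment $\{T_{00},\dots,T_{\xL-1,0}\}$: any element at $T_{i+1,0}$ covers one at $T_{i0}$, so $T_{i0}$ is then allocated too.) Now split into two cases. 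If every $T_{i0}$ is allocated, then $m+i\delta^->0$ for all $i$ by positivity, forcing $\delta^-\ge0$; setting $\xL=\infty$ gives alternative~(1). Otherwise let $\xL\ge2$ be the least unallocated index on the edge; the second form applied at $i=\xL-1$ gives $2\bigl(m+(\xL-1)\delta^-\bigr)=m+(\xL-2)\delta^-$, i.e.\ $m+\xL\delta^-=0$, so $\delta^-=-m/\xL<0$ and $\xL=-m/\delta^-$, which is alternative~(2) (with $T_{i0}$ allocated exactly for $i<\xL$ and $w_{i0}=m+i\delta^-$ there).

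The hard part is the auxiliary fact that $T_{i-1,1}$ is allocated whenever $T_{i0}$ is — equivalently, ruling out neighborhood type $(2,1,0)$ at left-edge points, which would destroy the clean two-term recurrence. This is exactly where the $h^\tmin/h^\tmax$ bookkeeping built into lem.~\ref{lem:assignment}, together with the fact that $P^+$ has width $2$ at level $1$, is indispensable. Once that is in hand, the remaining steps — identifying the two admissible neighborhood types, reading off their constraints, the arithmetic-progression induction, and the short computation isolating $\xL$ — are routine.
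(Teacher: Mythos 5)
Your proposal is correct and follows essentially the same route as the paper: identify the left-edge neighborhoods (down-cover $T_{i-1,0}$, up-covers among $T_{i+1,0}$ and $T_{i,1}$, with $T_{i,1}$ allocated and not an upward orphan because $T_{i-1,1}$ is allocated), read off the recurrence $2w_{i0}=w_{i-1,0}+w_{i+1,0}$ (respectively $2w_{i0}=w_{i-1,0}$ at the last allocated edge location), and solve it, splitting on whether the edge is infinite. The only difference is that you justify the allocation of $T_{i-1,1}$ explicitly via the $h^{\textup{min}}/h^{\textup{max}}$ bookkeeping of the assignment lemma, a detail the paper's proof leaves implicit in its appeal to unique-cover-modularity.
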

\begin{proof} \disconnect
This is proved by finding the smallest $I$ for which $T_{i0}$ is not
allocated, if one exists.  Because $T_{01}$ is allocated,
unique-modular-covering guarantees that for any $T_{i0}$ with $i \leq I$
(any $i$ if $I$ does not exist),
$T_{i1}$ is allocated and is not an upward orphan.
The constraints on $w_{i0}$ are that $2w_{i0} = w_{i-1,0} + w_{i+1,0}$
for any $0 < i < I$ and $2w_{i0} = w_{i-1,0}$ for $i = I$ (if $I$ exists).
Thus, $w_{i0} = m + i\delta^-$ for any $0 \leq i < I$ (and for any
$0 \leq i$ if $I$ does not exist).
If no such $I$ exists, conclusion case (1) follows, since positivity
for all of the $w_{\bullet 0}$ requires $\delta^- \geq 0$.
If $I$ exists, conclusion case (2) follows, since the constraint on
$w_{I-1,0}$ requires it to equal $-\delta^-$.
\end{proof}

\begin{lemma}
Similarly, the constraints on $w$ due to applying lem.~\ref{lem:equiv-6} to
$T_{0\bullet}$ imply either
\begin{enumerate}
\item $\delta^+ \geq 0$\upcolon
  we define $\yL =  \infty$,
  all $T_{0\bullet}$ are allocated, and
  $w_{0i} = m + i\delta^+$ for all $0 \leq i$; or
\item $\delta^+ < 0$\upcolon
  we define $\yL = -m/\delta^+$, $\yL$ is a positive integer $\geq 2$,
  $T_{0i}$ is allocated iff $i < \yL$, and
  $w_{0i} = m + i\delta^+$ for all $0 \leq i < \yL$.
\end{enumerate}
\end{lemma}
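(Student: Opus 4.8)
The plan is to run the argument of the preceding lemma with the two coordinates interchanged: replace $T_{i0}$ by $T_{0i}$, $T_{i1}$ by $T_{1i}$, $\delta^-$ by $\delta^+$, $\xL$ by $\yL$, and use the allocation of $T_{10}$ in place of that of $T_{01}$. First I would set $J$ to be the smallest index for which $T_{0J}$ is not allocated, if such an index exists (and otherwise leave $J$ undefined). Since $T_{10}$ is allocated and $P^+$ is unique-cover-modular (assump.~\ref{ass:unique-cover-modular}), for every $i$ with $i \leq J$ (every $i$ if $J$ is undefined) the location $T_{1i}$ is allocated; moreover, for $i \geq 1$ the element $T_{1,i-1}$ is then a sibling of $T_{0i}$, so $T_{1i}$ is not an upward orphan relative to $T_{0i}$. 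On the other hand, each allocated $T_{0,i+1}$ covers only $T_{0i}$, hence is an upward orphan relative to $T_{0i}$.

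Feeding these neighborhood descriptions (per lem.~\ref{lem:neighborhoods}) into eq.~\ref{eq:join-orphan} of lem.~\ref{lem:equiv-6} at each $T_{0i}$ yields $2 w_{0i} = w_{0,i-1} + w_{0,i+1}$ for all $i$ with $0 < i$ strictly below the threshold, together with the boundary relation $2 w_{0,J-1} = w_{0,J-2}$ when $J$ exists. Combined with the initial values $w_{00} = m$ and $w_{01} = m + \delta^+$ from def.~\ref{def:m-delta}, the recurrence forces $w_{0i} = m + i\delta^+$ for every allocated $T_{0i}$. It then remains to split into the two cases. If $J$ does not exist, all $T_{0\bullet}$ are allocated with $w_{0i} = m + i\delta^+$; positivity of $w$ (assump.~\ref{ass:positive}) forces $\delta^+ \geq 0$, and setting $\yL = \infty$ gives conclusion (1). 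If $J$ exists, substituting $w_{0,J-1} = m + (J-1)\delta^+$ and $w_{0,J-2} = m + (J-2)\delta^+$ into $2 w_{0,J-1} = w_{0,J-2}$ gives $m + J\delta^+ = 0$; since $m = Nr > 0$ by lem.~\ref{lem:tail}, this forces $\delta^+ < 0$ and $J = -m/\delta^+$, while positivity of $w_{0,J-1}$ together with $w_{00} = m > 0$ forces $J \geq 2$. Setting $\yL = J = -m/\delta^+$, we have $T_{0i}$ allocated exactly when $i < \yL$ and $w_{0i} = m + i\delta^+$ there, which is conclusion (2).

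The main obstacle is the same bookkeeping step as in the $T_{\bullet 0}$ case: verifying, from unique-cover-modularity and the neighborhood classification, that the relevant $T_{1i}$ really are allocated and non-orphan and that the ``outer'' covers $T_{0,i+1}$ really are upward orphans, so that eq.~\ref{eq:join-orphan} collapses to exactly the claimed recurrence with no stray terms. Once this is in hand, the rest is the elementary solution of a second-order linear recurrence with the positivity constraint pinning down the boundary. In the write-up this can simply be stated as ``proved dually to the preceding lemma, interchanging the roles of the two coordinates.''
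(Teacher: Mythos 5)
Your proposal is correct and is essentially the paper's own argument: the paper gives no separate proof for this lemma, treating it as the coordinate-swapped ``similarly'' dual of the preceding $T_{\bullet 0}$ lemma, whose proof is exactly the recurrence-plus-boundary analysis you spell out (smallest unallocated index, $T_{1i}$ allocated and non-orphan via unique-cover-modularity, $T_{0,i+1}$ an upward orphan, then $w_{0i}=m+i\delta^+$ with the boundary equation forcing $m+\yL\delta^+=0$). Your closing remark that the write-up can simply say ``proved dually to the preceding lemma'' matches the paper's treatment.
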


\begin{definition}
We define $\Delta^- = -m/\delta^-$ and $\Delta^+ = -m/\delta^+$,
defining $-m/0 = \infty$.
This parallels the relationship between $N$ and $\epsilon$, $N = -m/\epsilon$.
\end{definition}

\begin{lemma}
  $\Delta^-$ is either $\infty$ (if $\delta^- = 0$),
  a negative real number (if $\delta^- > 0$), or the
  reciprocal of a positive integer (which is $\xL$, if $\delta^- < 0$).
  Similarly for $\Delta^+$ w.r.t.\ $\yL$.
\end{lemma}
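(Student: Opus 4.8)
The plan is to obtain this lemma as a direct corollary of the two immediately preceding lemmas---which, case by case on the sign of $\delta^-$, decide whether the axis $T_{\bullet 0}$ is allocated indefinitely and identify the integer $\xL$---together with Lemma~\ref{lem:tail}, which fixes $m$. The argument is a trichotomy on the sign of $\delta^-$, with the three cases matching the three alternatives of the statement exactly; since the statement for $\Delta^+$ is word-for-word the same with $\delta^+, \yL, T_{0\bullet}$ in place of $\delta^-, \xL, T_{\bullet 0}$, I would prove only the $\Delta^-$ half and remark that $\Delta^+$ is identical.

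First I would record that $m > 0$: by Lemma~\ref{lem:tail}, $m = w(B_N) = Nr$, and $N \geq 1$ with $r > 0$ by positivity (Assumption~\ref{ass:positive} together with \eqref{eq:minimal}). Now split on $\delta^-$. If $\delta^- = 0$, the convention $-m/0 = \infty$ built into the definition of $\Delta^-$ gives $\Delta^- = \infty$, and the preceding lemma places us simultaneously in its first alternative ($\xL = \infty$, every $T_{\bullet 0}$ allocated). If $\delta^- > 0$, then because $m > 0$ and $\delta^- > 0$ the definition of $\Delta^-$ returns a strictly negative real, so $\Delta^-$ is a negative real number. If $\delta^- < 0$, the preceding lemma's second alternative applies: the weights $w_{i0} = m + i\delta^-$ form a strictly decreasing arithmetic progression starting at $m > 0$, and positivity forces this progression to reach the value $0$ exactly at the index $-m/\delta^-$, so that $\xL = -m/\delta^-$ is a positive integer, with $\xL \geq 2$ because $w_{10} = m + \delta^- > 0$. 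The statement then records $\Delta^-$ as the reciprocal of this positive integer, that is, $\Delta^- = 1/\xL$.

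The only case with genuine content is the third, and even there the decisive step is inherited from the preceding lemma: the crux is the integrality (and the lower bound $\geq 2$) of $\xL$, which is not automatic but is exactly what positivity buys---it forces the linear weight sequence $w_{i0}$ to land on $0$ at an integer index rather than crossing between consecutive integers. Everything else is sign bookkeeping against the definition of $\Delta^-$ and its convention at $0$. The corresponding statement for $\Delta^+$ follows by running the identical trichotomy on the axis $T_{0\bullet}$, with $\yL$ in place of $\xL$.
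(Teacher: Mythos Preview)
Your trichotomy on the sign of $\delta^-$ is exactly the right approach, and the paper treats this lemma as immediate from the definitions and the two preceding lemmas, giving no explicit proof. However, your final identification in the $\delta^- < 0$ case is a slip: by definition $\Delta^- = -m/\delta^-$, and the preceding lemma sets $x_L = -m/\delta^-$ in this case, so $\Delta^- = x_L$, not $\Delta^- = 1/x_L$. The phrase ``the reciprocal of a positive integer'' in the lemma statement is admittedly awkward, but the downstream table in Lemma~\ref{lem:9-cases} confirms that $\Delta^-$ takes the integer values $3,4,6,\ldots$, i.e., $\Delta^- = x_L$; a glance at the definition settles the matter. Your argument that $x_L = -m/\delta^-$ is a positive integer $\geq 2$ is correct and is the only substantive content here; just replace the final equation by $\Delta^- = x_L$.
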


\newcommand{\recip}[1]{\frac{1}{#1}}

\begin{lemma}
The constraint on $T_{00}$ is equivalent to
\begin{equation}
\recip{\Delta^-} + \recip{\Delta^+} + \recip{N} = 1, \label{eq:delta-N}
\end{equation}
defining $1/\infty = 0$.
\end{lemma}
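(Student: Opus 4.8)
\emph{Proof proposal.} The plan is to identify the neighborhood $N_{T_{00}}$ of the point $p = T_{00} = B_N$, read off the constraint it imposes via lem.~\ref{lem:equiv-6} (equivalently, the corresponding row of fig.~\ref{fig:neighborhoods}), and then rewrite that constraint in the coordinates $m, \delta^\pm, \epsilon, N, \Delta^\pm$ of def.~\ref{def:m-delta}.

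First I would pin down $N_{T_{00}}$. In the case under consideration $B_N$ has exactly two upward covers, which were named $T_{10}$ and $T_{01}$, so $C^+_{T_{00}} = \{T_{10}, T_{01}\}$. For the downward covers, an easy induction shows that, for $1 \le k \le N$, $B_k$ is the unique element of $P$ of its rank: $B_1 = \zerohat_P$ is the unique element of rank $0$; and if $B_k$ with $k < N$ is the unique element of its rank, then every element of the next rank covers an element of the preceding rank (as $P$ is graded with minimum $\zerohat_P$, lem.~\ref{lem:graded}), hence covers $B_k$, and $B_k$ has the unique upward cover $B_{k+1}$, so the next rank is $\{B_{k+1}\}$. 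Consequently, if $N > 1$ then every element covered by $B_N$ lies in the rank just below $B_N$, which is $\{B_{N-1}\}$, and since $B_{N-1} \lessdot B_N$ we get $C^-_{T_{00}} = \{B_{N-1}\}$; if $N = 1$ then $T_{00} = \zerohat_P$ and $C^-_{T_{00}} = \zeroslash$. In either case $S_{T_{00}} = \zeroslash$, since by lem.~\ref{lem:neighborhoods} an element of $S_{T_{00}}$ would cover a distinct element of $C^-_{T_{00}}$, i.e.\ would be a second upward cover of $B_{N-1}$, whereas $B_{N-1}$ (having index $< N$) has $B_N$ as its unique upward cover. Thus $N_{T_{00}}$ is the neighborhood with $(|C^+_p|, |C^-_p|, |S_p|) = (2,1,0)$ when $N > 1$ and $(2,0,0)$ when $N = 1$; since $S_{T_{00}}$ is empty, both $T_{10}$ and $T_{01}$ are upward orphans, and lem.~\ref{lem:equiv-6} via eq.~(\ref{eq:join-orphan}) gives the constraint $2\,w(T_{00}) = w(B_{N-1}) + w(T_{10}) + w(T_{01})$ when $N > 1$, and $2\,w(T_{00}) = w(T_{10}) + w(T_{01})$ when $N = 1$.

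Next I would put this in coordinates. By def.~\ref{def:m-delta}, $w(T_{00}) = m$, $w(T_{10}) = m + \delta^-$, $w(T_{01}) = m + \delta^+$, and $w(B_{N-1}) = m + \epsilon$ when $N > 1$; the stipulation $\epsilon = -m$ in the case $N = 1$ makes the term $m + \epsilon$ vanish, so both forms of the constraint collapse to the single identity
\[
2m = (m + \epsilon) + (m + \delta^-) + (m + \delta^+),
\]
that is, $\epsilon + \delta^- + \delta^+ = -m$. By lem.~\ref{lem:tail}, $m = Nr > 0$, so dividing by $-m$ is legitimate and yields $\epsilon/(-m) + \delta^-/(-m) + \delta^+/(-m) = 1$. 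Here $\epsilon/(-m) = 1/(-m/\epsilon) = 1/N$ by lem.~\ref{lem:tail} (as $\epsilon \neq 0$), and for each sign $\delta^\pm/(-m) = 1/(-m/\delta^\pm) = 1/\Delta^\pm$ when $\delta^\pm \neq 0$, while if $\delta^\pm = 0$ then $\delta^\pm/(-m) = 0$ and $\Delta^\pm = \infty$, so the equality persists under the convention $1/\infty = 0$. Hence the constraint is equivalent to $1/\Delta^- + 1/\Delta^+ + 1/N = 1$. Every step above is reversible (multiply through by $-m$, then add $3m$), so the equivalence holds, as claimed.

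The only delicate point is the determination of $N_{T_{00}}$ — namely that the bottom of $P$ really is the chain $B_1 \lessdot \cdots \lessdot B_N$ with each $B_k$ alone in its rank, so that $B_{N-1}$ is a downward orphan of $B_N$; once $N_{T_{00}}$ is known, the remainder is routine algebra with the $1/\infty$ conventions, with the $N = 1$ degeneracy absorbed cleanly by the definition $\epsilon = -m$.
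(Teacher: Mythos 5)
Your proposal is correct and follows essentially the same route as the paper: apply the constraint of lem.~\ref{lem:equiv-6} (via eq.~(\ref{eq:join-orphan})) to $T_{00}$, noting that $T_{10}$ and $T_{01}$ are upward orphans, to get $2w_{00}=(w_{00}+\epsilon)+w_{10}+w_{01}$, and divide by $-m$. The only difference is that you explicitly justify what the paper merely asserts — that $C^-_{T_{00}}\subseteq\{B_{N-1}\}$, that $S_{T_{00}}=\zeroslash$ so both upward covers are orphans, and that the $N=1$ case is absorbed by the convention $\epsilon=-m$ — which is a welcome but not essentially different elaboration.
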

\begin{proof}
Applying lem.~\ref{lem:equiv-6} to $T_{00}$ yields
$2w_{00} = (w_{00}+\epsilon) + w_{10} + w_{01}$, since $T_{10}$ and
$T_{01}$ both exist and are upward orphans of $T_{00}$.  Dividing this
equation by $-m$ gives the conclusion.
\end{proof}

\begin{lemma} \label{lem:sum-of-recip}
Not considering the order of the terms, simple enumeration shows the
decompositions of $1$ as a sum of reciprocals as in (\ref{eq:delta-N}) are:
\begin{alignat*}{3}
  & \recip{1} & + \recip{n} & + (-\recip{n})
      \textup{\quad for any integer $n \geq 1$} \\
  & \recip{1} & + \recip{\infty} & + \recip{\infty} \\
  & \recip{2} & + \recip{2} & + \recip{\infty} \\
  & \recip{2} & + \recip{3} & + \recip{6} \\
  & \recip{2} & + \recip{4} & + \recip{4} \\
  & \recip{3} & + \recip{3} & + \recip{3}
\end{alignat*}
\end{lemma}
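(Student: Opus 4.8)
The plan is a short Diophantine enumeration. First I would record the admissible values of the three summands of (\ref{eq:delta-N}) supplied by the preceding lemmas: $\frac1N$ is the reciprocal of a positive integer, since $N$ is the integer index of $B_N$ and $N=-m/\epsilon$ by lem.~\ref{lem:tail}, so $\frac1N\in\{1,\frac12,\frac13,\dots\}$; while each of $\frac1{\Delta^-}$, $\frac1{\Delta^+}$ is either $0$ (when the corresponding $\delta$ is $0$), a negative real (when that $\delta$ is positive), or $\frac1k$ for an integer $k\geq 2$, namely $\frac1{x_L}$ resp.\ $\frac1{y_L}$ (when that $\delta$ is negative). In particular each of $\frac1{\Delta^-},\frac1{\Delta^+}$ is $\leq\frac12$, and $0<\frac1N\leq 1$. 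The task is to list all solutions of (\ref{eq:delta-N}) under these restrictions, up to interchanging $\Delta^-$ and $\Delta^+$.

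The enumeration splits on how many of $\frac1{\Delta^-},\frac1{\Delta^+}$ are negative. Both cannot be negative, for then $\frac1N=1-\frac1{\Delta^-}-\frac1{\Delta^+}>1$. If exactly one is negative, say $\frac1{\Delta^+}$, then $\frac1{\Delta^-}+\frac1N=1-\frac1{\Delta^+}>1$; combined with $\frac1{\Delta^-}\leq\frac12$ and $\frac1N\leq 1$ this forces $\frac1N=1$, i.e.\ $N=1$, and hence $\frac1{\Delta^+}=-\frac1{\Delta^-}$ with $\frac1{\Delta^-}\in\{0\}\cup\{\frac1n:n\geq 2\}$. This yields the family $\frac11+\frac1n+(-\frac1n)$, whose $n=\infty$ instance is $\frac11+\frac1\infty+\frac1\infty$.

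If neither of $\frac1{\Delta^-},\frac1{\Delta^+}$ is negative, each is $0$ or a unit fraction $\frac1k$ with $k\geq 2$. Both zero forces $\frac1N=1$, again $\frac11+\frac1\infty+\frac1\infty$. Exactly one zero, say $\frac1{\Delta^+}=0$, gives $\frac1{\Delta^-}+\frac1N=1$ with $\frac1{\Delta^-}\leq\frac12$, whence $\frac1{\Delta^-}=\frac1N=\frac12$, i.e.\ $\frac12+\frac12+\frac1\infty$. If neither is zero, write $\frac1{\Delta^-}=\frac1j$, $\frac1{\Delta^+}=\frac1k$ with $j,k\geq 2$; since $N=1$ would make $\frac1j+\frac1k=0$, also $N\geq 2$, and we are reduced to expressing $1$ as a sum of three positive unit fractions, whose only solutions up to order are $\frac12+\frac13+\frac16$, $\frac12+\frac14+\frac14$, and $\frac13+\frac13+\frac13$. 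Collecting the cases gives exactly the six families listed.

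There is no serious obstacle here; the argument is an elementary finite case split, and the only care needed is with the degenerate values --- $\Delta^{\pm}=\infty$ contributing a zero summand, and $N=1$ contributing the summand $1$ --- and with checking that several parameter assignments $(j,k,N)$ collapse to the same unordered decomposition, so that nothing beyond the six families appears. The classical sub-lemma ($\frac1a+\frac1b+\frac1c=1$ in positive integers forces $\{a,b,c\}$ to be $\{2,3,6\}$, $\{2,4,4\}$, or $\{3,3,3\}$) is itself one line: assume $a\leq b\leq c$, note $a\leq 3$, and check $a=2$ and $a=3$.
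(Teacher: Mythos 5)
Your proof is correct and takes essentially the same route as the paper, which offers no written argument beyond asserting ``simple enumeration'': you carry out that enumeration explicitly, using the admissible values of $1/N$, $1/\Delta^-$, $1/\Delta^+$ established in the preceding lemmas. The only cosmetic difference is that your constrained case split produces the first family only for $n \geq 2$ (since a finite positive $\Delta^\pm$ is an integer $\geq 2$), whereas the lemma lists $n \geq 1$; this is harmless, since the listed items are trivially decompositions and the cases $\Delta^\pm = 1$ are discarded later in lem.~\ref{lem:9-cases} anyway.
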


\paragraph{The possible cases for \texorpdfstring{$P$}{P}}

\begin{lemma} \label{lem:9-cases}
Since
\begin{enumerate}
  \item the possible decompositions of 1 as a sum of reciprocals are
    listed in lem.~\ref{lem:sum-of-recip},
  \item $N$ must be positive,
  \item given the left-right symmetry of $P$, we
    conventionally choose $\xL \geq \yL$, and
  \item $T_{10}$ and $T_{01}$ are allocated, so $\Delta^- \neq 1$
        and $\Delta^+ \neq 1$,
\end{enumerate}
the possible cases for $P$ are limited to:

\disconnect
\hspace{0.5in}
{
\renewcommand{\arraystretch}{1.4}
\textup{
\begin{tabular}{|l|l||l|l||l||l|}
\hline $\Delta^-$ & $\xL$ & $\Delta^+$ & $\yL$ & $N$ & Lattice \\
\hline $-n$ for $n \geq 2$ & $\infty$ & $n$ & $n$ & 1 & $\mbbY_n$ \\
\hline $\infty$ & $\infty$ & $\infty$ & $\infty$ & 1 & $\mbbY$ \\
\hline $\infty$ & $\infty$ & 2 & 2 & 2 & $\mbbSY$ \\
\hline 3 & 3 & 2 & 2 & 6 & none (lem.~\forwardref{lem:xy-32} \\
\hline 6 & 6 & 2 & 2 & 3 & none (lem.~\forwardref{lem:xy-62} \\
\hline 6 & 6 & 3 & 3 & 2 & none (lem.~\forwardref{lem:xy-ge-3}) \\
\hline 4 & 4 & 2 & 2 & 4 & none (lem.~\forwardref{lem:xy-42}) \\
\hline 4 & 4 & 4 & 4 & 2 & none (lem.~\forwardref{lem:xy-ge-3}) \\
\hline 3 & 3 & 3 & 3 & 3 & none (lem.~\forwardref{lem:xy-ge-3}) \\
\hline
\end{tabular}
}
}
\vspace{\baselineskip}
\end{lemma}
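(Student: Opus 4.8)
The plan is to reduce the statement to a finite bookkeeping exercise on equation~(\ref{eq:delta-N}), $\recip{\Delta^-}+\recip{\Delta^+}+\recip{N}=1$, using the three side constraints recorded in the lemma together with the dictionary between $\Delta^\pm$ and $\xL,\yL$ from the preceding lemmas. Concretely, I would first catalogue, for each of the three ``slots'' $N$, $\Delta^-$, $\Delta^+$, exactly which reciprocal values are admissible. By lem.~\ref{lem:tail}, $N=-m/\epsilon$ with $\epsilon=-r<0$, so $N\geq 1$ is a positive integer; hence $\recip{N}$ is the reciprocal of a positive integer, in particular $\recip{N}>0$ and $\recip{N}\neq 0$. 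For $\recip{\Delta^-}$ (and symmetrically $\recip{\Delta^+}$), the preceding lemmas give that it is $0$ (when $\delta^-=0$, $\xL=\infty$), strictly negative (when $\delta^->0$, $\xL=\infty$), or the reciprocal of an integer $\xL\geq 2$ (when $\delta^-<0$, $\xL=\Delta^-$); in no case can it equal $1$, since $T_{10}$ and $T_{01}$ are assumed allocated, forcing $\xL,\yL\geq 2$. This is precisely where conditions (2)--(4) of the lemma enter.

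Then I would run through the six families of decompositions of $1$ listed in lem.~\ref{lem:sum-of-recip} and, for each, determine all legal ways to assign its three summands to the slots $N$, $\Delta^-$, $\Delta^+$. For the family $\recip{1}+\recip{n}+(-\recip{n})$: the summand $\recip{1}=1$ cannot sit in a $\Delta$-slot, so it must be $\recip{N}$, giving $N=1$; the summand $-\recip{n}$ is negative, hence cannot be $\recip{N}$, so it is a $\Delta$-slot, forcing $n\geq 2$ (otherwise $\recip{n}=1$ would be the other $\Delta$-value); by the symmetry convention $\xL\geq\yL$ we place the negative value on the ``$-$'' side, $\Delta^-=-n$ (so $\xL=\infty$) and $\Delta^+=n$ (so $\yL=n$), yielding the first table row with $L\cong\mbbY_n$. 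The families $\recip{1}+\recip{\infty}+\recip{\infty}$ and $\recip{2}+\recip{2}+\recip{\infty}$ are handled the same way, since the $\recip{\infty}=0$ summand cannot be $\recip{N}$, producing the $\mbbY$ and $\mbbSY$ rows. For the three remaining families every summand is the reciprocal of an integer $\geq 2$, hence admissible in every slot; there $N$ may be taken to be any one of the distinct values appearing, and the other two are sorted by $\xL\geq\yL$, giving $3$, $2$, and $1$ rows respectively for $\recip{2}+\recip{3}+\recip{6}$, $\recip{2}+\recip{4}+\recip{4}$, and $\recip{3}+\recip{3}+\recip{3}$.

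Finally I would tally $1+1+1+3+2+1=9$ rows, matching the table, identify the lattice in the three previously known rows via para.~\ref{para:known}, and label the other six ``none'' with forward references to the lemmas that rule them out. The one place requiring care --- and the main obstacle, such as it is --- is the translation step: the single parameter $\Delta^-$ encodes three qualitatively distinct geometric situations ($\xL=\infty$ with $\delta^-\geq 0$ versus a finite truncation $\xL=\Delta^-$), so I must be sure each admissible slot-assignment is recorded in the table with the correct pair $(\Delta^-,\xL)$, and that the $\xL\geq\yL$ convention is applied consistently, including in the $\xL=\infty$ cases. Beyond that the argument is pure enumeration.
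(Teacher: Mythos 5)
Your proposal is correct and is essentially the paper's argument: the paper gives no separate proof for this lemma, treating it as the immediate finite enumeration of assignments of the reciprocal decompositions from lem.~\ref{lem:sum-of-recip} to the slots $N$, $\Delta^-$, $\Delta^+$ subject to conditions (2)--(4), which is exactly the bookkeeping you spell out (including the count $1+1+1+3+2+1=9$ and the translation to $(\xL,\yL)$).
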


\begin{remark}
If we do not require $\zerohat$, then we remove the restriction $N>0$
in lem.~\ref{lem:9-cases}.
That allows the additional case $\Delta^- = \Delta^+ = 2$,
and $N = \infty$.  Developing that case leads to the Fomin lattices
para.~\ref{para:known}(\ref{i:previous:twos}).
\end{remark}

\begin{lemma} \label{lem:xy-ge-3}
  It is not possible that $\xL$ and $\yL$ are not $\infty$ and are $> 2$.
\end{lemma}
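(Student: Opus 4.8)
The plan is to derive a contradiction at the bottom corner of $P^{+}$ from positivity of $w$. Since a finite $\xL$ (or $\yL$) is an integer $\ge 2$, the hypothesis to be refuted is that $\xL$ and $\yL$ are both finite with $\xL\ge 3$ and $\yL\ge 3$. First I would record which cells near $T_{00}$ are allocated. The cells $T_{00}=B_{N}$, its two covers $T_{10}$ and $T_{01}$, and $T_{11}=T_{10}\vee T_{01}$ are allocated in any two-dimensional $P^{+}$; since $\xL>2$, the lemma describing the cells $T_{\bullet 0}$ allocates $T_{20}$ with $w_{20}=m+2\delta^{-}$, and symmetrically $\yL>2$ allocates $T_{02}$. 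Because $T_{20}$ and $T_{11}$ both cover $T_{10}$, unique-cover-modularity gives a common upper cover of the two; by lem.~\ref{lem:no-triples-dual} it covers only those two, and by the grading of $P$ (lem.~\ref{lem:graded}) it has grade $N+3$, so it must be the cell $T_{21}$; symmetrically $T_{12}$ is allocated. Using lem.~\ref{lem:no-triples} I would also note that grade $N+1$ of $P^{+}$ is exactly $\{T_{10},T_{01}\}$ and grade $N+2$ is exactly $\{T_{20},T_{11},T_{02}\}$.

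Next I would read off two neighborhoods from the classification in fig.~\ref{fig:neighborhoods} (equivalently lem.~\ref{lem:equiv-6}). At $T_{10}$ one has $C^{-}_{T_{10}}=\{T_{00}\}$, $C^{+}_{T_{10}}=\{T_{20},T_{11}\}$, and $S_{T_{10}}=\{T_{01}\}$, with $T_{01}$ above $T_{00}$ and below $T_{11}$; this is the case $|C^{+}|=2$, $|C^{-}|=1$, $|S|=1$, whose constraints include $w(T_{00})=w(T_{11})$, so $w_{11}=m$. At $T_{11}$ one has $C^{-}_{T_{11}}=\{T_{10},T_{01}\}$, $C^{+}_{T_{11}}=\{T_{21},T_{12}\}$, and $S_{T_{11}}=\{T_{20},T_{02}\}$; this is the case $|C^{+}|=|C^{-}|=|S|=2$, whose join-orphan constraint reads $2w(T_{11})=w(T_{10})+w(T_{01})$.

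Finally I would substitute the boundary weights. From the lemmas describing $T_{\bullet 0}$ and $T_{0\bullet}$, together with $\delta^{-}=-m/\xL$ and $\delta^{+}=-m/\yL$, we get $w_{10}=m+\delta^{-}=m(1-1/\xL)$ and $w_{01}=m(1-1/\yL)$. Combining $w_{11}=m$ with $2w_{11}=w_{10}+w_{01}$ yields $2m=m\,(2-1/\xL-1/\yL)$; since $m=w(B_{N})>0$ by positivity, this forces $1/\xL+1/\yL=0$, which is impossible for the positive integers $\xL$ and $\yL$. Hence $\xL$ and $\yL$ cannot both be finite and greater than $2$.

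The step I expect to require the most care is verifying that the neighborhoods of $T_{10}$ and $T_{11}$ are exactly the claimed ones: that $T_{20}$, $T_{02}$, $T_{21}$, $T_{12}$ are all allocated --- this is precisely where the hypothesis $\xL,\yL>2$ is used, via the $T_{\bullet 0}$/$T_{0\bullet}$ lemmas and lem.~\ref{lem:no-triples-dual} --- and that no further element enters any of the sets $C^{-}$, $C^{+}$, $S$ for these two points, which follows from the grading of $P$ and from the covering and sibling relations among the eight cells being forced by the coordinate geometry of the $T_{xy}$ grid.
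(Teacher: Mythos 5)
Your proof is correct and follows essentially the same route as the paper: the finiteness of $\xL$ and $\yL$ forces $T_{20}$ and $T_{02}$ to be allocated, so $T_{11}$ has no upward orphans and its constraint (together with $w_{11}=m$) gives $\delta^-+\delta^+=0$, i.e.\ $1/\xL+1/\yL=0$, contradicting $\delta^-,\delta^+<0$. Your write-up is somewhat more explicit than the paper's (in particular you justify $w_{11}=m$ via the sibling constraint at $T_{10}$ and verify the neighborhood structures), but the underlying argument is the same.
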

\begin{proof} \disconnect
If $\xL$ and $\yL$ are not $\infty$ and are $> 2$, then $T_{20}$ and
$T_{02}$ are allocated
and neither $T_{21}$ nor $T_{12}$ are upward orphans of $T_{11}$.  Thus
the constraints of lem.~\ref{lem:equiv-6} for $T_{11}$ simplifies to
$0 = \delta^- + \delta^+$.
But since $\xL$ and $\yL$ are finite, $\delta^-<0$ and $\delta^+ < 0$,
which is a contradiction.
\end{proof}

\begin{lemma} \label{lem:xy-32}
  It is not possible that $\xL = 3$ and $\yL = 2$.
\end{lemma}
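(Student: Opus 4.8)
The plan is to push the local/global analysis of sec.~\ref{sec:local}--\ref{sec:global} one stage further. Assume $\xL=3$ and $\yL=2$; then (\ref{eq:delta-N}) forces $N=6$, so $m=w(T_{00})=Nr=6r$, $\delta^-=-m/3=-2r$, $\delta^+=-m/2=-3r$.

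First I would pin down the weights near the bottom of $P^+$. The left‑column and right‑column lemmas give $w_{00}=6r$, $w_{10}=m+\delta^-=4r$, $w_{20}=m+2\delta^-=2r$ (with $T_{30}$ unallocated), and $w_{01}=m+\delta^+=3r$ (with $T_{02}$ unallocated). Applying lem.~\ref{lem:equiv-6} at $T_{10}$ (neighbourhood of type $2,1,1,5$ with sibling $T_{01}$, matched cover $T_{11}$, orphan cover $T_{20}$) gives $w_{11}=w_{00}=6r$; since $T_{02}$ is unallocated the cover $T_{12}$ of $T_{11}$ covers only $T_{11}$ and is therefore an upward orphan, while $T_{21}$ (which covers $T_{20}\in S_{T_{11}}$) is not, so lem.~\ref{lem:equiv-6} at $T_{11}$ forces $w_{12}=5r$ and $w_{21}=4r$. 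Climbing the right edge, for $k\ge 2$ the point $T_{1k}$ has type $2,1,1,5$ (sole lower cover $T_{1,k-1}$; orphan cover $T_{1,k+1}$ because $T_{0,k+1}$ is unallocated), so (\ref{eq:join-orphan}) gives $w_{1,k-1}+w_{1,k+1}=2w_{1,k}$, whence $w_{1,k}=(7-k)r$; the orphan weight $2w_{1,k}-w_{1,k-1}=(6-k)r$ vanishes at $k=6=N$, so the right edge terminates at $T_{1,6}$ with weight $r$ and $T_{1,7}$ is unallocated.

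Next I would run the mirror computation down the left column and then fill in $P^+$ grade by grade, at each step using lem.~\ref{lem:antichain}, lem.~\ref{lem:no-triples}, lem.~\ref{lem:no-triples-dual} to fix the neighbourhood type and then reading the forced weight off lem.~\ref{lem:equiv-6}. Because $\xL=3$ and $\yL=2$ are both finite, both boundary sequences $h^{\tmin}_\bullet$ and $h^{\tmax}_\bullet$ are forced to turn back toward the centre (the line $h=-3$, like $h=6$, is never allocated, and neither of the two boundary points one grade below the pinch can have an upward orphan), so $P^+$ is a vertical strip that pinches, at grade $v=2N=12$, to a single point $T_{66}$, with $w(T_{66})=6r$ and with lower covers $T_{65},T_{56}$ of weights $3r$ and $5r$. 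Being a pinch point, $T_{66}$ has two upper covers $T_{76},T_{67}$, and each of them has $T_{66}$ as its only lower cover (its other potential lower cover would lie in the pinch grade $v=2N$, which contains only $T_{66}$).

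The contradiction is then immediate. Applying lem.~\ref{lem:equiv-6} at $T_{66}$ (neighbourhood of type $2,2,0,5$) gives $w(T_{76})+w(T_{67})=2w(T_{66})-w(T_{65})-w(T_{56})=12r-3r-5r=4r$. On the other hand, (\ref{eq:join-orphan}) at $T_{76}$ reads $w(T_{66})+(\text{sum of weights of upward orphans of }T_{76})=2w(T_{76})$, and since $w$ is positive the orphan sum is $\ge 0$, so $w(T_{76})\ge \tfrac12 w(T_{66})=3r$; symmetrically $w(T_{67})\ge 3r$. Hence $4r=w(T_{76})+w(T_{67})\ge 6r$, i.e.\ $0\ge 2r$, contradicting positivity (assump.~\ref{ass:positive}). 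The only real work is the middle step --- propagating the forced weights from $T_{00}$ up to the pinch and verifying that the two boundaries are indeed forced to meet at grade $2N$ with $T_{66}$ having the stated local structure --- after which the contradiction falls out of three applications of lem.~\ref{lem:equiv-6}.
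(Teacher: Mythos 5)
Your setup and the first stage of the propagation are correct: $N=6$, $m=6r$, $\delta^-=-2r$, $\delta^+=-3r$, the bottom rows $w_{10}=4r$, $w_{20}=2r$, $w_{01}=3r$ with $T_{30}$, $T_{02}$ unallocated, $w_{11}=w_{00}=m$, $w_{21}=4r$, $w_{12}=5r$, and the right-edge recursion $w_{1k}=(7-k)r$ terminating at $T_{16}$ all follow from lem.~\ref{lem:equiv-5} and (\ref{eq:join-orphan}) exactly as you say. The genuine gap is the ``middle step'' that you defer as routine: the claim that the forced allocation is a strip whose boundaries move monotonically inward and pinch at grade $v=2N=12$ to a single point $T_{66}$ with $w(T_{66})=6r$ and lower covers of weights $3r$ and $5r$. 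That structure is not what the constraints force, and in fact it cannot exist. Carrying the propagation out grade by grade, the sibling-equality condition of lem.~\ref{lem:equiv-5}(2) chains along the diagonal $h=0$: the sibling of $T_{10}$ forces $w_{11}=w_{00}$, the sibling $T_{12}$ of $T_{21}$ forces $w_{22}=w_{11}$, the sibling $T_{23}$ of $T_{32}$ forces $w_{33}=w_{22}$, and the sibling $T_{34}$ of $T_{43}$ forces $w_{44}=w_{33}$, so the diagonal weights never decrease below $m$ and no taper toward a pinch is possible. Moreover the left boundary does not march inward: (\ref{eq:join-orphan}) at $T_{33}$ (lower sum $w_{32}+w_{23}=3r+5r$ against $2w_{33}=12r$) forces a new orphan $T_{43}$ with $w_{43}=4r$, and then $T_{43}$ forces $T_{53}$ with $w_{53}=2r$. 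At that point $T_{44}$ has both upper covers $T_{54}$, $T_{45}$ allocated (by unique-cover-modularity applied to the pairs $T_{53},T_{44}$ and $T_{44},T_{35}$) and neither is an upward orphan, so (\ref{eq:join-orphan}) at $T_{44}$ reads $w_{43}+w_{34}=2w_{44}$, i.e.\ $4r+5r=12r$, which is impossible. This is the paper's contradiction (``$w_{44}=w_{33}=m$ violates the constraint on $T_{44}$''), and it occurs at grade $8$, before the configuration you describe at grade $12$ could ever be reached.

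So while your overall strategy is the same as the paper's --- propagate the forced allocations and weights upward from $T_{00}$ and exhibit a numerical violation of lem.~\ref{lem:equiv-6} --- your final three-line argument at $T_{66}$ rests on an unverified structural assertion that is false, and the deferred propagation is precisely where the proof lives. To repair the argument you would replace the pinch picture by the explicit grade-by-grade computation above (the content of fig.~\ref{fig:C32}) and conclude at $T_{44}$ rather than at a hypothetical $T_{66}$.
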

\begin{proof} \disconnect
\begin{figure}[htp]
\centering
  \includegraphics[page=\ipeFigCthreetwo,scale=1]{k-row-figs.pdf}
\caption{The case $\xL = 3$ and $\yL = 2$:
Elements are labeled with their weights in multiples of $m$.
Coverings marked with a stroke are upward orphan coverings.
Double vertical strokes connect weights that are equal by
lem.~\ref{lem:equiv-5}(2).
Empty circles are $T_{\bullet\bullet}$ that are not allocated.
The constraint on $T_{44}$ must be violated.}
\label{fig:C32}
\end{figure}

Applying the constraints of lem.~\ref{lem:equiv-6} to
$T_{\bullet\bullet}$ in increasing order of
$v$ straightforwardly determines the allocations, non-allocations, and
weights shown in fig.~\ref{fig:C32}.  But of necessity,
$w_{44} = w_{33} = m$, which violates the constraint on $T_{44}$.
\end{proof}

\begin{lemma} \label{lem:xy-42}
  It is not possible that $\xL = 4$ and $\yL = 2$.
\end{lemma}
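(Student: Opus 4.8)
The plan is to mirror the proof of lem.~\ref{lem:xy-32}: pin down the parameters forced by the hypothesis, propagate the constraints of lem.~\ref{lem:equiv-6} through the bottom elements $T_{\bullet\bullet}$ of $P^+$ (a deterministic process past rank $N+1$, by lem.~\ref{lem:assignment}), and reach an impossible constraint. First I would record that $\xL=4$ and $\yL=2$ give $\Delta^-=4$, $\Delta^+=2$, so (\ref{eq:delta-N}) forces $N=4$; lem.~\ref{lem:tail} then gives $m=w(B_N)=Nr=4r$, whence $\delta^-=-m/\xL=-r$ and $\delta^+=-m/\yL=-2r$, so $w_{00}=4r$, $w_{10}=m+\delta^-=3r$, $w_{01}=m+\delta^+=2r$, with $T_{40}$ and $T_{02}$ unallocated.

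Next I would run the propagation. Since $T_{10}$ and $T_{01}$ both cover $T_{00}=B_N$, they have a common upper cover $T_{11}$ (unique by unique-cover-modularity), placed at $(1,1)$ by step~(\ref{i:assignment:3b}) of lem.~\ref{lem:assignment}. As $T_{02}$ is missing, the lone upper cover $T_{11}$ of $T_{01}$ covers the sibling $T_{10}\in S_{T_{01}}$, so (\ref{eq:w-plus-minus}) at $T_{01}$ gives $w_{11}=w_{00}=4r=m$. At $T_{11}$ the only possible upward orphan is $T_{12}$ (again because $T_{02}$ is missing), while $T_{21}$ is not an orphan since $T_{20}\in S_{T_{11}}$; hence (\ref{eq:join-orphan}) forces $T_{12}$ to exist with $w_{12}=2w_{11}-w_{10}-w_{01}=3r$, and (\ref{eq:w-plus-minus}) at $T_{11}$ gives $w_{21}=w_{10}=3r$. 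Step~(\ref{i:assignment:3b}) next forms $T_{22}=T_{21}\vee T_{12}$ at $(2,2)$ and $T_{31}=T_{30}\vee T_{21}$ at $(3,1)$, and then (\ref{eq:w-plus-minus}) at $T_{12}$ (where $T_{21}\in S_{T_{12}}$ with lower neighbour $T_{11}$ and upper neighbour $T_{22}$) gives $w_{22}=w_{11}=4r=m$; running (\ref{eq:join-orphan}) at $T_{12}$ likewise forces $T_{13}$ to exist at $(1,3)$ as the orphan balancing that equation.

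The contradiction then lives at $T_{22}$: its lower covers are exactly $T_{21}$ and $T_{12}$, and, by lem.~\ref{lem:no-triples}, it has at most two upper covers, each of which lies at $(3,2)$ or $(2,3)$ and therefore covers, besides $T_{22}$, the allocated point $T_{31}$ or $T_{13}$; since $T_{31},T_{13}\in S_{T_{22}}$, no upper cover of $T_{22}$ is an upward orphan. Then (\ref{eq:join-orphan}) at $T_{22}$ reads $w_{21}+w_{12}=2w_{22}$, i.e.\ $3r+3r=8r$, forcing $r=0$ and contradicting positivity (assump.~\ref{ass:positive}). I expect the only real work to be the bookkeeping of which $T_{\bullet\bullet}$ are allocated and which upper covers count as orphans at the handful of sites involved — the same routine-but-fiddly check underlying lem.~\ref{lem:xy-32}, and best summarized in a figure analogous to fig.~\ref{fig:C32}.
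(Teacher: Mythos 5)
Your proposal is correct and follows essentially the same route as the paper: the paper's proof propagates the constraints of lem.~\ref{lem:equiv-6} through the grid in increasing $v$ (recording the allocations and weights in fig.~\ref{fig:C42}) and finds that $w_{22}=w_{11}=m$ violates the constraint at $T_{22}$, which is exactly the contradiction $w_{21}+w_{12}=2w_{22}$ (i.e.\ $6r=8r$) you derive. Your write-up merely makes explicit the bookkeeping the paper delegates to the figure.
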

\begin{proof} \disconnect
\begin{figure}[htp]
\centering
  \includegraphics[page=\ipeFigCfourtwo,scale=1]{k-row-figs.pdf}
\caption{The case $\xL = 4$ and $\yL = 2$:
Elements are labeled with their weights in multiples of $m$.
Coverings marked with a stroke are upward orphan coverings.
Double vertical strokes connect weights that are equal by
lem.~\ref{lem:equiv-5}(2).
Empty circles are $T_{\bullet\bullet}$ that are not allocated.
The constraint on $T_{22}$ must be violated.}
\label{fig:C42}
\end{figure}

Applying the constraints of lem.~\ref{lem:equiv-6} to
$T_{\bullet\bullet}$ in increasing order of
$v$ straightforwardly determines the allocations, non-allocations, and
weights shown in fig.~\ref{fig:C42}.  But of necessity,
$w_{22} = w_{11} = m$, which violates the constraint on $T_{22}$.
\end{proof}

\begin{lemma} \label{lem:xy-62}
  It is not possible that $\xL = 6$ and $\yL = 2$.
\end{lemma}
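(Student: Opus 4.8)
The plan is to follow the template of the proofs of lemmas~\ref{lem:xy-32} and~\ref{lem:xy-42}. First I would pin down $N$: the hypothesis $\xL=6$, $\yL=2$ means $\delta^-=-m/6$ and $\delta^+=-m/2$, so $\Delta^-=6$ and $\Delta^+=2$, and equation~(\ref{eq:delta-N}) gives $\recip{6}+\recip{2}+\recip{N}=1$, whence $N=3$. By lemma~\ref{lem:tail} this forces $m=Nr=3r$ and $\epsilon=-r=-m/3$, with tail weights $w(B_i)=ir$ for $1\le i\le 3$.

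Next I would read off the two boundary rays of $P^+$. Since $\delta^-=-m/6<0$, the constraints on $T_{\bullet 0}$ give $w_{i0}=m+i\delta^-=(6-i)m/6$ for $0\le i\le 5$ with $T_{60}$ unallocated; since $\delta^+=-m/2<0$, the constraints on $T_{0\bullet}$ give $w_{0i}=m+i\delta^+=(2-i)m/2$ for $0\le i\le 1$ with $T_{02}$ unallocated. With both boundaries fixed I would march through the remaining locations $T_{xy}$ in increasing order of $v=x+y$. At each step, which lower covers of $T_{xy}$ are already allocated determines the neighborhood type of $T_{xy}$ among the list in figure~\ref{fig:neighborhoods}; the associated constraint of lemma~\ref{lem:equiv-6}, combined with positivity of $w$, decides whether $T_{xy}$ is allocated and, if so, fixes its weight, while the equalities $w(m^-_p)=w(m^+_p)$ of lemma~\ref{lem:equiv-5}(2) propagate weights across the sibling matchings. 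Since $\yL=2$, the right-hand boundary is as short as possible, so the allocated region hugs the left boundary and the whole computation, although lengthy, is purely mechanical; I would collect its outcome into a figure analogous to figures~\ref{fig:C32} and~\ref{fig:C42}, with stroked coverings marking upward orphans and double strokes marking the forced equalities.

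The contradiction should then appear exactly as in the two earlier cases: the propagation forces some cell $T_{kk}$ on the main diagonal to carry weight $w_{kk}=m$, tied by lemma~\ref{lem:equiv-5}(2) to the neighboring diagonal cell $T_{k-1,k-1}$ also of weight $m$, whereas the neighborhood constraint of lemma~\ref{lem:equiv-6} at $T_{kk}$ demands that $2w_{kk}$ equal a sum of weights strictly exceeding $2m$. Since $w$ is positive this is impossible, so $\xL=6$, $\yL=2$ cannot occur.

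The step I expect to be the main obstacle is the bookkeeping in the upward march: because the allocated region for $\xL=6$, $\yL=2$ has a different outline than for $\xL\in\{3,4\}$, $\yL=2$, one must be scrupulous at each cell about identifying which of its potential lower covers are actually present and which of its upward covers are orphans before invoking figure~\ref{fig:neighborhoods}; a slip there would produce the wrong figure and either a spurious or a missed contradiction. Beyond this there is no conceptual difficulty — the argument is a finite, forced propagation of the linear constraints, just as in lemmas~\ref{lem:xy-32} and~\ref{lem:xy-42}.
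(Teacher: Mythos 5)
Your proposal is correct and follows essentially the same route as the paper: the paper's proof likewise fixes the boundary data ($N=3$, $w_{i0}=(6-i)m/6$, $w_{0i}=(2-i)m/2$) and then propagates the constraints of lem.~\ref{lem:equiv-6} through $T_{\bullet\bullet}$ in increasing order of $v$, recording the outcome in fig.~\ref{fig:C62}. The contradiction lands exactly where you predict, at the diagonal cell with $k=2$: the forced values $w_{22}=w_{11}=m$ violate the constraint on $T_{22}$.
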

\begin{proof} \disconnect
\begin{figure}[htp]
\centering
  \includegraphics[page=\ipeFigCsixtwo,scale=1]{k-row-figs.pdf}
\caption{The case $\xL = 6$ and $\yL = 2$:
Elements are labeled with their weights in multiples of $m$.
Coverings marked with a stroke are upward orphan coverings.
Double vertical strokes connect weights that are equal by
lem.~\ref{lem:equiv-5}(2).
Empty circles are $T_{\bullet\bullet}$ that are not allocated.
The constraint on $T_{22}$ must be violated.}
\label{fig:C62}
\end{figure}

Applying the constraints of lem.~\ref{lem:equiv-6} to
$T_{\bullet\bullet}$ in increasing order of
$v$ straightforwardly determines the allocations, non-allocations, and
weights shown in fig.~\ref{fig:C62}.  But of necessity,
$w_{22} = w_{11} = m$, which violates the constraint on $T_{22}$.
\end{proof}

\begin{lemma} \label{lem:3-real-cases}
For the first three cases of lem.~\ref{lem:9-cases},
$L$ must be:
\begin{enumerate}
  \item for $\Delta^- = -n$ and $\Delta^+ = n$, $L$ is isomorphic to $\mbbY_n$ with
    weights that are $m/n$ times the canonical weights
    (para.~\ref{para:known}(\ref{i:previous:Yk}) and fig.~\ref{fig:Y3}),
  \item for $\Delta^- = \Delta^+ = \infty$, $L$ is isomorphic to $\mbbY$ with
    weights that are $m$ times the canonical weights
    (para.~\ref{para:known}(\ref{i:previous:Y}) and fig.~\ref{fig:Y}),
  \item for $\Delta^- = \infty$ and $\Delta^+ = 2$, $L$ is isomorphic
    to $\mbbSY$ with
    weights that are $m/2$ times the canonical weights
    (para.~\ref{para:known}(\ref{i:previous:SY}) and fig.~\ref{fig:SY}),
\end{enumerate}
with $P$ being the poset of join-irreducibles of $L$ with the
corresponding weights.
\end{lemma}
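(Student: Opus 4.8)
The plan is to handle the three surviving tuples $(\Delta^-, x_L, \Delta^+, y_L, N)$ of lem.~\ref{lem:9-cases} one at a time, by a common method. In each case the preceding lemmas already fix the tail $B_1 \lessdot \cdots \lessdot B_N$ with $w(B_i)=ir$ (lem.~\ref{lem:tail}); the values $\delta^-,\delta^+$; and, along the two boundary rays, which of $T_{\bullet 0},T_{0\bullet}$ are allocated and with what weights (the lemmas giving $w_{i0}=m+i\delta^-$, $w_{0i}=m+i\delta^+$, together with $x_L,y_L$). What remains is to fill in the interior, which I would do by induction on the vertical coordinate $v=x+y$: run the construction of lem.~\ref{lem:assignment} to place the points of level $v+1$ (step~(3b) places the joins of adjacent allocated pairs of level $v$, the ray lemmas place $T_{v+1,0}$ and $T_{0,v+1}$, steps~(3c)--(3e) place the ``corner'' points) and, in parallel, force the new weights via the neighborhood equations of lem.~\ref{lem:equiv-6} and~\ref{lem:equiv-6-dual} tabulated in fig.~\ref{fig:neighborhoods-1} and~\ref{fig:neighborhoods-2}, with positivity (assump.~\ref{ass:positive}) deciding which grid cells stay unallocated (a cell is dropped precisely when its forced weight would be $\le 0$). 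A point to record is that in each of these three cases the only genuinely nondeterministic step, (3f), is invoked only at rank $N+1$, where it coincides with step~(2); so the whole construction is deterministic up to the single left--right reflection chosen in step~(2), which is harmless.

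The arithmetic then runs as follows. For $\Delta^-=\Delta^+=\infty$ (so $\delta^-=\delta^+=0$, $N=1$): every $T_{xy}$ is allocated and the ``$|C_p^+|=|C_p^-|=|S_p|=2$'' equation $2w(p)=w(x_1)+w(x_2)$ of fig.~\ref{fig:neighborhoods-2} forces $w_{xy}=m$ everywhere, so $P=P^+\cong\mbbN\times\mbbN=J(\mbbY)$ with $m$ times the canonical weight (fig.~\ref{fig:Y}). For $\Delta^-=-n$, $\Delta^+=n$ (so $\delta^-=m/n>0$, $\delta^+=-m/n<0$, $N=1$): positivity of the $T_{0\bullet}$ ray already caps it at $y_L=n$, the interior equations give $w_{xy}=(m/n)(n+x-y)$, and $T_{xy}$ survives exactly for $0\le y\le n-1$; this is $J(\mbbY_n)$ (via an explicit bijection with the boxes of the $n$-row shape) with $m/n$ times the canonical weight (fig.~\ref{fig:Y3}). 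For $\Delta^-=\infty$, $\Delta^+=2$ (so $\delta^-=0$, $\delta^+=-m/2$, $N=2$): the ``$1,1,1$'' equation at $T_{01}$ forces $w_{11}=m$ via the sibling $T_{10}$, and this in turn forces $T_{12}$ to be allocated, since the alternative ``$1,2,1$'' neighborhood at $T_{11}$ would give $w_{11}=3m/4\ne m$; iterating, the allocated region is the half-quadrant $0\le y\le x+1$ with $w_{xy}=m$ for $y\le x$ and $w_{xy}=m/2$ on the edge $y=x+1$, which is exactly the octant $J(\mbbSY)$ with $m/2$ times the canonical weight (fig.~\ref{fig:SY}); here $P=\{B_1\}\sqcup P^+$ with the single covering $B_1\lessdot B_2=T_{00}$ and $w(B_1)=r=m/2$, matching. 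In all three cases $L=\Idealf(P)$ by th.~\ref{th:differential-distributive}, the weight of a covering of $L$ is the weight of the point it adds, and $r=\sum_{z\textup{ minimal}}w(z)$ is determined accordingly, so the weighted-poset isomorphism $P\cong J(L')$ lifts to the claimed isomorphism of Fomin lattices.

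I expect the main obstacle to be the inductive bookkeeping in the middle step: one must check that at every level the combination of no-triple-covers (lem.~\ref{lem:no-triples}, \ref{lem:no-triples-dual}), unique-cover-modularity, and the neighborhood constraints really determines both the allocation pattern and the weights with no residual freedom --- in particular that the ``corner'' cases (3c)--(3e) behave as claimed and that the interior weight assigned at a point is never in conflict with a relation $w(m_p^-)=w(m_p^+)$ (lem.~\ref{lem:equiv-5}(2)) coming from a sibling one level up. The subtlest single point is identifying the correct region in the $\mbbSY$ case, where $y_L=2$ exactly as for $\mbbY_2$ yet the region is the slanted half-quadrant rather than the width-$2$ strip; the distinguishing feature is $\delta^-=0$ (versus $\delta^-=m/2>0$ for $\mbbY_2$), which keeps the entire $T_{\bullet 0}$ ray alive and pins the left edge to the diagonal of canonical weight $1$. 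Once region and weights are in hand, the comparison with the explicit descriptions of $\mbbY$, $\mbbY_n$, $\mbbSY$ in para.~\ref{para:known} is routine, and by assump.~\ref{ass:non-factor} there is nothing further to check.
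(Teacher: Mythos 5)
Your proposal is correct and takes essentially the same route as the paper's proof: the tail and boundary-ray weights are fixed by the preceding lemmas, and the remaining allocations and weights are then forced, working in order of increasing $v$, by the constraints of lem.~\ref{lem:equiv-6} (organized via lem.~\ref{lem:assignment} and the neighborhood table), necessarily producing $\mbbY_n$, $\mbbY$, and $\mbbSY$ with the stated multiples of the canonical weights. The only difference is that you carry out explicitly (and accurately, including the $\delta^-=0$ versus $\delta^->0$ distinction separating $\mbbSY$ from $\mbbY_2$) the induction and arithmetic that the paper compresses into ``straightforwardly determined.''
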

\begin{proof} \disconnect
In all of these cases, the values $w_{B_i}$, $w_{x0}$, and
$w_{0y}$ are determined (relative to $m$) by $\xL$, $\yL$, and
$N$.  Which other elements of $T_{\bullet\bullet}$ are allocated and
their weights are straightforwardly determined by the constraints of
lem.~\ref{lem:equiv-6} on
them (working in order of increasing $v$), and necessarily result in
the known lattices listed in the table.
\end{proof}

\section{Classification theorem} \label{sec:classification}

\begin{theorem} \label{th:classification}
The only Fomin lattices $L$ that satisfy the assumptions
\begin{enumerate}
\item[] (assump.~\ref{ass:distributive}) The lattice $L$ is distributive.
  We define the poset
  of points (join-irreducibles) to be $P$, so $L = \Idealf(P)$.
\item[] (assump.~\ref{ass:unique-cover-modular}) $P$ is unique-cover-modular.
\item[] (assump.~\ref{ass:positive})
The weighting $w$ is positive, that is, the set of values $V$, the target of
$w$, is $\mbbZ$, and all of
the values of $w$ are positive.  The differential degree $r$ is positive.
\item[] (assump.~\ref{ass:non-factor})
The lattice cannot be factored into two nontrivial
lattices, which is equivalent to assuming that
$P$ cannot be partitioned into two disjoint non-empty
posets $P = P_1 \sqcup P_2$.
\end{enumerate}
are the cases
\begin{enumerate}
\item[$\mbbY$\upcolon] (para.~\ref{para:known}(\ref{i:previous:Y}))
$P$ is isomorphic to the quadrant of the plane,
$\{ (x,y) \mvert \textup{$x$ and $y$ are integers $\geq 0$}\}$,
and $L$ is isomorphic to $\mbbY$, Young's lattice, the lattice of partitions,
with weighting $r$ times the canonical weighting of 1.
\item[$\mbbSY$\upcolon] (para.~\ref{para:known}(\ref{i:previous:SY}))
$P$ is isomorphic to the octant of the plane,
$\{ (x,y) \mvert \textup{$x$ and $y$ are integers with
$x \geq 0$ and $0 \leq y \leq x$}\}$,
and $L$ is isomorphic to $\mbbSY$, the shifted Young's lattice, the lattice of
partitions into distinct parts,
with weighting $r$ times the canonical weighting of 1 and 2.
\item[$\mbbY_\bullet$\upcolon] (para.~\ref{para:known}(\ref{i:previous:Yk}))
$P$ is isomorphic to the $k$-row strip for some $k \geq 1$,
$\{ (x,y) \mvert \textup{$x$ and $y$ are integers,
 $0 \leq x$ and $0 \leq y < k$}\}$,
and $L$ is isomorphic to $\mbbY_k$, the $k$-row Young's lattice, the lattice of
partitions with at most $k$ parts, with weighting $r/k$ times
the canonical weighting $w(T_{xy}) = k + x - y$.
\end{enumerate}
\end{theorem}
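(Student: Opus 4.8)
The plan is to assemble the results of sections~\ref{sec:local} and~\ref{sec:global}, since the four hypotheses of the theorem are exactly the standing assumptions~\ref{ass:distributive}, \ref{ass:unique-cover-modular}, \ref{ass:positive}, and~\ref{ass:non-factor} under which those sections were developed. First I would invoke lem.~\ref{lem:P-minimum} to obtain a minimum element $\zerohat_P$ of $P$, and lem.~\ref{lem:graded} to obtain that $P$ is graded; these are what make the ``bottom chain'' $B_1 = \zerohat_P \lessdot B_2 \lessdot \cdots$ and its cutoff index $N$ well-defined, and when $N < \infty$, lem.~\ref{lem:no-triples} forces $B_N$ to have either zero or exactly two upward covers.

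The argument then splits into the three mutually exclusive cases of sec.~\ref{sec:global}. If $N = \infty$, case~\ref{case:path} gives directly that $P$ is the upward semi-infinite path with $w(B_i) = ir$ and $L \cong \mbbN$; this is the $\mbbY_\bullet$ conclusion with $k = 1$, via the identification $B_{i+1} \mapsto T_{i0}$, under which $P$ becomes the $1$-row strip and $w(B_i) = ir$ is $r$ times the canonical weight $w(T_{x0}) = 1 + x$. If $N < \infty$ and $B_N$ has no upward cover, then $P$ is a finite path and the constraints of lem.~\ref{lem:equiv-6} applied to the $B_i$ force $r = 0$ and every $w(B_i) = 0$, contradicting positivity; so this case does not occur (this is also where the trivial lattice and all finite chains are eliminated). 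The remaining case is $N < \infty$ with $B_N$ having two upward covers, which is the two-dimensional regime of para.~\ref{para:two-dim}.

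In that regime I would quote lem.~\ref{lem:9-cases}, which reduces the parameters $(\Delta^-, \Delta^+, N)$ to nine possibilities, and then lem.~\ref{lem:xy-ge-3}, \ref{lem:xy-32}, \ref{lem:xy-42}, and~\ref{lem:xy-62} to rule out six of them. For the three survivors, lem.~\ref{lem:3-real-cases} identifies $L$ as $\mbbY_n$ ($\Delta^- = -n$, $\Delta^+ = n$, $N = 1$), as $\mbbY$ ($\Delta^- = \Delta^+ = \infty$, $N = 1$), or as $\mbbSY$ ($\Delta^- = \infty$, $\Delta^+ = 2$, $N = 2$). To match the weight normalizations stated in the theorem, I would use lem.~\ref{lem:tail}, which gives $m = Nr$: thus $m = r$ in the $\mbbY$ and $\mbbY_n$ cases and $m = 2r$ in the $\mbbSY$ case, which turns the ``$m$ times'', ``$m/n$ times'', and ``$m/2$ times'' of lem.~\ref{lem:3-real-cases} into exactly ``$r$ times'', ``$r/k$ times'' (with $k = n$), and ``$r$ times'' the respective canonical weightings. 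Finally, the posets of join-irreducibles of $\mbbY$, $\mbbSY$, and $\mbbY_k$ are respectively the quadrant, the octant, and the $k$-row strip, which are the three conclusions listed, and combining with the $k=1$ case from case~\ref{case:path} gives all $k \geq 1$.

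There is no genuine obstacle: the theorem is an assembly of results already established, and the only place needing care is the bookkeeping that reconciles the three distinct weight normalizations --- in particular, recognizing that the $N = \infty$ chain case is the $k = 1$ instance of the $\mbbY_\bullet$ family rather than a separate conclusion, and checking via lem.~\ref{lem:tail} that $m = Nr$ converts every ``multiple of $m$'' statement into the ``multiple of $r$'' form the theorem demands.
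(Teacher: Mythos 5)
Your proposal is correct and follows essentially the same route as the paper: the paper's proof is exactly the assembly of case~\ref{case:path}, lem.~\ref{lem:9-cases}, \ref{lem:xy-ge-3}, \ref{lem:xy-32}, \ref{lem:xy-42}, \ref{lem:xy-62}, and~\ref{lem:3-real-cases}, with the semi-infinite path absorbed as $\mbbY_1$. Your additional bookkeeping (eliminating the finite-path case by positivity, and using lem.~\ref{lem:tail}'s $m = Nr$ to convert the ``multiple of $m$'' normalizations into the theorem's ``multiple of $r$'' form) is exactly what the paper leaves implicit, so it is a faithful, slightly more explicit version of the same argument.
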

\begin{proof} \disconnect
Note that conclusion case $\mbbY_k$ comprises case~\ref{case:path},
the upward semi-infinite path, as $\mbbY_1$ and the case
lem.~\ref{lem:3-real-cases}(1) as $\mbbY_k$ with $k \geq 2$.
The theorem is the summation of case~\ref{case:path},
lem.~\ref{lem:9-cases}, \ref{lem:xy-ge-3}, \ref{lem:xy-32}, \ref{lem:xy-42},
\ref{lem:xy-62}, and~\ref{lem:3-real-cases}.
\end{proof}

\section{Future directions} \label{sec:future}

The strength of the assumptions of our classification theorem
(th.~\ref{th:classification} is disappointing, but they are
satisfied by all known Fomin lattices except the one family that is not even
distributive, the Young--Fibonacci lattices and
cartesian products that include them.
In future work, we hope to expand the classification to larger classes.

There are a number of ways of relaxing the assumption of
positivity.

\begin{enumerate}
\item
Considering sets of values $V$ that are not the integers $\mbbZ$ may be of
abstract interest but doesn't seem to be directly relevant to
Robinson--Schensted algorithms.
However, investigations of ``generic'' weightings of a lattice can
exploit this
generality.  For instance, looking at weightings of the quadrant as
$P$, we could start with the generic formula
$w((i,j)) = \alpha + \beta r + \gamma i + \delta j$ and see what
values of $\alpha$, $\beta$, $\gamma$, and $\delta$ allow the
differential criterion to be satisfied.
This can be modeled by setting $V = \mbbR^4$ and translating
$\alpha + \beta r + \gamma i + \delta j$ into
$(\alpha,\beta,\gamma,\delta)$ and $r$ into $(0,1,0,0)$.
Once the possible weightings for this $V$ are determined, then
homomorphisms from $V$ to $\mbbZ$ can be considered and the resulting
weightings that are positive can be identified.
From these positive weightings Robinson--Schensted algorithms can be
constructed.

When $L$, $w$, and $V$ are jointly constructed from some complex
algebraic structure (such as \cite{Fom1994a}*{sec.~2.3})
this sort of flexibility may be particularly useful.

\item
Allowing values of $w$ to be negative (when $V = \mbbZ$) does not seem
to be relevant to constructing Robinson--Schensted algorithms.

\item
Allowing values of $w$ to be 0 may be useful as an intermediate step
in constructing positive weightings,
but it will reveal no new Fomin lattices since
th.~\ref{th:truncate-fomin} et al.\ show that a Fomin lattice with
some edges
weighted 0 can be truncated into connected components which are each
Fomin lattices with all edges having nonzero weights.

\item
Allowing $r \leq 0$ (with $V = \mbbZ$) seems to have limited potential as
then either $L$ is trivial or does not have $\zerohat$.
However, see the discussion of Fomin lattices without $\zerohat$ in
(\ref{i:futures:no-zerohat}).

\end{enumerate}

There are also a number of ways of relaxing our structural assumptions
about $L$.

\begin{enumerate}[resume]
\item \label{i:futures:no-zerohat}
Relaxing the assumption that $L$ has a $\zerohat$ seems like it would
be tractable.  But lem.~\ref{lem:graded} and rem.~\ref{rem:graded}
show that the existence of $\zerohat$ has non-obvious consequences in
that it causes unique-cover-modularity to imply that $L$ is graded.
Thus removing the requirement of $\zerohat$ may introduce unexpected
complexity.

Currently, there is no known way to use a Fomin lattice without $\zerohat$
to build a Fomin lattice with $\zerohat$, and thus a
Robinson--Schensted algorithm.  But the truncation theorems
(th.~\ref{th:truncate-fomin} et al.) suggest it's worth looking for
processes by which they might be exploited.

\item
There are a considerable number of Fomin lattices which have all
values of $w$ positive but $r=0$ (and necessarily, no $\zerohat$):
See para.~\ref{para:known}(\ref{i:previous:Z})(\ref{i:previous:CP})%
(\ref{i:previous:SS})(\ref{i:previous:twos}).
There is no obvious application of these to Robinson--Schensted
algorithms, as any cartesian product involving any of them has no
$\zerohat$ and cannot be used in Robinson--Schensted algorithms.  But
this class of lattices may have a different combinatorial significance.

\item
Relaxing the assumption that poset of points is
unique-cover-modular to just cover-modular seems like it may be tractable.
The structure of
neighborhoods would become more complicated but perhaps a careful
analysis of the graphs of possible coverings would show that a
tractable set of constraints characterize the weights of the points.

\item
Relaxing the assumption that the poset
of points is cover-modular seems to be more difficult as it is used in
the proof that no element of $L$ has more than two upward or downward
covers (lem.~\ref{lem:no-triples}) and a number of the lemmas for
global characterization.

\item
Classifying modular but non-distributive Fomin lattices seems to be
the most difficult problem.  There seems to be no
tractable representation of general modular lattices that
parallels Birkhoff's Representation Theorem for distributive lattices,
and our entire analysis is based on characterizing $L$ in the manner
of Birkhoff's representation.

On the other hand, the technique used in
\cite{Stan2012a}*{Exer.~3.51(a) soln.} can be restated to provide a
way to analyze an arbitrary differential distributive lattice, or
conversely, to provide a nondeterministic algorithm for constructing
all differential distributive lattices for a wide range of definitions of
``differential''.  This can be used to prove that no point is covered
by three distinct points (lem.~\ref{lem:no-triples}) and that there is a
unique minimum point (lem.~\ref{lem:P-minimum}).
It may be possible to use this technique to prove additional lemmas
used in this analysis without assuming unique-cover-modularity.
\end{enumerate}

On the largest scale,

\begin{enumerate}[resume]
\item \label{i:why-fomin-lattices}
All of the Robinson--Schensted algorithms that have attracted
attention in the past has been derivable as growth diagrams based on
what we call Fomin lattices, viz., modular lattices whose weightings
are constant on projectivity classes.  However the growth diagram
construction can be based on any pair of dual graded graphs which is a much
broader and more complicated class of structures.  There needs to be
an investigation why only the Fomin lattices have appeared in previous
works.

\item \label{i:sequentially-differential}
Richard Stanley asks how much of this
analysis can be applied to sequentially differential
posets\cite{Stan1990a}*{sec.~2}%
\footnote{Sequentially differential posets have a different differential degree
for each grade of the poset.}
that are distributive lattices.

\item \label{i:cover-function}
Jonathan Farley\cite{Far2003a}*{sec.~11} classifies all finitary
distributive lattices which have a \emph{cover function}, where there
is a function $f: \mbbN \rightarrow \mbbN$ and each
element of the lattice that covers $n$ elements is covered by $f(n)$
elements.  This suggests extending his analysis can be extended to the
weighted case, where
\begin{equation}
  f \left( \sum_{a\mvert a \textup{ maximal in } x} w(a) \right) =
  \sum_{b\mvert b \textup{ minimal in } P \setminus x} w(b)
  \textup{\qquad\qquad for all $x \in \Idealf(P)$}.
\label{eq:cover-distributive}
\end{equation}

\end{enumerate}

\section*{References}

\begin{biblist}[\normalsize]*{labels={alphabetic}}

%\bibselect{bibliography}

\DefineSimpleKey{bib}{identifier}{}
\DefineSimpleKey{bib}{location}{}
\DefineSimpleKey{bib}{primaryclass}{}
% These must be global definitions, as bibinclude's are executed in a
% different scope than bib's.
% Cribbed from the definition of \MR in amsart.cls.
\gdef\Zbl#1{\relax\ifhmode\unskip\spacefactor3000 \space\fi
  \href{https://zbmath.org/#1}{Zbl~#1}}
% Generate Google Scholar URL, wrapped in \url{...}
\gdef\GS#1{\relax\ifhmode\unskip\spacefactor3000 \space\fi
  \href{https://scholar.google.com/scholar?cluster=#1}{GS~#1}}
% Generate ORCID URLs.
\gdef\ORCID#1{\relax\ifhmode\unskip\spacefactor3000 \space\fi
  \href{https://arxiv.org/search/?query=#1&searchtype=orcid&abstracts=hide&order=-announced_date_first&size=50}{ar$\chi$iv}
  \href{https://orcid.org/orcid-search/search?searchQuery=#1}{ORCID:#1}}

% Adding fields to the BibSpec's in
% /usr/share/texlive/texmf-dist/tex/latex/amsrefs/{amsbst.sty,amsrefs.sty}

\BibSpec{arXiv}{%
    +{}{\PrintAuthors}                  {author}
    +{,}{ \textit}                      {title}
    +{,} { \PrintTranslatorsC}          {translator}
    +{}{ \parenthesize}                 {date}
    +{,}{ arXiv }                       {identifier}
    +{,}{ primary class }               {primaryclass}
    +{,} { \PrintDOI}                   {doi}
    +{,} { available at \eprint}        {eprint}
    +{.} { }                            {note}
}

\BibSpec{article}{%
    +{}  {\PrintAuthors}                {author}
    +{,} { \textit}                     {title}
    +{.} { }                            {part}
    +{:} { \textit}                     {subtitle}
    +{,} { \PrintTranslatorsC}          {translator}
    +{,} { \PrintContributions}         {contribution}
    +{.} { \PrintPartials}              {partial}
    +{,} { }                            {journal}
    +{}  { \textbf}                     {volume}
    +{}  { \PrintDatePV}                {date}
    +{,} { \issuetext}                  {number}
    +{,} { \eprintpages}                {pages}
    +{,} { }                            {status}
    +{,} { \PrintDOI}                   {doi}
    +{,} { available at \eprint}        {eprint}
    +{}  { \parenthesize}               {language}
    +{}  { \PrintTranslation}           {translation}
    +{;} { \PrintReprint}               {reprint}
    +{.} { }                            {note}
    +{.} {}                             {transition}
    +{}  {\SentenceSpace \PrintReviews} {review}
}

\BibSpec{partial}{%
    +{}  {}                             {part}
    +{:} { \textit}                     {subtitle}
    +{,} { \PrintContributions}         {contribution}
    +{,} { }                            {journal}
    +{}  { \textbf}                     {volume}
    +{}  { \PrintDatePV}                {date}
    +{,} { \issuetext}                  {number}
    +{,} { \eprintpages}                {pages}
    +{,} { \PrintDOI}                   {doi}
    +{,} { available at \eprint}        {eprint}
    +{.} { }                            {note}
}

\BibSpec{presentation}{%
    +{}{\PrintAuthors}                  {author}
    +{,}{ \textit}                      {title}
    +{,}{ }                             {date}
    +{,}{ }                             {location}
    +{,}{ }                             {series}
    +{,} { \PrintDOI}                   {doi}
    +{,} { available at \eprint}        {eprint}
    +{.} { }                            {note}
}

\BibSpec{misc}{%
    +{}  {\PrintPrimary}                {transition}
    +{,} { \textit}                     {title}
    +{.} { }                            {part}
    +{:} { \textit}                     {subtitle}
    +{,} { \PrintEdition}               {edition}
    +{}  { \PrintEditorsB}              {editor}
    +{,} { \PrintTranslatorsC}          {translator}
    +{,} { \PrintContributions}         {contribution}
    +{,} { }                            {organization}
    +{,} { }                            {address}
    +{,} { \PrintDateB}                 {date}
    +{,} { }                            {status}
    +{}  { \parenthesize}               {language}
    +{}  { \PrintTranslation}           {translation}
    +{;} { \PrintReprint}               {reprint}
    +{,} { \PrintDOI}                   {doi}
    +{,} { available at \eprint}        {eprint}
    +{.} { }                            {note}
    +{.} {}                             {transition}
    +{}  {\SentenceSpace \PrintReviews} {review}
}

\bib*{xref-Berk2024a}{book}{
  editor={Berkesch, Christine},
  editor={Musiker, Gregg},
  editor={Pylyavskyy, Pavlo},
  editor={Reiner, Victor},
  title={Open problems in algebraic combinatorics},
  date={2024},
  publisher={AMS},
  address={Providence, RI, US},
  series={Proc.\ of Symposia in Pure Mathematics},
  volume={110},
  doi={10.1090/pspum/110},
  eprint={https://www.ams.org/books/pspum/110/},
}

\bib*{xref-BogFreesKung1990a}{book}{
  title={The Dilworth theorems: Selected papers of Robert P.\ Dilworth},
  editor={Bogard, Kenneth P.},
  editor={Freese, Ralph S.},
  editor={Kung, Joseph P.~S.},
  date={1990},
  publisher={Springer},
  address={New York},
  series={Contemporary Mathematicians},
  doi={10.1007/978-1-4899-3558-8},
}

\bib*{xref-Stan1999a}{book}{
  title={Enumerative Combinatorics, Volume 2},
  author={Stanley, Richard P.},
  date={1999},
  publisher={Cambridge University Press},
  address={Cambridge},
  series={Cambridge Studies in Advanced Mathematics},
  volume={62},
}

\bib*{xref-Stant1990a}{book}{
  title={Invariant Theory and Tableaux},
  editor={Stanton, Dennis},
  publisher={Springer-Verlag},
  series={IMA Volumes in Math. and Its Appls.},
  volume={19},
  address={Berlin and New York},
  date={1990},
}

\bib{Birk1967a}{book}{
  label={Birk1967},
  author={Birkhoff, Garrett},
  title={Lattice theory},
  edition={3},
  date={1967},
  publisher={American Mathematical Society},
  address={Providence},
  series={American Mathematical Society Colloquium Publications},
  volume={25},
  review={\Zbl {0153.02501}},
  eprint={https://archive.org/details/latticetheory0000birk},
  note={Original edition 1940. \GS {10180976689018188837}},
}

\bib{Byrn2012a}{thesis}{
  label={Byrn2012},
  title={Structural Aspects of Differential Posets},
  author={Byrnes, Patrick},
  school={Univ. of Minnesota},
  year={2012},
  type={Ph.D.\ thesis},
  eprint={https://conservancy.umn.edu/handle/11299/142992},
  note={\GS {37435333891573400}},
}

\bib{El2025a}{arXiv}{
  label={El2025},
  author={Elizalde, Sergi},
  title={Cylindric growth diagrams, walks in simplices, and exclusion processes},
  date={2025},
  identifier={2507.01097},
  primaryclass={math.CO},
  doi={10.48550/arXiv.2507.01097},
  eprint={https://arxiv.org/abs/2507.01097},
  note={11905606782830664430},
}

\bib{Far2003a}{article}{
  label={Far2003},
  author={Farley, Jonathan David},
  title={Quasi-Differential Posets and Cover Functions of Distributive Lattices; II:\ A Problem in Stanley's \textit {Enumerative Combinatorics}},
  journal={Graphs and Comb.},
  volume={19},
  date={2003},
  pages={475--491},
  doi={0.1007/s00373-003-0525-0},
  eprint={http://latticetheory.net/media/pdf/stanley_quasi_II.pdf},
  note={\GS {18144920561279469427}},
}

\bib{Fom1994a}{article}{
  label={Fom1994},
  author={Fomin, Sergey V.},
  title={Duality of Graded Graphs},
  journal={Journal of Algebraic Combinatorics},
  volume={3},
  date={1994},
  pages={357--404},
  review={\Zbl {0810.05005}},
  doi={10.1023/A:1022412010826},
  eprint={https://link.springer.com/content/pdf/10.1023/A:1022412010826.pdf},
  note={\GS {3401296478290474488}},
}

\bib{Fom1995a}{article}{
  label={Fom1995},
  author={Fomin, Sergey V.},
  title={Schensted Algorithms for Dual Graded Graphs},
  journal={Journal of Algebraic Combinatorics},
  volume={4},
  date={1995},
  pages={5--45},
  review={\Zbl {0817.05077}},
  doi={10.1023/A:1022404807578},
  eprint={https://link.springer.com/content/pdf/10.1023/A:1022404807578.pdf},
  note={\GS {9003315695694762360}},
}

\bib{GessKratt1997a}{article}{
  label={GessKratt1997},
  author={Gessel, Ira M.},
  author={Krattenthaler, Christian},
  title={Cylindric partitions},
  journal={Trans.\ AMS},
  volume={349},
  date={1997},
  pages={429--479},
  review={\Zbl {0865.05003}},
  doi={10.1090/S0002-9947-97-01791-1},
  eprint={https://www.ams.org/journals/tran/1997-349-02/S0002-9947-97-01791-1/},
  note={\GS {10012216907340663744}},
}

\bib{Lew2007a}{misc}{
  label={Lew2007},
  author={Lewis, Joel},
  title={On differential posets},
  date={2007},
  eprint={https://bpb-us-e1.wpmucdn.com/blogs.gwu.edu/dist/5/693/files/2018/04/JBLHarvardSeniorThesis-20drube.pdf},
  note={\GS {17935962314847284809}},
}

\bib{McLar1986a}{thesis}{
  label={McLar1986},
  author={McLarnan, Timothy J.},
  title={Tableau Recursions and Symmetric Schensted Correspondences for Ordinary, Shifted and Oscillating Tableaux},
  school={U. C. San Diego},
  year={1986},
  type={Ph.D.\ thesis},
  note={\GS {6563465974933598796}},
}

\bib{Nat2017a}{book}{
  label={Nat2017},
  title={Notes on Lattice Theory},
  author={Nation, James Bryant},
  date={2017},
  eprint={http://math.hawaii.edu/~jb/lattice2017.pdf},
  note={\GS {2514237753731863273} Links to individual chapters are in \url {https://math.hawaii.edu/~jb/books.html}},
}

\bib{Proct1982a}{article}{
  label={Proct1982},
  author={Proctor, Robert A.},
  title={Representations of $\mathfrak {sl}(2,\mathbb {C})$ on posets and the Sperner property},
  journal={SIAM J.~Alg.\ Disc.\ Meth.},
  volume={3},
  date={1982},
  pages={275--280},
  review={\Zbl {0496.06004}},
  doi={10.1137/0603026},
  eprint={https://epubs.siam.org/doi/abs/10.1137/0603026},
  note={\GS {3401363903828155882}},
}

\bib{Qing2008a}{thesis}{
  label={Qing2008},
  author={Qing, Yulan},
  title={Differential Posets and Dual Graded Graphs},
  school={Massachusetts Inst.\ of Tech.},
  year={2008},
  type={Masters Thesis},
  eprint={https://dspace.mit.edu/handle/1721.1/47899},
  note={\GS {9596728992570937515}},
}

\bib{Rob1938a}{article}{
  label={Rob1938},
  author={Robinson, G. de B.},
  title={On the representations of the symmetric group},
  partial={ journal = {Amer. J. Math.}, volume = {60}, date = {1938}, pages = {745--760}, eprint = {https://www.jstor.org/stable/2371609}, doi = {10.2307/2371609}, },
  partial={ journal = {Amer. J. Math.}, volume = {69}, date = {1947}, pages = {286--298}, eprint = {https://www.jstor.org/stable/2371853}, doi = {10.2307/2371853}, },
  partial={ journal = {Amer. J. Math.}, volume = {70}, date = {1948}, pages = {277--294}, eprint = {https://www.jstor.org/stable/2372326}, doi = {10.2307/2372326}, },
}

\bib{Roby1991a}{thesis}{
  label={Roby1991},
  author={Roby, Thomas W.},
  title={Applications and extensions of Fomin's generalization of the Robinson-Schensted correspondence to differential posets},
  school={Massachusetts Inst.\ of Tech.},
  year={1991},
  type={Ph.D.\ thesis},
  eprint={https://dspace.mit.edu/handle/1721.1/13517},
  note={\GS {9893116474914970883}},
}

\bib{Sag1979b}{article}{
  label={Sag1979},
  author={Sagan, Bruce E.},
  title={An analog of Schensted's algorithm for shifted Young tableaux},
  journal={J. Combin. Theory},
  volume={Ser.\ A 27},
  date={1979},
  pages={10--18},
  review={\MR {80k 05029}},
  eprint={https://users.math.msu.edu/users/bsagan/Papers/Old/asa-pub.pdf},
  doi={10.1016/B978-0-12-428780-8.50007-6},
  note={\GS {15694654829594027728}},
}

\bib{Schen1961a}{article}{
  label={Schen1961},
  author={Schensted, Craige},
  title={Longest increasing and decreasing sequences},
  journal={Canad. J. Math.},
  volume={13},
  date={1961},
  pages={179--191},
  eprint={https://www.cambridge.org/core/services/aop-cambridge-core/content/view/B5098D9BC8B226C575402B971852C05E/S0008414X00013146a.pdf/longest-increasing-and-decreasing-subsequences.pdf},
  doi={10.4153/CJM-1961-015-3},
}

\bib{Stan1988a}{article}{
  label={Stan1988},
  author={Stanley, Richard P.},
  title={Differential Posets},
  journal={J. Amer. Math. Soc.},
  volume={1},
  date={1988},
  pages={919--961},
  doi={10.2307/1990995},
  eprint={https://www.jstor.org/stable/1990995},
  note={\GS {5318386056862341375}},
}

\bib{Stan1990a}{article}{
  label={Stan1990},
  author={Stanley, Richard P.},
  title={Variations on differential posets},
  date={1990},
  book={ xref = {xref-Stant1990a}, },
  pages={145--165},
  eprint={https://math.mit.edu/~rstan/pubs/pubfiles/78.pdf},
  note={\GS {2941535162033905939}},
}

\bib{Stan2012a}{book}{
  label={Stan2012},
  title={Enumerative Combinatorics, Volume 1},
  edition={2},
  author={Stanley, Richard P.},
  date={1997, 2012},
  publisher={Cambridge University Press},
  address={Cambridge},
  series={Cambridge Studies in Advanced Mathematics},
  volume={49},
  note={original edition 1997.},
}

\bib{Wor1984a}{thesis}{
  label={Wor1984},
  author={Worley, Dale R.},
  title={A Theory of Shifted Young Tableaux},
  school={Massachusetts Inst.\ of Tech.},
  year={1984},
  type={Ph.D.\ thesis},
  eprint={https://dspace.mit.edu/handle/1721.1/15599},
  note={\GS {7106851617217394040}},
}

\bib{Wor2023c}{arXiv}{
  label={Wor2023},
  author={Worley, Dale R.},
  title={On the combinatorics of tableaux --- Graphical representation of insertion algorithms},
  date={2023},
  identifier={2306.11140},
  primaryclass={math.CO},
  doi={10.48550/arXiv.2306.11140},
  eprint={https://arxiv.org/abs/2306.11140},
  note={\GS {2318818649542884742}},
}

\end{biblist}

% Push the e-mail address down a bit.
\vspace{3em}

\end{document}